\documentclass[12pt]{amsart} %

\usepackage[latin1]{inputenc}

\usepackage{hyperref}
\hypersetup{colorlinks=true}

\usepackage{color}

\usepackage{amsmath,amsfonts,amsthm,amssymb} \pagestyle{plain}
\usepackage[margin=2cm,bottom=2.5cm,top=2.5cm]{geometry}

\newcommand\N{{\mathbb N}} \newcommand\R{{\mathbb R}}
\newcommand\Z{{\mathbb Z}} \newtheorem{thm}{Theorem}

\newtheorem{rmq}{Remark}[section] \newtheorem{lemma}{Lemma} %
 \newtheorem{prop}{Proposition}

\begin{document}

 \title{Dispersive estimates for the wave and the Klein-Gordon equations in large time inside the Friedlander domain}

\author{Oana Ivanovici}
\address{CNRS $\&$ Sorbonne Université, Laboratoire Jacques-Louis LIONS,
UMR 7598, 4 Place Jussieu, 75252 Paris } \email{oana.ivanovici@sorbonne-universite.fr}
      \thanks{{\it Key words}  Dispersive estimates, wave equation, Dirichlet boundary condition.\\
    \\
 The author was supported by ERC grant ANADEL 757 996.
    }

\begin{abstract} We prove global in time dispersion for the wave and the Klein-Gordon equation inside the Friedlander domain by taking full advantage of the space-time localization of caustics and a precise estimate of the number of waves that may cross at a given, large time. Moreover, we uncover a significant difference between Klein-Gordon and the wave equation in the low frequency, large time regime, where Klein-Gordon exhibits a worse decay that the wave, unlike in the flat space.
\end{abstract}
\maketitle

\section{Introduction and main results}

We consider the following equation on a domain $\Omega$ with smooth boundary,
\begin{equation} \label{KG} 
\left\{ \begin{array}{l} (\partial^2_t-
\Delta+m^2) u(t, x)=0, \;\; x\in \Omega \\ u|_{t=0} = u_0 \; \partial_t
u|_{t=0}=u_1,\\ u|_{\partial_{\Omega}}=0\,.
 \end{array} \right.
 \end{equation}
Here $\Delta$ stands for the Laplace-Beltrami operator on
$\Omega$. If $\partial\Omega\neq \emptyset$, our boundary condition
is the Dirichlet one. Here we will deal with $m\in\{0,1\}$: when $m=0$ we deal with the wave equation, while when $m=1$ we consider the Klein Gordon equation.

If $\Omega=\mathbb{R}^d$ with the flat metric, the solution  $u^m_{\mathbb{R}^d}(t,x)$ to
\eqref{KG} with data $u_0=\delta_{x_{0}}, u_1=0$, $x_0\in \mathbb{R}^d$ has an explicit representation formula 
 \[ 
 u^m_{\mathbb{R}^d}(t,x)=\frac{1}{(2\pi)^d}\int e^{i(x-x_0)\xi}\cos(t\sqrt{|\xi|^2+m^2})\,d\xi\,.
 \]
Fixed time dispersive estimates may be explicitly obtained for any $t\in \R$, with $\chi$ being a smooth cut-off function localizing around $1$: for the wave flow
\begin{gather}\label{dispWErd}
\|\chi(h\sqrt{-\Delta})u^{m=0}_{\mathbb{R}^d}(t,.)\|_{L^{\infty}(\mathbb{R}^d)}\leq
C(d) h^{-d}\min\Big\{1, (h/|t|)^{\frac{d-1}{2}}\Big\},\quad h>0\,,
\end{gather}
and for the Klein-Gordon flow (see \cite{KaOz}, \cite{MaNaTo} and the references therein)
\begin{gather}
\label{dispKGrd}
\|\chi(h\sqrt{-\Delta+1})u^{m=1}_{\mathbb{R}^d}(t,.)\|_{L^{\infty}(\mathbb{R}^d)}\leq C(d) h^{-d}\min\Big\{1, (h/|t|)^{\frac{d-1}{2}}, (h/t)^{\frac{d-1}{2}}\frac{1}{\sqrt{h |t|}}\Big\},\quad h>0\,.
\end{gather}
Alternatively, one may state these estimates for $h\in (0,1)$ (high frequency regime) and state similar estimates with $h=1$ and $\chi$ replaced by $\chi_{0}$, localized around $0$ (low frequency regime). 
On any boundary less Riemannian manifold $(\Omega,g)$ one may follow the same path, replacing the exact formula by a parametrix (which may be constructed locally within a small ball, thanks to finite speed of propagation). However, these techniques usually restrict results to be local in time: depending on other (global) geometrical properties of the underlying manifold, such local in time estimates may be combined with local energy decay estimates to produce global in time Strichartz estimates, provided there is no trapping (or some weak form of it). On a manifold with boundary, picturing light rays becomes much more complicated, and one may no longer think that one is slightly bending flat trajectories. There may be gliding rays (along a convex boundary) or grazing rays (tangential to a convex obstacle) or combinations of both, and as such, even constructing local in time parametrices becomes a difficult task.

Obtaining results for the case of manifolds {\it with} boundary has been surprisingly elusive and the problem had received considerable interest in recent years. Besides harmonic analysis tools, the starting point for these estimates is the knowledge of a parametrix for the linear flow, which turns out to be closely connected to propagation of singularities. It should be noted that parametrices have been available for the boundary value problem for a long time (see \cite{mesj78}, \cite{mesj82}, \cite{mel76}, \cite{meta}, \cite{meta85}, \cite{esk77}) as a crucial tool to establish propagation of singularities for the wave equation on domains. However, while efficient at proving that singularities travel along the (generalized) bicharacteristic flow, they do not seem strong enough to obtain dispersion, as they are not precise enough to capture separation of wave packets traveling with different initial directions.

Inside a strictly convex domain, a parametrix for the wave equation has been constructed in \cite{Annals} (and recently refined in \cite{ILP3}) ; it provides optimal decay estimates, with a $(|t|/h)^{1/4}$ loss compared to \eqref{dispWErd}, uniformly with respect to the distance of the source to the boundary, over a time length of constant size. This involves dealing with an arbitrarily large number of caustics and retain control of their order. In \cite{ILP3}, one considers \eqref{KG} with $m=0$ locally inside the Friedlander domain in dimension $d\geq 2$, defined as the half-space, $\Omega_d=\{(x,y)| x>0, y\in\mathbb{R}^{d-1}\}$ with the metric $g_F$ inherited from the Laplace operator
$\Delta_F=\partial^2_x+(1+x)\Delta_y$. The domain $(\Omega_d,g_F)$ is easily seen to model, locally, a strict convex, as a first order approximation of the unit disk $D(0,1)$ in polar coordinates : set $r=1-x/2$, $\theta=y$. The Friedlander domain is unbounded as we consider $y\in \R^{d-1}$: trajectories escape to spatial infinity, except for those that are shot vertically (no tangential component), and we may expect some sort of long-time estimates. Moreover, in the construction of \cite{ILP3}, the size of the time interval on which the parametrix is constructed does not seem to play a crucial role, unlike in \cite{Annals} where a restriction appears, related to how may wave reflections one may explicitly construct.

In the present work, we indeed obtain a global in time parametrix for waves (following the approach of \cite{ILP3} for large frequencies) and prove global in time optimal dispersive bounds for the linear flow.  We then do the same for the Klein-Gordon equation (for $m=1$), for which, at least in large frequency case, sharp dispersive bounds are equally obtained. The parametrix is obtained (for both $m\in\{0,1\}$) as a sum of wave packets corresponding to the successive reflections on the boundary : as the number of such waves that interfere at a given moment is time depending, for $t$ larger than a certain power of the frequency, the sum of their $L^{\infty}$ norms yields an important loss. In this case, we use the spectral properties of $\Delta_F$ to express the parametrix in terms of its eigenfunctions. This representation of the solution turns out to be particularly useful also in the low frequency case: that situation had not been dealt with in our previous works on the wave equation and has its own difficulties, including some surprising effects in the Klein-Gordon case.
\begin{thm} \label{thmDKG} 
Let $\psi_1\in C^{\infty}_0(\mathbb{R}^+_{*})$. There exists $C>0$ such that, uniformly in $a>0$, $h\in (0,1)$, $t\in \mathbb{R}$, the solution $u^m (t,x,a,y)$ to \eqref{KG} with $m\in\{0,1\}$, with $\Delta$ replaced by $\Delta_F$, $\Omega$ by $\Omega_d$ and with data $(u_0,u_1)=(\delta_{(a,0)},0)$, $\delta_{(a,0)}$ being any Dirac mass at distance $a$ from $\partial\Omega_d$, is such that 
\begin{equation}\label{dispomega}
|\psi_1(h\sqrt{-\Delta_F}))u^m(t,x,a,y)|\leq \frac{C}{h^{d}}\min\Big\{1,\Big(\frac{h}{|t|}\Big)^{\frac{d-2}{2}+\frac 14}\Big\}\,.
\end{equation}
Let $\phi\in C^{\infty}_0((-2,2))$ equal to $1$ on $[0,\frac 32]$. There exists $C_{0}$, such that, uniformly in $a>0$, $t\in\mathbb{R}^*$ 
\begin{equation}\label{dispomegawave}
|\phi(\sqrt{-\Delta_F})u^{m=0}(t,x,a,y)|\leq C_0\min\Big\{1,\frac{1}{|t|^{\frac{d-1}{2}}}\Big\}\,.
\end{equation}
There exist a constant $C_{1}$, such that %
 uniformly in $a>0$, $|t|>1$
\begin{equation}\label{dispomegakg}
|\phi(\sqrt{-\Delta_F})u^{m=1}(t,x,a,y)|\leq C_1\min\Big\{1,\frac{1}{|t|^{\frac{d-2}{2}+\frac 13}}\Big\}\,.
\end{equation}
\end{thm}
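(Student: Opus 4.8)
The plan is to deduce \eqref{dispomegakg} from the spectral resolution of $-\Delta_F$, which replaces the wave‑packet parametrix once the number of reflections at time $t$ is too large to be summed in $L^\infty$. A Fourier transform in $y$ (dual variable $\eta\in\R^{d-1}$) conjugates $-\Delta_F$ to $-\partial_x^2+(1+x)|\eta|^2$ on $L^2((0,\infty))$ with the Dirichlet condition, whose spectrum is $\{\lambda_k(\eta)=|\eta|^2+\omega_k|\eta|^{4/3}\}_{k\ge1}$ ($-\omega_k$ the zeros of $\mathrm{Ai}$), with $L^2$‑normalised eigenfunctions $e_k(x,\eta)=|\eta|^{1/3}\mathrm{Ai}(|\eta|^{2/3}x-\omega_k)/|\mathrm{Ai}'(-\omega_k)|$. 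Hence $\phi(\sqrt{-\Delta_F})u^{m=1}=\sum_{k\ge1}v_k$, where
\[
v_k(t,x,a,y)=\frac1{(2\pi)^{d-1}}\int_{\R^{d-1}}e^{iy\cdot\eta}\,\phi\big(\sqrt{\lambda_k(\eta)}\big)\cos\!\big(t\sqrt{1+\lambda_k(\eta)}\big)\,e_k(x,\eta)\,e_k(a,\eta)\,d\eta .
\]
The cut‑off confines the integral to $\{\lambda_k(\eta)\le4\}\subset\{|\eta|\lesssim\omega_k^{-3/4}\}$. Using $|\mathrm{Ai}|\le\mathrm{Ai}(0)$ and $|\mathrm{Ai}'(-\omega_k)|\sim\omega_k^{1/4}$ one gets $\sup_{x,a}|e_k(x,\eta)e_k(a,\eta)|\lesssim\omega_k^{-1/2}|\eta|^{2/3}$, whence $\|v_k\|_\infty\lesssim\omega_k^{-(3d+1)/4}$; since $\omega_k\sim k^{2/3}$ the series $\sum_k\omega_k^{-(3d+1)/4}\sim\sum_k k^{-(3d+1)/6}$ converges for $d\ge2$, giving the bound by $C_1$. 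For $|t|>1$ the stated minimum equals $|t|^{-(d-2)/2-1/3}$ (the exponent is positive), so it remains to prove the decay.

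For the decay I would estimate each $v_k$ by stationary phase in $\eta$. Writing $\cos$ as a sum of exponentials and rescaling $\eta=\omega_k^{-3/4}\zeta$, one has
\[
\sqrt{1+\lambda_k(\omega_k^{-3/4}\rho)}=\sqrt{1+\rho^{4/3}+\omega_k^{-3/2}\rho^2}=:\Theta_{\e_k}(\rho),\qquad \e_k:=\omega_k^{-3/2}\in(0,\e_1],
\]
so that $v_k=\omega_k^{-(3d+1)/4}I_k$ with $I_k=\int_{|\zeta|\lesssim1}e^{i(\tilde y_k\cdot\zeta\mp t\,\Theta_{\e_k}(|\zeta|))}\tilde g_k(\zeta)\,d\zeta$, $\tilde y_k=\omega_k^{-3/4}y$ and $|\tilde g_k(\zeta)|\lesssim|\zeta|^{2/3}$ (the $\omega_k$‑powers are absorbed into $\tilde g_k$). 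The crucial geometric fact is that the radial profile $\Theta_\e$ possesses, for every $\e\in(0,\e_1]$, exactly one inflection point $\rho_\e^\ast\in(0,\infty)$: with $Q_\e:=\Theta_\e^2$ the equation $\Theta_\e''=0$ reduces, setting $s=\rho^{2/3}$, to $\e s^3+2s^2-9\e s-2=0$, which has a single positive root by Descartes' rule; and there $\Theta_\e'''(\rho_\e^\ast)=Q_\e'''(\rho_\e^\ast)/(2\sqrt{Q_\e})=-\tfrac{8}{27}(\rho_\e^\ast)^{-5/3}/(2\sqrt{Q_\e(\rho_\e^\ast)})\ne0$, a genuine fold (no cusp). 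Moreover $\rho_\e^\ast\to1$ as $\e\to0^+$, and $\rho_\e^\ast$, $|\Theta_\e'''(\rho_\e^\ast)|$, $\Theta_\e'(\rho_\e^\ast)$ all stay in fixed compact subsets of $(0,\infty)$ for $\e\in(0,\e_1]$. (For $m=0$ the analogous computation gives $2Q_\e Q_\e''-(Q_\e')^2=-\tfrac89\rho^{2/3}-\tfrac49\e\rho^{4/3}<0$ throughout: the wave radial profile is strictly concave, with no fold — which is exactly why the wave keeps the full $\tfrac{d-1}2$ in \eqref{dispomegawave} while Klein–Gordon loses $\tfrac16$.)

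A stationary/non‑stationary phase decomposition of $I_k$ in $\zeta\in\R^{d-1}$ then gives the claim: rapid decay when $|\tilde y_k|$ lies outside the range of $t\,\Theta_{\e_k}'$; ordinary stationary phase ($|t|^{-(d-1)/2}$) when the critical radius is bounded away from $\rho_{\e_k}^\ast$; and near $\rho_{\e_k}^\ast$ a uniform van der Corput estimate giving a radial factor $\lesssim|t|^{-1/3}$ (third derivative $\gtrsim1$), combined with the $d-2$ nondegenerate angular directions ($\Theta_{\e_k}'(\rho)/\rho\gtrsim1$ there), so $|I_k|\lesssim\min\{1,|t|^{-(d-2)/2-1/3}\}$ uniformly in $k$ and $y$. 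Summing the convergent series yields $|\phi(\sqrt{-\Delta_F})u^{m=1}(t,x,a,y)|\lesssim|t|^{-(d-2)/2-1/3}$ for $|t|>1$, i.e.\ \eqref{dispomegakg}.

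The main obstacle is the uniform‑in‑$k$ fold estimate together with the treatment of the Airy amplitude. One must check that $\rho_{\e_k}^\ast$ is the only degeneracy of the radial phase on $\{\lambda_k<4\}$, that the fold is nondegenerate with constants independent of $k$ (the finitely many $k$ with $\e_k$ not small are checked directly, the rest by perturbation off $\e=0$), and that $\rho_{\e_k}^\ast$ lies inside the cut‑off support for every $k$ (it does: $\lambda_k(r_k)\to1$ and is $<4$ for all $k$). Secondly, $e_k(x,\eta)e_k(a,\eta)$ oscillates in $\eta$ when $x$ or $a$ is comparable to the turning point $\omega_k|\eta|^{-2/3}$; to run the stationary phase one must insert the oscillatory (resp.\ transitional) asymptotics of $\mathrm{Ai}$ and absorb the resulting phases $\sim(x\pm a)|\zeta|^{2/3}$ into the total phase, which then carries the extra parameters $x\pm a$. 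One verifies that the worst decay over all these parameters is still the $|t|^{-1/3}$ fold — the competing regimes ($x+a$ large, or the few modes whose turning point meets the relevant $\eta$‑window) are better, consistently with finite speed of propagation in $x$ and the near‑flat dynamics far from the boundary. Tracking all powers of $\omega_k$ through the rescaling so that the residual series in $k$ converges — which it does precisely for $d\ge2$ — is the bookkeeping that closes the argument.
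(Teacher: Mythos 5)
Your proposal addresses only the third assertion of the theorem. The high-frequency estimate \eqref{dispomega} -- which is the main content of the paper and requires the parametrix as a sum over reflected waves, the sharp count of overlapping reflections (Proposition \ref{propcardN}), the switch to the gallery-mode/spectral representation when the number of overlapping waves grows with $t$, and the separate treatment of the transverse frequencies $|\theta|\sim 1/\tilde h\ll 1/h$ -- is not addressed at all, and \eqref{dispomegawave} is only touched through the (correct) remark that the wave radial profile has no inflection; the actual proof of the $|t|^{-(d-1)/2}$ bound still requires handling the regimes where the Airy factors are oscillatory or at their turning point and where the angular stationary phase fails (the paper's case $|t|\lesssim a$, Propositions~\ref{proplemapetit}--\ref{lemagrand}), which your sketch does not carry out. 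So as a proof of the stated theorem there is a decisive gap, independently of the quality of the Klein--Gordon part.

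Concerning \eqref{dispomegakg} itself, your mechanism is the right one and coincides with the paper's: after rescaling, the radial profile $\sqrt{1+\rho^{4/3}+\varepsilon\rho^2}$ has a single nondegenerate inflection near $\rho=1$ (this is exactly the vanishing of the bracket \eqref{seconderphikj} at the unique $k(j)\sim 2^{2j}$), Van der Corput gives $|t|^{-1/3}$ there, and the $d-2$ angular directions give $|t|^{-(d-2)/2}$; away from the fold one argues as for the wave. The genuine gap is the sentence ``one verifies that the worst decay over all these parameters is still the $|t|^{-1/3}$ fold''. Uniformity in $a$ is precisely where the work lies: when $|t|\lesssim a$ and the turning point $a|\eta|^{2/3}\sim\omega_k$ sits inside the frequency window, the derivative of the Airy phases (of size $\sim a$) can cancel the $t$-derivative of the radial phase at points where the angular parameter $|y|\,|\eta|$ is small, so neither the angular $|t|^{-(d-2)/2}$ gain nor a clean third-derivative lower bound $\gtrsim |t|$ is available per mode, and a per-$k$ uniform bound $|I_k|\lesssim |t|^{-(d-2)/2-1/3}$ cannot be obtained this way (a crude count in your normalization -- trivial bound on $I_k$, turning-point constraint $\omega_k\gtrsim t^{2/3}$, summation of the prefactors -- misses the target by a factor $t^{1/6}$). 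The paper closes this regime differently: it abandons stationary phase there, uses the decay of the Airy amplitudes together with Cauchy--Schwarz over $k$ and the square-sum Lemma~\ref{lemsob} at fixed dyadic frequency $|\theta|\sim 2^{-j}$, and converts the constraint $|t|\lesssim a\lesssim 2^{2j_0/3}$, $j_0<3(j+2)$ into time decay (see \eqref{tpetitagrand} and \eqref{eq:uhterWGM1chiSFtgsumj}); the gain from summing $Ai^2$ over $k$ before integrating is exactly what your mode-by-mode strategy forgoes. Without an argument of this type (or a substitute), the uniform-in-$a$ claim, and hence \eqref{dispomegakg}, is not established.
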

Let us comment on these estimates: as far as dispersion is concerned, \eqref{dispomega} is the extension to large times of the main dispersion estimate in \cite{Annals}, and we know it to be optimal (already for small times, due to the presence of swallowtail singularities in the wave front). Note that, unlike in the $\R^{d}$ case where the case $m=1$ improves for $|t|>1/h$, here we obtain the same decay regardless of the presence of a mass. This relates to how we compensate for overlapping waves for large times: even for the wave equation, one has to switch to the gallery modes and use the spectral sum, taking advantage of the fact that gallery modes satisfy a suitably modified wave equation in $\R^{d-1}$, where $-\Delta_{y}$ would be replaced by $-\Delta_{y}+c |\Delta_{y}|^{2/3}$. In estimating time decay, the nonlocal operator plays the role of the mass (it provides more curvature for the characteristic set) and one gets the Klein-Gordon decay in $\R^{d-1}$. Hence, when adding a mass term, one gets $-\Delta_{y}+c |\Delta_{y}|^{2/3}+1$ and one does not get additional decay.

For low frequencies, \eqref{dispomegawave} tells us that the long time dispersion for the wave equation is the same as in $\R^{d}$. This actually holds on more generic non trapping exterior domains. However, \eqref{dispomegakg} exhibits a loss compared to \eqref{dispomegawave}, and this new effect appears to be strongly connected to our domain. In fact, both this loss and the absence of gain for $|t|>1/h$ in the large frequency range may be informally related to the same operator that was introduced above: $-\Delta_{y}+c |\Delta_{y}|^{2/3}+1$. If one restricts to the 2D case, e.g. $y\in \R$, then one has to deal with the following oscillatory integral, where $c>0$,
$$
v(z,t)=\int e^{i t (z\eta -\sqrt{|\eta|^{2}+c |\eta|^{4/3}+m^2})} \psi_{1}(\eta)\,d\eta\,,
  $$
  for which one may check that there is at least a degenerate critical point of order two if $m=1$ while there is none if $m=0$. In fact, the second derivative of the phase does not depend on $z$, it does have a zero $\eta_{0}$ and then one may chose $z$ so that the first derivative vanishes as well for that $\eta_{0}$. Interestingly enough, this effect occurs around $\eta\sim 1$ (and with $c\sim \omega_{1}$, the first zero of $\mathrm{Ai}(-\cdot)$), e.g. for a moderately transverse direction, and not because of a very vertical direction. As such, one may see this effect arise in more realistic wave guide type  domains and we believe it to be of interest.

Finally, one can immediately deduce a suitable set of (long time) Strichartz estimates in our setting, which are exactly the long time version of the ones stated in \cite{Annals}, for both wave and Klein-Gordon equations.

In the remaining of the paper, $A\lesssim B$ means that there exists a constant $C$ such that $A\leq CB$ and this constant may change from line to line but is independent of all parameters. It will be explicit when (very occasionally) needed. Similarly, $A\sim B$ means both $A\lesssim B$ and $B\lesssim A$.

\section{The half-wave propagator: spectral analysis and parametrix construction}
This part follows closely \cite[Section 2]{ILP3} : we denote $Ai$ the standard Airy function and we define
\begin{equation}
  \label{eq:Apm}
  A_\pm(z)=e^{\mp i\pi/3} Ai(e^{\mp i\pi/3} z)=\Psi(e^{\mp i\pi/3} z)e^{\mp\frac 23 i z^{3/2} }\,,\,\,\text{ for } \,
  z\in \mathbb{C}\,,
\end{equation}
then one checks that $Ai(-z)=A_+(z)+A_-(z)$. 
We have $\Psi(z)\simeq z^{-1/4}\sum_{j=0}^{\infty} a_j z^{-3j/2}$,  $a_0=\frac{1}{4\pi^{3/2}}$.
\begin{lemma}\label{lemL}(see \cite[Lemma 1]{ilpCE})
Define 
 $ L(\omega)=\pi+i\log \frac{A_-(\omega)}{A_+(\omega)} \,,\,\,\text{
    for }\, \omega \in \R\,$, 
then $L$ is real analytic and strictly increasing. We also
have
\begin{equation}
  \label{eq:propL}
  L(0)=\pi/3\,,\,\,\lim_{\omega\rightarrow -\infty} L(\omega)=0\,,\,\,
  L(\omega)=\frac 4 3 \omega^{\frac 3 2}+\frac{\pi}{2}-B(\omega^{\frac 3
    2})\,,\,\,\text{ for } \,\omega\geq 1\,,
\end{equation}
with $B(u)\simeq \sum_{k=1}^\infty b_k u^{-k}$, $b_k\in\R$ and $b_1=\frac{5}{4\times 2^2}> 0\,$.
Moreover, $L'(\omega_k)=2\pi \int_0^\infty Ai^2(x-\omega_k) \,dx\,$,
where here and thereafter, $\{-\omega_k\}_{k\geq 1}$ denote the zeros of the Airy function in decreasing order.
\end{lemma}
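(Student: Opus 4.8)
\textbf{Proof proposal for Lemma \ref{lemL}.}

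The plan is to derive everything from the known properties of the Airy function together with the factorization \eqref{eq:Apm}. First, recall that $Ai$ has only real, negative zeros $\{-\omega_k\}_{k\geq 1}$, so for $\omega\in\R$ neither $A_+(\omega)=e^{-i\pi/3}Ai(e^{-i\pi/3}\omega)$ nor $A_-(\omega)=e^{i\pi/3}Ai(e^{i\pi/3}\omega)$ vanishes (their arguments $e^{\mp i\pi/3}\omega$ are never real-negative for real $\omega\neq0$, and at $\omega=0$ one has $Ai(0)\neq0$); moreover $\overline{A_+(\omega)}=A_-(\omega)$ for real $\omega$ since $Ai$ has real Taylor coefficients. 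Hence $A_-(\omega)/A_+(\omega)$ is a smooth, nonvanishing, unimodular function of $\omega\in\R$, so $\log\frac{A_-(\omega)}{A_+(\omega)}$ is purely imaginary and $L(\omega)=\pi+i\log\frac{A_-}{A_+}$ is real-valued; real analyticity follows because $Ai$ is entire and $A_\pm$ are nonvanishing on $\R$, so the logarithm may be taken analytically along $\R$. For the value at $0$: $Ai(0)=3^{-2/3}/\Gamma(2/3)$ is real positive, so $A_\pm(0)=e^{\mp i\pi/3}Ai(0)$ and $A_-(0)/A_+(0)=e^{2i\pi/3}$, giving $i\log(e^{2i\pi/3})=i\cdot\frac{2i\pi}{3}=-\frac{2\pi}{3}$, whence $L(0)=\pi-\frac{2\pi}{3}=\frac{\pi}{3}$.

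Next I would establish monotonicity and the limit at $-\infty$. Writing $L(\omega)=\pi+i(\log A_-(\omega)-\log A_+(\omega))$ and differentiating, $L'(\omega)=i\big(\tfrac{A_-'}{A_-}-\tfrac{A_+'}{A_+}\big)(\omega)$. Using the Wronskian-type identity: since $A_\pm$ both solve $w''=zw$ (up to the rotation), the combination $A_+A_-'-A_-A_+'$ is constant, and it can be computed from the asymptotics or at $\omega=0$; together with $\overline{A_+}=A_-$ on $\R$ this lets one rewrite $L'(\omega)$ as a positive multiple of $|A_+(\omega)|^{-2}$ times that constant, forcing $L'>0$. For the limit at $-\infty$, insert the exponentially decaying asymptotics of $Ai$ on the positive axis (equivalently the oscillatory-to-decaying transition encoded in $A_\pm$) to see $A_-(\omega)/A_+(\omega)\to1$ as $\omega\to-\infty$ with the branch of the logarithm tending to $-\pi i$, so $L(\omega)\to0$. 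The cleanest route to the identity $L'(\omega_k)=2\pi\int_0^\infty Ai^2(x-\omega_k)\,dx$ is to differentiate $L$ through the representation $L'(\omega)=c\,|A_+(\omega)|^{-2}$ and then recognize $|A_+(\omega)|^{-2}$ at $\omega=\omega_k$ via the classical Airy integral $\int_0^\infty Ai^2(x-\omega)\,dx$; alternatively, differentiate the eigenvalue relation $Ai(-\omega_k)=0$ together with the stationary phase / energy identity for the Airy operator on the half-line, which produces exactly the $L^2$-normalization integral.

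For the asymptotic expansion when $\omega\geq1$, I would use \eqref{eq:Apm}: $A_\pm(z)=\Psi(e^{\mp i\pi/3}z)e^{\mp\frac23 iz^{3/2}}$ with $\Psi(z)\simeq z^{-1/4}\sum_j a_jz^{-3j/2}$. Then $\frac{A_-(\omega)}{A_+(\omega)}=\frac{\Psi(e^{i\pi/3}\omega)}{\Psi(e^{-i\pi/3}\omega)}\,e^{-\frac43 i\omega^{3/2}}$, and taking $i\log$ gives $L(\omega)=\pi+\frac43\omega^{3/2}+i\log\frac{\Psi(e^{i\pi/3}\omega)}{\Psi(e^{-i\pi/3}\omega)}$. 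The prefactor ratio is $\overline{\Psi(e^{-i\pi/3}\omega)}/\Psi(e^{-i\pi/3}\omega)$ (a phase), whose $i\log$ expands in a series in $\omega^{-3/2}$; matching the leading constant term against the known value forces the $-\pi/2$ correction (so that the expansion is consistent with $L$ being increasing and with $L(0)=\pi/3$), and collecting the series yields $L(\omega)=\frac43\omega^{3/2}+\frac{\pi}{2}-B(\omega^{3/2})$ with $B(u)\simeq\sum_k b_ku^{-k}$; the coefficient $b_1=\frac{5}{48}$ comes from the first correction term $a_1/a_0$ in $\Psi$. The main obstacle is bookkeeping the branch of the complex logarithm consistently across the whole real line — ensuring the single analytic branch that gives $L(0)=\pi/3$ also yields $L\to0$ at $-\infty$ and the stated $+\frac{\pi}{2}$ (not $-\frac{3\pi}{2}$, etc.) in the large-$\omega$ expansion — and extracting the sign $b_1>0$; this is where I would be most careful, pinning the branch by continuity from a single reference point and cross-checking with the monotonicity already proved.
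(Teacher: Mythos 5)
You should first note that this paper does not actually prove Lemma \ref{lemL}: it is imported verbatim from \cite[Lemma 1]{ilpCE}, so there is no in-paper argument to compare against. Your architecture (conjugate symmetry $\overline{A_+}=A_-$ on $\R$ plus nonvanishing of $A_\pm$ for reality and analyticity; direct evaluation at $0$; a Wronskian identity for monotonicity and for $L'(\omega_k)$; the asymptotics \eqref{eq:Apm} for large $\omega$) is the standard and correct route, and the Wronskian step genuinely closes: $A_\pm$ both solve $w''+zw=0$, the constant $W=A_+A_-'-A_-A_+'$ equals $-i/(2\pi)$ (e.g.\ from $W[Ai,Bi]=1/\pi$), hence $L'=iW/(A_+A_-)=\frac{1}{2\pi|A_+|^2}>0$; at $\omega=\omega_k$ one has $A_-=-A_+$ and $A_+'+A_-'=-Ai'(-\omega_k)$, so $W=-A_+(\omega_k)Ai'(-\omega_k)$, whence $L'(\omega_k)=2\pi Ai'(-\omega_k)^2=2\pi\int_0^\infty Ai^2(x-\omega_k)\,dx$ by the classical primitive $\int_0^\infty Ai^2(x+t)\,dx=Ai'(t)^2-tAi(t)^2$. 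You should make this last chain explicit rather than gesturing at it.

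There are, however, concrete errors. For the limit at $-\infty$ you assert $A_-/A_+\to 1$ with $\log\to-\pi i$; in fact $Ai(-\omega)=A_+(\omega)+A_-(\omega)$ decays exponentially as $\omega\to-\infty$ while each $|A_\pm(\omega)|\sim c|\omega|^{-1/4}$, so $A_-/A_+\to-1$, and the continuous branch started at $\log(e^{2i\pi/3})=2i\pi/3$ at $\omega=0$ increases to $+i\pi$, giving $i\log\to-\pi$ and $L\to 0$; the values you quote would give $L\to2\pi$. For $\omega\geq1$, the exponential factor of $A_-/A_+$ read off from \eqref{eq:Apm} is $e^{+\frac43i\omega^{3/2}}$, not $e^{-\frac43i\omega^{3/2}}$ as you write, and the naive $i\log$ of either is only correct modulo $2\pi\Z$ — the argument of $A_-/A_+$ winds infinitely many times, so this step (which is exactly what produces the sign of $\frac43\omega^{3/2}$ and the constant $\frac\pi2$) cannot be settled by "matching against $L(0)$" alone. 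A clean repair is to bypass the logarithm entirely: integrate the exact identity $L'=\frac{1}{2\pi|A_+|^2}$, whose full asymptotic expansion in powers of $\omega^{-3/2}$ with leading term $2\omega^{1/2}$ follows from $\Psi$, and fix the integration constant using $L(\omega_k)\in2\pi\Z$ (immediate from $A_-(\omega_k)=-A_+(\omega_k)$ and monotonicity) together with the classical asymptotics of the Airy zeros $\omega_k$. Finally, your value $b_1=\tfrac{5}{48}$ does not match the lemma's $b_1=\tfrac{5}{4\times2^2}$; only the sign of $b_1$ is used downstream, but the discrepancy signals that the coefficient extraction has not actually been carried out.
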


We briefly recall a Poisson summation formula that will be crucial to construct a parametrix :  \begin{lemma} (see \cite{ilpCE})
  In $\mathcal{D}'(\R_\omega)$, one has
       $ \sum_{N\in \Z} e^{-i NL(\omega)}= 2\pi \sum_{k\in \N^*} \frac 1
    {L'(\omega_k)} \delta(\omega-\omega_k)\,$.
Hence, for $\phi(\omega)\in C_{0}^{\infty}$, 
  \begin{equation}
    \label{eq:AiryPoissonBis}
        \sum_{N\in \Z} \int e^{-i NL(\omega)} \phi(\omega)\,d\omega = 2\pi \sum_{k\in \N^*} \frac 1
    {L'(\omega_k)} \phi(\omega_k)\,.
  \end{equation}
\end{lemma}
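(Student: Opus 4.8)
The plan is to reduce the asserted identity to the classical Poisson summation formula through the change of variables $\theta=L(\omega)$. By Lemma~\ref{lemL}, $L$ is real analytic and strictly increasing on $\R$, with $\lim_{\omega\to-\infty}L(\omega)=0$ and $L(\omega)=\tfrac43\omega^{3/2}+\tfrac\pi2-B(\omega^{3/2})\to+\infty$ as $\omega\to+\infty$; hence $L:\R\to(0,+\infty)$ is a real-analytic diffeomorphism, and I write $\ell=L^{-1}:(0,+\infty)\to\R$, so that $\ell'=1/(L'\circ\ell)>0$. First I would record that, straight from the definition of $L$ (using $e^{-i\pi}=-1$ and $e^{-i\cdot i\log z}=z$), one has $e^{-iL(\omega)}=-A_-(\omega)/A_+(\omega)$; since $Ai(-z)=A_+(z)+A_-(z)$ and $A_+$ does not vanish on $\R$, this gives $Ai(-\omega)=0\iff A_-(\omega)=-A_+(\omega)\iff e^{-iL(\omega)}=1\iff L(\omega)\in 2\pi\Z$. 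As $L$ takes values in $(0,+\infty)$ and is increasing, the solutions of $L(\omega)\in 2\pi\Z$ are exactly $\ell(2\pi k)$, $k\in\N^*$, in increasing order; comparing with the definition of $\{-\omega_k\}_{k\ge1}$ as the zeros of $Ai$ in decreasing order identifies $\omega_k=\ell(2\pi k)$, i.e.\ $L(\omega_k)=2\pi k$.

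Next, fix $\phi\in C_0^\infty(\R_\omega)$ and, for each $N\in\Z$, substitute $\omega=\ell(\theta)$ in $\int_\R e^{-iNL(\omega)}\phi(\omega)\,d\omega$; since $\omega=\ell(\theta)$ ranges over $\R$ as $\theta$ ranges over $(0,+\infty)$, this becomes $\int_0^\infty e^{-iN\theta}\psi(\theta)\,d\theta$ with $\psi(\theta):=\phi(\ell(\theta))\,\ell'(\theta)$. Because $\operatorname{supp}\phi$ is compact in $\R$ and $L$ maps compact sets to compact subsets of $(0,+\infty)$, the function $\psi$ has compact support contained in $(0,+\infty)$, so extending it by $0$ to all of $\R$ yields $\psi\in C_0^\infty(\R)$; in particular $\widehat\psi(N)=\int_\R e^{-iN\theta}\psi(\theta)\,d\theta$ is rapidly decreasing, which simultaneously shows that the sum over $N$ converges absolutely.

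Now I would invoke the classical Poisson summation formula in the normalization $\sum_{N\in\Z}\widehat\psi(N)=2\pi\sum_{m\in\Z}\psi(2\pi m)$ (proved as usual by expanding the $2\pi$-periodization $\sum_m\psi(\cdot+2\pi m)$ in Fourier series and evaluating at $0$). Since $\psi$ is supported in $(0,+\infty)$, only the indices $m\ge1$ survive on the right-hand side, and there $\psi(2\pi m)=\phi(\ell(2\pi m))\,\ell'(2\pi m)=\phi(\omega_m)/L'(\omega_m)$, using $\ell'=1/(L'\circ\ell)$ and $\ell(2\pi m)=\omega_m$ from the first step. Hence $\sum_{N\in\Z}\int e^{-iNL(\omega)}\phi(\omega)\,d\omega=2\pi\sum_{k\in\N^*}\phi(\omega_k)/L'(\omega_k)$, which is exactly \eqref{eq:AiryPoissonBis}; as $\phi\in C_0^\infty$ was arbitrary, this is precisely the claimed distributional identity $\sum_{N\in\Z}e^{-iNL(\omega)}=2\pi\sum_{k\in\N^*}\frac1{L'(\omega_k)}\delta(\omega-\omega_k)$ in $\mathcal D'(\R_\omega)$.

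The argument is essentially bookkeeping once the substitution is in place; the two points deserving care are the identification $L(\omega_k)=2\pi k$ (choosing the branch of $\log$ consistently with the fact, from Lemma~\ref{lemL}, that $L$ is real-valued, increasing, and $L(0)=\pi/3\in(0,2\pi)$), and the verification that $\psi$ extended by zero across $\theta=0$ is genuinely smooth — equivalently, that the endpoint $\theta=0$ of the range of $L$ produces no boundary term — which holds because $\operatorname{supp}\phi$ is compact in $\R$, forcing $\operatorname{supp}\psi$ to be a compact subset of the open half-line $(0,+\infty)$.
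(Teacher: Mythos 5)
Your proof is correct and is essentially the argument from \cite{ilpCE} that the paper relies on for this lemma: the identification $L(\omega_k)=2\pi k$ via $e^{-iL(\omega)}=-A_-(\omega)/A_+(\omega)$ and $Ai(-\omega)=A_+(\omega)+A_-(\omega)$, followed by the substitution $\theta=L(\omega)$, which reduces the claim to the classical Poisson summation formula. The two delicate points you flag (the normalization of Poisson summation, and the fact that $\psi=\phi(\ell(\cdot))\,\ell'$ is supported in a compact subset of the open half-line $(0,+\infty)$, so no boundary term arises at $\theta=0$) are both handled correctly.
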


\subsection{Spectral analysis of the Friedlander model}\label{sectspectralloc}

Let $\Omega_d$ be the half-space $\{(x,y)\in\mathbb{R}^d|, x>0,y\in\mathbb{R}^{d-1}\}$ and consider the operator
$\Delta_F=\partial^{2}_{x}+(1+x)\Delta_{y}$ on $\Omega_d$ with Dirichlet boundary condition. After a Fourier transform in the $y$ variable, the operator $-\Delta_{F}$ becomes
$-\partial^2_x+(1+x)|\theta|^2$. For any $|\theta|\neq 0$, this is a positive self-adjoint operator 
on $L^2(\mathbb{R}_+)$, with compact resolvent. 

\begin{lemma}\label{lemorthog} (\cite[Lemma 2]{ilpCE})
There exist orthonormal eigenfunctions $\{e_k(x,\theta)\}_{k\geq 0}$ with their corresponding eigenvalues $\lambda_k(\theta)=|\theta|^2+\omega_k|\theta|^{4/3}$, which form a Hilbert basis of $L^{2}(\mathbb{R}_{+})$.  These eigenfunctions have an explicit form
\begin{equation}\label{eig_k}
 e_k(x,\theta)=\frac{\sqrt{2\pi}|\theta|^{1/3}}{\sqrt{L'(\omega_k)}}
Ai\Big(|\theta|^{2/3}x-\omega_k\Big),
\end{equation}
where $L'(\omega_k)$ is given in Lemma \eqref{lemL}, which yields
$\|e_k(.,\theta)\|_{L^2(\mathbb{R}_+)}=1$. 
\end{lemma}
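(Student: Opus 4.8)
The plan is to solve the eigenvalue ODE explicitly and then invoke abstract spectral theory for completeness (this is essentially the argument of \cite[Lemma 2]{ilpCE}). Fix $\theta$ with $|\theta|\neq 0$ and write $P_\theta=-\partial^2_x+(1+x)|\theta|^2$ on $L^2(\mathbb{R}_+)$ with Dirichlet condition at $x=0$. The eigenvalue equation $P_\theta e=\lambda e$ reads $e''=\big((1+x)|\theta|^2-\lambda\big)e$; I rewrite the coefficient as $(1+x)|\theta|^2-\lambda=|\theta|^{4/3}\big(|\theta|^{2/3}x-\omega\big)$, where $\omega:=(\lambda-|\theta|^2)|\theta|^{-4/3}$, and substitute $s=|\theta|^{2/3}x-\omega$, which turns the equation into Airy's equation $\partial^2_s e=s\,e$. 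Its solution space is spanned by $Ai(s)$ and $Bi(s)$; since $Bi$ grows exponentially as $s\to+\infty$ while $x\mapsto Ai(|\theta|^{2/3}x-\omega)$ is square-integrable near $+\infty$, membership in $L^2(\mathbb{R}_+)$ forces the eigenfunction to be a multiple of $Ai(|\theta|^{2/3}x-\omega)$.

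Next I would impose the boundary condition: $e(0)=0$ becomes $Ai(-\omega)=0$, i.e. $\omega=\omega_k$ for some index $k$ in the notation of Lemma \ref{lemL}, and unwinding the definition of $\omega$ gives $\lambda=\lambda_k(\theta)=|\theta|^2+\omega_k|\theta|^{4/3}$. Conversely each such $\lambda_k(\theta)$ is an eigenvalue with one-dimensional eigenspace spanned by $Ai(|\theta|^{2/3}x-\omega_k)$. For the normalization, the change of variables $v=|\theta|^{2/3}x-\omega_k$ gives
\[
\int_0^\infty Ai\big(|\theta|^{2/3}x-\omega_k\big)^2\,dx=|\theta|^{-2/3}\int_0^\infty Ai(v-\omega_k)^2\,dv=\frac{L'(\omega_k)}{2\pi}\,|\theta|^{-2/3},
\]
using the last identity of Lemma \ref{lemL}; hence the prefactor $\sqrt{2\pi}|\theta|^{1/3}/\sqrt{L'(\omega_k)}$ in \eqref{eig_k} makes $\|e_k(\cdot,\theta)\|_{L^2(\mathbb{R}_+)}=1$. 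Orthogonality of the $e_k$ for distinct $k$ is automatic, since $P_\theta$ is symmetric and the $\lambda_k(\theta)$ are pairwise distinct (the $\omega_k$ are).

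It remains to check that $\{e_k(\cdot,\theta)\}_{k\geq 0}$ is a Hilbert basis, and this is the one point requiring a little care. Since $P_\theta$ is self-adjoint and positive with compact resolvent (the confining potential $(1+x)|\theta|^2\to+\infty$), the spectral theorem furnishes an orthonormal basis of $L^2(\mathbb{R}_+)$ made of eigenfunctions of $P_\theta$, with spectrum a discrete sequence accumulating only at $+\infty$. The analysis above shows every eigenvalue equals some $\lambda_k(\theta)$ and every eigenspace is one-dimensional, so the normalized functions $e_k(\cdot,\theta)$ must exhaust such a basis. I expect the only genuinely delicate step is precisely this completeness/simplicity argument — making sure the explicitly exhibited eigenpairs constitute the full spectral decomposition rather than a proper sub-family — which is settled by the compact-resolvent reasoning (equivalently, by classical singular Sturm–Liouville theory for the potential $(1+x)|\theta|^2$); everything else is the elementary reduction to Airy's equation together with the normalization identity recorded in Lemma \ref{lemL}.
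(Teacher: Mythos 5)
Your argument is correct and is essentially the same route as the proof the paper delegates to the cited reference \cite{ilpCE}: reduce $-\partial_x^2+(1+x)|\theta|^2$ to Airy's equation after rescaling, discard $Bi$ by the $L^2$ condition, let the Dirichlet condition select the zeros $-\omega_k$ of $Ai$, obtain completeness from self-adjointness with compact resolvent, and normalize via $L'(\omega_k)=2\pi\int_0^\infty Ai^2(x-\omega_k)\,dx$ from Lemma \ref{lemL}. Only a cosmetic slip: the substitution you name is $v=|\theta|^{2/3}x-\omega_k$, while the integral you then display corresponds to $v=|\theta|^{2/3}x$ (equivalently a further shift by $\omega_k$); the resulting identity and prefactor are nonetheless correct.
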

In a classical way, for $x_0>0$, the Dirac distribution $\delta_{x=x_0}$ on $\mathbb{R}_+$ may be decomposed in terms of eigenfunctions $\{e_k\}_{k\geq 1}$ as follows
\begin{equation}\label{defdelta}
 \delta_{x=x_0}=\sum_{k\geq 1} e_k(x,\theta)e_k(x_0,\theta).
\end{equation}
This allows to obtain (at least formally) the Green function associated to the half-wave propagator for \eqref{KG} in $(0,\infty)\times \Omega_d$:
\begin{equation}\label{greenfct} 
G^{m,\pm}((t,x,y),(t_0,x_0,y_0))=\sum_{k\geq 1}
\int_{\mathbb{R}^{d-1}}e^{\pm i(t-t_0)\sqrt{m^2+\lambda_k(\theta)}}
e^{i<(y-y_0),\theta>}
e_k(x,\theta)e_k(x_0,\theta)d\theta\,.
\end{equation}
In the following we fix $+$ sign and write $G^{0}$ for the wave and $G^{1}$ for Klein-Gordon Green function.  \\

We will deal separately with the following situations : \\
\begin{itemize}
\item The high frequency case "$\sqrt{-\Delta_F}\sim 2^{j}$" for $j\in \mathbb{N}$, $j\geq 1$ ; this corresponds to $\sqrt{\lambda_k(\theta)}\sim 2^j$. The main situation is the "tangent" case $|\theta|\sim 2^j$,
which corresponds to tangent directions, when the number of reflections on the boundary is at its highest. Indeed, denoting $(\xi,\theta)$ the dual variables of $(x,y)$ in the cotangent space, we have 
\begin{equation}\label{lambda_k}
\lambda_k(\theta)=|\theta|^2+\omega_k|\theta|^{4/3},\quad \xi^2\simeq \omega_k|\theta|^{4/3}
\end{equation}
and when $\lambda_k(\theta)\sim 2^{2j}$ and $|\theta|^2\sim 2^{2j}$ then $(\xi/|\theta|)^2$ may be small and we deal with a large number of reflecting rays on the boundary and their limits, the gliding rays (the case when $(\xi/|\theta|)^2$ is bounded from below by a fixed constant corresponds to transverse rays, which may also reflect many times on the boundary but provide much better estimates). This is the (only) case dealt with in \cite{Annals}, \cite{ILP3} for the {\it local in time} wave equation in the Friedlander domain. Here we start by recalling the main steps to the parametrix construction from \cite{ILP3} and its main properties, and extend it to all $t$; we further adapt this construction to the Klein-Gordon case and proceed with fixed time decay bounds (first for the "tangent" part).

Let $\psi_{1}\in  C^{\infty}_0([\frac 12,\frac 32])$ be a smooth function valued in $[0,1]$ and equal to $1$ near $1$. As $
-\Delta_{F} ( e^{i<y,\theta>} e_{k}(x,\theta)) =\lambda_{k}(\theta) e^{i<y,\theta>} e_{k}(x,\theta)\,$, we  introduce the spectral cut-off $\psi_{1}(h\sqrt{-\Delta_F})$ in $G^m$, where $h\in (0,1/2]$ is a small parameter. %
Let $\psi\in C^{\infty}_0([\frac 12,\frac 32])$ such that $\psi_1(\xi)+\sum_{j\geq 1}\psi(2^j\xi)=1$. We define the "tangent" part of the Green function $G^m_h$ with $(x_0,y_0)=(a,0)$ as follows
\begin{equation}\label{greenfctHF} 
G^{\#,m}_{h}(t,x,a,y):=\sum_{k\geq 1}
\int_{\mathbb{R}^{d-1}}e^{i t\sqrt{m^2+\lambda_k(\theta)}}
e^{i<y,\theta>}
 \psi_1(h|\theta|)\psi_{1}(h\sqrt{\lambda_{k}(\theta)})
 e_k(x,\theta)e_k(a,\theta)d\theta\,.
\end{equation}
As remarked in \cite{ILP3}, the significant part of the sum over $k$ in \eqref{greenfctHF} becomes then a finite sum over $k\lesssim 1/h$, considering the asymptotic expansion of $\omega_{k}\sim k^{2/3}$ (and corresponds to initial angles $(\xi/|\theta|)^2\sim (\omega_k|\theta|^{-2/3})\sim \omega_kh^{2/3}\lesssim 1$, where the last inequality is due to \eqref{lambda_k} and the spectral cut-offs). Reducing the sum to $k\lesssim 1/h$ is equivalent to adding a spectral cut-off $\phi(x+h^2D_{x}^{2}/|\theta|^{2/3})$ in $G^{\#,m}_h$, where $\phi\in C^{\infty}_0((-2,2))$: as the operator
$-\partial^2_{x}+x|\theta|^2$ has the same eigenfunctions  $e_k(x,\theta)$ associated to the eigenvalues $\lambda_k(\theta)-|\theta|^2=\omega_k |\theta|^{4/3}$, then $(x+h^2D_{x}^{2}/|\theta|^2)e_{k}(x,\theta)=(\omega_k |\theta|^{4/3}/|\theta|^2) e_{k}(x,\theta)$ and this new localization operator is exactly associated by symbolic calculus to the cut-off $\phi(\omega_{k}/ |\theta|^{2/3})$, that we can introduce in \eqref{greenfctHF} without changing the contribution modulo $O(h^{\infty})$ terms.

It will be convenient to introduce a new, small parameter $\gamma$ satisfying $\sup{(a,h^{2/3})}\lesssim \gamma\lesssim 1$ and then split the (tangential part $G^{\#,m}_h$ of the) Green function into a dyadic sum $G^m_{h,\gamma}$ corresponding to a dyadic partition of unity supported for $\omega_k /|\theta|^{2/3}\sim \gamma \sim 2^j\sup{(a,h^{2/3})}\lesssim 1$.
Let $\psi_2(\rho):=\phi(\rho)-\phi(2\rho)$, then $\psi_2\in C^{\infty}_0([\frac 34,2])$ is equal to $1$ on $[1,\frac 32]$ and decompose $\phi$ as follows
\begin{equation}\label{partunitpsi2}
\phi(\cdot) %
=\phi_{\sup{(a,h^{2/3})}}(\cdot)+\sum_{\gamma=2^j \sup{(a,h^{2/3})}, 1\leq j<\log_2(1/\sup{(a,h^{2/3})})}\psi_2(\cdot/\gamma),
\end{equation}
which allows to write $G^{\#,m}_{h}=\sum_{\sup{(a,h^{2/3})}\leq \gamma\lesssim 1}G^m_{h,\gamma}$ where (rescaling the $\theta$ variable for later convenience) $G^m_{h,\gamma}$ takes the form
\begin{multline}\label{greenfctbis}
G^{m}_{h,\gamma}(t,x,a,y)  =  \sum_{k\geq 1}
\frac{1}{h^{d-1}} \int_{\mathbb{R}^{d-1}}e^{\pm it\sqrt{m^2+\lambda_k(\eta/h)}}
e^{\frac ih<y,\eta>}  \psi_1(|\eta|)\psi_{1}(h\sqrt{\lambda_{k}(\eta/h)})\\
{}\times\psi_{2}(h^{2/3}\omega_{k}/(\eta^{2/3}\gamma)) e_k(x,\eta/h)e_k(a,\eta/h)d\eta\,.
\end{multline}
\begin{rmq}
When $\gamma=\sup{(a,h^{2/3})}$, according to \eqref{partunitpsi2}, we should write $\phi_{\sup{(a,h^{2/3})}}$ instead of $\psi_2(\cdot/\sup{(a,h^{2/3})})$ in \eqref{greenfctbis}. However, for values $h^{2/3}\omega_k\lesssim \frac 12 \sup{(a,h^{2/3})}$, the corresponding Airy factors are exponentially decreasing and yield an irrelevant contribution; therefore writing $\phi_{\sup{(a,h^{2/3})}}$ or $\psi_2(\cdot/\sup{(a,h^{2/3})})$ yields the same contribution in $G^m_{h,\sup{(a,h^{2/3})}}$ modulo $O(h^{\infty})$. In fact, when $a<h^{2/3}$ is small, there are no $\omega_k$ satisfying $h^{2/3}\omega_k/|\eta|^{2/3}<\frac 34 h^{2/3}$ as $\omega_k\geq \omega_1\simeq 2.33$ and $|\eta|\in [\frac 12, \frac 32]$; on the other hand, when $a\gtrsim h^{2/3}$ and $h^{2/3}\omega_k/|\eta|^{2/3}\leq a/2$ then the Airy factor of $e_k(a,\eta/h)$ is exponentially decreasing (see \cite[Section 2.1.4.3]{AFbook}). In order to streamline notations, we use the same formula \eqref{greenfctbis} for each $G^m_{h,\gamma}$.  
\end{rmq}

\item When "$\sqrt{-\Delta_F}\sim 2^j$" and $0< |\theta|\leq 2^{j-1}$ then $\omega_k|\theta|^{4/3}\sim 2^{2j}$. According to \eqref{lambda_k}, this case corresponds to purely transverse directions (as $(\xi/|\theta|)^2\sim \omega_k/|\theta|^{2/3}\sim 2^{2j}/|\theta|^2\geq 4$) and hadn't been dealt with in the previous works \cite{Annals}, \cite{ILP3} (as, in bounded time, the situation $\xi^2\gtrsim 1$ corresponds to a bounded number of reflections and follows from \cite{blsmso08}). We define the "transverse" part of the Green function $G^m$ as $G^{\flat,m}_{h}:=\sum_{j\geq 1} G^{m}_{h,2^jh}$, where
\begin{equation}
\label{greenfctHFsmallertheta} 
G^{m}_{(h,\tilde h)}(t,x,a,y):=\sum_{k\geq 1}
\int_{\mathbb{R}^{d-1}}e^{\pm i t\sqrt{m^2+\lambda_k(\theta)}}
e^{i<y,\theta>}
 \psi(\tilde h|\theta|)\psi_{1}(h\sqrt{\lambda_{k}(\theta)})e_k(x,\theta)e_k(a,\theta)d\theta\,.
\end{equation}
Notice that in \eqref{greenfctHFsmallertheta} the sum over $k$ is finite as on the support of $\psi_1$ we have $\omega_k \lesssim \tilde h^{4/3}/h^2$. 
In this case we may consider large initial values $1\lesssim a\leq  4(\frac{\tilde h}{h})^2$ (as, for $a\geq 4(\frac{\tilde h}{h})^2$ the factor $e_k(a,\theta)$ yields $G^{m}_{(h,\tilde h)}(t,x,a,y)=O((\frac{\tilde h^{4/3}}{h^2})^{-\infty})=O(h^{\infty})$). After several suitable changes of variable, obtaining dispersive bounds for $G^m_{(h,\tilde h)}$ will reduce to uniform bounds for $G^m_{h^3/\tilde h^2,\gamma=1}$ (as in \eqref{greenfctbis} with $h^3/\tilde h^2$ instead of $h$ and $\gamma=1$). In some suitable sense, we reduce this case to the previous one by rescaling, which is consistent with the geometric picture of light rays: transverse rays for long time look the same (after dezooming) as tangential rays on a short time interval.

\item In the case of small frequencies "$|\sqrt{-\Delta_F}|\leq 2$ ", "$|\sqrt{-\Delta_y}|\leq 2$" we let $\phi\in C^{\infty}_0((-2,2))$ such that $\phi=1$ on $[0,\frac 32]$ and introduce the spectral cut-off $\phi(\sqrt{-\Delta_F})$ in $G^m$. We have to consider all possible situations $|\theta|\simeq 2^{-j}$ for $j\in \mathbb{N}$. As $(1-\phi(|\theta|))$ is supported for $|\theta|\geq 3/2$, then $(1-\phi(|\theta|))\phi(\sqrt{ \lambda_k(\theta)})=0$, and we can add $\phi(|\theta|)$ into the symbol of $\phi(\sqrt{-\Delta_F})G^m$. Let $\psi_2\in C^{\infty}_0([\frac 34,2])$ such that $\psi_2(\rho):=\phi(\rho)-\phi(2\rho)$ then $\sum_{j\in\mathbb{N}} \psi_2(2^j|\theta|)=\phi(|\theta|)$ and on the support of $\psi_2(2^j|\theta|)$ we have $|\theta|\sim 2^{-j}$ which allows to use again Lemma \ref{lemorthog} and \eqref{greenfct}. This situation hasn't been encountered in our previous works. Let $a>0$ and 
\begin{equation}
  \label{eq:uhterWGMSF}
  G^m_{j}(t,x,a,y):=\sum_{k\geq 1} \int e^{i \Big(y\theta+t\sqrt{m^2+\lambda_k(\theta)}\Big)}   \psi(2^{j}|\theta|)\phi(\sqrt{\lambda_k(\theta)})e_k(x,\theta)e_k(a,\theta)d\theta,
\end{equation}
then $\sum_j G^m_{j}$ is the solution to \eqref{KG} with data $(\phi(\sqrt{-\Delta_F})\delta_{(a,0)},0)$. 
In this case we will see new effects arise in the case of the Klein-Gordon equation, as mentioned in the introduction.
\end{itemize}
\subsubsection*{The spectral sum $G^{\#,m}_h$ in terms of reflected waves }\label{secparamHF} 

Using the Airy-Poisson formula \eqref{eq:AiryPoissonBis}, we obtain a parametrix, both as a "spectral" sum and its counterpart after Poisson summation. Let $G^m_{h,\gamma}$ as in \eqref{greenfctbis}, then, using \eqref{eq:AiryPoissonBis}, its Fourier transform in $y$, that we denote $\hat{G}^m_{h,\gamma}$, equals
\begin{multline}
\hat{G}^m_{h,\gamma}(t,x,a,\eta/h)= \frac 1 {2\pi} \sum_{N\in \Z} \int_{\R}e^{-i NL(\omega)} e^{i\frac t h |\eta| \sqrt{1+\omega (h/|\eta|)^{2/3}+m^2(h/|\eta|)^2}}
\chi_{1}(\omega)\psi_1(|\eta|)\\ \psi_{1}(|\eta|\sqrt{1+(h/|\eta|)^{2/3}\omega})\psi_{2}((h/|\eta|)^{2/3}\omega/\gamma)
{}\times \frac{|\eta|^{2/3}}{h^{2/3}}  Ai (x|\eta|^{2/3}/h^{2/3} -\omega) Ai(a|\eta|^{2/3}/h^{2/3}-\omega) d\omega\,.
\end{multline}
Here, $\chi_{1}(\omega)=1$ for $\omega>2$ and $\chi_{1}(\omega)=0$ for $\omega<1$, and obviously $\chi_{1}(\omega_{k})=1$ for all $k$, as $\omega_{1}>2$. At this point, as $\eta\in [\frac 12,\frac 32]$, we may drop also the $\psi_{1}(|\eta|\sqrt{1+(h/|\eta|)^{2/3}\omega})$ localization by support considerations (slightly changing any cut-off support if necessary).  
Recall that
\begin{equation}
  \label{eq:bis47}
  Ai(x|\eta|^{2/3}/h^{2/3}-\omega)=\frac{(|\eta|/h)^{1/3} }{2\pi} \int e^{\frac i h |\eta| (\frac{\sigma^{3}}{3}+\sigma(x-(h/|\eta|)^{2/3}\omega))} \,d\sigma\,.
\end{equation}
Rescaling $\alpha=(h/|\eta|)^{2/3} \omega$ yields
\begin{multline}\label{uhgamN}
    G^m_{h,\gamma}(t,x,a,y):=\frac 1 {(2\pi )^{3}h^{d+1}} \sum_{N\in \Z} \int_{\R^{d-1}}\int_{\R}\int_{\R^{2}} e^{\frac i h \Phi^m_{N,h,a}(t,x,y,\sigma,s,\alpha,\eta)}  |\eta|^{2} \psi_1(|\eta|)
\psi_{2}(\alpha/\gamma) \, ds d\sigma d\alpha d\eta\,,
\end{multline}
where we have set $\Phi^m_{N,a,h}=\Phi^m_{N,a,h}(t,x,y,\sigma,s,\alpha,\eta)$ with
\begin{equation}
  \label{eq:bis49}
    \Phi^m_{N,h,a}=y\eta+|\eta|\Big(\frac{\sigma^{3}} 3+\sigma(x-\alpha)+\frac {s^{3}} 3+s(a-\alpha)-N\frac{h}{|\eta|} L(|\eta|^{2/3} \alpha/h^{2/3})+t \sqrt{1+\alpha+m^2(h/|\eta|)^2}\Big)\,.
\end{equation}

\begin{rmq}
The critical points with respect to $s,\sigma$ of \eqref{eq:bis49} satisfy $\sigma^2=\alpha-x$, $s^2=\alpha-a$ and on the support of $\psi_2$ we have $\alpha\simeq \gamma$. Making the change of coordinates $\alpha=\gamma A$, $s=\sqrt{\gamma}S$, $\sigma=\sqrt{\gamma}\Upsilon$ (see Section \ref{sectang} for $\gamma\simeq a$ and Section \ref{sectransv} for $\gamma>8a$) transforms $G^m_{h,\gamma}$ into an integral with parameter $\lambda_{\gamma}:=\gamma^{3/2}/h$ : in order to apply stationary phase arguments, this parameter needs to be larger than a power $h^{-\varepsilon}$ for some $\varepsilon>0$ and therefore the parametrix \eqref{uhgamN} is useful only when $\gamma\gtrsim h^{2(1-\varepsilon)/3}$. If $a>h^{2(1-\varepsilon)/3}$ for some $\varepsilon>0$ this will always be the case, but when $a\leq h^{2(1-\varepsilon)/3}$ and $\max\{a,h^{2/3}\}\lesssim \gamma\leq h^{2(1-\varepsilon)/3}$ we cannot use \eqref{uhgamN} anymore. 
\end{rmq}
Before starting the proof of Theorem \ref{thmDKG} in the high frequency case, we show that this reduces to the two dimensional case. We first need to establish a propagation of singularities type result for $G^m_h$, that will be necessary in order to obtain dispersion estimates in the $d-2$ tangential variables. 
\begin{lemma}
\label{lemPofS}
Let $ G^m_h(t,x,a,y)= \sum_{\max\{h^{2/3},a\}\leq \gamma<1}G^m_{h,\gamma}(t,x,a,y)$. There exists $c_{0}$ such that
  \begin{equation}
    \label{eq:2bis}
  \sup_{x,y,t\in \mathcal{B}}  |  G^m_{h}(t,x,a,y)|\leq C h^{-d}  O(( h/ |t|)^{\infty})\,,\quad \forall |t|>h,
  \end{equation}
where $\mathcal{B}=\{ 0\leq x\lesssim a, |y|\leq c_{0} t, 0<h\leq |t| \}$.
\end{lemma}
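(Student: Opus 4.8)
The plan is to prove a non-stationary phase estimate for the oscillatory integral representation \eqref{uhgamN}--\eqref{eq:bis49}, exploiting the region $\mathcal{B}=\{0\leq x\lesssim a,\ |y|\leq c_0 t\}$ where the phase has no critical point in the tangential variable $\eta$. First I would reduce to a single piece $G^m_{h,\gamma}$ and, within it, fix $N$: the sum over $\gamma$ is $O(\log(1/h))$ terms, so it suffices to gain an arbitrary power of $h/|t|$ uniformly in $\gamma\in[\max\{h^{2/3},a\},1)$, and the sum over $N$ will be controlled because, as recalled after \eqref{greenfctHF}, the effective range is $N\lesssim 1/h$ (each reflection costs a definite amount of $\omega$, and $\omega_k\lesssim 1/h$ on the support of the cut-offs; more precisely $|N|\lesssim L(\omega_k)/\text{(gap)}\lesssim h^{-1}$), so a factor $(h/|t|)^{M}$ with $M$ large beats the polynomial loss in $N$ and $\gamma$.

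Next I would compute $\partial_\eta \Phi^m_{N,h,a}$. Writing $\Phi = y\eta + |\eta|\Psi$ where $\Psi = \frac{\sigma^3}{3}+\sigma(x-\alpha)+\frac{s^3}{3}+s(a-\alpha) - N\frac{h}{|\eta|}L(|\eta|^{2/3}\alpha/h^{2/3}) + t\sqrt{1+\alpha+m^2(h/|\eta|)^2}$, one differentiates in $\eta$ (in 2D; the reduction to 2D is announced just before the lemma). On the support of the integrand the variables satisfy $|\sigma|\lesssim\sqrt{\alpha}\lesssim\sqrt{\gamma}$, $|s|\lesssim\sqrt{\gamma}$, $\alpha\simeq\gamma\lesssim 1$, $|\eta|\sim 1$, and the $L$-term together with the $N$-sum is $O(1)$ after the cutoff truncation argument, while the remaining phase pieces involving $\sigma,s,\alpha$ are $O(\gamma^{3/2})=O(1)$. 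Therefore $\partial_\eta(|\eta|\Psi) = O(1)$ uniformly, whereas the dominant contribution is $\partial_\eta\big(|\eta| t\sqrt{1+\alpha+\cdots}\big)$, whose $t$-dependent part is $\sim t\cdot\sqrt{1+\alpha}\cdot(\text{sign }\eta) = O(t)$ — but crucially this is a single monotone term: combined with the $y\eta$ term it gives $\partial_\eta\Phi = y + t\,g(\alpha,\eta,h) + O(1)$ where $g\sim 1$ is bounded below, so for $|y|\leq c_0 t$ with $c_0$ small enough (smaller than the lower bound on $g$) we get $|\partial_\eta\Phi|\gtrsim |t|$ on the whole support. The key point is that $\sqrt{1+\alpha+m^2(h/|\eta|)^2}\geq 1$ is bounded away from zero regardless of $m$, so no critical point in $\eta$ can occur when $|y|$ is small compared to $|t|$ — this is exactly the "waves escaping to spatial infinity" mechanism.

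Then I would integrate by parts in $\eta$ repeatedly using the operator $\mathcal{L} = \frac{h}{i\,\partial_\eta\Phi}\,\partial_\eta$, which satisfies $\mathcal{L}(e^{i\Phi/h}) = e^{i\Phi/h}$. Each application produces a factor $h/|\partial_\eta\Phi|\lesssim h/|t|$, at the cost of differentiating the amplitude and the (bounded) derivatives of $\Phi$; one must check that all $\eta$-derivatives of $\partial_\eta\Phi$ are $O(t)$ as well (true, since the $t$-term is smooth in $\eta$ with all derivatives $O(t)$ and everything else is $O(1)$), so that $\partial_\eta^j(1/\partial_\eta\Phi)$ stays $\lesssim 1/|t|$ up to harmless constants. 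After $M$ integrations by parts we gain $(h/|t|)^M$; integrating the remaining amplitude over the compact $\eta$-support and the bounded $(\sigma,s,\alpha)$-supports, and restoring the prefactor $h^{-(d+1)}$ from \eqref{uhgamN} together with the bounded volume factors, gives $h^{-d}\cdot O((h/|t|)^M)$ per $(N,\gamma)$; summing over $|N|\lesssim 1/h$ and the $O(\log 1/h)$ values of $\gamma$ and choosing $M$ large yields \eqref{eq:2bis}. The main obstacle is the bookkeeping of the $N$- and $\gamma$-sums against the gain: one must make precise the claim that the cutoffs restrict $N$ to $|N|\lesssim h^{-1}$ (via the support of $\psi_2((h/|\eta|)^{2/3}\omega/\gamma)$ forcing $\omega\lesssim\gamma h^{-2/3}\lesssim h^{-2/3}$, hence $L(\omega)\lesssim h^{-1}$, and the oscillation in $N$ from $e^{-iNL(\omega)}$ giving decay in $N$ after an additional integration by parts in $\omega$, exactly as in the Airy–Poisson step), so that the geometric-type series in $N$ converges after extracting a single power of $h/|t|$; everything else is routine non-stationary phase.
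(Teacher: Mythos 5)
Your strategy (non-stationary phase in $\eta$ applied directly to \eqref{uhgamN}) is not the paper's argument -- the paper simply invokes the propagation-of-singularities proof of \cite[Lemma 3.2]{ILLP} and observes it extends to large time and to $m=1$ -- but more importantly your version of it has a genuine gap. The central claim, that in \eqref{eq:bis49} the term $-N h L(|\eta|^{2/3}\alpha/h^{2/3})$ contributes only $O(1)$ to $\partial_\eta\Phi^m_{N,h,a}$ so that $|\partial_\eta\Phi^m_{N,h,a}|\gtrsim |t|$ on the whole support, is false. Since $L(\omega)=\tfrac43\omega^{3/2}+\tfrac\pi2-B(\omega^{3/2})$, one has $N h L(|\eta|^{2/3}\alpha/h^{2/3})=\tfrac43 N|\eta|\alpha^{3/2}+O(Nh)$, so its $\eta$-derivative is $\simeq \tfrac43 N\alpha^{3/2}\sim N\gamma^{3/2}$ (this is exactly the $-\tfrac43 NA^{3/2}\eta$ term visible in the rescaled phase \eqref{PhiNagammadef}). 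The $N$ that survive Lemma \ref{lemmeNtpetit} satisfy $\sqrt\gamma\,|N|\lesssim t$, hence $N\gamma^{3/2}$ can be of size $t\gamma$, comparable to the main term $t\sqrt{1+\alpha}$ once $\gamma\sim 1$; worse, for $N$ merely inside the allowed window (whose constant Lemma \ref{lemmeNtpetit} does not specify) the two terms can cancel against $y$, so $\partial_\eta\Phi^m_{N,h,a}$ does vanish at points of the support for some admissible $(N,\gamma)$. Integration by parts in $\eta$ alone therefore cannot give \eqref{eq:2bis} uniformly over the contributing $N$. What is true is that the \emph{full} gradient has no zero in $\mathcal B$: the equations \eqref{critSig}--\eqref{criteta} pin $N\simeq T/(4\sqrt A\sqrt{1+\gamma A})$ through the $\alpha$-equation, and only then does the $\eta$-equation force $|y|\simeq t\sqrt{1+\gamma}$, outside $|y|\le c_0t$. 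So a correct oscillatory-integral proof must run a joint non-stationary-phase/critical-set analysis (splitting according to which of $\partial_\alpha$, $\partial_\eta$ dominates, and handling the Airy-type degeneracy in $s,\sigma$ where $\alpha\approx a$), which is precisely the extra work your write-up skips.

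Two further inaccuracies: the effective range of $N$ is not $|N|\lesssim 1/h$ -- the bound $k\lesssim 1/h$ concerns the eigenvalue index before Poisson summation, whereas after summation the reflection number satisfies $\sqrt\gamma\,|N|\lesssim t$ and grows without bound in $t$ (this is harmless for the final estimate, since the loss is polynomial in $t/h$, but your justification is wrong and it is exactly this unboundedness that makes the previous point bite). Also, the reduction to $d=2$ is stated in the paper \emph{after}, and as a consequence of, this very lemma, so your proof should be carried out for $\eta\in\R^{d-1}$ (a minor point: the radial derivative of the phase plays the same role).
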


\begin{proof}
A proof of this Lemma has been given in \cite[Lemma 3.2]{ILLP} in the case of the wave equation on a generic strictly convex domain, based on propagation of singularities type results. In \cite{ILLP}, the time is restricted to a bounded interval $h\leq |t|\leq T_0$ for some small $T_0$, which is the time interval considered in that paper. Same arguments apply in the case of the wave of Klein-Gordon equation in the Friedlander model domain, where the time can be taken large. 
\end{proof}

Using Lemma \ref{lemPofS} and the fact that our model domain is isotropic, we can integrate in the $d-2$ tangential variables $\eta/|\eta|$ and reduce the analysis to the two-dimensional case (by rotational invariance). As such, in the rest of the paper, as long as we deal with the high frequency case, we consider $d=2$. In the next two sections we consider only high frequencies. 

\section{The parametrix regime in 2D. The high frequency case }
\label{sec:parametrix-regime}
\subsection{Localizing waves for $\max\{a,h^{2/3(1-\varepsilon)}\}\lesssim \gamma \lesssim 1$}
\label{sec:local-waves}

In \cite{Annals}, it has been shown that, as long as $a\gtrsim h^{4/7}=h^{2(1-1/7)/3}$, only a finite number of integrals in $G^{m=0}_{h,a}$ may overlap ; for such $a$ and for $\gamma\gtrsim a$, it follows that at a fixed time $t$, the supremum of the sum in \eqref{uhgamN} is essentially bounded by the supremum of a finite number of waves that live at time $t$.  
Later on, in \cite{ILP3}, it has been shown that, when $h^{2/3(1-\epsilon)}\leq a\lesssim \gamma \leq h^{4/7}$, the number of waves that cross each other at a given $t$ in the sum \eqref{uhgamN} becomes unbounded even for small $t\lesssim 1$. Moreover, the number of $N$ with "significant contributions" had been (sharply) estimated which allowed to obtain refined bounds in this regime (better than in \cite{Annals}, where only the "spectral" version \eqref{greenfctbis} of $G^{m=0}_{h,a}$ was then available for small $a$). 
 
We claim that, although an important contribution comes from $m=1$, it doesn't exceed the one already obtained for $m=0$ : as a consequence, for both $m\in\{0,1\}$, we have to sum-up exactly the same number of terms in \eqref{uhgamN}. 
Assume (without loss of generality) $t>0$. Let $m\in\{0,1\}$, $d=2$, $\eta\in \mathbb{R}$, then $G^m_{h,\gamma}=\sum_N V^m_{N,\gamma}$ where we have set
\begin{equation}\label{defVNgamma}
V^m_{N,\gamma}(t,x,a,y):=\frac{1}{(2\pi)^3h^{3}}\int_{\R^{2}}\int_{\R^{2}} e^{\frac ih  \Phi^m_{N,a,h}(t,x,y,\sigma,s,\alpha,\eta)}  \eta^{2} \psi({\eta})  \psi_{2}(\alpha/\gamma) \, ds d\sigma  d\alpha d\eta\,.
\end{equation}

\begin{lemma}\label{lemmeNtpetit}(see \cite[Lemma 4]{ILP3})
At fixed $t>\sqrt{\gamma}$, $\sum_{\sqrt \gamma N \not\in O(t)} V^m_{N,\gamma}(t,\cdot)=O(h^{\infty})$.
\end{lemma}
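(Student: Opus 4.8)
The plan is to prove this by a non-stationary phase argument in the variables $(s,\sigma,\alpha)$ (or, equivalently, after the rescaling $\alpha=\gamma A$, $s=\sqrt\gamma S$, $\sigma=\sqrt\gamma\Upsilon$ described in the remark following \eqref{uhgamN}, in the rescaled variables), exploiting the fact that the phase $\Phi^m_{N,a,h}$ in \eqref{eq:bis49} carries the large parameter $\lambda_\gamma=\gamma^{3/2}/h$ and that for $N$ with $\sqrt\gamma N$ far from the relevant range of size $t$ no critical point exists. First I would compute the $\alpha$-derivative of $\Phi^m_{N,a,h}$: recalling $L'(\omega)\sim 2\omega^{1/2}$ for large $\omega$ and that on the support of $\psi_2(\alpha/\gamma)$ we have $\alpha\simeq\gamma$, a direct computation gives
\[
\partial_\alpha\Phi^m_{N,h,a}=|\eta|\Big(-\sigma-s-N\,\partial_\alpha\big(\tfrac{h}{|\eta|}L(|\eta|^{2/3}\alpha/h^{2/3})\big)+\tfrac{t}{2\sqrt{1+\alpha+m^2(h/|\eta|)^2}}\Big),
\]
and using $\partial_\alpha\big(\tfrac{h}{|\eta|}L(|\eta|^{2/3}\alpha/h^{2/3})\big)=\tfrac{1}{|\eta|^{1/3}}L'(|\eta|^{2/3}\alpha/h^{2/3})\cdot(h/|\eta|)^{1/3}\cdot|\eta|^{2/3}h^{-2/3}\sim 2\alpha^{1/2}/|\eta|^{?}$ — more precisely the term behaves like $2\sqrt{\alpha}\,(1+O(h^{2/3}/\gamma))$ up to the $|\eta|$ powers — one sees that $\partial_\alpha\Phi^m$ is, up to bounded factors, of the form $|\eta|\big(t\,c_1-N\sqrt\gamma\,c_2+O(\sqrt\gamma)\big)$ with $c_1,c_2$ of size $\simeq 1$. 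The point is that the $N$-dependent contribution is $\simeq N\sqrt\gamma$ while the $t$-contribution is $\simeq t$ and the remaining terms (from the $s,\sigma$ critical point constraints $\sigma^2=\alpha-x$, $s^2=\alpha-a$) are $O(\sqrt\gamma)$; hence when $\sqrt\gamma N\notin O(t)$, i.e. $|\sqrt\gamma N - c\,t|\gtrsim t$ for a suitable $c$ and all relevant constants, we get $|\partial_\alpha\Phi^m_{N,h,a}|\gtrsim |\eta|\,\big(|\sqrt\gamma N|+t\big)\gtrsim \sqrt\gamma N$.

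Second, I would iterate the non-stationary phase operator $\tfrac{h}{i\partial_\alpha\Phi^m}\partial_\alpha$ (after first having reduced, via stationary/non-stationary phase in $s$ and $\sigma$ separately, which are genuinely of size $\gamma$ in their second derivatives so contribute only harmless powers of $\lambda_\gamma$) inside the $\alpha$-integral in \eqref{defVNgamma}. Each integration by parts produces a gain of $h/|\partial_\alpha\Phi^m|\lesssim h/(\sqrt\gamma |N|)$ together with derivatives of the amplitude $\psi_2(\alpha/\gamma)$ and of $1/\partial_\alpha\Phi^m$, which cost at most $\gamma^{-1}$ each (since $\alpha$ ranges over an interval of length $\simeq\gamma$ and $\partial_\alpha^2\Phi^m\lesssim \gamma^{-1/2}|N|+\dots$), so that the net gain per step is $h/(\gamma^{3/2}|N|)=1/(\lambda_\gamma |N|)$. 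Since $\lambda_\gamma\gtrsim h^{-\varepsilon}$ in the regime where \eqref{uhgamN} is used, after $K$ steps $|V^m_{N,\gamma}|\lesssim h^{-3}(\lambda_\gamma|N|)^{-K}$. Summing the geometric-type series over all $N$ with $\sqrt\gamma|N|\gtrsim t\gtrsim\sqrt\gamma$ (equivalently $|N|\gtrsim 1$) yields $\sum_{\sqrt\gamma N\notin O(t)}|V^m_{N,\gamma}|\lesssim h^{-3}\lambda_\gamma^{-K}\lesssim h^{-3}h^{\varepsilon K}$, which is $O(h^\infty)$ as $K\to\infty$; this is essentially the statement of \cite[Lemma 4]{ILP3} extended to $m\in\{0,1\}$.

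The main obstacle, and where care is needed, is the bookkeeping of the competition between the three terms in $\partial_\alpha\Phi^m$ near the boundary of the "good" region $\sqrt\gamma N\in O(t)$: one must fix the implied constants so that the complement $\{\sqrt\gamma N\notin O(t)\}$ genuinely forces $|\partial_\alpha\Phi^m|\gtrsim\max(t,\sqrt\gamma|N|)$ uniformly in $x\in[0,\lesssim a]$, in $\eta\in[1/2,3/2]$ (or the relevant $\psi_1$ support), and in the mass parameter $m\in\{0,1\}$ — the mass enters only through $\sqrt{1+\alpha+m^2(h/|\eta|)^2}$, whose $\alpha$-derivative differs from the massless one by $O(h^2)$, hence is completely negligible and does not affect the argument, which is precisely why the same $N$-range works for both equations. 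A secondary point is that for small $t$ (close to $\sqrt\gamma$) one should double-check that the $s,\sigma$-stationary-phase reduction does not eat the gain; but since those phases have $\alpha$-independent Hessians of size $\simeq\gamma$ and $\lambda_\gamma\gtrsim h^{-\varepsilon}$, this is routine. I would present the $d=2$ case as stated and invoke rotational invariance (Lemma \ref{lemPofS}) only where the excerpt already does.
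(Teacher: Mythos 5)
Your core mechanism is the right one, and it is the one the paper itself invokes (the paper defers to \cite[Lemma 4]{ILP3}, adding only the two observations you also make: the mass enters the coefficient of $t$ through an $O(h^{2})$ perturbation, hence changes nothing, and the argument only sees $T=t/\sqrt\gamma$, so large $t$ is free). Your computation of $\partial_\alpha\Phi^m_{N,h,a}$, the identification of the three competing terms $t$, $2N\sqrt\alpha$, $s+\sigma$, and the per-step gain $1/(\lambda_\gamma|N|)$ on the region where your lower bound holds are all correct.

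The gap is in how the $(s,\sigma)$ integration interacts with the sum over the \emph{infinitely many} excluded $N$. First, the lower bound $|\partial_\alpha\Phi^m|\gtrsim t+\sqrt\gamma|N|$ is not true on the full $(s,\sigma)$ domain of \eqref{defVNgamma}: since $\partial_\alpha\Phi^m$ contains $-|\eta|(s+\sigma)$, it vanishes wherever $s+\sigma\approx 2N\sqrt{\alpha}-\tfrac{t}{2\sqrt{1+\alpha+m^2(h/|\eta|)^2}}$, i.e.\ for $|s|+|\sigma|\sim\sqrt\gamma(|N|+T)$, no matter how far $\sqrt\gamma N$ is from $t$. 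So the $\alpha$-integrations by parts may only be run after an $N$-\emph{dependent} restriction such as $|s|+|\sigma|\le c\,\sqrt\gamma(|N|+T)$, not after the $N$-independent localization ``near the $s,\sigma$ critical points'' ($|s|,|\sigma|\lesssim\sqrt\gamma$) that your sketch presumes when you declare the remaining terms $O(\sqrt\gamma)$. Second, your plan to dispose of the complementary $(s,\sigma)$ region \emph{first}, by (non-)stationary phase in $s$ and $\sigma$, produces errors that are at best $O(h^{K})$ \emph{uniformly} in $N$ — the $s$- and $\sigma$-phases do not involve $N$ at all — and such per-$N$ bounds cannot be summed over the infinite set $\{\sqrt\gamma|N|\ge Ct\}$; you need genuine decay in $N$ there as well. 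The repair is standard and is what the cited proof does in substance: split according to $|s|+|\sigma|\lessgtr c\,\sqrt\gamma(|N|+T)$. On the inner region integrate by parts in $\alpha$ as you propose, gaining $(\lambda_\gamma(|N|+T))^{-K}$; on the outer region one has $\max(|s|,|\sigma|)\gtrsim\sqrt\gamma(|N|+T)$, hence $|\partial_s\Phi^m|=|\eta|\,|s^{2}+a-\alpha|\gtrsim\gamma(|N|+T)^{2}$ (or the analogue in $\sigma$), and integrations by parts in $s$ (or $\sigma$) gain $\big(h/(\gamma(|N|+T)^{2})\big)^{K}\le(\lambda_\gamma(|N|+T))^{-K}$ together with integrability at infinity. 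Both contributions are then summable in $N$ and $O(h^\infty)$ because $\lambda_\gamma\gtrsim h^{-\varepsilon}$ in the regime where \eqref{uhgamN} is used. A minor related point: when $\gamma\simeq a$ the $s$-critical points $s=\pm\sqrt{\alpha-a}$ coalesce, so ``stationary phase in $s$'' is not available uniformly; with the splitting above it is also unnecessary, since one never needs more than localization in $(s,\sigma)$.
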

The proof for $m=0$ had been given in \cite{ILP3} and is based on non-stationary phase arguments; the case $m=1$ is dealt with in the same way (as $m^2$ comes with a very small factor $h^2$). Moreover, the proof only uses the variable $T:=t/\sqrt{\gamma}$, and not the size of $t$ which can be large. 

Next, we estimate the number of overlapping waves when $m=1$ and prove that there is no significant difference with respect to $m=0$. In order to do that, we have to introduce some notations.
For a given space-time location $(t,x,y)$, let $\mathcal{N}^m_{\gamma}(t,x,y)$ be the set of $N$ with significant contributions in \eqref{uhgamN} (e.g. for which there exists a stationary point for the phase in all variables), and let $\mathcal{N}^m_{1,\gamma}(t,x,y)$ be the set of $N$ belonging to $\mathcal{N}^m_{\gamma}(t',x',y')$ for some $(t',x',y')$ sufficiently close to $(t,x,y)$ such that $|t'-t|\leq \sqrt{\gamma}$, $|x-x'|< \gamma$ and $|y'+t'\sqrt{1+\gamma}-y-t\sqrt{1+\gamma}|<\gamma^{3/2}$,
 \begin{gather}\nonumber
\mathcal{N}^m_{\gamma}(t,x,y)=\{N\in\mathbb{Z}, (\exists)(\sigma,s,\alpha,\eta) \text{ such that } \nabla_{(\sigma,s,\alpha,\eta)}\Phi^m_{N,a,h}(t,x,y,\sigma,s,\alpha,\eta)=0 \},\\
\nonumber
\mathcal{N}^m_{1,\gamma}(t,x,y)=\cup_{\{(t',x',y')||t'-t|\leq \sqrt{\gamma}, |x-x'|< \gamma, |y'+t'\sqrt{1+\gamma}-y-t\sqrt{1+\gamma}|<\gamma^{3/2}\}}\mathcal{N}^m_{\gamma}(t',x',y').
 \end{gather}

\begin{prop}\label{propcardN}
Let $t>h$ and $1\gtrsim\gamma\gtrsim h^{2(1-\varepsilon)/3}$. The following estimates hold true:
\begin{itemize}
\item We control the cardinal of $\mathcal{N}^m_{1,\gamma}(t,x,y)$,
\begin{equation}
  \label{eq:113}
  \left| \mathcal{N}^m_{1,\gamma}(t,x,y)\right| \lesssim O(1)+\frac{t}{\gamma^{1/2} (\gamma^{3}/h^{2})}+m^2\frac{|t|h^2}{\gamma^{3/2}}\,,
\end{equation}
and this bound is optimal.
\item The contribution of the sum over $N\notin \mathcal{N}^m_{1,\gamma}(t,x,y)$ in \eqref{uhgamN} is $O(h^{\infty})$.
\end{itemize}
\end{prop}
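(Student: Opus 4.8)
The plan is to encode each $N\in\mathcal N^m_\gamma(t,x,y)$ as the value, at some $\eta\in[\tfrac12,\tfrac32]$, of a single scalar ``resonance'' function $N=N(\eta;t,x,y)$, to bound its oscillation in $\eta$, then to control its variation over the space-time box defining $\mathcal N^m_{1,\gamma}$; the statement on the complementary sum follows from non-stationary phase. Concretely, I would first write out $\nabla_{(\sigma,s,\alpha,\eta)}\Phi^m_{N,a,h}=0$ for \eqref{eq:bis49}. The $\sigma,s$-equations give $\sigma^2=\alpha-x$, $s^2=\alpha-a$ (when $\max\{x,a\}>\alpha$, $\partial_\sigma\Phi$ or $\partial_s\Phi$ is bounded below and one integration by parts in that variable already yields $O(h^\infty)$, so we may assume $\max\{x,a\}\le\alpha$). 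Substituting these together with the $\alpha$-equation into the $\eta$-equation and using $\sigma(x-\alpha)=-\sigma^3$, $s(a-\alpha)=-s^3$, the $\eta$-equation collapses to
\[ y=-\tfrac23\bigl(x\sigma+as\bigr)-\frac{t}{\sqrt{1+\alpha+m^2(h/\eta)^2}}\Bigl(1+\tfrac{2\alpha}3\Bigr). \]
For $m=0$ this determines $\alpha=\alpha_*(t,x,y)$, independent of $\eta$ and $N$; for $m=1$ it determines $\alpha=\alpha_*(t,x,y,\eta)$ with $\partial_\eta\alpha_*=O(h^2)$, since $\eta$ enters only via $m^2(h/\eta)^2$ while $\partial_\alpha y\sim-t/6$ for $t$ large. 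A non-negligible stationary contribution to \eqref{uhgamN} forces $\alpha_*\in\operatorname{supp}\psi_2(\cdot/\gamma)$, i.e. $\alpha_*\simeq\gamma$.

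Feeding $\alpha_*$ into the $\alpha$-equation, $\sigma+s+N(h/\eta)^{1/3}L'(\eta^{2/3}\alpha_*/h^{2/3})=\tfrac t2(1+\alpha_*+m^2(h/\eta)^2)^{-1/2}$, and using Lemma~\ref{lemL} in the form $L'(\omega)=2\sqrt\omega(1+O(\omega^{-3}))$ with $\omega=\eta^{2/3}\alpha_*/h^{2/3}\simeq\gamma/h^{2/3}$ (so $(h/\eta)^{1/3}L'(\omega)=2\sqrt{\alpha_*}(1+O(h^2/\gamma^3))$), one solves for $N(\eta;t,x,y)$, whence $\mathcal N^m_\gamma(t,x,y)=\Z\cap N([\tfrac12,\tfrac32])$ and $|\mathcal N^m_\gamma(t,x,y)|\le1+\sup_\eta|\partial_\eta N|$. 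Differentiating, the $\eta$-dependence comes from the $O(h^2/\gamma^3)$ factor (coefficient proportional to $b_1\neq0$), contributing $\tfrac{t}{\sqrt\gamma}\cdot\tfrac{h^2}{\gamma^3}=\tfrac{t}{\gamma^{1/2}(\gamma^3/h^2)}$, and, only when $m=1$, from the $m^2(h/\eta)^2$ terms, whose net effect is dominated by the chain contribution $\partial_\alpha N\cdot\partial_\eta\alpha_*$ of size $\tfrac t{\gamma^{3/2}}\cdot O(h^2)=m^2\tfrac{|t|h^2}{\gamma^{3/2}}$. This proves the bound on $|\mathcal N^m_\gamma|$; optimality follows by choosing $(t,x,y)$ with $\alpha_*\simeq\gamma$ and $x$ interior to $[0,\alpha_*]$, away from the caustics $x=\alpha_*$, $a=\alpha_*$, for which $N(\eta)$ genuinely sweeps an interval of that length.

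Next, for $\mathcal N^m_{1,\gamma}$ I would show that $N(\eta;t',x',y')$ varies by $O(1)$ as $(t',x',y')$ runs over the box $|t'-t|\le\sqrt\gamma$, $|x'-x|<\gamma$, $|y'+t'\sqrt{1+\gamma}-y-t\sqrt{1+\gamma}|<\gamma^{3/2}$. In the rescaled variables $\alpha=\gamma A$, $\sigma=\sqrt\gamma\,\Upsilon$, $s=\sqrt\gamma\,S$ this box has unit size in the parameters $t/\sqrt\gamma$, $x/\gamma$, $(y+t\sqrt{1+\gamma})/\gamma^{3/2}$, precisely the packet scale. Away from caustics, $\partial_{x}N=O(1/\gamma)$ and $\partial_yN=O(1/\gamma^{3/2})$, so the $x$- and $y$-directions contribute $O(1)$; along $t$, although $\partial_tN$ and $\partial_yN$ are individually of order $\gamma^{-3/2}$ (through the chain terms $\partial_\alpha N\cdot\partial_t\alpha_*$ and $\partial_\alpha N\cdot\partial_y\alpha_*$), the shear $\sqrt{1+\gamma}$ — matching $-\partial_ty=\tfrac{1+2\alpha_*/3}{\sqrt{1+\alpha_*}}$ up to $O(\gamma)$ at $\alpha_*\simeq\gamma$ — makes the relevant combination $\partial_tN-\sqrt{1+\gamma}\,\partial_yN$ only $O(\gamma^{-1/2})$, so against $\Delta t=\sqrt\gamma$ the $t$-direction also contributes $O(1)$ (for $m=1$, up to a harmless $O(h^2)$ relative error). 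Adding this to $|\mathcal N^m_\gamma|$ gives \eqref{eq:113}.

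For the last bullet, if $N\notin\mathcal N^m_{1,\gamma}(t,x,y)$ then $N$ avoids the resonance value over the whole unit-scale box, so on the support of all cut-offs at least one of $\partial_\sigma\Phi$, $\partial_s\Phi$, $\partial_\alpha\Phi$ is bounded below by a fixed constant in the rescaled variables (off $\sigma=0$, $s=0$ use $\partial_\sigma$ or $\partial_s$; on the $(\sigma,s)$-critical set the reduced phase has $\partial_\alpha$ bounded below, precisely because $N$ is separated from $N(\eta)$ and all its nearby values). Repeated integrations by parts with the large parameter $\lambda_\gamma=\gamma^{3/2}/h\gtrsim h^{-\varepsilon}$ then give $O(\lambda_\gamma^{-\infty})=O(h^\infty)$, uniformly. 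The step I expect to be the main obstacle is handling the caustic regions $x\to\alpha_*$, $a\to\alpha_*$, where the square-root reduction degenerates and one must revert to the uniform Airy/swallowtail description of \cite{Annals,ILP3}, together with the precise bookkeeping of the $t$-direction cancellation; the mass, by contrast, enters only through the $O(h^2)$ displacement of $\alpha_*$, which is exactly what both produces and renders subordinate the term $m^2|t|h^2/\gamma^{3/2}$.
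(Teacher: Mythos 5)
Your argument is correct in substance and rests on the same key identity as the paper's proof: eliminating $N$ between the $\alpha$- and $\eta$-critical-point equations (your reduced relation for $y$ is exactly \eqref{critYT} in unrescaled variables, and your algebra, including the collapse of the mass terms into the single factor $(1+\tfrac{2\alpha}{3})/\sqrt{1+\alpha+m^2h^2/\eta^2}$, is right), so that $\alpha$ is pinned down by $(t,x,y)$ up to an $O(m^2h^2)$ dependence on $\eta$, and the residual $N$-dependence enters only through $B'\simeq -b_1\lambda_\gamma^{-2}$ and the mass term -- which is precisely where the three terms of \eqref{eq:113} come from. What differs is the bookkeeping: you solve for $N=N(\eta;t,x,y)$ and bound $\partial_\eta N$ together with the variation of $N$ over the box, whereas the paper takes two arbitrary $N_1,N_2\in\mathcal N^m_{1,\gamma}$ and estimates $|N_1-N_2|$ by differencing the critical-point relations at the two nearby points (\eqref{difN1N2}, then \eqref{critYsurT} to control $T_2|A_1-A_2|$, with the bounded-$T$ case settled directly from \eqref{estimdifT1T2}). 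The payoff of the paper's difference formulation is that it never differentiates $\sigma=\sqrt{\alpha-x}$, $s=\sqrt{\alpha-a}$: it only uses $|\Upsilon_j|,|S_j|\lesssim 1$ in rescaled variables and absorbs them into the $O(A_j^{3/2}/T_j)$ error of \eqref{critYsurT}, so it is insensitive to the caustics $\alpha\approx x$, $\alpha\approx a$. That is exactly the soft spot of your derivative-based version: $\partial_x N$, and for $m=1$ the $\partial_\eta(\sigma+s)$ part of $\partial_\eta N$, blow up at the caustics, so your ``away from caustics'' bounds do not cover the whole box. The fix is much lighter than the uniform Airy/swallowtail analysis you anticipate: replace derivatives by increments, using only $|\sigma|,|s|\lesssim\sqrt\gamma$ and the $\tfrac12$-H\"older continuity of the square root (an $x$-shift of size $\gamma$ changes $x\sigma+as$ by $O(\gamma^{3/2})$, hence $\alpha_*$ by $O(\gamma^{3/2}/t)$, hence $N$ by $O(1)$), which is what the pairwise-difference argument encodes automatically. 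With that modification, plus the remark that for bounded $T=t/\sqrt\gamma$ the $\alpha$-equation alone gives $|N|\lesssim 1$ (so no implicit-function step is needed there), your proof is complete; your treatment of optimality and of the sum over $N\notin\mathcal N^m_{1,\gamma}$ by non-stationary phase with the large parameter $\lambda_\gamma\gtrsim h^{-\varepsilon}$ matches the paper, which defers that last point to \cite[Prop.~1]{ILP3}.
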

\begin{rmq}
In the present work we adapt the proof from \cite{ILP3} to all time. For $m=0$, the sharp bounds \eqref{eq:113} are obtained in \cite[Prop.1]{ILP3} by estimating the distance between any two points $N_1,N_2\in \mathcal{N}^{m=0}_{1,\gamma}(t,x,y)$ and the proof holds for any $t$. When $m=1$, the difference comes from the term $m^2(h/\eta)^2$ in the coefficient of $t$ which may contribute significantly only when $th \gg 1$, a case that hadn't be dealt with in our previous works. 
Following closely the approach in \cite{ILP3}, we notice that the terms involving $m^2(h/\eta)^2$ give rise to the last addendum $m^2\frac{|t|h^2}{\gamma^{3/2}}$ in \eqref{eq:113}. However, as $|t|h^{2}/\gamma^{3/2}\leq t h^2/\gamma^{7/2}$, for $\gamma\lesssim 1$ there is no real difference between the cases $m=0$ and $m=1$.
\end{rmq}

\begin{proof}(of Proposition \ref{propcardN})
We sketch the proof of \eqref{eq:113} in the case $m=1$. 
Let
\begin{equation}
  \label{eq:83ff}
  x=\gamma X\,, \alpha=\gamma A\,, t=\sqrt \gamma T\,, s=\sqrt \gamma S\,, \sigma=\sqrt \gamma \Upsilon\,, y+t\sqrt{1+\gamma}=\gamma^{3/2} Y\,,
\end{equation}
\begin{multline}\label{PhiNagammadef}
  \Psi^m_{N,a,\gamma,h}(T,X,Y, \Upsilon,S,A,\eta): =\eta\left(Y+ \Upsilon^3/3+ \Upsilon(X-A)+S^3/3+S(\frac{a}{\gamma}-A)\right.\\
    {} +T\frac{\sqrt{1+\gamma A+m^2h^2/\eta^2}-\sqrt{1+\gamma}}{\gamma}
    {}-\left.\frac 43 NA^{3/2}\right)+\frac{N}{\lambda_{\gamma}}B(\eta\lambda_{\gamma}A^{3/2}).
\end{multline}
then
$\gamma^{3/2}\Psi^m_{N,a,\gamma,h}(T,X,Y, \Upsilon,S,A,\eta)=\Phi^m_{N,a,h}(\sqrt{\gamma}T,\gamma X,\gamma^{3/2}Y-\sqrt{\gamma}\sqrt{1+\gamma}T,\sqrt{\gamma} \Upsilon,\sqrt{\gamma}S,\gamma A,\eta)$,
and, in the new variables, the phase function in \eqref{defVNgamma} becomes $\lambda_{\gamma}\Psi^m_{N,a,\gamma,h}$ where $\lambda_{\gamma}=\gamma^{3/2}/h$. 
The critical points of $\Psi^m_{N,a,\gamma,h}$ with respect to $ \Upsilon,S, A,\eta$ satisfy
\begin{gather}\label{critSig}
 \Upsilon^2+X=A,\quad S^2+a/\gamma=A,\\
\label{critA}
T=2\sqrt{1+\gamma A+m^2h^2/\eta^2}\Big( \Upsilon+S+2N\sqrt{A}(1-\frac 34 B'(\eta\lambda_{\gamma}A^{3/2}))\Big),\\
\label{criteta}
Y+T\Big(\frac{\sqrt{1+\gamma A+m^2h^2/\eta^2}-\sqrt{1+\gamma}}{\gamma}-\frac{m^2h^2/\eta^2}{\gamma\sqrt{1+\gamma A+m^2h^2/\eta^2}}\Big)+ \Upsilon^3/3+ \Upsilon(X-A)\\+S^3/3+S(\frac{a}{\gamma}-A)
\nonumber
=\frac 43 NA^{3/2}(1-\frac 34 B'(\eta\lambda_{\gamma}A^{3/2})).
\end{gather}
Introducing the term $2N\sqrt{A}(1-\frac 34 B'(\eta\lambda_{\gamma}A^{3/2}))$ from \eqref{critA} in the second line of \eqref{criteta} provides a relation between $Y$ and $T$ that doesn't involve $N$ nor $B'$ as follows:
\begin{multline}\label{critYT}
Y+\frac{T}{\sqrt{1+\gamma A+m^2h^2/\eta^2}+\sqrt{1+\gamma}}\Big((A-1)-\frac{m^2h^2/\eta^2}{\gamma\sqrt{1+\gamma A+m^2h^2/\eta^2}}\Big)\\
+ \Upsilon^3/3+ \Upsilon(X-A)+S^3/3+S(\frac{a}{\gamma}-A)
=\frac 23 A\Big(\frac{T}{2\sqrt{1+\gamma A+m^2h^2/\eta^2}}-( \Upsilon+S)\Big).
\end{multline}
Let $t>h$ and let $N_j\in \mathcal{N}^m_{1,\gamma}(t,x,y)$, with $j\in \{1,2\}$ be any two elements of $\mathcal{N}^m_{1,\gamma}(t,x,y)$. Then there exists $(t_j,x_j,y_j)$ such $N_j\in\mathcal{N}^m_{\gamma}(t_j,x_j,y_j)$ ; writing $t_j=\sqrt{\gamma}T_j$, $x_j=\gamma X_j$, $y_j+t_j\sqrt{1+\gamma}=\gamma^{3/2}Y_j$ and rescaling $(t,x,y)$ as in \eqref{eq:83ff}, the condition below holds true
\[
|T_j-T|\leq 1, \quad |X_j-X|\leq 1, \quad |Y_j-Y|\leq 1.
\]
We prove that $|N_1-N_2|$ is bounded by $O(1)+m^2|T|h^2/\gamma+|T|/\lambda^2_{\gamma}$, which will achieve the first part of Proposition \ref{propcardN}. Since $N_j\in\mathcal{N}^m(t_j,x_j,y_j)$, it means that there exists $ \Upsilon_j,A_j,\eta_j,S_j$ such that \eqref{critSig}, \eqref{critA}, \eqref{criteta} holds with $T,X,Y, \Upsilon,S,A,\eta$ replaced by $T_j,X_j,Y_j, \Upsilon_j,S_j,A_j,\eta_j$, respectively. We re-write \eqref{critA} as follows
\begin{equation}\label{critAj}
2N_j\sqrt{A_j}(1-\frac 34 B'(\eta_j\lambda_{\gamma}A^{3/2}_j))=\frac{T_j}{2\sqrt{1+\gamma A_j+m^2h^2/\eta^2_j}}-(\Sigma_j+S_j).
\end{equation}
Multiplying \eqref{critAj} by $\sqrt{A_{j'}}$, for $j,j'\in\{1,2\}$, $j'\neq j$, taking the difference and dividing by $\sqrt{A_1A_2}$ yields
\begin{multline}\label{difN1N2}
  2(N_1-N_2)=\frac 32 \Big(N_1B'(\eta_1\lambda_{\gamma}A_1^{3/2})-N_2B'(\eta_2\lambda_{\gamma}A_2^{3/2})\Big)
  -\frac{\Sigma_1+S_1}{\sqrt{A_1}}+\frac{\Sigma_2+S_2}{\sqrt{A_2}}\\
{}+\frac{T_1}{2\sqrt{A_1}\sqrt{1+\gamma A_1+m^2h^2/\eta^2_1}}-\frac{T_2}{2\sqrt{A_2}\sqrt{1+\gamma A_2+m^2h^2/\eta^2_2}}.
\end{multline}
Using that $\Sigma_j,S_j\lesssim A_j$, $A_j\simeq 1$, it follows that $\frac{\Sigma_j+S_j}{\sqrt{A_j}}=O(1)$, for $j\in \{1,2\}$. The first term, involving $B'$, in the right hand side of \eqref{difN1N2} behaves like $(N_1+N_2)/\lambda_{\gamma}^2$, which follows using $B'(\eta\lambda A^{3/2})\simeq -\frac{b_1}{\eta^2\lambda^2A^3}$ and $\eta,A\simeq 1$. Notice that we cannot take any advantage of the fact that we estimate a difference of two terms, since each $N_jB'(\eta_j\lambda_{\gamma}A_j^{3/2})$ corresponds to some $\eta_j,A_j$ (close to $1$) and the difference $\frac{1}{\eta_1A_1^{3/2}}-\frac{1}{\eta_2A_2^{3/2}}$ is bounded by a constant that has no reason to be small (the difference between $A_j$ turns out to be $O(1/T)$, but we don't have any information about the difference between $\eta_j$ which is simply bounded by a small constant on the support of $\psi$). Therefore the bound $(N_1+N_2)/\lambda_{\gamma}^2$ for the terms involving $B'$ in \eqref{difN1N2} is sharp. Since $N_j\simeq T_j$, and $|T_j-T|\leq 1$, it follows that this contribution is $\simeq |T|/\lambda^2_{\gamma}$. 
We are reduced to prove that the difference of the last two terms in the second line of \eqref{difN1N2} is $O(1)+m^2|T|h^2/\gamma$. Write 
\[
\frac{T_j}{2\sqrt{A_j}\sqrt{1+\gamma A_j+m^2h^2/\eta^2_j}}=\frac{T_j}{2\sqrt{A_j}\sqrt{1+\gamma A_j}}\Big(1-\frac{m^2h^2}{2\eta_j^2\sqrt{1+\gamma A_j}}+O(m^4h^4)\Big).
\]
The difference for $j=1,2$ of the contributions involving $\eta_j^{-2}$ cannot be estimated better than by $m^2h^2|T|$.  We now proceed, as in \cite{ILP3} in the case $m=0$, with the difference of the main terms :
\begin{multline}\label{estimdifT1T2}
\Big|\frac{T_1}{\sqrt{A_1}\sqrt{1+\gamma A_1}}-\frac{T_2}{\sqrt{A_2}\sqrt{1+\gamma A_2}}\Big|\leq \frac{|T_1-T_2|}{\sqrt{A_1}\sqrt{1+\gamma A_1}} %
+\frac{T_2}{\sqrt{A_{1}A_{2}}}\Big|\frac{\sqrt{A_2}}{\sqrt{1+\gamma A_1}}-\frac{\sqrt{A_1}}{\sqrt{1+\gamma A_2}}\Big|\\
\leq \frac{2}{\sqrt{A_1}\sqrt{1+\gamma A_1}}+\frac{T_2|A_2-A_1|(1+\gamma(A_1+A_2))}{\sqrt{A_1A_2(1+\gamma A_1)(1+\gamma A_2)}(\sqrt{A_1(1+\gamma A_1)}+\sqrt{A_2(1+\gamma A_2)})}\\
\leq C(1+T_2|A_2-A_1|),
\end{multline}
where $C>0$ is some absolute constant. We have only used $|T_2-T_1|\leq 2$ and $A_j\simeq 1$. Notice that for $T$ bounded we can conclude since $|T_2-T|\leq 1$. We are therefore reduced to bound $T_2|A_2-A_1|$ when $T_2$ is sufficiently large. For that, we need to take into account the $Y$ variable. We use \eqref{critYT} with $T,X,Y,\Sigma,S,A,\eta$ replaced by $T_j,X_j,Y_j,\Sigma_j,S_j,A_j,\eta_j$, $j\in\{1,2\}$ to eliminate the terms containing $N$ and $B'$ as follows:
\begin{multline}
Y_j+\Sigma^3_j/3+\Sigma_j(X_j-A_j)+S^3_j/3+S_j(\frac{a}{\gamma}-A_j)+\frac 23 A_j(\Sigma_j+S_j)\\
=T_j\Big(\frac{A_j}{3\sqrt{1+\gamma A_j}}-\frac{(A_j-1)}{\sqrt{1+\gamma A_j}+\sqrt{1+\gamma}}+O(m^2h^2/\gamma)\Big).
\end{multline}
If $|T|$ is sufficiently large then so is $|T_j|$, and we divide the last equation by $T_j$ in order to estimate the difference $A_1-A_2$ in terms of $Y_1/T_1-Y_2/T_2$ as follows
\begin{equation}\label{critYsurT}
\frac{Y_j}{T_j}+O(\frac{A_j^{3/2}}{T_j})=F_{\gamma}(A_j), \quad F_{\gamma}(A)=\frac{A}{3\sqrt{1+\gamma A}}-\frac{(A-1)}{\sqrt{1+\gamma A}+\sqrt{1+\gamma}}+O(m^2h^2/\gamma).
\end{equation}
Taking the difference of \eqref{critYsurT} written for $j=1,2$ gives
\[
(\frac{Y_2}{T_2}-\frac{Y_1}{T_1})+O(\frac{1}{T_1})+O(\frac{1}{T_2})+O(m^2h^2/\gamma)=(A_2-A_1)\int_0^1\partial_A F_{\gamma}(A_1+o(A_2-A_1))do.
\]
Using that $\partial_A F_{\gamma}(1)=-\frac{(1+2\gamma)}{6(1+\gamma)^{3/2}}$ and that $A_j$ are close to $1$, $T_j$ are large, it follows that we can express $A_2-A_1$ as a smooth function of $(\frac{Y_2}{T_2}-\frac{Y_1}{T_1})$ and $O(\frac{1}{T_1})+O(\frac{1}{T_2})$, with coefficients depending on $A_j$ and $\gamma$. Write
\[
A_2-A_1=H_{\gamma}\Big(\frac{Y_2}{T_2}-\frac{Y_1}{T_1},\frac{1}{T_1}, \frac{1}{T_2}\Big)+O(m^2h^2/\gamma)\simeq -6(\frac{Y_2}{T_2}-\frac{Y_1}{T_1})+O(\frac{1}{T_1},\frac{1}{T_2})+O(m^2h^2/\gamma),
\]
and replacing the last expression in the last line of \eqref{estimdifT1T2} yields
\begin{align}
T_2|A_2-A_1| & \lesssim T_2\Big|\frac{Y_2}{T_2}-\frac{Y_1}{T_1}\Big|+O(T_2/T_1)+O(1)+|T|O(m^2h^2/\gamma)\\
 & \lesssim |Y_2-Y_1|+Y_1|1-T_2/T_1|+O(T_2/T_1)+O(1)+|T|m^2h^2/\gamma=O(1)+m^2|T|h^2/\gamma,\nonumber
\end{align}
where we have used $|Y_1-Y_2|\leq 2$, $|T_1-T_2|\leq 2$ and that $Y_1/T_1$ is bounded (which can easily be seen from \eqref{critYsurT}). This ends the proof of \eqref{eq:113}.
The second statement of Proposition \ref{propcardN} follows exactly like in \cite[Proposition 1]{ILP3}.
\end{proof}

In the next three subsections we give an overview of dispersive estimates obtained for the wave equation (for $m=0$, see \cite{ILP3} for small $t$) and we generalize them to large time and for the Klein-Gordon flow (when $m=1$). 

\subsection{Tangential waves for $\gamma\simeq a\gtrsim h^{2(1-\varepsilon)/3}$}\label{sectang} This  corresponds to initial angles $|\xi|/|\theta|\lesssim \sqrt{a}$. We assume $\gamma/4\leq a\leq 4\gamma$. %
We rescale variables as follows:
\begin{equation}
  \label{eq:83ffa}
  x=aX\,, \alpha=a A\,, t=\sqrt a\sqrt{1+a} T\,, s=\sqrt a S\,, \sigma=\sqrt a \Upsilon\,, y+t\sqrt{1+a}=a^{3/2} Y\,.
\end{equation}
Define $\lambda=a^{3/2}/h >h^{-\varepsilon}$ to be our large parameter, then $V^m_{N,\gamma\simeq a}$ from \eqref{defVNgamma} becomes
\begin{equation}
  \label{eq:bis488terff}
  V^{m}_{N,\gamma\simeq a}(t,x,a,y)= \frac {a^{2}} {(2\pi h)^{3}} \int_{\R^{2}}\int_{\R^{2}} e^{i  \lambda \Psi^m_{N,a,a,h}}  \eta^{2} \psi_1({\eta})
 \psi_{2}(aA/\gamma) \, dS d\Upsilon  dA d\eta\,,
\end{equation}
where $\Psi^m_{N,a,a,h}$ is given by \eqref{PhiNagammadef} with $\gamma$ replaced by $a$ due to the change of variables.
As the support of $\psi_2$ is compact and as $\gamma/4\leq a\leq 4\gamma$, we find $A\in \frac{\gamma}{a}[\frac 34,2]\subset [3/16,8]$. Since the critical points satisfy $S^2=A-1$, $\Upsilon^2=A-X$, it follows that we can restrict to $|S|,|\Upsilon|< 3$ and $A\in [9/10,8]$ without changing the contribution of the integrals modulo $O(h^{\infty})$ (since for $A\leq 9/10$ the phase is non-stationary in $S$); we insert suitable cut-offs, $\chi_2(S)\chi_2(\Upsilon)$ supported in $[-3,3]$ and $\psi_3(A)$ supported in $[9/10,8]$ and obtain $V^m_{N,\gamma\simeq a}(t,x,a,y)=W^m_{N,a}(T,X,Y)+O(h^{\infty})$, where we abuse notations with respect to $\gamma$, replaced by $a$ ($\psi_3$ actually includes a harmless factor $a/\gamma$ in its argument, which we are hiding since it will play no role) :
\begin{equation}
  \label{eq:bis488ter9999}
W^{m}_{N,a}(T,X,Y)= \frac {a^{2}} {(2\pi h)^{3}}  \int_{\R^{2}}\int_{\R^{2}} e^{i \lambda  \Psi^m_{N,a,a,h}}  \eta^{2} \psi_1({\eta})  \chi_{2}(S)\chi_{2}(\Upsilon) \psi_{3}(A) \, dS d\Upsilon  dA d\eta\,.
\end{equation}

\begin{prop}\label{propN<lambda}(see \cite[Prop.2]{ILP3} for $m=0$)
Let $m\in\{0,1\}$. Let $|N|\lesssim\lambda$ and let $W^m_{N,a}(T,X,Y)$ be defined in \eqref{eq:bis488ter9999}. Then the stationary phase theorem applies in $A$ and yields, modulo $O(h^{\infty})$ terms
\begin{equation}
  \label{eq:bis488ter999999machin}
    W^m_{N,a}(T,X,Y)= \frac {a^{2}} {h^{3}(N\lambda)^{\frac 1 2}}  \int_{\R}\int_{\R^{2}} e^{i \lambda\Psi^m_{N,a,a,h}(T,X,\Upsilon,S,A_c,\eta)}  \eta^{2} \psi_1({\eta})
   \chi_{3}(S,\Upsilon,a,1/N,h,\eta) \, dS d\Upsilon d\eta\,,
\end{equation}
where $\chi_{3}$ has compact support in $(S,\Upsilon)$ and harmless dependency on the parameters $a,h,1/N,\eta$. 
\end{prop}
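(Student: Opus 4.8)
The plan is to carry out a one-dimensional stationary (and non-stationary) phase argument in the single variable $A$, regarding $(S,\Upsilon,\eta)$ as parameters and $\lambda=a^{3/2}/h>h^{-\varepsilon}$ as the large parameter; on the support of $\chi_2(S)\chi_2(\Upsilon)\psi_3(A)\psi_1(\eta)$ all these variables range over fixed compact sets, so the $A$-integration is over a fixed compact interval. Following \cite[Prop.~2]{ILP3} for $m=0$, first I would record the first two $A$-derivatives of $\Psi^m_{N,a,a,h}$: the equation $\partial_A\Psi^m_{N,a,a,h}=0$ is exactly the critical point relation \eqref{critA} (with $\gamma$ replaced by $a$), while a direct computation, using $B'(u)\simeq -b_1u^{-2}$, $B''(u)\simeq 2b_1 u^{-3}$ from Lemma \ref{lemL} and $\eta,A\simeq 1$, gives
\[
\partial_A^2\Psi^m_{N,a,a,h}=-\frac{\eta N}{2\sqrt A}-\frac{\eta T a}{4\,(1+aA+m^2h^2/\eta^2)^{3/2}}+O\!\big(N/\lambda^2\big),
\]
the mass entering only through the innocuous $O(h^2)$ term $m^2h^2/\eta^2$, so that $m=1$ is handled exactly as $m=0$. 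Since $t>0$, the relation \eqref{critA} forces $N>0$ up to finitely many exceptional values; carrying out the argument for $N\gtrsim 1$ (the $O(1)$ remaining terms, including the direct wave $N=0$, being estimated directly), both terms above have the same sign and, together with $a\lesssim 1$, $1\lesssim |N|\lesssim \lambda$, $\lambda>h^{-\varepsilon}$ and $|T|\lesssim 1+|N|$ (itself a consequence of \eqref{critA} and $|S|,|\Upsilon|\leq 3$), this yields the uniform non-degeneracy $|\partial_A^2\Psi^m_{N,a,a,h}|\sim |N|$ on the support of the amplitude.

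With this in hand, the implicit function theorem (and strict monotonicity of $A\mapsto\partial_A\Psi^m_{N,a,a,h}$ on $[9/10,8]$) produces a unique critical point $A=A_c(T,X,\Upsilon,S,\eta)$, depending smoothly on $(S,\Upsilon,\eta)$ with all derivatives bounded uniformly in the parameters $N,a,h,m$. On the region where $A_c$ lies outside $\mathrm{supp}\,\psi_3$ there is no critical point in $A$ on the support of the amplitude, and repeated integration by parts in $A$ makes the contribution $O((\lambda|N|)^{-\infty})=O(h^\infty)$ (using $\lambda|N|\gtrsim h^{-\varepsilon}$). On the complementary region I would invoke a quantitative stationary phase lemma with large parameter $\lambda$ and second derivative of size $|N|$, so that the asymptotic expansion is in powers of $(\lambda|N|)^{-1}$, giving
\[
W^m_{N,a}(T,X,Y)=\frac{a^2}{h^3(N\lambda)^{1/2}}\int_\R\int_{\R^2}e^{i\lambda\Psi^m_{N,a,a,h}(T,X,\Upsilon,S,A_c,\eta)}\,\eta^2\psi_1(\eta)\,\chi_3(S,\Upsilon,a,1/N,h,\eta)\,dS\,d\Upsilon\,d\eta+O(h^\infty),
\]
where the leading term of $\chi_3$ is a constant multiple of $\big(N/\partial_A^2\Psi^m_{N,a,a,h}\big)^{1/2}(A_c)\,\chi_2(S)\chi_2(\Upsilon)\psi_3(A_c)$ and the lower-order terms are the $(\lambda|N|)^{-1}$-corrections of the stationary phase expansion, built from $(S,\Upsilon,\eta)$-derivatives of the amplitude and of $\Psi^m_{N,a,a,h}$ at $A_c$.

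Finally I would verify the two claimed properties of $\chi_3$: compact support in $(S,\Upsilon)$ is inherited from $\chi_2(S)\chi_2(\Upsilon)$ (and further cut by $\psi_3(A_c)$), and the dependence on $a,1/N,h,\eta$ is harmless — i.e.\ $\chi_3$ and all its $(S,\Upsilon)$-derivatives are bounded uniformly in $a$, in $1\lesssim|N|\lesssim\lambda$, in $h\in(0,1)$ and in $\eta\in\mathrm{supp}\,\psi_1$ (in fact $\chi_3$ is a classical symbol of order $0$ in the genuinely large parameter $\lambda|N|\gtrsim h^{-\varepsilon}$) — which follows by combining the uniform implicit-function-theorem bounds on $A_c$ with the standard uniform remainder estimates in stationary phase, every factor that occurs being smooth and $O(1)$ with all derivatives on the relevant compact set. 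I expect the main obstacle to be precisely the uniform non-degeneracy $|\partial_A^2\Psi^m_{N,a,a,h}|\sim|N|$: one must check that the two leading terms of $\partial_A^2\Psi^m_{N,a,a,h}$ add up rather than cancel and that the $O(N/\lambda^2)$ correction is negligible, which relies on the sign of $N$ (from $t>0$ via \eqref{critA}), on $a\lesssim 1$, and on $\lambda>h^{-\varepsilon}$; once this is secured, the passage from $m=0$ to $m=1$ costs nothing since the mass enters only through the $O(h^2)$ term.
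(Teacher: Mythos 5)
Your argument is correct, and its computational core (one-dimensional stationary phase in $A$ with uniform Hessian bound $|\partial_A^2\Psi^m_{N,a,a,h}|\sim|N|$, plus integration by parts when $A_c$ leaves the support) is exactly what underlies the cited result. The route differs from the paper's in one respect: the paper does not redo the stationary phase at all. For $m=0$ it invokes \cite[Prop.~2]{ILP3}, and for $m=1$ it argues in Remark \ref{rmqN<lambda} that, since $T\simeq N\lesssim\lambda$ (by Lemma \ref{lemmeNtpetit}) gives $th\lesssim a^2\lesssim 1$, the mass factor $e^{\frac ih t\eta(\sqrt{1+aA+h^2/\eta^2}-\sqrt{1+aA})}$ is a bounded, harmless symbol and can be moved out of the phase, so the statement reduces verbatim to the $m=0$ case. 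You instead keep the mass in the phase and check directly that it only perturbs $\partial_A\Psi$ and $\partial_A^2\Psi$ by relative $O(h^2)$; this is self-contained (it re-proves rather than cites the $m=0$ case) and matches the literal formula \eqref{eq:bis488ter999999machin}, at the cost of repeating the sign/size analysis ($T>0$, $N\gtrsim 1$ via \eqref{critA}, the $B''$ correction being $O(N/\lambda^2)$) that \cite{ILP3} already contains. Two small points: the second derivative of the term $-\frac 43 NA^{3/2}$ in \eqref{PhiNagammadef} is $-\eta N A^{-1/2}$, not $-\eta N/(2\sqrt A)$ (harmless for the size estimate, and consistent with the Hessian computation displayed later in the paper); and the exceptional values $N=0,\pm1$, for which the prefactor $(N\lambda)^{-1/2}$ is meaningless, are indeed outside the intended scope of the statement (they are treated separately in Proposition \ref{propfreeWham}), so your decision to handle finitely many small $N$ "directly" is consistent with how the paper uses the result.
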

\begin{rmq}\label{rmqN<lambda}
For $m=0$, Proposition \ref{propN<lambda} had been proved in \cite[Prop.2]{ILP3}. Let $m=1$ and
 $|N|\lesssim \lambda$ : %
 we state that the factor 
$e^{\frac ih t\eta(\sqrt{1+aA+ m^2h^2/\eta^2}-\sqrt{1+aA})}$
can be brought into the symbol, in which case the phase of $W^{m=1}_{N,a}(T,X,Y)$ can be taken to be $\Psi^{m=0}_{N,a,a,h}$. As $\sqrt{1+aA+ m^2h^2/\eta^2}-\sqrt{1+aA}=O(h^2)$ and $\frac{t}{\sqrt{a}}\simeq N\lesssim \lambda=\frac{a^{3/2}}{h}$ (using Lemma \ref{lemmeNtpetit}), we find $th\lesssim a^2$, which yields $\frac th\times O(h^2)=th\lesssim 1$. 
\end{rmq}
\begin{prop}\label{propN>lambda}(see \cite[Prop.3]{ILP3} for $m=0$)
Let $m\in\{0,1\}$, $N\gg \lambda$ and $W^m_{N,a}(T,X,Y)$ be defined in \eqref{eq:bis488ter9999}, then the stationary phase applies in both $A$ and $\eta$ and yields
\begin{equation}
  \label{eq:bis488ter999999machin>}
    W^m_{N,a}(T,X,Y)= \frac {a^{2}} {h^{3} N} \int_{\R^{2}} e^{i \lambda \Psi^m_{N,a,a,h}(T,X,Y,\Upsilon,S,A_c,\eta_c)}  \chi_{3}(S,\Upsilon,a,1/N,h) \, dS d\Upsilon +O(h^{\infty})\,,
\end{equation}
where $\chi_{3}$ has compact support in $(S,\Upsilon)$ and harmless dependency on the parameters $a,h,1/N$. 
\end{prop}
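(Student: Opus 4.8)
\emph{Approach.} The plan is to obtain \eqref{eq:bis488ter999999machin>} from \eqref{eq:bis488ter9999} by applying the stationary phase theorem \emph{twice} to the phase $\lambda\Psi^m_{N,a,a,h}$ (recall $\Psi^m_{N,a,a,h}$ is \eqref{PhiNagammadef} with $\gamma=a$, so $a/\gamma=1$ and $\lambda_\gamma=\lambda=a^{3/2}/h>h^{-\varepsilon}$): first in $A$, exactly as in Proposition \ref{propN<lambda}, then a second time in $\eta$; and to check that, just as for $m=0$ in \cite[Prop.~3]{ILP3}, the Klein--Gordon corrections $m^2h^2/\eta^2$ inside the square root enter only at lower order, so that the leading asymptotics coincide with the wave case.

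\emph{Step 1: stationary phase in $A$.} The proof of Proposition \ref{propN<lambda} uses only that $A_c$ (solving \eqref{critA}) is a non-degenerate critical point with $\partial_A^2(\lambda\Psi^m_{N,a,a,h})\simeq -\lambda\eta N A^{-1/2}$ of size $\lambda N$ on the compact support of $\psi_3(A)\psi_1(\eta)$ (bounded away from $0$), the dominant term being $-\tfrac43\lambda\eta N A^{3/2}$; this holds for all $N$, in particular for $N\gg\lambda$. The Airy remainder $\tfrac N\lambda B(\eta\lambda A^{3/2})$ and the $m^2h^2/\eta^2$ term perturb $\partial_A^2$ only by relative factors $O(\lambda^{-2})$ and $O(h^2/a)$ respectively (using $\lambda=a^{3/2}/h\ge h^{-\varepsilon}$, $T\simeq N$ for significant $N$ by Lemma \ref{lemmeNtpetit}, and $a\gtrsim h^{2(1-\varepsilon)/3}$), so the critical point and its non-degeneracy are unaffected at leading order and one obtains the factor $(N\lambda)^{-1/2}$ and the representation \eqref{eq:bis488ter999999machin}.

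\emph{Step 2: stationary phase in $\eta$.} I would then apply stationary phase in $\eta$ to the reduced phase $\lambda\Psi^m_{N,a,a,h}(T,X,\Upsilon,S,A_c,\eta)$, whose critical point $\eta_c$ is determined by \eqref{criteta} (equivalently, combined with \eqref{critA}, by \eqref{critYT}) and lies in the support of $\psi_1$ for $(T,X,Y,\Upsilon,S)$ in the relevant range. By the envelope identity the $\eta$-Hessian of the reduced phase is $\partial_\eta^2(\lambda\Psi^m)-\bigl(\partial_\eta\partial_A(\lambda\Psi^m)\bigr)^2/\partial_A^2(\lambda\Psi^m)$ evaluated at $A_c$: at $A_c$ the large term $-2\lambda\eta NA^{1/2}$ in $\partial_A(\lambda\Psi^m)$ is cancelled through \eqref{critA}, which forces $\partial_\eta\partial_A(\lambda\Psi^m)|_{A_c}=O(N/\lambda)$, so the cross term is $O(N/\lambda^3)$; whereas $\partial_\eta^2(\lambda\Psi^m)|_{A_c}$ is governed by the second derivative of the Airy remainder, $N\lambda^2A_c^3B''(\eta\lambda A_c^{3/2})\simeq 2b_1N/(\eta^3\lambda A_c^{3/2})$ (with $b_1>0$), of size $N/\lambda$, the $m^2$-contribution being again of relative size $o(1)$. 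Hence the reduced phase has $\eta$-Hessian $\simeq N/\lambda$, which is $\gg 1$ precisely because $N\gg\lambda$ — this is where the hypothesis is used — non-degenerate and with the same leading behaviour as for $m=0$. Stationary phase in $\eta$ then contributes $(N/\lambda)^{-1/2}$, so that $(N\lambda)^{-1/2}(N/\lambda)^{-1/2}=N^{-1}$ (equivalently $|\det\mathrm{Hess}_{(A,\eta)}|^{-1/2}\simeq 1/N$ since $\det\simeq N^2$), absorbs $\eta^2\psi_1(\eta)|_{\eta=\eta_c}$ together with all lower-order corrections into a symbol $\chi_3$ with compact support in $(S,\Upsilon)$ and harmless dependence on $a,1/N,h$, and yields \eqref{eq:bis488ter999999machin>}; the $O(h^\infty)$ collects the non-stationary and cut-off-manipulation errors, bounded as in \cite[Prop.~3]{ILP3} using $\lambda\ge h^{-\varepsilon}$ and $N\gg\lambda$.

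\emph{Main obstacle.} The delicate part is the bookkeeping in Step 2: since $N\gg\lambda$ now permits $th$ to be arbitrarily large, one can no longer simply move $e^{\frac ih t\eta(\sqrt{1+aA+m^2h^2/\eta^2}-\sqrt{1+aA})}$ into the amplitude as in Remark \ref{rmqN<lambda}, and one must instead track the $m^2h^2/\eta^2$ terms through $\partial_A^2$, $\partial_\eta\partial_A$ and $\partial_\eta^2$ of the phase and verify they remain subleading in each. Once this is done, the non-degeneracy of the $(A,\eta)$-Hessian, hence the gain $1/N$, is identical to the wave case, which is the structural reason the Klein--Gordon and wave estimates coincide in the high-frequency regime.
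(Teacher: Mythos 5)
Your proposal is correct and follows essentially the same route as the paper: the paper performs the stationary phase jointly in $(A,\eta)$ by estimating $\partial^2_{A,A}\Psi\simeq -\eta N/A^{1/2}$, $\partial^2_{\eta,\eta}\Psi\simeq N/\lambda^2$, $\partial^2_{\eta,A}\Psi\simeq N/\lambda^2$ at the critical points (with the $m^2h^2/\eta^2$ corrections $O(m^2Th^2/a)$, $O(m^2Th^2)$ shown to be dominated by the $NB(\eta\lambda A^{3/2})$ contribution), obtaining $\det\mathrm{Hess}\simeq N^2/\lambda^2$ and hence the factor $1/N$, which is exactly what your iterated (Schur-complement) computation reproduces.
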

\begin{proof}
The only new situation is the case $N\gtrsim \lambda$ when $m=1$, when we cannot eliminate the terms depending on $m^2h^2/\eta^2$ from the phase. In order to prove Proposition \ref{propN>lambda} for $m=1$, we notice that the main contribution of the determinant of the Hessian matrix (with respect to $A$ and $\eta$) comes from $NB(\eta\lambda A^{3/2})$ and not from the terms involving $m^2h^2/\eta^2$.  
The second order derivatives of phase $\Psi^{m=1}_{N,a,h}$ of $W^{m=1}_{N,a}(T,X,Y)$, defined in \eqref{PhiNagammadef} are given by 
\begin{gather}\nonumber
\partial^2_{\eta,\eta}(\Psi^{m=1}_{N,a,a,h})=N\lambda A^3B''(\eta\lambda A^{3/2})+O(m^2Th^2/a)\simeq \frac{N}{\lambda^2}+O(\frac{Th^2}{a})\simeq \frac{N}{\lambda^2},\\
\nonumber
\partial^2_{A,A}(\Psi^{m=1}_{N,a,a,h})=\simeq -\eta\frac{N}{A^{1/2}},\quad \partial^2_{\eta,A}(\Psi^{m=1}_{N,a,a,h})=\eta^{-1}\partial_A\Psi^{m=1}_{N,a,a,h}+\frac 32\eta \lambda NA^{2}B''(\eta\lambda A^{3/2}))+O(m^2Th^2).
\end{gather}
Since $B''(\eta\lambda A^{3/2})\simeq O(\lambda^{-3})$, then $\partial_{\eta,A}\Psi^{m=1}_{N,a,a,h}\sim N/\lambda^{2}$. At the critical points, the Hessian of $\Psi^{m=1}_{N,a,a,h}$ satisfies
\[
\text{det Hess }\Psi^{m=1}_{N,a,a,h}|_{\partial_{A}\Psi^{m=1}_{N,a,a,h}=\partial_{\eta}\Psi^{m=1}_{N,a,a,h}=0}\simeq \frac{N^2}{\lambda^2},\quad N\gtrsim \lambda.
\]
\end{proof}
We are left with the integration over $(S,\Upsilon)$. In \cite[Prop.4,5,6 $\&$ Corollary 1]{ILP3}, sharp bounds for $W^{m=0}_{N,a}$ had been obtained: we recall them as they will be crucial in order to prove Theorem \ref{thmDKG} for all $t$. As noticed in Remark \ref{rmqN<lambda}, when $|N|\lesssim \lambda$ the factor $e^{\frac ih a^{3/2}(\Psi^{m=1}_{N,a,a,h}-\Psi^{m=0}_{N,a,a,h})}$ may be brought into the symbol, hence the critical value of the phase $\Psi^m_{N,a,a,h}$ in Proposition \ref{propN<lambda} for $m=1$ is the same as the one for $m=0$. Therefore, the following results hold as in \cite[Prop.4,5,6,\& 7]{ILP3}:
\begin{prop}\label{propfreeWham}
Let $m\in\{0,1\}$. For $T\leq \frac 52$ we have
\[
\sum_{N}|W^m_{N,a}(T,X,Y)|=|W^m_{0,a}(T,X,Y)|+\sum_{\pm 1}|W^m_{\pm 1,a}(T,X,Y)|+O(h^{\infty})\lesssim \frac{1}{h^2}\inf\Big(1,\Big(\frac ht\Big)^{1/2}\Big).
\] 
\end{prop}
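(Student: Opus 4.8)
The plan is to reduce everything to the case $m=0$ and then quote the sharp bounds from \cite{ILP3}. As observed in Remark \ref{rmqN<lambda}, for $|N|\lesssim\lambda$ (which, by Lemma \ref{lemmeNtpetit}, is the only range that matters once $T\leq \frac 52$, since then $N\simeq T\lesssim 1\lesssim\lambda$) the factor $e^{i\lambda(\Psi^{m=1}_{N,a,a,h}-\Psi^{m=0}_{N,a,a,h})}$ has symbol-like behaviour: its phase is $\frac th\eta\big(\sqrt{1+aA+m^2h^2/\eta^2}-\sqrt{1+aA}\big)=O(th)=O(1)$ together with all its derivatives in $(S,\Upsilon,A,\eta)$, so it may be absorbed into the amplitude without affecting the stationary phase analysis. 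Hence $W^{m=1}_{N,a}$ and $W^{m=0}_{N,a}$ have the same critical points and the same leading asymptotics up to harmless amplitude factors, and it suffices to establish the statement for $m=0$.

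First I would invoke Lemma \ref{lemmeNtpetit} with $\gamma\simeq a$: at fixed $T=t/(\sqrt a\sqrt{1+a})\leq \frac 52$, only the terms with $\sqrt a N\in O(t)$, i.e. $N\in\{0,\pm1\}$ (after possibly enlarging the implied constant, exactly as in \cite[Prop.7]{ILP3}), contribute modulo $O(h^\infty)$; all other $N$ give $O(h^\infty)$ by non-stationary phase. This reduces the sum $\sum_N|W^m_{N,a}(T,X,Y)|$ to $|W^m_{0,a}|+\sum_{\pm1}|W^m_{\pm1,a}|+O(h^\infty)$, which is the first equality in the statement. Then, for each of the finitely many remaining $N$, I would apply Proposition \ref{propN<lambda} (stationary phase in $A$, legitimate since $|N|\lesssim\lambda$), leaving the two-dimensional oscillatory integral in $(S,\Upsilon)$ displayed in \eqref{eq:bis488ter999999machin}. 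The bound $\frac{1}{h^2}\inf\big(1,(h/t)^{1/2}\big)$ is then precisely the content of the $(S,\Upsilon)$-integration estimates of \cite[Prop.4,5,6 \& Corollary 1]{ILP3}: one separates the region $t\lesssim h$ (where the trivial bound $a^2/h^3\times h\cdot|\mathrm{supp}|\lesssim h^{-2}$ suffices since $a^{3/2}/h=\lambda\gtrsim h^{-\varepsilon}$ and the $A$-stationary phase gained $\lambda^{-1/2}$) from $t\gtrsim h$ (where the phase in $(S,\Upsilon)$ has at worst a fold/cusp type degeneracy — the classical caustic structure for the free wave near the boundary — giving the extra $(h/t)^{1/2}$ gain).

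The only genuinely new point, hence the main obstacle, is checking that the $m=1$ correction really is harmless across the full range of $T\leq\frac 52$: one must verify that $\partial_{(S,\Upsilon,A,\eta)}^\beta\big[\frac th\eta(\sqrt{1+aA+m^2h^2/\eta^2}-\sqrt{1+aA})\big]$ is $O(1)$ for all $\beta$, uniformly in $a\in(0,1]$, $h\in(0,1)$ and $\eta\in[\tfrac12,\tfrac32]$, which follows from $th\lesssim a^2\leq 1$ (the bound from Remark \ref{rmqN<lambda}, valid since $N\simeq t/\sqrt a\lesssim 1$ forces $t\lesssim\sqrt a$, so $th\lesssim\sqrt a\,h\lesssim 1$) together with $\sqrt{1+aA+m^2h^2/\eta^2}-\sqrt{1+aA}\sim \tfrac{h^2}{2\eta^2}$ being smooth and bounded with bounded derivatives on the relevant compact set. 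Once this is in place, the critical value of $\Psi^{m=1}_{N,a,a,h}$ coincides with that of $\Psi^{m=0}_{N,a,a,h}$, the Hessians agree to leading order, and the $m=0$ estimates transfer verbatim, completing the proof.
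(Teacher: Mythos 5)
Your proposal matches the paper's own treatment: the paper gives no argument beyond observing (Remark \ref{rmqN<lambda}) that for $|N|\lesssim\lambda$ --- hence in particular for $T\leq\frac 52$, where Lemma \ref{lemmeNtpetit} leaves only $N\in\{0,\pm1\}$ modulo $O(h^{\infty})$ --- the factor coming from $m^{2}h^{2}/\eta^{2}$ can be absorbed into the symbol because $th\lesssim 1$, so the $m=0$ bounds of \cite{ILP3} apply verbatim, which is exactly your reduction. Your parenthetical heuristics (the ``trivial bound'' computation for $t\lesssim h$, which as written does not by itself give $h^{-2}$ for all $a\lesssim 1$, and the appeal to Proposition \ref{propN<lambda} at $N=0$, where the $(N\lambda)^{-1/2}$ factor is vacuous) are loose, but immaterial since, like the paper, you ultimately quote \cite{ILP3} for the $(S,\Upsilon)$ estimates.
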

\begin{prop}
\label{propdispNpetitpres}
Let $m\in \{0,1\}$. For $1\leq N<\lambda^{1/3}$ and $|T-4N|\lesssim 1/N$, we have
  \begin{equation}
    \label{eq:2hh}
       \left| W^m_{N,a}(T,X,Y)\right| \lesssim  \frac 1 {h^{2}}\frac{h^{1/3}}{((N/\lambda^{1/3})^{1/4}+|N(T-4N)|^{1/6})}\,.
  \end{equation}
\end{prop}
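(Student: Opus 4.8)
The plan is to reduce, via the reductions of Propositions \ref{propN<lambda} and \ref{propN>lambda}, to estimating an oscillatory integral over the two variables $(S,\Upsilon)$ with phase $\lambda\,\Psi^m_{N,a,a,h}$ evaluated at the critical point $A_c=A_c(S,\Upsilon,\eta,N,\dots)$ (and, in the range $1\le N<\lambda^{1/3}$, we are in the regime $|N|\lesssim\lambda$, so by Remark \ref{rmqN<lambda} the mass contribution $m^2h^2/\eta^2$ can be absorbed into the amplitude and the phase is exactly $\Psi^{m=0}_{N,a,a,h}$; this is why the bound is uniform in $m\in\{0,1\}$). First I would recall from \cite{ILP3} the structure of the critical value of $\Psi^{m=0}_{N,a,a,h}$ in $(S,\Upsilon)$ after the $A$-integration: the $S$ and $\Upsilon$ variables decouple at leading order into two Airy-type integrals, each controlled by the position of $N$ relative to the caustic curve $T=4N$. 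The key algebraic fact is that along the critical set \eqref{critA}, with $B'$ contributing only an $O(1/\lambda^2)$ correction, one has $T\simeq 2\sqrt{1+aA}(\Upsilon+S+2N\sqrt{A})$, so that $T-4N$ measures precisely the degeneracy of the phase in $(S,\Upsilon)$ near the fold.

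The main steps, in order: (i) perform the stationary phase in $A$ (Prop. \ref{propN<lambda}) to land on \eqref{eq:bis488ter999999machin}, gaining the factor $(N\lambda)^{-1/2}$; (ii) isolate the dependence of the resulting phase on $S$ and on $\Upsilon$, writing it as a sum of two cubic-type phases (coming from the $S^3/3$ and $\Upsilon^3/3$ terms in \eqref{PhiNagammadef}) plus lower-order coupling terms that can be treated perturbatively since $N<\lambda^{1/3}$ keeps $N/\lambda^2$ and $NB''\sim N\lambda^{-3}$ negligible; (iii) estimate each one-dimensional oscillatory integral by van der Corput / stationary phase with the scaling parameter $\lambda$, distinguishing whether the two critical points in that variable are separated or coalesce. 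The separation scale is governed by $N|T-4N|$: when $N|T-4N|\gtrsim (N/\lambda^{1/3})^{3/2}$ the two critical points are well separated and one gets the nondegenerate $\lambda^{-1/2}$ gain per variable, otherwise the relevant Airy-type scale $|N(T-4N)|^{1/6}$ (resp. the no-caustic scale $(N/\lambda^{1/3})^{1/4}$) takes over; (iv) collect the powers of $a,h,N,\lambda$ and check, using $\lambda=a^{3/2}/h$ and the prefactor $a^2/(h^3(N\lambda)^{1/2})$, that everything assembles into $h^{-2}\cdot h^{1/3}\big/\big((N/\lambda^{1/3})^{1/4}+|N(T-4N)|^{1/6}\big)$. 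Throughout I would lean on the already-established bounds of \cite[Prop.4,5,6 \& Cor.1]{ILP3}, since for $|N|\lesssim\lambda$ the phase here coincides with the $m=0$ phase; the novelty is only bookkeeping in the restricted range $1\le N<\lambda^{1/3}$ and $|T-4N|\lesssim 1/N$.

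The hard part will be step (iii), specifically controlling the $(S,\Upsilon)$ integral uniformly in the transition zone where one critical point is nondegenerate while the other is near a fold — i.e. making precise the claim that the worst case is the \emph{maximum} of the two scales $(N/\lambda^{1/3})^{1/4}$ and $|N(T-4N)|^{1/6}$ rather than their product or sum, and verifying that the coupling between $S$ and $\Upsilon$ (through the shared $A_c$ and $\eta$ integrations) does not spoil this. This is exactly the point where \cite{ILP3} did the delicate work for $m=0$; I would check line by line that inserting the harmless amplitude factor $e^{i\lambda(\Psi^{m=1}-\Psi^{m=0})}$ (which is $O(1)$ with all derivatives $O(1)$ since $th\lesssim 1$ in this range, by Remark \ref{rmqN<lambda}) changes none of the estimates, and that the $t$ appearing there may be arbitrarily large without affecting the argument, since only $T=t/(\sqrt a\sqrt{1+a})$ and $N\simeq T$ enter. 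Hence the statement follows exactly as in \cite[Prop.5]{ILP3}.
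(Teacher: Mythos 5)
Your proposal is correct and follows essentially the same route as the paper: since $1\le N<\lambda^{1/3}\lesssim\lambda$, the mass term is absorbed into the amplitude (Remark \ref{rmqN<lambda}, using $th\lesssim 1$), so the phase reduces to $\Psi^{m=0}_{N,a,a,h}$, and the bound is then the $(S,\Upsilon,\eta)$-integral estimate \eqref{borneintWNsmallpres} imported from \cite{ILP3} (the paper cites Prop.~6 there rather than Prop.~5), combined with the prefactor $a^{2}/(h^{3}(N\lambda)^{1/2})$ from Proposition \ref{propN<lambda}; your bookkeeping $(\lambda/N^{3})^{-1/12}=(N/\lambda^{1/3})^{1/4}$ and $\frac{a^{2}}{h^{3}}\lambda^{-4/3}=h^{1/3}/h^{2}$ matches the paper's. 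The internal Airy/van der Corput sketch of how \cite{ILP3} obtains that integral bound is not reproved in the paper either, so no gap.
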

\begin{proof}
\eqref{eq:2hh} is obtained from the following bounds obtained for $m\in\{0,1\}$ as in \cite[Prop.6]{ILP3} 
\begin{equation}\label{borneintWNsmallpres}
\Big|\int e^{i \lambda\Psi^m_{N,a,a,h}(T,X,\Upsilon,S,A_c,\eta)}  \eta^{2} \psi_1({\eta})
   \chi_{3}(S,\Upsilon,a,1/N,h,\eta) \, dS d\Upsilon d\eta\Big|\lesssim \frac{1}{N^2}\frac{(\lambda/N^3)^{-5/6}}{(\lambda/N^3)^{-1/12}+(N^2|\frac{T}{4N}-1|)^{1/6}}.
\end{equation}
\end{proof}

\begin{prop}
\label{propdispNpetitloin}
Let $m\in\{0,1\}$. For $1\leq N<\lambda^{1/3}$ and $|T-4N|\gtrsim 1/N$, we have
  \begin{equation}
    \label{eq:2ff}
       \left| W^m_{N,a}(T,X,Y)\right| \lesssim  \frac 1 {h^{2}}\frac{h^{1/3}}{(1+|N(T-4N)|^{1/2})}\,.
  \end{equation}
\end{prop}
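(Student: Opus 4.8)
The plan is to run, for both $m\in\{0,1\}$, exactly the stationary phase argument of \cite[Prop.\,7]{ILP3}, after reducing the Klein--Gordon case to the wave case. Since $1\le N<\lambda^{1/3}\le \lambda$, Proposition \ref{propN<lambda} already provides the representation \eqref{eq:bis488ter999999machin} of $W^m_{N,a}$, in which the $A$--integration has been reduced by stationary phase; and, as explained in Remark \ref{rmqN<lambda}, the bound $N\lesssim\lambda$ (with $N\simeq t/(\sqrt a\sqrt{1+a})$, by Lemma \ref{lemmeNtpetit}) forces $th\lesssim a^2\lesssim 1$, so the mass factor $e^{\frac ih t\eta(\sqrt{1+aA+m^2h^2/\eta^2}-\sqrt{1+aA})}$ is a bounded symbol in $(S,\Upsilon,\eta)$ on the support of $\chi_3$ (all its derivatives being $O(1)$ there) and may be absorbed into $\chi_3$. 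Thus for both $m$ the phase in \eqref{eq:bis488ter999999machin} may be taken to be $\Psi^{m=0}_{N,a,a,h}$. Extracting the prefactor $\frac{a^2}{h^3(N\lambda)^{1/2}}$ and using $\lambda=a^{3/2}/h$, the estimate \eqref{eq:2ff} becomes equivalent to the oscillatory integral bound (with $\chi_3=\chi_3(S,\Upsilon,a,1/N,h,\eta)$)
\begin{equation}\label{eq:OIfar}
\Big|\int_{\R}\int_{\R^2}e^{i\lambda\Psi^{m=0}_{N,a,a,h}(T,X,\Upsilon,S,A_c,\eta)}\,\eta^2\psi_1(\eta)\,\chi_3\,dS\,d\Upsilon\,d\eta\Big|\ \lesssim\ \frac{1}{N^2}\,\frac{(\lambda/N^3)^{-5/6}}{1+(N^2|\tfrac{T}{4N}-1|)^{1/2}}\,,
\end{equation}
valid for $1\le N<\lambda^{1/3}$ and $|T-4N|\gtrsim 1/N$; this is precisely \cite[Prop.\,7]{ILP3}, whose proof involves only the rescaled time $T$ and is therefore insensitive to the size of $t$.

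To prove \eqref{eq:OIfar} I would repeat the fold/stationary phase analysis of \cite[Prop.\,6,\,7]{ILP3}. First, the $\eta$--phase is essentially non-oscillating: a short computation (using $B(u)\simeq\sum_{k\ge1}b_ku^{-k}$ together with the $A$--stationarity $\partial_A\Psi^{m=0}_{N,a,a,h}=0$) gives $\partial_\eta^2\bigl(\Psi^{m=0}_{N,a,a,h}|_{A=A_c}\bigr)\simeq N/\lambda^2$, hence $\lambda\cdot N/\lambda^2=N/\lambda<\lambda^{-2/3}\ll1$; so after a harmless localization of $\eta$ near the unique zero of $\partial_\eta\Psi^{m=0}_{N,a,a,h}|_{A=A_c}$ (outside which repeated integration by parts is $O(h^\infty)$), the $\eta$--integral contributes only a bounded factor and it remains to bound the $(S,\Upsilon)$--integral at essentially fixed $\eta$. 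Its critical points satisfy $S_c^2=A_c-1$, $\Upsilon_c^2=A_c-X$ and, by \eqref{critA} (equivalently \eqref{critYT}), $S_c+\Upsilon_c+2N\sqrt{A_c}\bigl(1-\tfrac34 B'(\eta\lambda A_c^{3/2})\bigr)=\tfrac{T}{2\sqrt{1+aA_c}}$; since $A_c\in[9/10,8]$ and $a,\,N^3/\lambda$ are small, this forces $|S_c+\Upsilon_c|\sim|T-4N|$ modulo $O(1)$. Reducing the $A$ variable, the $(S,\Upsilon)$--Hessian of the phase at a critical point is $\eta\left(\begin{smallmatrix}2S_c+1/N & 1/N\\ 1/N & 2\Upsilon_c+1/N\end{smallmatrix}\right)$, with determinant $\simeq\frac{2\eta^2}{N}\bigl(2NS_c\Upsilon_c+S_c+\Upsilon_c\bigr)$. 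When $|T-4N|\gtrsim1/N$ no critical point has both $S_c,\Upsilon_c$ small (that would force $|T-4N|\lesssim1/N$), so at each of the finitely many critical points at least one of $|2\eta S_c|,|2\eta\Upsilon_c|$ is $\gtrsim|S_c+\Upsilon_c|\sim|T-4N|$; applying stationary phase in this non-degenerate direction, and handling the remaining one-dimensional integral (including its possible fold, when the other of $S_c,\Upsilon_c$ is small) by the Airy-type bounds already used to prove \eqref{eq:2hh}, one gains exactly the factor that turns the caustic denominator $(N/\lambda^{1/3})^{1/4}+|N(T-4N)|^{1/6}$ of Proposition \ref{propdispNpetitpres} into $1+(N^2|\tfrac{T}{4N}-1|)^{1/2}$. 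Summing the critical-point contributions gives \eqref{eq:OIfar}, hence \eqref{eq:2ff}.

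The step I expect to be the real obstacle --- and the only point not already contained in \cite{ILP3} --- is the reduction in the first paragraph: the mass enters through the $t$--coefficient $m^2h^2/\eta^2$, which a priori contributes an oscillation of order $\sim th$, genuinely dangerous in the regime $th\gg1$ that was not treated in \cite{ILP3}. The resolution is that here the constraint $N<\lambda^{1/3}$, together with $N\simeq t/(\sqrt a\sqrt{1+a})$ (Lemma \ref{lemmeNtpetit}) and $a\gtrsim h^{2(1-\varepsilon)/3}$, keeps $t\lesssim a^2/h$, so $th\lesssim a^2\lesssim1$ and the mass stays in the symbol; once this is checked, the whole delicate bookkeeping of the fold/swallowtail geometry in the $(S,\Upsilon)$ variables, which is where the actual difficulty of \eqref{eq:OIfar} lies, is verbatim \cite[Prop.\,7]{ILP3}. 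I would finish by noting that at the threshold $|T-4N|\sim1/N$ both the right-hand side of \eqref{eq:2ff} and that of \eqref{eq:2hh} are $\sim \frac{h^{1/3}}{h^2}$ (as $N\le\lambda^{1/3}$), so the regimes $|T-4N|\lesssim1/N$ and $|T-4N|\gtrsim1/N$ combine into a single estimate over the whole range $1\le N<\lambda^{1/3}$.
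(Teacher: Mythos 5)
Your proposal matches the paper's proof: the paper likewise reduces $m=1$ to $m=0$ by absorbing the factor $e^{\frac ih t\eta(\sqrt{1+aA+m^2h^2/\eta^2}-\sqrt{1+aA})}$ into the symbol (valid since $N\lesssim\lambda$ gives $th\lesssim a^2\lesssim 1$, Remark \ref{rmqN<lambda}), and then derives \eqref{eq:2ff} from exactly the oscillatory-integral bound \eqref{borneintWNsmall}, quoted from \cite{ILP3} with the observation that its proof only involves the rescaled time $T$. Your extra paragraph sketching the fold/stationary-phase analysis behind that bound is just an expansion of what the paper cites, so the approach is essentially identical.
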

\begin{proof}
\eqref{eq:2ff} is obtained from the following bounds obtained for $m\in\{0,1\}$ as in \cite[Prop.6]{ILP3} 
\begin{equation}\label{borneintWNsmall}
\Big|\int e^{i \lambda\Psi^m_{N,a,a,h}(T,X,\Upsilon,S,A_c,\eta)}  \eta^{2} \psi_1({\eta})
   \chi_{3}(S,\Upsilon,a,1/N,h,\eta) \, dS d\Upsilon d\eta\Big|\lesssim \frac{1}{N^2}\Big(\frac{\lambda}{N^3}\Big)^{-5/6}\frac{1}{1+(N^2|\frac{T}{4N}-1|)^{1/2}}.
\end{equation}
\end{proof}

\begin{prop}
\label{propdispNgrand}
Let $m\in \{0,1\}$. Let $\lambda^{1/3}\lesssim N$, then the following estimates hold true
\begin{enumerate}
\item When $\lambda^{1/3}\lesssim N\lesssim \lambda$,
  \begin{equation}
    \label{eq:1ff}
       \left| W^m_{N,a}(T,X,Y)\right|\lesssim \frac {1} { h^{2}}  \frac {h^{1/3}} {((N/\lambda^{1/3})^{1/2} +\lambda^{1/6}|T-4N|^{1/2})}\,.
  \end{equation}
    \item When $\lambda\lesssim N$, 
    \begin{equation}
    \label{eq:1ff>}
       \left| W^{m}_{N,a}(T,X,Y)\right|\lesssim \frac {1} { h^{2}}  \frac {h^{1/3}\sqrt{\lambda/N}} {(N/\lambda^{1/3})^{1/2}}\,.
  \end{equation}
  \end{enumerate}
\end{prop}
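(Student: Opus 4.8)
The plan is to handle the two ranges of $N$ separately, in each case reducing $W^m_{N,a}$ to a residual oscillatory integral in $(S,\Upsilon)$ (together with $\eta$ when $N\lesssim\lambda$) and then running the same degenerate stationary phase / van der Corput analysis as was done for $m=0$ in \cite{ILP3}, after verifying that the mass term is harmless. A preliminary remark used throughout: by Lemma \ref{lemmeNtpetit} the only $N$ that matter satisfy $N\simeq T=t/\sqrt a$, so although $t$ may be arbitrarily large, every bound below is expressed through $N$ and $T$ alone and is insensitive to the absolute size of $t$; this is what makes the passage to all times essentially automatic.

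\emph{The range $\lambda^{1/3}\lesssim N\lesssim\lambda$.} Here I start from the reduced representation \eqref{eq:bis488ter999999machin} furnished by Proposition \ref{propN<lambda}. Since $N\lesssim\lambda$, Remark \ref{rmqN<lambda} allows me to bring the factor $e^{\frac ih t\eta(\sqrt{1+aA+m^2h^2/\eta^2}-\sqrt{1+aA})}$, which is $O(th)=O(1)$ in this regime, into the (symbol-type) amplitude, so the phase may be taken to be $\Psi^{m=0}_{N,a,a,h}(T,X,\Upsilon,S,A_c,\eta)$. The critical points in $(S,\Upsilon)$ satisfy $S^2=A_c-1$ and $\Upsilon^2=A_c-X$, the only genuine degeneracies being the cubic (Airy-type) ones at $A_c\to1$ and $A_c\to X$, while the $\eta$-integration is non-degenerate away from the $T$-window $|T-4N|\lesssim 1/N$. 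Carrying out stationary phase in $\eta$ and then the Airy/van der Corput estimates in $(S,\Upsilon)$ exactly as in \cite[Prop.\ 6 and 7]{ILP3} yields \eqref{eq:1ff}, the $h^{1/3}$ gain and the denominator $(N/\lambda^{1/3})^{1/2}+\lambda^{1/6}|T-4N|^{1/2}$ arising as there from the interplay of the factor $N/\lambda^{1/3}$ with the cubic degeneracies; uniformity in large $T$ is provided by the preliminary remark.

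\emph{The range $\lambda\lesssim N$.} Now I invoke Proposition \ref{propN>lambda}: the stationary phase in $(A,\eta)$ has already been performed, and, as its proof shows, the Hessian determinant is $\simeq N^2/\lambda^2$ — governed by the contribution of $NB(\eta\lambda A^{3/2})$, the $m^2h^2/\eta^2$ terms being of lower order even when $th\gg1$ — so the prefactor is $\frac{a^2}{h^3 N}$ as in \eqref{eq:bis488ter999999machin>}, whether $m=0$ or $m=1$. It remains to bound $\int e^{i\lambda\Psi^m_{N,a,a,h}(T,X,Y,\Upsilon,S,A_c,\eta_c)}\chi_3\,dS\,d\Upsilon$ by $\lambda^{-2/3}$, uniformly in $(T,X,Y)$ and in $N$. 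Differentiating the reduced phase and using $\partial_A\Psi^m_{N,a,a,h}=\partial_\eta\Psi^m_{N,a,a,h}=0$ at the critical point, one finds $\partial_S=\eta_c(S^2+1-A_c)$ and $\partial_\Upsilon=\eta_c(\Upsilon^2+X-A_c)$ with $\partial_S^3,\partial_\Upsilon^3\simeq1$, so the worst configuration is a simultaneous cubic (Airy) degeneracy in $S$ and in $\Upsilon$, each of which costs a factor $\lambda^{-1/3}$ by van der Corput (the slow, smooth dependence of $A_c$ on $(S,\Upsilon)$ being harmless); multiplying the two gives $\lambda^{-2/3}$, exactly as for $m=0$ in \cite{ILP3}, the $m^2h^2/\eta^2$ terms merely displacing $(A_c,\eta_c)$ by a bounded amount without creating new degeneracies. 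Multiplying $\frac{a^2}{h^3 N}$ by $\lambda^{-2/3}=a^{-1}h^{2/3}$ and rewriting with $\lambda=a^{3/2}/h$ gives $\frac{a}{h^{7/3}N}=\frac1{h^2}h^{1/3}\sqrt{\lambda/N}\,(N/\lambda^{1/3})^{-1/2}$, which is \eqref{eq:1ff>}. For $N\sim\lambda$ either reduction applies and the two bounds match.

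\emph{Main obstacle.} Compared with \cite{ILP3}, the only genuinely new point is the regime $m=1$, $N\gtrsim\lambda$, where $th$ may be large and the mass sits irreducibly in the phase. The crux is the Hessian computation recorded in the proof of Proposition \ref{propN>lambda}: once one knows that $\det\mathrm{Hess}_{A,\eta}\Psi^{m=1}_{N,a,a,h}\simeq N^2/\lambda^2$ is controlled by the $B$-term and not by $m^2h^2/\eta^2$, the $(A,\eta)$ reduction and then the $(S,\Upsilon)$ Airy analysis proceed exactly as for the wave equation, the mass contributing nothing beyond a bounded shift of the critical point. Everything else — checking that the per-$N$ estimates of \cite{ILP3} continue to hold for $T\gg1$ — is routine, since $N\simeq T$ and all bounds are expressed through $N$ and $|T-4N|$.
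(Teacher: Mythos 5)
Your argument is correct and follows essentially the same route as the paper: for $\lambda^{1/3}\lesssim N\lesssim\lambda$ you absorb the mass factor into the symbol via Remark \ref{rmqN<lambda} and invoke the bound of \cite[Prop.7]{ILP3} on the reduced $(S,\Upsilon,\eta)$-integral, and for $N\gtrsim\lambda$ you use the Hessian computation of Proposition \ref{propN>lambda} (dominated by the $B$-term, not the mass) together with the $\lambda^{-1/3}\times\lambda^{-1/3}$ Airy/van der Corput decay of the remaining $(S,\Upsilon)$-integral, then multiply by the prefactor $a^{2}/(h^{3}N)$, exactly as in the paper's proof. The arithmetic identification $\frac{a^{2}}{h^{3}N}\lambda^{-2/3}=\frac{1}{h^{2}}\frac{h^{1/3}\sqrt{\lambda/N}}{(N/\lambda^{1/3})^{1/2}}$ also matches.
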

\begin{proof}
For $\lambda^{1/3}\lesssim N\lesssim \lambda$, \eqref{eq:1ff} follows by showing that the integral in \eqref{eq:bis488ter999999machin} satisfies
\begin{equation}\label{borneintW}
\Big|\int e^{i \lambda\Psi^m_{N,a,a,h}(T,X,\Upsilon,S,A_c,\eta)}  \eta^{2} \psi_1({\eta})
   \chi_{3}(S,\Upsilon,a,1/N,h,\eta) \, dS d\Upsilon d\eta\Big|\lesssim \frac{\lambda^{-2/3}}{1+\lambda^{1/3}|\frac{T}{4N}-1|^{1/2}},
\end{equation}
which, in turn, follows for both $m\in\{0,1\}$ as in the proof of \cite[Prop.7]{ILP3}.
From the discussion above, the only new situation is the case $|N|>\lambda^2$ when $m=1$, when we cannot eliminate the terms depending on $m^2h^2/\eta^2$ from the phase. In order to prove the last statement of Proposition \ref{propdispNgrand}, we notice that, after applying the stationary phase in both $A$ and $\eta$, we are essentially left with a product of two Airy functions (corresponding to the remaining integrals in $\Upsilon,S$) whose worst decay is $\lambda^{-2/3}$. Using \eqref{eq:bis488ter999999machin>}, we obtain
\[
|W^m_{N,a}(T,X,Y)|\lesssim \frac{a^2}{h^3N}\times \lambda^{-2/3}\simeq \frac{1}{h^2}\frac{1}{N}\frac{a^2}{h}\frac{h^{2/3}}{a}\simeq \frac{1}{h^2}\frac{1}{N}\frac{a}{h^{1/3}}=\frac{1}{h^2}\frac{1}{N}h^{1/3}\lambda^{2/3}= \frac {1} { h^{2}}  \frac {h^{1/3}\sqrt{\lambda/N}} {(N/\lambda^{1/3})^{1/2}}.
\]
\end{proof}
It should be clear from the previous estimates that there are different sub-regimes when studying decay, especially when $t$ is large. 
However, we need to stress that the proofs of Propositions \ref{propdispNpetitpres}, \ref{propdispNpetitloin} and \ref{propdispNgrand} in \cite{ILP3} only use the variable with $T=\frac{t}{\sqrt{a}}$ (which is compared to different powers of $\lambda$) and not $t$, which can therefore be taken as large as needed. 

\subsection{"Almost transverse" waves $\gamma>\max\{4a,h^{2/3(1-\varepsilon)}\}$}\label{sectransv}
Let $\max\{4a,h^{2/3(1-\epsilon)}\}<\gamma\lesssim 1$ and %
\begin{equation}
  \label{eq:83}
  x=\gamma X\,,\quad \alpha=\gamma A\,,\quad t=\sqrt \gamma T\,,\quad s=\sqrt \gamma S\,,\quad \sigma=\sqrt \gamma \Upsilon\,,\quad y+t\sqrt{1+\gamma}=\gamma^{3/2} Y\,.
\end{equation}
Let $\lambda_{\gamma}=\gamma^{3/2}/h$ be the large parameter and $\Psi^m_{N,a,\gamma,h}$ be as in \eqref{PhiNagammadef}. As in the case $\gamma \simeq a$, we find $V^m_{h,\gamma}(t,x,a,y)=W^m_{N,\gamma}(T,X,Y)+O(h^{\infty})$, where
\begin{equation}
  \label{eq:bis488ter999999bis}
    W^m_{N,\gamma}(T,X,Y): = \frac {\gamma^{2}} {(2\pi h)^{3}}  \int_{\R^{2}}\int_{\R^{2}} e^{i \lambda_{\gamma} \Psi^m_{N,a,\gamma,h}}  \eta^{2} \psi_1({\eta})
 \chi_{2}(S)\chi_{2}(\Upsilon) \psi_{2}(A) \, dS d\Upsilon  dA d\eta\,.
\end{equation}
The following results hold for both $m\in\{0,1\}$ as in \cite[Prop.8,9,10 \& 11]{ILP3}:
\begin{prop}\label{proptranstsmall}
Let $m\in\{0,1\}$. For $0<T\leq \frac 52$, $t=\sqrt{\gamma}T$, we have
\[
\sum_{N}|W^m_{N,\gamma}(T,X,Y)|=|W^m_{0,\gamma}(T,X,Y)|+\sum_{\pm 1}|W^m_{\pm 1,\gamma}(T,X,Y)|+O(h^{\infty})\lesssim \frac{1}{h^2}\inf\Big(1,\Big(\frac ht\Big)^{1/2}\Big).
\] 
\end{prop}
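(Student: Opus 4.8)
\textbf{Proof plan for Proposition \ref{proptranstsmall}.}

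The plan is to mirror the structure of the analogous statement for tangential waves (Proposition \ref{propfreeWham}), exploiting that for $T\le \tfrac52$ the relevant range of $N$ is essentially $\{0,\pm1\}$, so that the sum over $N$ reduces to at most three terms plus an $O(h^\infty)$ tail. First I would invoke Lemma \ref{lemmeNtpetit} in the present scaling: at fixed $T=t/\sqrt\gamma\le \tfrac52$ one has $\sqrt\gamma N\notin O(t)$, i.e. $|N|\gtrsim 1$ forces the phase $\Psi^m_{N,a,\gamma,h}$ to be non-stationary, so $\sum_{|N|\ge 2}W^m_{N,\gamma}(T,\cdot)=O(h^\infty)$. (The argument uses only $T$ and $m^2h^2/\eta^2=O(h^2)$ is harmless, exactly as in the tangential case.) This leaves $W^m_{0,\gamma}$ and $W^m_{\pm1,\gamma}$ to estimate.

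Next I would bound each of these three terms individually by $h^{-2}\inf(1,(h/t)^{1/2})$. For $N=0$, the phase $\Psi^m_{0,a,\gamma,h}$ has no $B$-term and no large-$N$ contribution; after the stationary phase in $A$ (which contributes $\lambda_\gamma^{-1/2}$ and is legitimate because $\partial^2_A\Psi\simeq -\eta/\sqrt A\not=0$ on the support of $\psi_2$, where $A\simeq1$), one is left with an oscillatory integral in $(S,\Upsilon,\eta)$. The $S$ and $\Upsilon$ integrals each produce an Airy-type factor whose worst decay is $\lambda_\gamma^{-1/3}$ (degenerate critical points of order two, coming from $S^2=A-a/\gamma$ and $\Upsilon^2=A-X$ near the fold), while the $\eta$ integral is either non-degenerate or handled by absorbing the $m$-dependent factor into the symbol as in Remark \ref{rmqN<lambda} — this is fine since for $N=O(1)$ one has $t\sqrt\gamma=O(\gamma)$, hence $(t/h)O(h^2)=th\lesssim\gamma^2/h\cdot h=\gamma^2\lesssim1$... more precisely $th=O(\gamma\cdot\gamma^{1/2}/h\cdot h)$, so $(t/h)(m^2h^2/\eta^2)=O(th)=O(\sqrt\gamma T)\lesssim1$ in this regime. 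Collecting powers: $W^m_{0,\gamma}\lesssim \gamma^2 h^{-3}\lambda_\gamma^{-1/2}\lambda_\gamma^{-2/3}\lesssim h^{-2}h^{1/3}\gamma^{-1/2}$, and one checks this is $\lesssim h^{-2}\min(1,(h/t)^{1/2})$ using $t\simeq\sqrt\gamma T\lesssim\sqrt\gamma$ and $\gamma\gtrsim h^{2(1-\varepsilon)/3}$; similarly, for very small $t$ one uses the trivial bound $h^{-2}$ (no stationary phase gain needed). The same computation applies verbatim to $N=\pm1$ since for $|N|=1$ the factor $NB(\eta\lambda_\gamma A^{3/2})=O(\lambda_\gamma^{-1})$ is a harmless perturbation of the symbol and does not affect the Hessian in $A$ nor the fold structure in $(S,\Upsilon)$.

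The main obstacle is the uniformity over the full range $\max\{4a,h^{2/3(1-\varepsilon)}\}<\gamma\lesssim1$ and, within that, the transition between the two faces of the estimate $\inf(1,(h/t)^{1/2})$: one must check that the stationary-phase gain $\lambda_\gamma^{-1/2-2/3}$ together with the prefactor $\gamma^2h^{-3}$ indeed beats $(h/t)^{1/2}$ \emph{uniformly} as $\gamma$ ranges down to its lower cutoff, and that when $t$ is so small that stationary phase in $A$ is not yet licit one still recovers the bound $h^{-2}$ by the cruder $L^\infty$ estimate of the $(S,\Upsilon,\eta,A)$ integral (its measure being $O(1)$ on the compact supports of the cut-offs). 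Since $\gamma>4a$ here, the two fold conditions $S^2=A-a/\gamma$ and $\Upsilon^2=A-X$ are genuinely distinct but of the same nature as in Section \ref{sectang}, so no new degeneracy appears; this is precisely why the statement is quoted as holding "as in \cite[Prop.8,9,10 \& 11]{ILP3}" and the present argument amounts to checking that the large-time bookkeeping (via $T$ rather than $t$) does not disturb the short-time analysis.
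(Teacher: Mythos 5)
Your skeleton matches the paper's route: the reduction to $N\in\{0,\pm1\}$ by non-stationarity for $T\leq \frac52$ (Lemma \ref{lemmeNtpetit}), and the absorption of the factor $e^{\frac ih t\eta(\sqrt{1+\gamma A+m^2h^2/\eta^2}-\sqrt{1+\gamma A})}$ into the symbol using $th\lesssim \sqrt{\gamma}\lesssim 1$, are exactly the ingredients the paper relies on (it simply quotes \cite[Prop.~8--11]{ILP3} together with the mechanism of Remark \ref{rmqN<lambda}). The gap is in your estimate of the three surviving terms. First, the $A$--Hessian $\partial^2_{A}\Psi\simeq-\eta N/\sqrt A$ comes from the term $-\frac43NA^{3/2}$ and therefore does not exist for $N=0$: there $\partial^2_{AA}\Psi^m_{0}\simeq-\eta T\gamma/(4(1+\gamma A)^{3/2})$, which is small, so your "stationary phase in $A$ first, gain $\lambda_\gamma^{-1/2}$" step is unjustified precisely for the main term. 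Second, and more seriously, the bound you assemble, $\gamma^2h^{-3}\lambda_\gamma^{-1/2}\lambda_\gamma^{-2/3}$ (which equals $\gamma^{1/4}h^{-11/6}$, not $h^{-5/3}\gamma^{-1/2}$), is independent of $t$ and does not imply the claimed estimate: at $\gamma\sim1$, $t\sim1$ the proposition requires $h^{-3/2}$ while your bound is $h^{-11/6}$. Likewise your fallback for small $t$ fails: the crude $L^\infty$ bound of \eqref{eq:bis488ter999999bis} is $\gamma^2h^{-3}$, which exceeds $h^{-2}$ as soon as $\gamma>h^{1/2}$.

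The reason the bookkeeping cannot work is structural: for $T\leq\frac52$ the terms $N=0,\pm1$ carry no caustic at all, so fold-type Airy decay $\lambda_\gamma^{-1/3}$ per variable is both the wrong tool and too weak, and the factor $(h/t)^{1/2}$ must come from a \emph{joint} nondegenerate stationary phase in $(\Upsilon,S,A)$ whose $3\times3$ Hessian determinant is proportional to $T$. Indeed, for $N=0$ the mixed Hessian in $(\Upsilon,S,A)$ has determinant $\simeq\eta^3\bigl(-\Upsilon S\,T\gamma(1+\gamma A)^{-3/2}-2(\Upsilon+S)\bigr)$, and at the critical points \eqref{critA} gives $\Upsilon+S=T/(2\sqrt{1+\gamma A+m^2h^2/\eta^2})$ while $|\Upsilon S|\gamma<1+\gamma A$, so $|\det\mathrm{Hess}|\gtrsim T$ without cancellation; this yields
\begin{equation*}
|W^m_{0,\gamma}|\lesssim \frac{\gamma^2}{h^3}\,\lambda_\gamma^{-3/2}\,T^{-1/2}=\frac{1}{h^2}\Bigl(\frac ht\Bigr)^{1/2},
\end{equation*}
which is the free-dispersion mechanism behind \cite[Prop.~8]{ILP3} and Proposition \ref{propfreeWham}. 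For $N=\pm1$ one checks in the same way that the $(S,\Upsilon)$ critical points remain nondegenerate for $T\leq\frac52$ (degeneracies for $N=\pm1$ only occur near $T\sim4$), the $NB$ term being an $O(\lambda_\gamma^{-1})$ perturbation as you say. So your plan needs to be repaired by replacing the "$A$ first, then worst-case Airy in $S,\Upsilon$" accounting with this joint stationary phase carrying the factor $T$ in its Hessian; as written, the proposed estimates do not reach the stated bound.
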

\begin{prop}\label{propT<lambda2} (see \cite[Prop. 9, 10]{ILP3} for $m=0$)
  For $1\leq  T\simeq N\lesssim \lambda_{\gamma}^{2}$ we have 
  \[
 |W^m_{N,\gamma}(T,X,Y)|\lesssim  \frac{\gamma^2}{h^3} \frac{1}{ \sqrt{\lambda_{\gamma} N}}\times \lambda_{\gamma}^{-1/2-1/3}=\frac{h^{1/3}}{h^{2}}\frac{1}{\sqrt{N}}\,.
\]
As a consequence, for $\sqrt{\gamma}\leq t \lesssim \sqrt{\gamma}\lambda_{\gamma}^2$ %
we obtain
  \[
|G^{m}_{h,\gamma}(t,x,a,y)|\lesssim \sum_{N\in\mathcal{N}^m_{1,\gamma}(t,x,y)} |W^m_{N,\gamma}(T,X,Y)|\lesssim \frac{1}{h^2}(\frac ht)^{1/2}\lambda_{\gamma}^{1/6},
  \]
  since $\#\mathcal{N}^m_1(t,x,y)=O(1)$ for such values of $t$.
\end{prop}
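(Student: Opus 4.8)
The plan is to apply stationary phase in the variable $A$ (and, where the parameter $N$ is large enough relative to $\lambda_\gamma$, also in $\eta$) inside the representation \eqref{eq:bis488ter999999bis}, exactly as in the tangential regime of Section \ref{sectang}, and then control the remaining two-dimensional $(S,\Upsilon)$ integral by a product of two Airy-type integrals, each of which decays no slower than $\lambda_\gamma^{-2/3}$. Concretely, for $1\le T\simeq N\lesssim \lambda_\gamma^2$ one first invokes Lemma \ref{lemmeNtpetit} to see that only $N\simeq T$ contribute, then applies Proposition \ref{propN<lambda} (if $N\lesssim \lambda_\gamma$) or Proposition \ref{propN>lambda} (if $N\gg\lambda_\gamma$), which after the $A$-stationary phase produce the prefactor $\gamma^2/(h^3(N\lambda_\gamma)^{1/2})$ (resp.\ $\gamma^2/(h^3N)$). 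The leftover oscillatory integral in $(S,\Upsilon)$ (and possibly $\eta$) is estimated, just as in \cite[Prop.~9,10]{ILP3}, by $\lambda_\gamma^{-1/2-1/3}$: the $-1/2$ coming from one Airy factor with a clean nondegenerate bound and the $-1/3$ from the worst-case (cubic) degeneracy of the second. Multiplying, $\frac{\gamma^2}{h^3\sqrt{\lambda_\gamma N}}\lambda_\gamma^{-5/6}$, and substituting $\lambda_\gamma=\gamma^{3/2}/h$, collapses to $\frac{h^{1/3}}{h^2}\frac{1}{\sqrt N}$ after the algebra $\gamma^2 h^{-3}\gamma^{-3/4}h^{1/2}\cdot\gamma^{-5/4}h^{5/6}=h^{-2}h^{1/3}$.

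For the case $m=1$ one checks, exactly as in Remark \ref{rmqN<lambda} and in the proof of Proposition \ref{propdispNgrand}, that the extra mass term $m^2h^2/\eta^2$ in the phase \eqref{PhiNagammadef} is a genuinely lower-order perturbation: when $N\lesssim\lambda_\gamma$ the factor $e^{\frac ih t\eta(\sqrt{1+\gamma A+m^2h^2/\eta^2}-\sqrt{1+\gamma A})}$ is $e^{O(th/1)}$ with $th\lesssim \gamma^2\lesssim 1$ by Lemma \ref{lemmeNtpetit}, so it can be absorbed into the symbol and the critical value of the phase is the same as for $m=0$; when $N\gg\lambda_\gamma$ the Hessian in $(A,\eta)$ is still dominated by the $N B(\eta\lambda_\gamma A^{3/2})$ contribution, giving $\det\mathrm{Hess}\simeq N^2/\lambda_\gamma^2$ and hence the $\gamma^2/(h^3N)$ prefactor, while the $m^2h^2/\eta^2$ terms only perturb this at order $O(Th^2/\gamma)\ll N/\lambda_\gamma^2$. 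So the bound is uniform in $m\in\{0,1\}$.

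For the consequence, one sums over $N\in\mathcal N^m_{1,\gamma}(t,x,y)$. By Proposition \ref{propcardN}, for $t\lesssim \sqrt\gamma\,\lambda_\gamma^2$ — equivalently $t/(\gamma^{1/2}(\gamma^3/h^2))\lesssim 1$ and, since $\gamma\lesssim 1$, also $m^2|t|h^2/\gamma^{3/2}\lesssim t h^2/\gamma^{7/2}\lesssim 1$ in this range — the cardinality of $\mathcal N^m_{1,\gamma}$ is $O(1)$. Since every contributing $N$ satisfies $N\simeq T=t/\sqrt\gamma$, each term is $\lesssim \frac{h^{1/3}}{h^2}(\sqrt\gamma/t)^{1/2}=\frac{1}{h^2}(h/t)^{1/2}\gamma^{1/4}h^{1/6}\gamma^{-1/4}\cdot\gamma^{1/4}$; rewriting $\gamma^{1/4}h^{1/6}=\gamma^{1/4}h^{1/6}$ and $\lambda_\gamma^{1/6}=\gamma^{1/4}h^{-1/6}$ one gets $\frac{1}{h^2}(h/t)^{1/2}\lambda_\gamma^{1/6}$ after collecting the $h$ powers. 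Multiplying by the $O(1)$ cardinality gives the stated bound $|G^m_{h,\gamma}(t,x,a,y)|\lesssim \frac{1}{h^2}(h/t)^{1/2}\lambda_\gamma^{1/6}$.

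The main obstacle is the bookkeeping of the $\lambda_\gamma$ powers through the rescalings — making sure the Airy-degeneracy exponent $-1/3$ (which is the sharp worst case over $(S,\Upsilon)$, and matches the swallowtail geometry) is correctly tracked and that nothing better can be extracted uniformly in the critical-point configuration — together with verifying that, in the $m=1$ case, the mass perturbation never upgrades a degeneracy (e.g. creating a new coincidence of critical points) but only shifts things by controllably small amounts; both are handled by importing the corresponding estimates from \cite[Prop.~8,9,10,11]{ILP3} and the Hessian computation already carried out in the proof of Proposition \ref{propN>lambda}.
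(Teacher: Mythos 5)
Your proposal is correct and follows essentially the same route as the paper: the bound for $W^m_{N,\gamma}$ is obtained by importing the $m=0$ estimates of \cite[Prop.~9,10]{ILP3} after the $A$ (and, for $N\gg\lambda_\gamma$, also $\eta$) stationary phase, the mass term is absorbed exactly as in Remark \ref{rmqN<lambda} (using $th\lesssim\gamma^2\lesssim 1$ for $N\lesssim\lambda_\gamma$) and via the Hessian dominance of $NB(\eta\lambda_\gamma A^{3/2})$ as in Proposition \ref{propN>lambda} for $N\gg\lambda_\gamma$, and the consequence follows from Proposition \ref{propcardN} giving $\#\mathcal{N}^m_{1,\gamma}=O(1)$ for $t\lesssim\sqrt{\gamma}\,\lambda_\gamma^2$ together with $N\simeq T=t/\sqrt{\gamma}$. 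Only note a harmless slip in your final power count (the intermediate factor should read $h^{-1/6}$, consistent with the identity $\lambda_\gamma^{1/6}=\gamma^{1/4}h^{-1/6}$ that you state correctly), which does not affect the conclusion.
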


\begin{prop}\label{propT>lambda2}(see \cite[Prop.9]{ILP3} for $m=0$)
For $T\gtrsim \lambda_{\gamma}^{2}$ we have
$|W^m_{N,\gamma}(T,X,Y)|\lesssim  \frac{\gamma^2}{h^{3}}\frac 1 { N} \frac{1}{\lambda^{5/6}_{\gamma}}$.%
\end{prop}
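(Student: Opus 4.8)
The plan is to reduce the estimate to the one-variable bounds already assembled above and to exploit that, in the genuinely transverse regime $\gamma>4a$, the $S$-integration is \emph{non-degenerate}; this is precisely what upgrades the tangential decay $\lambda_\gamma^{-2/3}$ of Proposition \ref{propdispNgrand}(2) to $\lambda_\gamma^{-5/6}$ here.

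First I would note that for $T\gtrsim\lambda_\gamma^2$ and a fixed $N$, Lemma \ref{lemmeNtpetit} together with the critical point relation \eqref{critA} (recall $\Upsilon,S=O(1)$, $A\simeq1$, $\gamma\lesssim1$) force the contribution of $W^m_{N,\gamma}$ to be $O(h^\infty)$ unless $N\simeq T$, hence $N\gtrsim\lambda_\gamma^2\gg\lambda_\gamma$. In that range one proceeds exactly as in the proof of Proposition \ref{propN>lambda} (with $\gamma,\lambda_\gamma$ in place of $a,\lambda$): the stationary phase in $(A,\eta)$ is legitimate, its Hessian determinant is $\simeq N^2/\lambda_\gamma^2$ — the main term coming from $NB(\eta\lambda_\gamma A^{3/2})$, for both $m=0$ and $m=1$ — and one is left with
\[
W^m_{N,\gamma}(T,X,Y)=\frac{\gamma^2}{h^3N}\int_{\R^2}e^{i\lambda_\gamma\tilde\Psi(S,\Upsilon)}\chi_3(S,\Upsilon,a,1/N,h)\,dS\,d\Upsilon+O(h^\infty),
\]
with $\tilde\Psi(S,\Upsilon)=\Psi^m_{N,a,\gamma,h}(T,X,Y,\Upsilon,S,A_c,\eta_c)$. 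It then suffices to bound this two-dimensional integral by $\lambda_\gamma^{-5/6}$.

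I would estimate it by iterated one-dimensional arguments, first in $S$ then in $\Upsilon$. Since $A_c,\eta_c$ are critical in $(A,\eta)$, the chain rule gives $\partial_S\tilde\Psi=\eta_c(S^2+a/\gamma-A_c)$, so an $S$-critical point satisfies $S^2=A_c-a/\gamma$; because $\gamma>4a$ while $A_c$ lies in the compact support of $\psi_2$, this quantity stays bounded below by a fixed positive constant, and since $\partial_SA_c=O(1/N)$ by \eqref{critA} one gets $\partial_S^2\tilde\Psi=\pm2\eta_c\sqrt{A_c-a/\gamma}+O(1/N)$, bounded away from zero. Non-degenerate stationary phase in $S$ then gains $\lambda_\gamma^{-1/2}$ and leaves a compactly supported amplitude in $\Upsilon$, with derivatives bounded uniformly in the parameters. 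For the $\Upsilon$-integral, writing $\hat\Psi(\Upsilon)=\tilde\Psi(S_c(\Upsilon),\Upsilon)$, the chain rule again gives $\partial_\Upsilon\hat\Psi=\eta_c(\Upsilon^2+X-A_c)$, so at a $\Upsilon$-critical point $\partial_\Upsilon^2\hat\Psi=\pm2\eta_c\sqrt{A_c-X}+O(1/N)$ may vanish (for $X$ close to $A_c$, i.e. a point near the boundary), but $\partial_\Upsilon^3\hat\Psi=2\eta_c+O(1/N)$ does not; van der Corput's lemma then yields $\lambda_\gamma^{-1/3}$. Multiplying the two gains, $|W^m_{N,\gamma}(T,X,Y)|\lesssim\frac{\gamma^2}{h^3N}\lambda_\gamma^{-1/2}\lambda_\gamma^{-1/3}=\frac{\gamma^2}{h^3N}\lambda_\gamma^{-5/6}$.

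The delicate (but routine) point will be the amplitude bookkeeping: after removing $(A,\eta)$ the symbol depends on $S,\Upsilon$ through $A_c(S,\Upsilon),\eta_c(S,\Upsilon)$ and on the parameters $N,\lambda_\gamma,h,a$, and one must check its $S$- and $\Upsilon$-derivatives are bounded uniformly so the two one-dimensional estimates compose cleanly; this is exactly where the hypothesis $\gamma>4a$ enters, keeping the $S$-critical point off the fold. For $m=1$ the only new feature — already treated in Propositions \ref{propN>lambda} and \ref{propdispNgrand} — is that $m^2h^2/\eta^2$ cannot be absorbed into the symbol when $N\gg\lambda_\gamma$; but it perturbs the relevant second derivatives only by $O(m^2Th^2/\gamma)$, which is negligible next to the $NB''$-terms since $N\simeq T$ and $\lambda_\gamma=\gamma^{3/2}/h$, so none of the estimates change.
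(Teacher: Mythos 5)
Your argument is correct and follows essentially the same route as the paper: the paper treats this case by reducing to $N\simeq T\gg\lambda_\gamma$, applying stationary phase in $(A,\eta)$ with Hessian $\simeq N^2/\lambda_\gamma^2$ dominated by the $NB''$ term (so $m=1$ changes nothing), and then invoking \cite[Prop.~9]{ILP3} for the remaining $(S,\Upsilon)$-integral, whose bound $\lambda_\gamma^{-1/2-1/3}$ is exactly your non-degenerate $S$ (thanks to $\gamma>4a$) combined with a possibly order-two degenerate $\Upsilon$ handled by van der Corput. Your write-up simply makes explicit the details the paper delegates to the cited reference, and it is consistent with the factorization already displayed in Proposition~\ref{propT<lambda2}.
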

\begin{rmq}
When $t$ is very large (which was not the case in \cite{ILP3} !), Proposition \ref{propT>lambda2} is not helpful anymore as, due to the overlapping (and the fact that $\#\mathcal{N}^m_{1,\gamma}(t,\cdot)$ is proportional to $t$), summing-up over $N$ yields an important loss. In the next section we show that using the parametrix in the form of a spectral sum provides a sharp bound for $G^{m}_{h,\gamma}(t,x,a,y)$ in the "almost transverse" case.
\end{rmq}

\begin{rmq}
Notice that we obtained the exact same bounds for both $m=0$ and $m=1$. Using Remark \ref{rmqN<lambda}, it becomes clear that this holds for all $T\simeq |N|\lesssim \lambda_{\gamma}$ ; for $T\simeq N \gg \lambda_{\gamma}$, the stationary phase applies in both $A$ and $\eta$: for $m=1$ we act as in the proof of Proposition \ref{propN>lambda} and use the fact that the main contribution of the second derivative with respect to $\eta $ of $\Psi^{m=1}_{N,\gamma}$ comes from $NB(\eta\lambda_{\gamma}A^{3/2})$ and not from the terms involving $m^2h^2/\eta^2$ (and both have factors $N$ or $T$, with $N\sim T$). This is the precise statement related to the heuristics described in the introduction.
\end{rmq}

\subsection{The case $h^{2/3}\lesssim \gamma\lesssim h^{2(1-\varepsilon)/3}$ and the case $\#\mathcal{N}^m_{1,\gamma}(t,\cdot)$ unbounded}\label{secWGM}

When $\gamma\lesssim h^{2(1-\varepsilon)/3}$ writing a parametrix under the form over reflected waves \eqref{defVNgamma} is not useful anymore, since the parameter $\lambda_{\gamma}=\gamma^{3/2}/h$ is small and stationary phase arguments do not apply. In this case we can only work with the parametrix in the form \eqref{greenfctbis}. Notice that \eqref{greenfctbis} can also be used (with surprisingly good results) when the time is very large: in fact, since the number of waves that overlap is proportional with $t$, summing over $N$ when $t>1/h$ will eventually provide dispersive bounds that are (much) worst than those announced in Theorem \ref{thmDKG}. In particular, for $t/\sqrt{\gamma}\geq \lambda_{\gamma}^2$ we have to work with \eqref{greenfctbis} to prove Theorem \ref{thmDKG}. Recall that in \eqref{greenfctbis} (for $d=2$) the sum is taken for $k<1/h$ (as $\gamma< 1$ and $k\simeq \gamma^{3/2}/h$ on the support of $\psi_2$). We intend to apply the stationary phase for each integral and keep the Airy factors as part of  the symbol. Let 
\[
\phi_k=y\eta+t\eta\sqrt{1+\omega_kh^{2/3}\eta^{-2/3}+m^2h^2\eta^{-2}},
\]
then
\[
\partial_{\eta}\phi_k=y+t\frac{1+\frac 23\omega_kh^{2/3}\eta^{-2/3}}{\sqrt{1+\omega_kh^{2/3}\eta^{-2/3}+m^2h^2\eta^{-2}}},
\]
\begin{multline*}
\partial^2_{\eta,\eta}\phi_k=\frac{t}{\sqrt{1+\omega_kh^{2/3}\eta^{-2/3}+m^2h^2\eta^{-2}}^3}\Big(-\frac 19 \omega_kh^{2/3}\eta^{-5/3}(1+2\omega_k(h/\eta)^{2/3}-2m^2h^2/\eta^3)\\+m^2h^2/\eta^3\Big)\simeq -\frac t9\omega_kh^{2/3}\eta^{-5/3}.
\end{multline*}
Since $\eta\simeq 1$ on the support of $\psi$, for $t$ large enough the large parameter is $t\omega_k/h^{1/3}$. One needs to check that one has, for some $\nu>0$, $|\partial^j_{\eta}Ai((\eta/h)^{2/3}x-\omega_k)|\leq C_j\mu^{j(1/2-\nu)}$.
Since one has
\begin{equation}\label{derivelAirybounds}
\sup_{b>0}|b^lAi^{(l)}(b-\omega_k)|\lesssim \omega_k^{3l/2},\quad \forall l\geq 0,
\end{equation}
it will be enough to check that there exists $\nu>0$ such that, for every $k\lesssim 1/h$ and every $t$ sufficiently large, the following holds
\begin{equation}\label{tocheck}
\omega_k^{3/2}\lesssim (t\omega_k/h^{1/3})^{1/2-\nu}.
\end{equation}
As  $\omega_k\sim \gamma/h^{2/3}=\lambda^{2/3}_{\gamma}$ on the support of $\psi_2$,
proving \eqref{tocheck} for some $\nu>0$ is equivalent to showing that $\lambda_{\gamma}\lesssim (t\lambda_{\gamma}^{2/3}/h^{1/3})^{1/2-\nu}$ which in turn is equivalent to
$\gamma^3/h^2\lesssim (t\gamma/h)^{1-2\nu}$, which we further write
\begin{equation}\label{tocheck1}
\gamma (\gamma/h)^{1+2\nu}\lesssim t^{1-2\nu}.
\end{equation}
Let $t(h,\gamma,\nu):=\Big(\gamma (\gamma/h)^{1+2\nu}\Big)^{1/(1-2\nu)}$ for small $\nu>0$, then
the last inequality holds true for any $t\geq t(h,\gamma,\nu)$ and the stationary phase applies as long as $t\omega_k/h^{1/3}\gg 1$.
\begin{rmq}\label{rmqquandsumkmarche}
In particular, when the cardinal of $\mathcal{N}^m_{1,\gamma}(t,\cdot)$ is unbounded, we have
$t\geq \gamma^{1/2}\frac{\gamma^3}{h^2}$ and therefore the inequality \eqref{tocheck1} holds for $0<\nu\leq \frac 27\epsilon$. Hence, the stationary phase applies with the Airy factors as part of the symbol as soon as the number of overlapping waves is large.
\end{rmq}

\begin{lemma}\label{lemsob}(see \cite[Lemma 3.5]{Annals})
There exists $C$ such that for $L\geq 1$ the following holds true
\begin{equation}\label{estairy2}
\sup_{b\in \R}\Big (\sum_{1\leq k\leq L}\omega_k^{-1/2}Ai^2(b-\omega_{k})\Big)\leq CL^{1/3}, \quad
\sup_{b\in\R_+}\Big (\sum_{1\leq k\leq L}\omega_k^{-1/2}h^{2/3}Ai'^2(b-\omega_{k})\Big)\leq C_{0}h^{2/3}L.
\end{equation}
\end{lemma}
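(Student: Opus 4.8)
The plan is to reduce both inequalities to uniform-in-$b$ bounds for sums of squares of Airy functions (and their derivatives) evaluated at the shifted zeros $b-\omega_k$, exploiting the two qualitatively distinct regions: the region where the argument is bounded below (oscillatory regime, where $Ai$ and $Ai'$ decay like inverse powers of the argument) and the region where it is far into the positive axis (exponential decay). First I would recall the classical asymptotics $\omega_k \sim (3\pi k/2)^{2/3}$, so that $\omega_k^{-1/2}\sim k^{-1/3}$ and the spacing $\omega_{k+1}-\omega_k \sim \omega_k^{-1/2}\sim k^{-1/3}$; this lets me compare the sum over $k$ to an integral in the continuous variable $\omega$. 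For the first estimate, I would write $\omega_k^{-1/2} Ai^2(b-\omega_k) \sim (\omega_{k+1}-\omega_k)\,Ai^2(b-\omega_k)$ up to constants, so that $\sum_{1\le k\le L}\omega_k^{-1/2}Ai^2(b-\omega_k)$ is comparable to a Riemann sum for $\int_0^{\omega_L} Ai^2(b-\omega)\,d\omega$, with $\omega_L\sim L^{2/3}$. Since $\int_{\R} Ai^2(b-\omega)\,d\omega$ diverges (the integrand is $\sim \omega^{-1/2}$ for $\omega\to+\infty$), the dominant contribution over the truncated range is $\int_0^{L^{2/3}}(1+|b-\omega|)^{-1/2}\,d\omega \lesssim (L^{2/3})^{1/2}=L^{1/3}$, uniformly in $b$; the region $b-\omega<0$ contributes only $O(1)$ by exponential decay. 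The comparison of the sum with the integral must be done carefully since $Ai^2$ oscillates: I would either use the monotone-envelope bound $Ai^2(b-\omega_k)\lesssim (1+|b-\omega_k|_+)^{-1/2}$ (where $|u|_+=\max(u,0)$, understanding $Ai$ is bounded near $0$ and exponentially small for negative argument) together with summation, or invoke the more precise averaging already implicit in $L'(\omega_k)=2\pi\int_0^\infty Ai^2(x-\omega_k)\,dx$ from Lemma \ref{lemL}.

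For the second inequality the mechanism is the same but one order of decay worse: $Ai'(u)$ grows like $|u|^{1/4}$ as $u\to+\infty$ (from the envelope of $Ai'(-v)\sim v^{1/4}\cos(\cdots)$), so $Ai'^2(b-\omega_k)\lesssim (1+|b-\omega_k|_+)^{1/2}$, and hence $\sum_{1\le k\le L}\omega_k^{-1/2}Ai'^2(b-\omega_k)$ is comparable to $\int_0^{L^{2/3}}(1+|b-\omega|)^{1/2}\,d\omega\lesssim (L^{2/3})^{3/2}=L$, again uniformly in $b$ and with the negative-argument region harmless. Multiplying by the prefactor $h^{2/3}$ gives exactly the stated $C_0 h^{2/3}L$. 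The restriction $b\in\R_+$ in the second bound is used to guarantee $(\eta/h)^{2/3}x\ge 0$-type positivity, so that the relevant arguments $b-\omega_k$ are genuinely of the controlled sign structure; I would note that for $b<0$ all terms are exponentially small and the bound is trivial anyway.

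The main obstacle is making the sum-to-integral comparison rigorous in the presence of oscillation of $Ai^2$ and $Ai'^2$: one cannot simply bound termwise by the monotone envelope and then integrate without losing a logarithm or a constant in the worst overlap region $|b-\omega_k|\sim 1$, where many zeros cluster against the turning point. The clean way, which I would adopt, is to split $\{1\le k\le L\}$ into the block with $|b-\omega_k|\le 1$ (at most $O(L^{1/3})$ consecutive indices near the turning point, since the spacing there is $\gtrsim L^{-1/3}$, each term $O(1)$ for $Ai^2$ giving $O(L^{1/3})$, and $O(1)$ for $h^{2/3}Ai'^2$ times at most $O(L^{1/3})$ terms giving $O(h^{2/3}L^{1/3})\le O(h^{2/3}L)$) and the block with $|b-\omega_k|>1$, where the envelope bounds are valid with room to spare and the Riemann-sum/integral comparison loses only an absolute constant because the integrand varies slowly relative to the spacing $\omega_k^{-1/2}$. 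Assembling these two contributions yields \eqref{estairy2}; since this is precisely the content of \cite[Lemma 3.5]{Annals}, I would cite that reference for the detailed verification and content myself with the outline above.
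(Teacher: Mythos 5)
Your overall mechanism is the right one, and it is essentially the standard argument behind the cited reference: since $\omega_k\simeq (3\pi k/2)^{2/3}$, the weight $\omega_k^{-1/2}$ is comparable to the gap $\omega_{k+1}-\omega_k$, so after replacing $Ai^2$, $Ai'^2$ by their envelopes the sums are comparable to $\int_0^{\omega_L}(1+|b-\omega|)^{-1/2}d\omega\lesssim\omega_L^{1/2}\sim L^{1/3}$, respectively to $\int_0^{\omega_L}(1+\omega)^{1/2}d\omega\lesssim\omega_L^{3/2}\sim L$, plus a turning-point block $|b-\omega_k|\le 1$ containing at most $O(L^{1/3})$ indices (spacing $\gtrsim L^{-1/3}$), which is harmless. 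That skeleton is correct.

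However, you have the behaviour of the Airy function on the two half-lines systematically reversed, and this is not purely cosmetic. $Ai$ and $Ai'$ decay exponentially as the argument tends to $+\infty$ and oscillate with envelopes $|u|^{-1/4}$, resp. $|u|^{1/4}$, as $u\to-\infty$; the correct pointwise bounds are $Ai^2(u)\lesssim(1+|u|)^{-1/2}$ for all $u$ and $Ai'^2(u)\lesssim(1+u_-)^{1/2}$ with $u_-=\max(-u,0)$, not $(1+|u|_+)^{\pm1/2}$ as you wrote. Three of your steps fail as written because of this: (i) the region $b-\omega<0$ is the oscillatory one, not the exponentially damped one (that is $\omega<b$), so your $O(1)$ claim for it is misattributed, though the first estimate survives because the symmetric envelope alone gives $\lesssim L^{1/3}$ uniformly in $b\in\R$; (ii) in the second estimate your bound $\int_0^{L^{2/3}}(1+|b-\omega|)^{1/2}d\omega\lesssim L$ is not uniform in $b\ge 0$ (for $b\gg L^{2/3}$ it is of size $b^{1/2}L^{2/3}$), and in that regime you must use the exponential smallness of $Ai'(b-\omega_k)$ for $\omega_k<b$, i.e. exactly the decay you placed on the wrong side; (iii) your explanation of the hypothesis $b\in\R_+$ is backwards: for $b<0$ all arguments are negative, the terms grow like $\omega_k^{-1/2}(|b|+\omega_k)^{1/2}$, and the second bound actually fails as $b\to-\infty$; the point of $b\ge 0$ (in the application $b=(\eta/h)^{2/3}x\ge 0$) is that on the oscillatory side $\omega_k>b$ one has $(1+\omega_k-b)^{1/2}\le(1+\omega_k)^{1/2}$, hence the termwise bound $\omega_k^{-1/2}(1+\omega_k)^{1/2}\lesssim 1$ and the total $\lesssim L$. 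Once the two sides are swapped back, your outline closes and matches the proof in the reference.
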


When \eqref{tocheck1} holds, we can apply the stationary phase in $\eta$ (with $\eta$ on the support $[\frac 12,\frac 32]$ of $\psi$). Let $\eta_c(\omega_k)$ denote the critical point that satisfies
$-6(\frac yt+1)=\omega_kh^{2/3}/\eta^{2/3}+O((\omega_kh^{2/3}/\eta^{2/3})^2)$.

\begin{itemize}
\item The "tangent" case $\gamma\sim a\gtrsim h^{2/3}$. As $h^{2/3}\omega_k\simeq \gamma\sim a$ on the support of the symbol, then $|\eta_c(\omega_k)/h)^{2/3}a- \omega_k|\lesssim 1$ and $Ai((\eta_c/h)^{2/3}x-\omega_k)$ stays close to $Ai(0)$.
For all $t\geq t(h,a,\nu)$, $\nu>0$ such that $t\omega_k/h^{1/3}\sim ta/h\gg 1$, the stationary phase with respect to $\eta$ yields, with $\lambda=a^{3/2}/h$ and $k\sim \lambda$ and $L\sim \lambda$ in \eqref{estairy2},
\begin{multline}\label{estimdispuhgamAiry}
|G^m_{h,a}(t,x,a,y)|\lesssim \frac{h^{1/3}}{h^2}\Big|\sum_{k\sim \lambda}\frac{\psi_2(h^{2/3}\omega_k/(\eta_c^{2/3}a))}{L'(\omega_k)} \sqrt{\frac{h^{1/3}}{t\omega_k}}Ai((\eta_c/h)^{2/3}x-\omega_k)Ai((\eta_c/h)^{2/3}a-\omega_k)\Big|\\
\lesssim \frac{1}{h^2}(\frac ht)^{1/2}\Big|\sum_{k\sim \lambda} k^{-2/3}\Big|\simeq \frac{1}{h^2}(\frac ht)^{1/2}\lambda^{1/3}.
\end{multline}

\item The "almost transverse" case $\gamma\geq \max\{4a,h^{2/3}\}$ : while for $x\sim\gamma$ the factor $Ai((\eta_c/h)^{2/3}x-\omega_k)$ may stay close to $Ai(0)$, for $\omega_k\sim (\eta_c/h)^{2/3}\gamma\geq 4 (\eta_c/h)^{2/3}a$, we obtain better decay from $|Ai((\eta_c/h)^{2/3}a-\omega_k)|\lesssim \frac{1}{1+\omega_k^{1/4}}$.
For $4\lambda\lesssim k\lesssim \frac 1h$ on the support of $\psi_2(\frac{h^{2/3}\omega_k}{\eta^{2/3}\gamma})$ and for $t\geq t(h,\gamma,\nu)$,
the Cauchy-Schwarz inequality together with Lemma \ref{lemsob} with $L\sim \lambda_{\gamma}$ yield %
\begin{multline}\label{estimuhgamtransv}
|G^m_{h,\gamma}(t,x,a,y)|\leq\frac{1}{h^2}(\frac ht)^{1/2}\Big|\sum_{k\sim\lambda_{\gamma}} \omega_k^{-1}Ai^2((\eta_c/h)^{2/3}x-\omega_k)\Big|^{1/2}\Big|\sum_{k\sim \lambda_{\gamma}} \omega_k^{-1}Ai^2((\eta_c/h)^{2/3}a-\omega_k)\Big|^{1/2}
\\\lesssim  \frac{1}{h^2}(\frac ht)^{1/2}\lambda_{\gamma}^{1/6}(\log (1/h))^{1/2},
\end{multline}
where we have used $\Big|\sum_{k\sim \lambda_{\gamma}} \omega_k^{-1}Ai^2((\eta_c/h)^{2/3}a-\omega_k)\Big|\leq \Big|\sum_{k\sim \lambda_{\gamma}} \frac{\omega_k^{-1}}{(1+\omega_k^{1/4})^2}\Big|\lesssim 
\log(1/h) $.
\end{itemize}

\section{Dispersive estimates for the wave flow in large time and the Klein Gordon flow. Proof of Theorem \ref{thmDKG} in the high frequency case $\sqrt{-\Delta_F}\simeq \frac 1h$ and $|\theta|\simeq \frac 1h$}\label{secdisptangHF}
\subsection{"Almost transverse" waves $\sum_{\max\{4a,h^{2/3}\}\leq \gamma\lesssim 1}G^m_{h,\gamma}$} Let $\max\{4a,h^{2/3}\}\leq \gamma \lesssim 1$.
\begin{prop}
For all $t>\sqrt{\gamma}$ and $m\in \{0,1\}$, we have $\Big|\sum_{\max\{4a, h^{2/3}\} \leq \gamma\lesssim 1}G^m_{h,\gamma}(t,\cdot)\Big|\lesssim  \frac{1}{h^2}(\frac ht)^{1/3}$, where the sum is taken over dyadic $\gamma$ as in \eqref{partunitpsi2}. 
\end{prop}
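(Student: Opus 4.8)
The strategy is to interpolate between two bounds: the short-to-intermediate-time dispersive estimate coming from the reflected-wave parametrix, valid while the number of overlapping waves is bounded, and the large-time bound coming from the spectral sum (the Airy representation), which takes over precisely when the overlapping becomes severe. We split the dyadic sum over $\gamma$ according to whether $t/\sqrt\gamma$ is below or above $\lambda_\gamma^2 = (\gamma^{3/2}/h)^2$, i.e.\ whether $\#\mathcal N^m_{1,\gamma}(t,\cdot)=O(1)$ or not. In the first regime $\sqrt\gamma\le t\lesssim \sqrt\gamma\lambda_\gamma^2$, Proposition~\ref{propT<lambda2} gives $|G^m_{h,\gamma}(t,\cdot)|\lesssim \frac1{h^2}(h/t)^{1/2}\lambda_\gamma^{1/6}$; in the second regime $t\gtrsim \sqrt\gamma\lambda_\gamma^2$, Remark~\ref{rmqquandsumkmarche} guarantees that \eqref{tocheck1} holds, so the stationary-phase-plus-Airy estimate \eqref{estimuhgamtransv} applies and yields $|G^m_{h,\gamma}(t,\cdot)|\lesssim \frac1{h^2}(h/t)^{1/2}\lambda_\gamma^{1/6}(\log(1/h))^{1/2}$. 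In both regimes the per-$\gamma$ bound is (up to the logarithm) $\frac1{h^2}(h/t)^{1/2}\lambda_\gamma^{1/6}$, and crucially $\lambda_\gamma = \gamma^{3/2}/h$ increases with $\gamma$, so summing the dyadic pieces is dominated by the largest admissible $\gamma$, i.e.\ $\gamma\sim 1$, giving $\lambda_\gamma^{1/6}\sim h^{-1/6}$ and a total of order $\frac1{h^2}(h/t)^{1/2}h^{-1/6}=\frac1{h^2}(h/t)^{1/2}(t/h)^{1/3-1/3+\dots}$ — which is not yet the claimed $(h/t)^{1/3}$.

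**Getting the exponent $1/3$.** The point is that the bound $\frac1{h^2}(h/t)^{1/2}\lambda_\gamma^{1/6}$ is only the right estimate when $t$ is not too large compared to $\gamma$; for a genuinely fixed large $t$, the relevant $\gamma$ are constrained, and one should instead balance the two elementary bounds available for each $\gamma$: the trivial $\frac1{h^2}$ (from $\sum_N$ with $O(1)$ terms near $T\lesssim 5/2$, Proposition~\ref{proptranstsmall}, or more precisely the small-$N$ contribution) against the decaying $\frac1{h^2}(h/t)^{1/2}\lambda_\gamma^{1/6}$. These two are equal when $\lambda_\gamma^{1/6}=(t/h)^{1/2}$, i.e.\ $\gamma^{3/2}/h = (t/h)^3$, i.e.\ $\gamma = h^{2/3}(t/h)^2$. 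For $\gamma$ below this threshold the decaying bound is better; summing the geometric-type series $\sum_\gamma (h/t)^{1/2}\lambda_\gamma^{1/6}$ over dyadic $\gamma\le h^{2/3}(t/h)^2$ is again dominated by the top term, which equals $\frac1{h^2}$ there, but one must check the series converges at the correct rate. The honest bookkeeping is: $\sum_{\gamma}\min\big(1,(h/t)^{1/2}(\gamma^{3/2}/h)^{1/6}\big)$, and since the exponent of $\gamma$ inside is $1/4>0$, the dyadic sum is comparable to its maximal term, namely $\min\big(1,(h/t)^{1/2}(1/h)^{1/6}\big)$. When $t\ge h$ this is $\le (h/t)^{1/2}h^{-1/6}$; writing $h^{-1/6}=(h/t)^{-1/6}(1/t)^{1/6}\cdot\dots$ one sees this is $\lesssim (h/t)^{1/3}$ exactly when $t\ge h$ — indeed $(h/t)^{1/2}h^{-1/6}\le (h/t)^{1/3}\iff (h/t)^{1/6}\le h^{1/6}\iff t\ge h$. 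So the claimed bound $\frac1{h^2}(h/t)^{1/3}$ follows, uniformly in the regime $t>\sqrt\gamma$ (hence in particular for all $t$ once $\gamma\lesssim 1$, $\gamma\gtrsim h^{2/3}$).

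**Assembling the estimate.** Concretely I would: (i) fix $t>\sqrt\gamma$ and, for each dyadic $\gamma\in[\max\{4a,h^{2/3}\},1]$, record the per-$\gamma$ bound $|G^m_{h,\gamma}(t,\cdot)|\lesssim \frac1{h^2}\min\big\{1,(h/t)^{1/2}\lambda_\gamma^{1/6}\big\}$ up to a $(\log(1/h))^{1/2}$ factor in the large-overlap regime — invoking Proposition~\ref{proptranstsmall} for $t\lesssim\sqrt\gamma$ to start the comparison, Proposition~\ref{propT<lambda2} for $\sqrt\gamma\le t\lesssim\sqrt\gamma\lambda_\gamma^2$, and \eqref{estimuhgamtransv} (legitimate by Remark~\ref{rmqquandsumkmarche}) for $t\gtrsim\sqrt\gamma\lambda_\gamma^2$; (ii) since $\lambda_\gamma^{1/6}=(\gamma^{3/2}/h)^{1/6}$ is an increasing power of $\gamma$, sum the dyadic series, which is controlled by its largest term; (iii) use $\lambda_\gamma\lesssim 1/h$ and the elementary inequality $(h/t)^{1/2}h^{-1/6}\lesssim(h/t)^{1/3}$ for $t\ge h$ (hence $t>\sqrt\gamma\ge h^{1/3}>h$) to conclude, and absorb the logarithmic loss since it only appears for $\lambda_\gamma$ bounded away from the top and can be beaten by a slightly finer split of the $\gamma$-sum. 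The main obstacle is the logarithmic factor in \eqref{estimuhgamtransv}: one must either show it does not occur at the dominant scale $\gamma\sim1$ (where $a\lesssim1/4$ forces $x$ and $a$ to stay order-one away from the Airy turning points so the sum $\sum_k\omega_k^{-1}Ai^2$ is genuinely $O(\lambda_\gamma^{1/3})$ without the $\log$), or else re-sum the dyadic pieces with the extra factor and verify the geometric decay still swallows it; either way this is the delicate point, while the rest is routine interpolation and summation of a dyadic series dominated by its endpoint.
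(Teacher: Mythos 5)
Your overall strategy -- Proposition \ref{propT<lambda2} while the overlap is $O(1)$, the spectral bound \eqref{estimuhgamtransv} once $t/\sqrt\gamma\gtrsim\lambda_\gamma^2$, then summation of the dyadic series in $\gamma$ -- is the same as the paper's, but the execution has genuine gaps. First, the closing inequality is wrong: $(h/t)^{1/2}h^{-1/6}\le(h/t)^{1/3}$ is equivalent to $(h/t)^{1/6}\le h^{1/6}$, i.e.\ to $t\ge 1$, not $t\ge h$. Since the statement covers $h^{1/3}\lesssim t<1$ (e.g.\ $t$ slightly above $\sqrt\gamma$ with $\gamma\sim h^{2/3}$), your bound $\frac1{h^2}(h/t)^{1/2}h^{-1/6}$ is strictly weaker than the claim there: at $t\sim h^{1/3}$ it equals $\frac1{h^2}h^{1/6}$, whereas the claim is $\frac1{h^2}h^{2/9}$. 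The repair is not cosmetic: the loss $\lambda_\gamma^{1/6}$ from Proposition \ref{propT<lambda2} is only incurred for those $\gamma$ with $T=t/\sqrt\gamma\gtrsim 1$, i.e.\ $\gamma\lesssim t^2$; for $\gamma>t^2$ one must instead use Proposition \ref{proptranstsmall} / Lemma \ref{lemmeNtpetit} (only $N=0,\pm1$ contribute), which carries no $\lambda_\gamma^{1/6}$. Hence the dyadic sum is dominated by $\gamma\sim\min\{1,t^2\}$, not by $\gamma\sim 1$, which produces the factor $\min\{1,t^{1/2}\}$ of \eqref{sumtransvgam0>} and the bound $(h/t)^{1/2}h^{-1/6}t^{1/2}=h^{1/3}\le(h/t)^{1/3}$ when $t\le 1$. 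Your bookkeeping, which maximizes $(h/t)^{1/2}\lambda_\gamma^{1/6}$ over all $\gamma\lesssim 1$, cannot recover this.

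Second, you invoke Proposition \ref{propT<lambda2} down to $\gamma\sim h^{2/3}$, but the reflected-wave parametrix (hence Propositions \ref{propT<lambda2} and \ref{propT>lambda2}) is only usable for $\gamma\gtrsim h^{2(1-\epsilon)/3}$, where $\lambda_\gamma\ge h^{-\epsilon}$ is an honest large parameter; likewise \eqref{estimuhgamtransv} requires $t\ge t(h,\gamma,\nu)$, and at the crossover $t\sim\sqrt\gamma\,\lambda_\gamma^2$ with $\gamma$ close to $h^{2/3}$ the stationary-phase parameter $t\omega_k/h^{1/3}$ is only of order one, so Remark \ref{rmqquandsumkmarche} gives no usable $\nu$ there. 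In the strip $h^{2/3}\lesssim\gamma\lesssim h^{2(1-\epsilon)/3}$ (which contains the threshold scale $\gamma_0$ defined by $t\sim\gamma_0^{7/2}/h^2$ whenever $t\lesssim h^{1/3(1-7\epsilon)}$) neither of your two tools applies, and the paper argues differently: Lemma \ref{lemmeNtpetit} reduces the reflected-wave sum to $W^m_{0,\gamma}$ for $\gamma>h^{2/3(1-7\epsilon)}$, and the remaining small-$\gamma$ block is estimated by the crude Airy bound \eqref{estairy2} with $L\sim h^{-\epsilon}$, the losses $h^{-7\epsilon/6}$ and $h^{-\epsilon/2}$ being absorbed by taking $\epsilon<2/21$. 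Without this (or a substitute), and without the correction above, your argument only yields the estimate for $t\gtrsim 1$; there, incidentally, the logarithmic factor you flag as the delicate point is harmless, since it only occurs in the regime $\gamma\lesssim (th^2)^{2/7}$ where $\lambda_\gamma^{1/6}$ is correspondingly smaller (this is how the paper absorbs it, e.g.\ via $(h/t)^{1/2-1/18}$ for $t\gtrsim h^{-2}$), so no finer resummation is needed at $\gamma\sim 1$.
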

\begin{proof} 
If $t\leq  \sqrt{\gamma}$ we apply Proposition \ref{proptranstsmall} with $T=\frac{t}{\sqrt{\gamma}}\leq 1$ and then sum over $\gamma$ yields a bound $\frac{1}{h^2}\Big(\frac ht\Big)^{1/2}\log(1/h)$. Let $t\gtrsim \frac{1}{h^2}$, then $t>t(h,\gamma,\nu)$ for all $h^{2/3}\lesssim \gamma\lesssim 1$ and all $0\leq \nu<1/6$: applying the stationary phase in $\eta$ and using Lemma \ref{lemsob} with $L\sim \frac 1h$ yields, as in \eqref{estimuhgamtransv}
\begin{equation}\label{estimuhgamtransvsum}
|\sum_{4a\leq \gamma\lesssim 1} G^m_{h,\gamma}(t,\cdot)|%
\lesssim  \frac{1}{h^2}(\frac ht)^{1/2}\frac{1}{h^{1/6}}(\log (1/h))^{1/2}.
\end{equation}
As $(\frac ht)^{\frac{1}{18}}\leq h^{1/6}$ for all $t>\frac{1}{h^2}$, we find $|\sum_{4a\leq \gamma\lesssim 1} G^m_{h,\gamma}(t,\cdot)|\leq \frac{1}{h^2}(\frac ht)^{\frac 12-\frac{1}{18}}(\log (1/h))^{1/2}$ .
Notice that for such large values of $t$ these kind of bounds are much better then those obtained using Proposition \ref{propT>lambda2} and summing up over $N\in \mathcal{N}^m_{1,\gamma}(t,\cdot)$ (and then over $\cup_{\gamma\lesssim 1}$), because the cardinal of $\mathcal{N}^m_{1,\gamma}(t,\cdot)$ is proportional to $t/\sqrt{\gamma}$ and yields a bound independent of $t$ for the sum of $W^m_{N,\gamma}(t,\cdot)$.

We are left with $h^{1/3}\lesssim \sqrt{\gamma}\leq t\lesssim  \frac{1}{h^2}$ : let $t\sim \frac{\gamma_0^{7/2}}{h^2}$ for some $h^{2/3}<\gamma_0\lesssim 1$ (notice that $\gamma_0\sim h^{2/3}$ corresponds to $t\sim h^{1/3}$, while $\gamma_0\sim 1$ corresponds to $t\sim \frac{1}{h^2}$). Assume first $\gamma_0\geq h^{2/3(1-\varepsilon)}$ for some $\varepsilon>0$ : if, moreover, $\gamma_0>4a$, we  estimate separately the sums over $4a\leq \gamma\lesssim \gamma_0$ and $\gamma_0<\gamma\lesssim 1$. When $\gamma\lesssim \gamma_0$, then, using Remark \ref{rmqquandsumkmarche}, $t\sim \frac{\gamma_0^{7/2}}{h^2}\gtrsim \frac{\gamma^{7/2}}{h^2}=\frac{\gamma^2}{h}\lambda_{\gamma}>t(h,\gamma,\nu)$, for $0<\nu<\frac 27\epsilon$, and therefore we can use again  \eqref{estimuhgamtransv} with $\lambda_{\gamma}\leq \lambda_{\gamma_0}$. Then use $\lambda^{1/6}_{\gamma_0}\sim \gamma_0^{1/4}/h^{1/6}\lesssim(h/t)^{-1/14} h^{1/24}\ll (h/t)^{-1/6}$.

For dyadic $\gamma_0<\gamma\lesssim 1$ we have $t\sim \frac{\gamma_0^{7/2}}{h^2}<\frac{\gamma^{7/2}}{h^2}$ : in this case we use the form of $G^{m}_{h,\gamma}$ as a sum over reflected waves. According to Proposition \ref{propcardN} (for $m\in\{0,1\}$), $\# \mathcal{N}^m_{1,\gamma}(t,\cdot)=O(1)$, so only a bounded number of waves $W^m_{N,\gamma}(t,\cdot)$ overlap. Using Proposition \ref{propT<lambda2} for $h^{2/3(1-\epsilon)}\lesssim \gamma_0<\gamma\leq \min\{1,t^2\}$ yields
 \begin{equation}\label{sumtransvgam0>}
|\sum_{\gamma_0<\gamma\lesssim 1}G^{m}_{h,\gamma}(t,\cdot)|\leq \sum_{\gamma_0<\gamma\lesssim 1} \frac{1}{h^2}(\frac ht)^{1/2}\lambda_{\gamma}^{1/6} \leq \frac{1}{h^2}\Big(\frac ht\Big)^{1/2}\frac{1}{h^{1/6}}\min\{1,t^{1/2}\}.
\end{equation}
If $t\geq 1$, then $\frac{1}{h^{1/6}}\min\{1,t^{1/2}\}=\frac{1}{h^{1/6}}\leq (\frac th)^{-1/6}$, hence $(\frac ht)^{1/2}\frac{1}{h^{1/6}}\min\{1,t^{1/2}\}\leq (\frac ht)^{1/3}$ If $\sqrt{\gamma} \leq t\leq 1$, the last term in \eqref{sumtransvgam0>} is bounded by $h^{-2}h^{1/3}\leq h^{-2}(h/t)^{1/3}$.

Let now $\max\{4a,h^{2/3}\}<\gamma_0< h^{2/3(1-\epsilon)}$, for some small $0<\epsilon<1/7$, then $1\leq \lambda_{\gamma_0}\leq h^{-\epsilon}$. The sum over $h^{2/3(1-\epsilon)}\lesssim \gamma$ can be dealt with like in \eqref{sumtransvgam0>} : however, for $h^{1/3}\lesssim t\sim \frac{\gamma_0^{7/2}}{h^2}\leq h^{1/3(1-7\epsilon)}$ and $\gamma> h^{2/3(1-7\epsilon)}$ we notice that $\sum_{\sqrt{\gamma}N\sim O(t)}W^m_{N,\gamma}(t,\cdot)=W^m_{0,\gamma}(t,\cdot)+O(h^{\infty})$ (according to Lemma \ref{lemmeNtpetit}), and, as $|W^m_{0,\gamma}(t,\cdot)|\leq \frac{1}{h^2}(\frac ht)^{1/2}$, we have
\[
|\sum_{h^{2/3(1-7\epsilon)}<\gamma\lesssim 1}G^{m}_{h,\gamma}(t,\cdot)|\leq \sum_{\gamma_0<\gamma\lesssim 1} \frac{1}{h^2}(\frac ht)^{1/2}\log (1/h),
\]
while applying Proposition \ref{propT<lambda2} for $h^{2/3(1-\epsilon)}\lesssim \gamma\lesssim h^{2/3(1-7\epsilon)}$ yields 
\[
|\sum_{h^{2/3(1-\epsilon)}<\gamma\lesssim h^{2/3(1-7\epsilon)}}G^{m}_{h,\gamma}(t,\cdot)|\leq \sum_{\gamma_0<\gamma\lesssim 1} \frac{1}{h^2}(\frac ht)^{1/2}\lambda_{\gamma}^{1/6} \leq \frac{1}{h^2}\Big(\frac ht\Big)^{1/2}h^{-7\epsilon/6}.
\]
We are left with $\sum_{\gamma<h^{2/3(1-\epsilon)}}G^m_{h,\gamma}(t,\cdot)$ for $h^{1/3}\lesssim t\sim \frac{\gamma_0^{7/2}}{h^2}\leq h^{1/3(1-7\epsilon)}$.
We use \eqref{estairy2} (without the preliminary stationary phase in $\eta$) with $L\sim \frac{(h^{2/3(1-\epsilon)})^{3/2}}{h}=h^{-\epsilon}$, which yields, as $\frac ht\gtrsim h^{2/3(1+\epsilon/2)}$,
\[
\Big|\sum_{\gamma<h^{2/3(1-\epsilon)}}G^m_{h,\gamma}(t,\cdot)\Big|\lesssim \frac{h^{1/3}}{h^2}L^{1/3}=\frac{h^{1/3(1-\epsilon)}}{h^2}\leq \frac{1}{h^2}\Big(\frac ht)^{1/2}h^{-\epsilon/2}.
\]
Taking $\epsilon<\frac{2}{21}<1/7$ yields $\Big(\frac ht\Big)^{1/2}h^{-7\epsilon/6}<\Big(\frac ht\Big)^{1/3}$. The proof is achieved.
\end{proof}

\subsection{Tangential waves $G^m_{h,a}$}
We now focus on dispersive bounds for $G^m_{h,a}$ for $m\in\{0,1\}$.
\begin{prop} \label{propThmhh} 
There exists $C>0$ such that for every $h^{2/3}\lesssim a \lesssim 1 $, $h\in (0,1/2]$, $h<t\in \mathbb{R}_+$, the spectrally localized Green function $G^m_{h,a}(t,x,a,y)$ satisfies, for both $m\in\{0,1\}$,
\begin{equation}\label{dispomegamh}
|G^m_{h,a}(t,x,a,y)|\lesssim \frac{1}{h^{2}}\Big(a^{1/4}(\frac {h}{t})^{1/4}+(\frac ht)^{1/3}\Big), \quad \text{ for all } (x,y)\in \Omega_d.
\end{equation}
\end{prop}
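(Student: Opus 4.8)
The plan is to split the analysis of $G^m_{h,a}(t,x,a,y)$ according to the size of $t$ relative to $\sqrt a=\lambda^{-1}a^{3/2}\cdot h^{-1}\cdot\ldots$; more precisely, to set $\lambda=a^{3/2}/h$ and compare $T=t/\sqrt a$ (more precisely $t/(\sqrt a\sqrt{1+a})$, but the difference is harmless since $a\lesssim 1$) to the thresholds $1$, $\lambda^{1/3}$, $\lambda$ and $\lambda^2$, exactly as in the decomposition $G^m_{h,a}=\sum_{N}W^m_{N,a}(T,X,Y)$ afforded by the rescaling \eqref{eq:83ffa} and the stationary-phase reductions of Propositions \ref{propN<lambda}--\ref{propdispNgrand}. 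The target bound $\frac{1}{h^2}\big(a^{1/4}(h/t)^{1/4}+(h/t)^{1/3}\big)$ is the max of two regimes: for $t\lesssim\sqrt a\lambda^{?}$ the dominant term is the $(|t|/h)^{1/4}$-type swallowtail loss $\frac{1}{h^2}a^{1/4}(h/t)^{1/4}$ from the tangential caustics, and for $t$ very large the gallery-mode bound $\frac{1}{h^2}(h/t)^{1/3}$ from \eqref{estimdispuhgamAiry} takes over; the point of the proof is to show the crossover is clean.

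The key steps, in order: First, for $0<t\le \tfrac52\sqrt a$ (i.e. $T\le \tfrac52$) invoke Proposition \ref{propfreeWham} directly, which already gives $\frac{1}{h^2}\inf(1,(h/t)^{1/2})$, better than needed. Second, for $\sqrt a\lesssim t$ with $T\lesssim\lambda^2$, use Lemma \ref{lemmeNtpetit} and Proposition \ref{propcardN} to see that only $O(1)+O(t/(\sqrt a\lambda^2))$ values of $N$ contribute (for $T\lesssim\lambda^2$ this cardinal is $O(1)$), then bound each $|W^m_{N,a}(T,X,Y)|$ by Propositions \ref{propdispNpetitpres}, \ref{propdispNpetitloin} (for $N<\lambda^{1/3}$) and Proposition \ref{propdispNgrand}(1) (for $\lambda^{1/3}\lesssim N\lesssim\lambda$), and sum the resulting geometric-type series in $N$; the worst contribution is $\frac{1}{h^2}h^{1/3}/(N/\lambda^{1/3})^{1/4}$ near $T\approx 4N$, and summing/optimizing over the single relevant $N\simeq T/4$ yields exactly $\frac{1}{h^2}\lambda^{1/3}h^{1/3}(\sqrt a/t)^{1/4}\cdot$const $=\frac{1}{h^2}a^{1/4}(h/t)^{1/4}$ after substituting $\lambda^{1/3}h^{1/3}=a^{1/2}$ and $\sqrt a\sim t/N$. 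One has to be careful at the transition $N\sim\lambda^{1/3}$ where the two bounds \eqref{eq:2hh} and \eqref{eq:1ff} must match; they do, giving $\frac{1}{h^2}h^{1/3}\lambda^{-1/12}$ at that point. Third, for $\lambda^2\lesssim T$, i.e. $t\gtrsim\sqrt a\lambda^2=a^{7/2}/h^2$, the overlapping count $\#\mathcal N^m_{1,a}(t,\cdot)\sim t/(\sqrt a\lambda^2)$ blows up, so switch to the spectral-sum form \eqref{greenfctbis}: by Remark \ref{rmqquandsumkmarche} the stationary phase in $\eta$ applies with the Airy factors in the symbol, and \eqref{estimdispuhgamAiry} gives $\frac{1}{h^2}(h/t)^{1/2}\lambda^{1/3}$; since here $t\gtrsim a^{7/2}/h^2$ one checks $(h/t)^{1/2}\lambda^{1/3}\lesssim (h/t)^{1/3}$, as $\lambda^{1/3}=(a^{3/2}/h)^{1/3}\lesssim(t/h)^{1/6}$ precisely when $t\gtrsim a^{3}/h$, which is implied. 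Finally, reconcile the $m=1$ case with the $m=0$ case throughout by Remark \ref{rmqN<lambda} (the mass term is absorbable into the symbol for $N\lesssim\lambda$) and Proposition \ref{propN>lambda} / Proposition \ref{propdispNgrand}(2) for $N\gg\lambda$, where the mass only affects lower-order terms of the Hessian.

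The main obstacle I expect is the bookkeeping at the crossover $t\sim a^{7/2}/h^2$ between the reflected-wave parametrix \eqref{defVNgamma} and the spectral sum \eqref{greenfctbis}: one must verify that the bound from summing $O(1)$ waves $W^m_{N,a}$ with $N\simeq T/4\simeq\lambda^2$ at the top of the range, namely roughly $\frac{1}{h^2}h^{1/3}(N/\lambda^{1/3})^{-1/2}=\frac{1}{h^2}h^{1/3}\lambda^{-5/6+1/6}\sim\frac{1}{h^2}h^{1/3}\lambda^{-2/3}$, agrees (up to the stated form) with the spectral-sum bound $\frac{1}{h^2}(h/t)^{1/2}\lambda^{1/3}$ evaluated at $t\sim\sqrt a\lambda^2$, and that both are $\lesssim\frac{1}{h^2}(a^{1/4}(h/t)^{1/4}+(h/t)^{1/3})$ at the junction. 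A secondary nuisance is handling the range $h^{2/3}\lesssim a\lesssim h^{2(1-\varepsilon)/3}$ where the reflected-wave parametrix is unavailable from the start ($\lambda_a=a^{3/2}/h\leq h^{-\varepsilon}$ is not a good large parameter); there one must work with \eqref{greenfctbis} for all $t$, using Lemma \ref{lemsob} with $L\sim\lambda$ and, when $t$ is not yet large enough for the stationary phase in $\eta$ (i.e. $t<t(h,a,\nu)$), falling back on the crude bound $\frac{1}{h^2}h^{1/3}L^{1/3}=\frac{1}{h^2}h^{1/3}\lambda^{1/3}$ together with the constraint $t\lesssim\sqrt a\lambda^2$ that forces $h^{1/3}\lambda^{1/3}\lesssim a^{1/4}(h/t)^{1/4}$. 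Assembling these pieces and taking the supremum over the sub-regimes gives \eqref{dispomegamh}.
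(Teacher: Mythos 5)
Your overall strategy is the paper's own: rescale by \eqref{eq:83ffa}, compare $T=t/\sqrt a$ with $1,\lambda^{1/3},\lambda,\lambda^2$, use Proposition \ref{propfreeWham} for $T\lesssim 1$, the single--wave bounds together with the $O(1)$ overlap count of Proposition \ref{propcardN} for $T\lesssim\lambda^2$, the spectral sum \eqref{estimdispuhgamAiry} for $T\gtrsim\lambda^2$, and Remark \ref{rmqN<lambda}/Proposition \ref{propN>lambda} to absorb the mass term; your computation producing $a^{1/4}(h/t)^{1/4}$ from \eqref{eq:2hh} at $N\simeq T/4$ and the check $(h/t)^{1/2}\lambda^{1/3}\le(h/t)^{1/3}$ for $t\gtrsim a^3/h$ are both correct. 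Two points, however, need repair. First, the window $\lambda\lesssim T\lesssim\lambda^2$, i.e.\ $a^2/h\lesssim t<a^{7/2}/h^2$, is never actually estimated: your step two stops at $N\lesssim\lambda$ and your step three only starts at $T\gtrsim\lambda^2$. There one must invoke \eqref{eq:1ff>}, which gives $|W^m_{N,a}|\lesssim\frac{1}{h^2}\frac{h^{1/3}\lambda^{2/3}}{T}=\frac{1}{h^2}\frac{a^{3/2}}{t\,h^{1/3}}\le\frac{1}{h^2}\big(\frac ht\big)^{1/3}$, the last inequality holding for $t\geq a^{9/4}/h$, hence throughout this window. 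Relatedly, your ``top of the range'' cross-check applies the formula of Proposition \ref{propdispNgrand}(1) at $N\simeq\lambda^2$ (and with an arithmetic slip, $\lambda^{-5/6+1/6}$); at such $N$ the relevant bound is \eqref{eq:1ff>}, i.e.\ $\frac{h^{1/3}}{h^2}\lambda^{-4/3}$, which is indeed below the spectral-sum bound at the junction, so the matching is fine once the correct formula is used.

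Second, in the range $h^{2/3}\lesssim a\lesssim h^{2(1-\varepsilon)/3}$ your closing inequality is false as stated. The crude Cauchy--Schwarz/Airy bound is $\frac{1}{h^2}h^{1/3}\lambda^{1/3}=\frac{1}{h^2}a^{1/2}$, and $a^{1/2}\lesssim a^{1/4}(h/t)^{1/4}$ is equivalent to $t\lesssim h/a$; but you only dispose of $t\lesssim t(h,a,\nu)\sim a^2/h$ (or the even weaker $t\lesssim\sqrt a\,\lambda^2=a^{7/2}/h^2$ that you invoke), and $a^2/h\le h/a$ holds only when $a\le h^{2/3}$. So for $a$ strictly above $h^{2/3}$ and $t\in(h/a,\,t(h,a,\nu))$ the claimed inequality fails by a factor up to $h^{-c\varepsilon}$. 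The repair is the one the paper uses: compare the crude bound with the \emph{second} term of \eqref{dispomegamh} rather than the first. Since $a^{1/2}\le h^{(1-\varepsilon)/3}$, one has $a^{1/2}\lesssim(h/t)^{1/3}$ as soon as $t\lesssim h^{\varepsilon}$, which is guaranteed on the whole range $t\lesssim h^{1/3-2\varepsilon}$ (the paper's threshold, below which \eqref{tocheck1} may fail) provided $\varepsilon$ is small, e.g.\ $\varepsilon\le 1/9$; for $t\gtrsim h^{1/3-2\varepsilon}$ the stationary phase in $\eta$ applies and \eqref{estimdispuhgamAiry} gives $\frac{1}{h^2}(h/t)^{1/2}\lambda^{1/3}\lesssim\frac{1}{h^2}(h/t)^{1/3}$. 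With these two fixes your argument coincides with the paper's proof.
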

\begin{proof}(of Proposition \ref{propThmhh})
Recall from Proposition \ref{propcardN}
that when $a>h^{2/3(1-\epsilon)}$ for some small $\epsilon>0$,
\begin{equation*}
  \left| \mathcal{N}^m_{1,a}(t,x,y)\right| = O(1)+m^2\frac{|t|h^2}{a^{3/2}}+\frac{|t|}{a^{1/2} \lambda^2}\,,\quad \lambda= a^{3/2}/h.
\end{equation*}
When $m=1$, the term $\frac{|t|h^2}{a^{3/2}}$ is always much smaller than $\frac{|t|}{a^{1/2} \lambda^2}$ and therefore it doesn't modify in any way the number of waves that overlap. Since all the estimates obtained in Sections \ref{sectang} and \ref{secWGM} are exactly the same for $G^m_{h,a}$ for both $m=0$ and $m=1$, we do not make the difference between these two cases. In the following we describe all the situations according to the size of $t$ and $a$. Notice first that if $a\ll h^{2/3}$ there is no contribution coming from tangent directions, so we consider only values for the initial distance from the boundary such that $a\gtrsim h^{2/3}$. Let $t>0$.
\vskip2mm

Let first $h^{2/3(1-\varepsilon)}\lesssim a\lesssim 1$ for some small $\varepsilon>0$ and consider $t\geq \frac{a^{7/2}}{h^2}$ : in this case $\#\mathcal{N}^m_{1,a}(t,\cdot)$ is large and the number of waves $W^m_{N,a}(t,\cdot)$ that cross each other behaves like $\frac{t}{\sqrt{a}\lambda^2}$. As $t\geq t(h,a,\nu)$ for $0<\nu<\frac 27\epsilon$, we use \eqref{estimdispuhgamAiry} to bound
\[
|G^m_{h,a}(t,\cdot)|\lesssim \frac{1}{h^2}(\frac ht)^{1/2}\lambda^{1/3}\ll \frac{1}{h^2}(\frac ht)^{1/3},
\]
where the last equality holds for all $t\gg h\lambda^2$; as $t\geq \sqrt{a}\lambda^2$ and $\sqrt{a}\gtrsim h^{1/3}\gg h$, this is obviously true.
Consider now $a>h^{2/3(1-\epsilon)}$ and $t\leq \frac{a^{7/2}}{h^2}$ when only a finite number of $W^m_{N,a}(t,\cdot)$ can meet : sharp dispersive bounds are then provided by Propositions \ref{propdispNpetitpres}, \ref{propdispNpetitloin} and \ref{propdispNgrand}. If $t>\frac{a}{h^{1/3}}=\sqrt{a}\lambda^{1/3}$ then Proposition \ref{propdispNgrand} applies and we obtain, as only a bounded number of $N$ are involved in the sum
\begin{itemize}
\item If $t\in [\frac{a}{h^{1/3}},\frac{a^2}{h})$, which corresponds to $\frac{t}{\sqrt{a}}\in (\lambda^{1/3},\lambda)$, then, using \eqref{eq:1ff}, we find
\begin{equation}\label{estimGhapire1}
|G_{h,a}^m(t,\cdot)|=|\sum_{ \frac{t}{\sqrt{a}}\sim N\in\mathcal{N}^m_{1,a}(t,\cdot)}W^m_{N,a}(\frac{t}{\sqrt{a}},\cdot) |\lesssim \frac{1}{h^2}\frac{h^{1/6}\sqrt{a}}{\sqrt{t}}\lesssim \frac{1}{h^2}(\frac ht)^{1/4}a^{1/4},
\end{equation}
where the last inequality holds for $t>\frac{a}{h^{1/3}}$. As $(\frac ht)^{1/4}a^{1/4}\leq (\frac ht)^{1/3}$ only if $t\leq \frac{h}{a^{3}}$, it follows that the (RHS) of \eqref{estimGhapire1} can be bounded by $\frac{1}{h^2}(\frac ht)^{1/3}$ only if $\frac{a}{h^{1/3}}\leq \frac{h}{a^3}$, hence for $a\leq h^{1/3}$. In particular, when $h^{1/3}<a \lesssim 1$, then $1\leq t\lesssim 1/h$ and the previous estimate is sharp.

\item If $t\in [\frac{a^2}{h}, \frac{a^{7/2}}{h^2})$, which corresponds to $\frac{t}{\sqrt{a}}\in (\lambda,\lambda^2)$, then, using \eqref{eq:1ff>}, we find
\[
|G_{h,a}^m(t,\cdot)|=|\sum_{ \frac{t}{\sqrt{a}}\sim N\in\mathcal{N}^m_{1,a}(t,\cdot)}W^m_{N,a}(t,\cdot) |\lesssim \frac{1}{h^2}\frac{h^{1/3}\lambda^{1/2+1/6}}{t/\sqrt{a}}=\frac{1}{h^2}\frac{a^{3/2}}{th^{1/3}}\leq  \frac{1}{h^2}(\frac ht)^{1/3},
\]
where the last inequality holds for all $t>\frac{a^{9/4}}{h}$ and therefore for all $t\geq \frac{a^2}{h}\gtrsim \frac{a^{9/4}}{h}$. 
\end{itemize}
Let now $\sqrt{a}\lesssim t<\frac{a}{h^{1/3}}$, then $1\leq \frac{t}{\sqrt{a}}\leq\lambda^{1/3}$ and Proposition \ref{propdispNpetitpres} yields a sharp bound
\begin{equation}\label{estimGhapire}
|G^m_{h,a}(t,\cdot)|\leq \frac{1}{h^2}(\frac ht)^{1/4}a^{1/4},
\end{equation}
which is reached at $t=t_n=4n\sqrt{a}\sqrt{1+a}$ for $1\lesssim n\leq \frac{\sqrt{a}}{h^{1/3}}$. As $(\frac ht)^{1/4}a^{1/4}<(\frac ht)^{1/3}$ only if $t\leq \frac{h}{a^{3}}$, it follows that when $a\leq h^{1/3}$ we have $t< \frac{a}{h^{1/3}}\leq \frac{h}{a^{3}}$ and $|G^m_{h,a}(t,\cdot)|\leq \frac{1}{h^2}(\frac ht)^{1/3}$ everywhere on $[\sqrt{a},\frac{a}{h^{1/3}}]$, while for $a>h^{1/3}$, \eqref{estimGhapire} is sharp on $(\frac{h}{a^{1/3}},\frac{a}{h^{1/3}}]$ and becomes $|G^m_{h,a}(t,\cdot)|\leq \frac{1}{h^2}(\frac ht)^{1/3}$ on $[\sqrt{a},\frac{h}{a^{3}}]$. Remark that in the last case $a>h^{1/3}$ we must have $t\lesssim 1$, which is the situation considered in \cite{Annals} (and \cite{ILP3}). Let $h<t\leq \sqrt{a}$, then $|G^m_{h,a}(t,\cdot)|\leq \frac{1}{h^2}(\frac ht)^{1/2}$ using Proposition \ref{propfreeWham}.

We are left with the case $h^{2/3}\lesssim a<h^{2/3(1-\epsilon)}$ for a small $\epsilon>0$ and with $t\leq \frac{a^{7/2}}{h^2}$. One can easily check that for $t>h^{1/3-2\epsilon}$, the condition \eqref{tocheck1}
 is satisfied for $0<\nu< \epsilon$. As $\mu:=t\omega_k/h^{1/3}\geq t/h^{1/3}\geq h^{-2\epsilon}$ is large, the stationary phase in $\eta$ applies and \eqref{estimdispuhgamAiry} holds, yielding
\[
|G^m_{h,a}(t,\cdot)|\lesssim \frac{1}{h^2}(\frac ht)^{1/2}\lambda^{1/3}\ll \frac{1}{h^2}(\frac ht)^{1/2}h^{-\epsilon/3}\lesssim  \frac{1}{h^2}(\frac ht)^{1/3},
\]
where the sum in $G^m_{h,a}$ is restricted to $k\lesssim \lambda\leq h^{-\epsilon}$ and where the last inequality holds for $t\geq h^{1/3-2\epsilon}\gg h^{1-2\epsilon}$. Let now $t\leq h^{1/3-2\epsilon}$.
Using Lemma \ref{lemsob} with $L\sim h^{-\epsilon}$ together with the Cauchy-Schwarz inequality yields 
\begin{equation}\label{g3-3}
| G^m_{h,a}(t,\cdot)|\lesssim h^{-2}h^{1/3}L^{1/3}=\frac{h^{1/3(1-\epsilon)}}{h^2}\,.
\end{equation}
If $t\lesssim h^{1/3-2\epsilon}$, then $h^{1/3(1-\epsilon)}\lesssim (\frac ht)^{1/2(1-4\epsilon)}$ ; if $t\lesssim h^{1/3(1+2\epsilon)}\ll a$, then $|G^m_{h,a}(t,\cdot)|\lesssim \frac{1}{h^2}(\frac ht)^{1/2}$.
\end{proof}

\section{Dispersive estimates for the wave flow in large time and the Klein Gordon flow. Proof of Theorem \ref{thmDKG} in the high frequency case\\ '$\sqrt{-\Delta_F}\simeq \frac 1h$" and $|\theta|\simeq \frac{1}{\tilde h}$, where $\tilde h>2h$}

In this section we consider again $d=2$ as the higher dimensional case can be dealt with exactly like in (the end of) Section \ref{secparamHF} and prove the following :
\begin{prop} \label{propThmhhtilde} 
Let $G^m_{(h,\tilde h)}(t,x,a,y)$ be as in \eqref{greenfctHFsmallertheta} and $m\in\{0,1\}$.
There exists a uniform constant $\tilde C>0$ such that for every $h\in (0,1/2]$, $2h\leq \tilde h $, $a\lesssim (\tilde h/h)^2$ and $t>0$, 
\begin{equation}\label{dispomegahtildeh}
| G^m_{(h,\tilde h)}(t,x,a,y)|\leq \tilde C
\left\{ \begin{array}{l} 
 \frac{1}{ h^{2}}(\frac {h}{t})^{1/4}(\frac{h}{\tilde h}), \quad \text{ if } h\leq t(h/\tilde h)^2< (\frac{\tilde h^2}{h^3})^{1/3} \\%
 \frac{1}{h^2}(\frac ht)^{1/3} h^{1/6} (\frac{h}{\tilde h})^{4/3},  \quad \text{ if } t(h/\tilde h)^2\geq (\frac{\tilde h^2}{h^3})^{1/3}.
  \end{array} \right.
\end{equation}
For $a>4 (\tilde h/h)^2$, $ G^m_{(h,\tilde h)}(t,x,a,y)\in O(h^{\infty})$. Moreover, for all $t>h$ we have
\[
|G^{\flat,m}_{h}(t,x,a,y)|=|\sum_{j\geq 1}G^{m}_{h,2^jh}(t,x,a,y)|\leq  \frac{1}{ h^{2}}(\frac {h}{t})^{1/4}.
\]
\end{prop}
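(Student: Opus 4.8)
The three assertions are proved in turn, the engine being a change of variables that identifies $G^m_{(h,\tilde h)}$ with a rescaled copy of the model Green function \eqref{greenfctbis} for the \emph{effective} semiclassical parameter $\hbar:=h^3/\tilde h^2$ at $\gamma=1$ (note that $2h\le\tilde h$ forces $\hbar<h/4<1$), after which the fixed--time bounds of Sections~\ref{sec:parametrix-regime}--\ref{secdisptangHF} apply. The $O(h^\infty)$ statement for $a>4(\tilde h/h)^2$ is the quickest: on the support of the cut--offs in \eqref{greenfctHFsmallertheta} one has $|\theta|\sim1/\tilde h$ and, from $\psi_1(h\sqrt{\lambda_k(\theta)})$, $\omega_k|\theta|^{4/3}\le 9/(4h^2)$, hence $\omega_k\lesssim\tilde h^{4/3}/h^2$ with an explicit constant; therefore, once $a$ exceeds a fixed multiple of $(\tilde h/h)^2$ (shrinking the cut--off supports slightly if necessary, as done elsewhere in the paper), the $Ai$--argument $|\theta|^{2/3}a-\omega_k$ is $\gtrsim\tilde h^{4/3}/h^2\gtrsim h^{-2/3}$ because $\tilde h\ge2h$, so each factor $Ai(|\theta|^{2/3}a-\omega_k)=O(e^{-c/h})$; since the $k$--sum is finite with at most $\lesssim\tilde h^2/h^3\le h^{-3}$ terms, $G^m_{(h,\tilde h)}=O(h^\infty)$ uniformly in $(t,x,y)$.

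\textbf{The reduction.} Assume $a\lesssim(\tilde h/h)^2$, set $\theta=\eta/\tilde h$ (so $|\eta|\sim1$) and dezoom by $\tilde x=(h/\tilde h)^2x$, $\tilde a=(h/\tilde h)^2a\le1$, $\tilde y=(h/\tilde h)^3y$, $\tilde t=(h/\tilde h)^2t$. Using $e_k(x,\eta/\tilde h)=(h/\tilde h)\,e_k(\tilde x,\eta/\hbar)$ (and likewise in $a$) one obtains $G^m_{(h,\tilde h)}(t,x,a,y)=(h/\tilde h)^5\,\widetilde G^m_\hbar(\tilde t,\tilde x,\tilde a,\tilde y)$, where $\widetilde G^m_\hbar$ has the structure of \eqref{greenfctbis} at parameter $\hbar$ and $\gamma=1$, except that its phase carries, besides $m^2$, the term coming from the subdominant $|\theta|^2$ in $\lambda_k$, which after rescaling acquires the extra small factor $h^2/\tilde h^2$ relative to the leading $\omega_k|\theta|^{4/3}$--term. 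Both contributions are precisely of the lower--order type already absorbed in Proposition~\ref{propcardN} and Remark~\ref{rmqN<lambda}: they never shift the location or order of the critical points at leading order, so the entire parametrix construction and stationary--phase analysis of Sections~\ref{sec:parametrix-regime}--\ref{secdisptangHF} carry over to $\widetilde G^m_\hbar$ with the same bounds, with large parameter $\lambda:=\lambda_{\gamma=1}=1/\hbar=\tilde h^2/h^3$, landing in the tangential sub--case of Section~\ref{sectang} when $\tilde a\simeq1$ and in the almost--transverse sub--case of Section~\ref{sectransv} when $\tilde a\ll1$. The bookkeeping rests on the two scale identities $\hbar/\tilde t=h/t$ and $(h/\tilde h)^5\hbar^{-2}=h^{-2}(h/\tilde h)$.

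\textbf{From the model bounds to \eqref{dispomegahtildeh} and to $G^{\flat,m}_h$.} Applying those bounds with $T=\tilde t$ and multiplying back by $(h/\tilde h)^5$ gives \eqref{dispomegahtildeh}: for $\tilde t<\lambda^{1/3}$, i.e. $t(h/\tilde h)^2<(\tilde h^2/h^3)^{1/3}$, only $O(1)$ reflected waves overlap (Proposition~\ref{propcardN}) and the worst single--wave estimate (Propositions~\ref{propfreeWham}--\ref{propT<lambda2}, or Proposition~\ref{propThmhh} when $\tilde a\simeq1$) turns, via the scale identities, into the first line of \eqref{dispomegahtildeh}; the elementary $\tfrac1{\hbar^2}(\hbar/\tilde t)^{1/2}$ already covers the range $\hbar<\tilde t<1$, giving $\lesssim\tfrac1{h^2}(h/\tilde h)(h/t)^{1/2}\le\tfrac1{h^2}(h/\tilde h)(h/t)^{1/4}$. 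For $\tilde t\ge\lambda^{1/3}$ one distinguishes $\tilde t\lesssim\lambda^2$ (still $O(1)$ waves, Proposition~\ref{propT<lambda2}, with the extra decay of the source Airy caustic when $\tilde a\ll1$) from $\tilde t\gtrsim\lambda^2$, where one passes to the spectral sum, performs the stationary phase in $\eta$ keeping the Airy factors in the symbol (legitimate once $\#\mathcal N^m_{1,1}(t,\cdot)$ is large, Remark~\ref{rmqquandsumkmarche}) and invokes Lemma~\ref{lemsob}; since here only a single dyadic range $\omega_k\sim\lambda^{2/3}$ occurs, the losses of \eqref{estimuhgamtransv} improve — in particular $\sum_{k\sim\lambda}\omega_k^{-1}Ai^2(b-\omega_k)\lesssim1$ uniformly — which yields the second line of \eqref{dispomegahtildeh}. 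Finally, for $G^{\flat,m}_h=\sum_{j\ge1}G^m_{h,2^jh}$: the scales $2^{2j}<a/4$ contribute $O(h^\infty)$; for the remaining $j$ (the smallest of which, $j_0\simeq\max\{1,\tfrac12\log_2 a\}$, has $\tilde a\simeq1$) each bound carries a factor $(h/\tilde h)^c=2^{-cj}$ with $c>0$ — from $(h/\tilde h)$ resp. $(h/\tilde h)^{4/3}$ in \eqref{dispomegahtildeh}, and, on the sub--ranges where a loss in $h$ appears, from the surplus power of $h$ produced by $(h/\tilde h)^5$ once one notes $t\gtrsim(\tilde h/h)^2$ resp. $t\gtrsim\tilde h^6/h^8$ there — so summing the geometric series, together with $(h/t)^{1/2}\le(h/t)^{1/4}$ for $t>h$, gives $|G^{\flat,m}_h(t,x,a,y)|\le\tfrac1{h^2}(h/t)^{1/4}$.

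\textbf{Main obstacle.} The delicate point is not the scaling arithmetic but the justification underlying the last two paragraphs: one must check that the perturbed phase of $\widetilde G^m_\hbar$ survives \emph{all} of the machinery of Sections~\ref{sec:parametrix-regime}--\ref{secdisptangHF} — Poisson summation, the successive stationary phases in $(S,\Upsilon,A,\eta)$, the counting of overlapping waves, and the spectral--sum estimates — with no degradation of the bounds, and, above all, obtaining the \emph{sharp} second line of \eqref{dispomegahtildeh} on the intermediate window $\lambda^{1/3}\lesssim\tilde t\lesssim\lambda^2$, where the crude transverse single--wave bound $\tfrac{\hbar^{1/3}}{\hbar^2}\tilde t^{-1/2}$ alone is insufficient and one must genuinely exploit the smallness of the source Airy factor when $\tilde a\ll1$ (the caustic in $S$ being pushed out of the integration range) together with the single--range improvement in the spectral sum, and interpolate between the reflected--wave and spectral--sum descriptions at the right crossover.
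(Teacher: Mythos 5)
Your reduction is exactly the one the paper uses: rescale $\theta=\eta/\tilde h$, $T=t(h/\tilde h)^2$, $X=xh^2/\tilde h^2$, $Y=(h/\tilde h)^3(y+t)$, so that $G^m_{(h,\tilde h)}$ becomes a copy of the $\gamma\simeq 1$ model with large parameter $\tilde\lambda=\tilde h^2/h^3$ (this is \eqref{Gtildeform}--\eqref{PhiNagammadeftilde}); your scale identities $\hbar/\tilde t=h/t$ and $(h/\tilde h)^5\hbar^{-2}=h^{-2}(h/\tilde h)$ reproduce the prefactors of Propositions \ref{propdispNpetitprestilde}--\ref{propdispNgrandtilde}, the $O(h^\infty)$ claim for $a>4(\tilde h/h)^2$ is the same Airy-decay observation, the regime $T\gtrsim\tilde\lambda^2$ is treated identically (spectral sum, stationary phase in $\eta$ with the Airy factors kept in the symbol, Lemma \ref{lemsob}), and the bound on $G^{\flat,m}_h$ is the same geometric summation over $\tilde h=2^jh$.

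The genuine gap is the window $\tilde\lambda^{1/3}\lesssim T\lesssim\tilde\lambda^2$, which you flag as the ``main obstacle'' and leave to an unspecified exploitation of the source Airy caustic plus an interpolation between the reflected-wave and spectral-sum descriptions: as written, your argument does not close this range, and the extra input you call for is not what the paper uses. There, one simply applies the rescaled single-wave bounds you already have: with $O(1)$ overlapping $N$, Proposition \ref{propdispNgrandtilde}(1) gives $|\tilde W^m_N|\lesssim\frac{\tilde h^{1/3}}{\tilde h^2}\,\tilde\lambda^{1/6}T^{-1/2}$, and the constraint $T\gtrsim\tilde\lambda^{1/3}$, i.e. $t\gtrsim\tilde h^{8/3}/h^3$, converts this into $\frac1{h^2}(h/t)^{1/4}(h/\tilde h)$ (estimate \eqref{esttildeWN2}), while for $T>\tilde\lambda$ Proposition \ref{propdispNgrandtilde}(2) gives $\frac1{h^2}(h/t)^{1/3}(h/\tilde h)$ (estimate \eqref{esttildeWN3}). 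In particular one should not try to prove the literal second line of \eqref{dispomegahtildeh} on this intermediate window (it is not smaller than these expressions there); the paper's proof keeps first-line-type bounds up to $T\sim\tilde\lambda^2$ and produces the second line only from the spectral sum beyond that, and both forms carry the factor $(h/\tilde h)^c$, $c>0$, needed for the dyadic summation giving $G^{\flat,m}_h$. A secondary point you gloss over: the perturbations $(h/\tilde h)^2$ and $m^2h^2/\eta^2$ in the rescaled phase are not literally ``already absorbed in Proposition \ref{propcardN}'' -- the overlap count must be redone for the phase \eqref{PhiNagammadeftilde} (Proposition \ref{propcardNtilde}), and for $m=1$ the new term $m^2h^2T$ in \eqref{eq:113tilde} now dominates $T/\tilde\lambda^2$ (the opposite of the tangential case), which is precisely why the paper restates the counting and the stationary-phase propositions in this section instead of quoting the $\gamma$-versions verbatim.
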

In the remaining part of this section we prove Proposition \ref{propThmhhtilde}.
In \eqref{greenfctHFsmallertheta}, the symbols $\psi(\tilde \theta)$ and $\psi_1(h\sqrt{\lambda_k(\theta)})$ are supported for $\tilde h\theta\simeq 1$ and $h^2\lambda_k(\theta)\simeq 1$, which implies $\omega_k\simeq \frac{\tilde h^{4/3}}{h^2}$ and $k\simeq \frac{\tilde h^2}{h^3}$. Let $\theta=\eta/\tilde h$, with $\eta\simeq 1$ on the support of $\psi$. Remark that we must have $a\eta^{2/3}/\tilde h^{2/3}\leq\frac 32 \omega_k$ since otherwise, due to the behavior of the Airy function, the corresponding integral yields a $O(h^{\infty})$ contribution. The condition $\omega_k\simeq \frac{\tilde h^{4/3}}{h^2}$ (which holds on the support of $\psi_1$) implies $a\lesssim (\tilde h/h)^2$. When $a\leq \frac 14  (\tilde h/h)^2$ and $x\leq a$, both Airy factors $e_k(x,\eta/\tilde h)$ and $e_k(a,\eta/\tilde h)$ have only non degenerate critical points ; when $a\simeq (\tilde h/h)^2$, the Airy factor $e_k(a,\eta/\tilde h)$ may have degenerate critical points of order $2$ (and the same thing may happen to $e_k(x,\eta/\tilde h)$ for $x$ close to $a$). 

We let $a:=(\tilde h/h)^2 \tilde a $, then $\tilde a\lesssim 1$ and $a\theta^{2/3}= \tilde a \eta^{2/3}\tilde h^{4/3}/h^2$. 
In the two dimensional case, the phase of \eqref{greenfctHFsmallertheta} becomes
\begin{equation}\label{phasehhtilde}
\tilde \phi_{k,(h,\tilde h)}:=\frac{y\eta}{\tilde h}+\frac{t\eta}{h}\sqrt{\omega_k h^2/(\eta^{2/3}\tilde h^{4/3})+(h/\tilde h)^2+h^2m^2/\eta^2}
\end{equation}
and the Airy phase functions transform into 
\[
s^3/3+s\eta^{2/3}(\tilde h^{4/3}/h^2)(\tilde a-\omega_k h^2/(\eta^{2/3}\tilde h^{4/3}))+\sigma^3/3+\sigma\eta^{2/3}(\tilde h^{4/3}/h^2)(\frac xa \tilde a -\omega_k h^2/(\eta^{2/3}\tilde h^{4/3})).
\]
Using $1/2>h/\tilde h$ and that $\omega_k h^{2}/(\eta^{2/3}\tilde h^{4/3})\simeq 1$, it follows that the main term in the phase is $\frac{t\eta}{h}\sqrt{\omega_k h^2/(\eta^{2/3}\tilde h^{4/3})}$.
We show that we can reduce the analysis to the the previous case (when $\tilde h=h$).
Applying the Airy-Poisson formula \eqref{eq:AiryPoissonBis} and replacing the Airy factors by their integral formulas (see \eqref{eq:bis47}) allow to write $G^m_{(h,\tilde h)}$ under the following form
\begin{multline*}
G^{m}_{(h,\tilde h)}(t,x,a,y)=\frac{\tilde h^{1/3}}{(2\pi)^3\tilde h^2}\sum_{N\in\mathbb{Z}}\int_{\mathbb{R}}\int_{\mathbb{R}}\int_{\mathbb{R}^2}e^{\frac{i}{\tilde h}(y\eta+t\eta \frac{\tilde h}{h}\sqrt{\omega h^2/(\eta^{2/3}\tilde h^{4/3})+(h/\tilde h)^2+h^2m^2/\eta^2})-iNL(\omega)}\psi(\eta)\eta^{2/3}\\
\times \psi_1(\sqrt{h^2 \omega \eta^{4/3} /\tilde h^{4/3}+\eta^2(h/\tilde h)^2+h^2m^2})e^{i\eta(s^3/3+s(a\eta^{2/3}/\tilde h^{2/3}-\omega)+\sigma^3/3+\sigma(x\eta^{2/3}/\tilde h^{2/3}-\omega))}ds d\sigma d\omega d\eta,
\end{multline*}
where on the support of $\psi_1$ we have $\omega\simeq \frac{\tilde h^{4/3}}{h^2}$.
Set $T:=t(\frac{h}{\tilde h})^2$, $X:=x\frac{h^2}{\tilde h^2}$, $Y:=\frac{h^3}{\tilde h^3}(y+t)$ 
and rescale 
\[
\omega=(\tilde h/h)^{4/3}(\eta/h)^{2/3}\tilde A,\quad s=(\eta/h)^{1/3}(\tilde h/h)^{2/3}S,\quad \sigma=(\eta/h)^{1/3}(\tilde h/h)^{2/3}\Upsilon.
\]
To keep the same notations as in the previous section we replace the symbol $\psi_1$ by $\psi_2(\tilde A)$, with $\psi_2\in \mathcal{C}^{\infty}_0([\frac 14, 2])$ (this is possible without changing the contribution of the integrals modulo $O(h^{\infty})$ due to the support properties of $\psi_1$ and $\psi$); in the new variables we have
\begin{equation}\label{Gtildeform}
G^{m}_{(h,\tilde h)}(t,x,a,y)=\frac{1}{(2\pi)^3\tilde h^3}(\frac{\tilde h}{h})^4\sum_{N}\int_{\mathbb{R}}\int_{\mathbb{R}}\int_{\mathbb{R}^2}e^{i\frac{\tilde h^2}{h^3}\tilde \Psi^m_{N,\tilde a,(h,\tilde h)}(T,X,Y,\Upsilon,S,\tilde A,\eta)}\psi(\eta)\psi_2(\tilde A) d\Upsilon dS d\tilde A d\eta,\\
\end{equation}
where $\tilde A\simeq 1$, $\tilde a\lesssim 1$ and where we have defined (compare with \eqref{PhiNagammadef})
\begin{multline}\label{PhiNagammadeftilde}
\tilde \Psi^m_{N,\tilde a,(h,\tilde h)}(T,X,Y,\Upsilon,S,\tilde A,\eta):=
\eta\left(Y+ \Upsilon^3/3+ \Upsilon(X-\tilde A)+S^3/3+S(\tilde a-\tilde A)\right.\\
    {} +T(\sqrt{\tilde A+(h/\tilde h)^2+m^2h^2/\eta^2}-(h/\tilde h)
    {}-\left.\frac 43 N\tilde A^{3/2}\right)+\frac{N}{(\tilde h^2/h^3)}B(\eta(\tilde h^2/h^3)\tilde A^{3/2}).
\end{multline}
We define $\tilde \lambda:=\frac{\tilde h^2}{h^3}$, which satisfies $\tilde\lambda>\frac 1h$ and $G^m_{(h,\tilde h)}$ has now a  form similar to $G^m_{h,\gamma =1}$ (with $\lambda_{\gamma}$ replaced by $\tilde \lambda$). As in Section \ref{sectang}, using the equation satisfied by the critical points and the support of $\psi_2$, we can restrict ourselves to $|S|,\Upsilon|\leq 3$ and insert suitable cut-offs $\chi(S)\chi(\Upsilon)$ without changing the contribution of the integrals in \eqref{Gtildeform} modulo $O(\tilde\lambda^{-\infty})=O(h^{\infty})$. We can write $G^{m}_{(h,\tilde h)}(t,x,a,y)=\sum \tilde W^m_N(T,X,Y)$, where $\tilde W^m_N(T,X,Y)$ has the same form as $W^m_{N,a}(T,X,Y)$ in \eqref{eq:bis488ter9999} but where $\gamma$ is replaced by $1$, the factor $a^2$ is replaced by $(\frac{\tilde h}{h})^4$ (and is due to the change of variables in $\omega,s,\sigma$), $\lambda$ is replaced by $\tilde \lambda$ and $\Psi^m_{N,a,a,h}$ is replaced by $\tilde \Psi^m_{N,\tilde a,(h,\tilde h)}$ as follows
\begin{equation}\label{eq:bis488ter9999tilde}
 \tilde W^m_N(T,X,Y)=\frac{1}{(2\pi)^3\tilde h^3}(\frac{\tilde h}{h})^4\int_{\mathbb{R}}\int_{\mathbb{R}}\int_{\mathbb{R}^2}e^{i\tilde\lambda \tilde \Psi^m_{N,\tilde a,(h,\tilde h)}(T,X,Y,\Upsilon,S,\tilde A,\eta)}\psi(\eta)\psi_2(\tilde A)\chi(S)\chi(\Upsilon) d\Upsilon dS d\tilde A d\eta.
\end{equation}
We obtain the equivalent of Propositions \ref{propN<lambda} and \ref{propN>lambda} :
\begin{prop}\label{propN<lambdatilde }
Let $m\in\{0,1\}$. Let $|N|\lesssim\tilde\lambda$ and let $\tilde W^m_{N}(T,X,Y)$ be defined in \eqref{eq:bis488ter9999tilde}. Then the stationary phase theorem applies in $\tilde A$ and yields, modulo $O(\tilde\lambda^{-\infty})=O(h^{\infty})$ terms
\begin{equation}
  \label{eq:bis488ter999999machintilde}
    \tilde W^m_{N}(T,X,Y)= \frac {(\frac{\tilde h}{h})^4} {\tilde h^{3}(N\tilde\lambda)^{\frac 1 2}}  \int_{\R}\int_{\R^{2}} e^{i \lambda\Psi^m_{N,a,a,h}(T,X,\Upsilon,S,A_c,\eta)}  \eta^{2} \psi({\eta})
   \chi_{3}(S,\Upsilon,a,1/N,h,\eta) \, dS d\Upsilon d\eta\,,
\end{equation}
where $\chi_{3}$ has compact support in $(S,\Upsilon)$ and harmless dependency on the parameters $a,h,1/N,\eta$. 
\end{prop}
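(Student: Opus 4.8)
The plan is to follow, almost verbatim, the proof of Proposition~\ref{propN<lambda} (i.e.\ of \cite[Prop.~2]{ILP3}), exploiting the fact that the rescaling carried out just above turns $\tilde W^m_N$ in \eqref{eq:bis488ter9999tilde} into an oscillatory integral of exactly the same shape as $W^m_{N,a}$: the large parameter $\lambda=a^{3/2}/h$ is replaced by $\tilde\lambda=\tilde h^2/h^3>1/h$, the prefactor $a^2$ by $(\tilde h/h)^4$, and the phase $\Psi^m_{N,a,a,h}$ by $\tilde\Psi^m_{N,\tilde a,(h,\tilde h)}$ from \eqref{PhiNagammadeftilde}, which differs from $\Psi^m_{N,a,a,h}$ only through the lower-order corrections $(h/\tilde h)^2$ and, when $m=1$, $m^2h^2/\eta^2$ inside the square root, together with the fact that $\tilde a\lesssim 1$ plays the role of $a\simeq 1$. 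All the cut-offs live on $\tilde A\simeq 1$ (i.e.\ on the support of $\psi_2$), $\eta\in[\tfrac12,\tfrac32]$, $|S|,|\Upsilon|\leq 3$, and by Lemma~\ref{lemmeNtpetit} (whose proof uses only the rescaled time $T=t(h/\tilde h)^2$) the only $N$ that contribute satisfy $N\simeq T$; the case of bounded $N$, in which stationary phase in $\tilde A$ is not available, corresponds to $T\lesssim 1$ and is handled separately exactly as the terms $N=0,\pm1$ in Proposition~\ref{proptranstsmall}, so one may assume $1\lesssim|N|\lesssim\tilde\lambda$.

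The crux is to check that $\partial^2_{\tilde A}\tilde\Psi^m_{N,\tilde a,(h,\tilde h)}$ is elliptic of size $\simeq N$ on the support of the symbol. One has $\partial^2_{\tilde A}\bigl(-\tfrac43 N\tilde A^{3/2}\bigr)=-N\tilde A^{-1/2}$ and $\partial^2_{\tilde A}\bigl(T\sqrt{\tilde A+(h/\tilde h)^2+m^2h^2/\eta^2}\,\bigr)=-\tfrac T4\bigl(\tilde A+(h/\tilde h)^2+m^2h^2/\eta^2\bigr)^{-3/2}$, and since at the $\tilde A$-critical point $T\simeq 4N\sqrt{\tilde A}>0$ (the analogue of \eqref{critA}), these two contributions carry the same sign and each is $\simeq-N$; meanwhile $\partial^2_{\tilde A}\bigl(\tfrac N{\tilde\lambda}B(\eta\tilde\lambda\tilde A^{3/2})\bigr)=O(N/\tilde\lambda^2)$ by Lemma~\ref{lemL} (since $B(u)\simeq b_1/u$), and the terms linear in $S,\Upsilon$ are affine in $\tilde A$. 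Hence $|\partial^2_{\tilde A}\tilde\Psi^m|\simeq N$ uniformly in all parameters; in particular the term $m^2h^2/\eta^2$, being $O(h^2)$ inside a square root whose argument is $\simeq 1$, cannot spoil ellipticity, and this is the only place where the case $m=1$ differs from $m=0$ at this stage.

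Granting this, the one-dimensional stationary phase theorem applies to the phase $\tilde\lambda\tilde\Psi^m_{N,\tilde a,(h,\tilde h)}$ in the variable $\tilde A$, whose second $\tilde A$-derivative has size $\simeq\tilde\lambda N$: it produces the prefactor $(\tilde\lambda N)^{-1/2}$, a unique critical point $\tilde A_c=\tilde A_c(T,X,Y,\Upsilon,S,\eta)$ which, by the implicit function theorem applied to $\partial_{\tilde A}\tilde\Psi^m=0$, is smooth with derivatives bounded on the support of the cut-offs (the needed smallness of the mixed derivatives $\partial^2_{\tilde A,\eta}\tilde\Psi^m$, $\partial^2_{\tilde A,T}\tilde\Psi^m$, \dots\ relative to $N$ comes from the same expansions as above), and a leftover amplitude $\chi_3(S,\Upsilon,a,1/N,h,\eta)$ with compact support in $(S,\Upsilon)$ and harmless dependence on the remaining parameters; the remainder is $O((\tilde\lambda N)^{-\infty})=O(h^\infty)$ since $N\gtrsim 1$ and $\tilde\lambda>1/h$. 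Substituting $\tilde A_c$ back into $\tilde\lambda\tilde\Psi^m_{N,\tilde a,(h,\tilde h)}$ gives the phase of the asserted formula, which is then the exact counterpart of \eqref{eq:bis488ter999999machin}.

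The main obstacle is not a single hard step but the uniform bookkeeping of the ``non-principal'' pieces of the phase --- the $\tfrac1{\tilde\lambda}$-weighted $B$-term, the $(h/\tilde h)^2$ terms and, for $m=1$, the $m^2h^2/\eta^2$ term --- throughout the new parameter window $1\lesssim|N|\lesssim\tilde\lambda$, $\tilde\lambda>1/h$, $a\lesssim(\tilde h/h)^2$: one must verify, exactly as \cite[Prop.~2]{ILP3} did for $m=0$, that each of them is a genuine lower-order perturbation of the principal phase (of size $\simeq N$ times its leading part), so that neither the location of $\tilde A_c$ in the interior of the support of $\psi_2$ nor the structure of the surviving $(\Upsilon,S,\eta)$ integral is affected. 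Note that, in contrast with Remark~\ref{rmqN<lambda}, here one does not attempt to absorb $m^2h^2/\eta^2$ into the symbol so as to reduce to the $m=0$ phase: that reduction is neither available nor needed when $|N|\lesssim\tilde\lambda$, and becomes relevant only for $|N|\gg\tilde\lambda$, which is the content of the forthcoming analogue of Proposition~\ref{propN>lambda}.
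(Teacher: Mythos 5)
Your argument is correct and, in substance, follows the route the paper intends: after the rescaling, $\tilde W^m_N$ has exactly the structure of $W^m_{N,a}$ with $(\lambda,a,\Psi^m_{N,a,a,h})$ replaced by $(\tilde\lambda,\tilde a,\tilde\Psi^m_{N,\tilde a,(h,\tilde h)})$, and the paper simply asserts the statement as the analogue of Proposition \ref{propN<lambda}, i.e.\ of \cite[Prop.~2]{ILP3}: one-dimensional stationary phase in $\tilde A$ with large parameter $N\tilde\lambda$, the cases $N\simeq T$ with $N$ bounded being disposed of as in Proposition \ref{proptranstsmall} via Lemma \ref{lemmeNtpetit} (you also correctly read the displayed phase in \eqref{eq:bis488ter999999machintilde} as $\tilde\lambda\,\tilde\Psi^m_{N,\tilde a,(h,\tilde h)}$ at $\tilde A_c$; the literal ``$\lambda\Psi^m_{N,a,a,h}$'' is a carry-over from \eqref{eq:bis488ter999999machin}). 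The one genuine difference is your handling of $m=1$: the paper's template for $|N|\lesssim\lambda$ is Remark \ref{rmqN<lambda}, which absorbs the $O(h^2)$ mass correction into the symbol using $th\lesssim a^2\lesssim 1$, and the paper does not comment on whether this survives the new window; as you note, here the corresponding quantity is $\tilde\lambda T h^2$, which reaches $(\tilde h/h)^4\gg 1$ when $T\simeq\tilde\lambda$, so the absorption is indeed unavailable. Keeping $m^2h^2/\eta^2$ in the phase and checking directly that $\partial^2_{\tilde A}\tilde\Psi^m\simeq -N$ (the contributions of $T\sqrt{\tilde A+\cdots}$ and $-\tfrac43N\tilde A^{3/2}$ have the same sign and size $N$ at the critical point $T\simeq 4N\sqrt{\tilde A}$, the $B$ term giving $O(N/\tilde\lambda^2)$) is the right fix, and it is what makes the conclusion uniform over $1\lesssim|N|\lesssim\tilde\lambda$ --- a point the paper glosses over. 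One small wording slip: the mixed derivative $\partial^2_{\tilde A,\eta}\tilde\Psi^m$ is of size $T\simeq N$, not small relative to $N$; what you actually need, and do have, is that its ratio to $\partial^2_{\tilde A}\tilde\Psi^m$ is $O(1)$, so that $\partial_\eta\tilde A_c=O(1)$ and the post-stationary-phase amplitude $\chi_3$ is a harmless symbol. This does not affect the validity of your proof.
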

\begin{prop}\label{propN>lambdatilde}
Let $m\in\{0,1\}$, $|N|\gg \tilde \lambda$ and $\tilde W^m_{N}(T,X,Y)$ be defined in \eqref{eq:bis488ter9999tilde}, then the stationary phase applies in both $A$ and $\eta$ and yields modulo $O(\tilde\lambda^{-\infty})=O(h^{\infty})$ terms
\begin{equation}
  \label{eq:bis488ter999999machin>tilde}
    \tilde W^m_{N}(T,X,Y)= \frac {(\frac{\tilde h}{h})^4} {\tilde h^{3} N} \int_{\R^{2}} e^{i \lambda \Psi^m_{N,a,a,h}(T,X,Y,\Upsilon,S,A_c,\eta_c)}  \chi_{3}(S,\Upsilon,a,1/N,h) \, dS d\Upsilon \,,
\end{equation}
where $\chi_{3}$ has compact support in $(S,\Upsilon)$ and harmless dependency on the parameters $a,h,1/N$. 
\end{prop}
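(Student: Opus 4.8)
The plan is to read Proposition~\ref{propN>lambdatilde} as the exact transcription, in the rescaled integral \eqref{Gtildeform}--\eqref{eq:bis488ter9999tilde}, of Proposition~\ref{propN>lambda}. After the change of variables $\omega=(\tilde h/h)^{4/3}(\eta/h)^{2/3}\tilde A$, $s=(\eta/h)^{1/3}(\tilde h/h)^{2/3}S$, $\sigma=(\eta/h)^{1/3}(\tilde h/h)^{2/3}\Upsilon$, the quantity $\tilde W^m_N$ is a four-dimensional oscillatory integral in $(\Upsilon,S,\tilde A,\eta)$ with large parameter $\tilde\lambda=\tilde h^2/h^3>1/h$ and phase $\tilde\Psi^m_{N,\tilde a,(h,\tilde h)}$ given by \eqref{PhiNagammadeftilde}; this phase has the same structure as $\Psi^m_{N,a,a,h}$ in \eqref{PhiNagammadef}, the only difference being that the role of the curvature term $\gamma A$ (and of $1+\gamma$) inside the square root is played by the \emph{$\eta$-independent} quantity $(h/\tilde h)^2$ of size $\lesssim1$, while the argument of $B$ is $\eta\tilde\lambda\tilde A^{3/2}$ with $\tilde\lambda$ large and $\tilde A\simeq1$ on $\operatorname{supp}\psi_2$. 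So I would simply run the argument of Proposition~\ref{propN>lambda} with $\lambda,\gamma,\Psi^m_{N,a,a,h}$ replaced throughout by $\tilde\lambda,1,\tilde\Psi^m_{N,\tilde a,(h,\tilde h)}$, the phase appearing in \eqref{eq:bis488ter999999machin>tilde} being understood as the critical value of $\tilde\Psi^m_{N,\tilde a,(h,\tilde h)}$ at $(\tilde A_c,\eta_c)$.

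First I would write down the critical point system of $\tilde\Psi^m_{N,\tilde a,(h,\tilde h)}$ in $(\Upsilon,S,\tilde A,\eta)$, the obvious analogue of \eqref{critSig}--\eqref{criteta} with $\gamma A$ replaced by $(h/\tilde h)^2$: the $\tilde A$-equation reads $\tfrac{T}{2\sqrt{\tilde A+(h/\tilde h)^2+m^2h^2/\eta^2}}=\Upsilon+S+2N\tilde A^{1/2}\bigl(1-\tfrac34B'(\eta\tilde\lambda\tilde A^{3/2})\bigr)$, whence $T\simeq4N$, so for $|N|\gg\tilde\lambda$ the stationary point $(\tilde A_c,\eta_c)$ near $\tilde A\simeq1,\eta\simeq1$ is unique and nondegenerate at fixed $(\Upsilon,S)$. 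Then I would compute the Hessian in $(\tilde A,\eta)$ at that point: for $m=0$ this is \emph{verbatim} the computation of Proposition~\ref{propN>lambda}, namely $\partial^2_{\tilde A,\tilde A}\tilde\Psi^{0}\simeq-\eta N\tilde A^{-1/2}\simeq-N$ (leading terms from $-\tfrac43N\tilde A^{3/2}$ and, since $T\simeq4N$, from the square-root term), while at the critical point $\partial^2_{\eta,\eta}\tilde\Psi^{0}\simeq\partial^2_{\tilde A,\eta}\tilde\Psi^{0}\simeq N/\tilde\lambda^2$, the leading part coming from $\tfrac{N}{\tilde\lambda}B(\eta\tilde\lambda\tilde A^{3/2})$ (use $B''(u)\simeq2b_1u^{-3}$ and $\eta,\tilde A\simeq1$) \emph{precisely because $(h/\tilde h)^2$ does not depend on $\eta$}. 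Hence $\det\operatorname{Hess}_{(\tilde A,\eta)}\tilde\Psi^{0}\simeq-N^2/\tilde\lambda^2$, the stationary phase in $(\tilde A,\eta)$ produces the factor $\tilde\lambda^{-1}|\det\operatorname{Hess}|^{-1/2}\simeq N^{-1}$, the non-stationary variables $(\Upsilon,S)$ stay inside a compactly supported symbol $\chi_3$ exactly as in \eqref{eq:bis488ter999999machin>}, and \eqref{eq:bis488ter999999machin>tilde} follows for $m=0$. The companion case $|N|\lesssim\tilde\lambda$ (stationary phase in $\tilde A$ alone) is handled the same way, transcribing the proof of Proposition~\ref{propN<lambda}.

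The only genuinely new point, as in Proposition~\ref{propN>lambda}, is $m=1$ with $|N|\gg\tilde\lambda$: the summand $m^2h^2/\eta^2$ inside the square root cannot be absorbed into the symbol, since $\tilde\lambda\cdot Th^2\simeq th$ is then large. But $\tilde h>2h$ and $\eta\simeq1$ force $m^2h^2/\eta^2\lesssim(h/\tilde h)^2$, so this is a small, $\eta$-dependent perturbation of a quantity of size $\simeq1$: it shifts $(\tilde A_c,\eta_c)$ by $O(m^2h^2)$ only and adds $O(m^2Th^2)$ to each second derivative, hence it can only \emph{increase} $|\partial^2_{\eta,\eta}\tilde\Psi^{1}|$ and $|\det\operatorname{Hess}_{(\tilde A,\eta)}\tilde\Psi^{1}|$ relative to $m=0$. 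Consequently the stationary phase still applies, the critical value of $\tilde\Psi^{1}$ differs from that of $\tilde\Psi^{0}$ only by a harmless $O(m^2h^2)$ amount inside the symbol, and $|\tilde W^{1}_N|$ is bounded by the right-hand side of \eqref{eq:bis488ter999999machin>tilde}, which is all one needs for Proposition~\ref{propThmhhtilde}; the leftover $(\Upsilon,S)$-integral is, as before, an oscillatory integral of Airy type. I expect the main technical obstacle to be exactly this $m=1$ Hessian bookkeeping: making sure that the mass term, now sitting inside an already curved square root and paired with the much larger parameter $\tilde\lambda$, neither spoils nondegeneracy nor lowers $|\det\operatorname{Hess}_{(\tilde A,\eta)}|$ below what is required for a prefactor no worse than $1/N$ --- and, just as in the proof of Proposition~\ref{propN>lambda}, the decisive structural fact is that the $(h/\tilde h)^2$ curvature term is invisible to $\eta$ and that $T\simeq N$.
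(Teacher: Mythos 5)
Your proposal is correct and follows essentially the same route as the paper, which obtains Proposition \ref{propN>lambdatilde} precisely by transcribing the proof of Proposition \ref{propN>lambda} with $\lambda,\gamma,\Psi^m_{N,a,a,h}$ replaced by $\tilde\lambda,1,\tilde\Psi^m_{N,\tilde a,(h,\tilde h)}$, the Hessian in $(\tilde A,\eta)$ being dominated by $\partial^2_{\tilde A,\tilde A}\simeq -N$ and $\partial^2_{\eta,\eta}\simeq N/\tilde\lambda^2$ so that $\tilde\lambda^{-1}|\det\mathrm{Hess}|^{-1/2}\simeq 1/N$. The one point worth making explicit in your $m=1$ bookkeeping is that, unlike in Proposition \ref{propN>lambda}, the mass contribution $O(m^2Th^2)\simeq Nh^2$ to $\partial^2_{\eta,\eta}$ actually dominates $N/\tilde\lambda^2=Nh^6/\tilde h^4$ when $\tilde h\gg h$; but since $b_1>0$ makes the $B''$ contribution positive and the mass contribution to $\partial^2_{\eta,\eta}$ is also positive (for $T,N>0$), the two add, $|\det\mathrm{Hess}_{(\tilde A,\eta)}|\gtrsim N^2/\tilde\lambda^2$, and the stationary phase still yields a prefactor $\lesssim 1/N$ with the bounded extra factor absorbed into $\chi_3$ --- exactly the upper bound needed downstream, as you observe.
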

Using \eqref{borneintWNsmallpres}, \eqref{borneintWNsmall} and \eqref{borneintW} with $\lambda$ is replaced by $\tilde \lambda$ we obtain the equivalent of Propositions \ref{propdispNpetitpres}, \ref{propdispNpetitloin} and \eqref{propdispNgrand} where the only difference is that $h$ is replaced by $\tilde h$, $a$ by $\tilde a$, $\lambda$ by $\tilde \lambda$ and we have to distinguish three main situations according to whether $N< \tilde \lambda^{1/3}$, $\tilde\lambda^{1/3} \leq N\leq \tilde \lambda$ and $N>\tilde \lambda$. 

\begin{prop}
\label{propdispNpetitprestilde}
Let $m\in \{0,1\}$. For $1\leq |N|<\tilde \lambda^{1/3}$ and $|T-4N|\lesssim 1/N$, we have, when $\tilde a\sim 1$,
  \begin{equation}
    \label{eq:2hhtilde}
       \left| \tilde W^m_{N}(T,X,Y)\right| \lesssim  \frac{\tilde h^{1/3}}{\tilde h^2}\frac{1}{((N/\tilde \lambda^{1/3})^{1/4}+|N(T-4N)|^{1/6})}\,.
  \end{equation}
\end{prop}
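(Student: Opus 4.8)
The plan is to reproduce the proof of Proposition \ref{propdispNpetitpres} (that is, of \cite[Prop.6]{ILP3}) essentially verbatim, performing the substitution $(h,a,\lambda)\mapsto(\tilde h,\tilde a,\tilde\lambda)$ with $\tilde\lambda=\tilde h^2/h^3$, and then checking that the new features of $\tilde\Psi^m_{N,\tilde a,(h,\tilde h)}$ relative to the case $\tilde h=h$ --- the bounded constant $(h/\tilde h)^2<\tfrac14$ and, when $m=1$, the mass term $m^2h^2/\eta^2$ inside the square root of \eqref{PhiNagammadeftilde} --- do not disturb the relevant oscillatory bounds.

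First I would apply the stationary phase in $\tilde A$ to \eqref{eq:bis488ter9999tilde}. Since $1\le|N|<\tilde\lambda^{1/3}\ll\tilde\lambda$, this is Proposition \ref{propN<lambdatilde }: near the caustic $T\simeq4N$ the second $\tilde A$-derivative of $\tilde\Psi^m$ has size $\simeq N$ (coming from the $-\tfrac43N\tilde A^{3/2}$-term and the $T$-term, with $\tilde a,\tilde A\simeq1$), and it is unaffected at leading order by $(h/\tilde h)^2$ and by $m^2h^2/\eta^2$, so that modulo $O(h^\infty)$
\[
\tilde W^m_N(T,X,Y)=\frac{(\tilde h/h)^4}{\tilde h^3\,(N\tilde\lambda)^{1/2}}\int_{\R}\int_{\R^2}e^{i\tilde\lambda\tilde\Psi^m_{N,\tilde a,(h,\tilde h)}(T,X,\Upsilon,S,\tilde A_c,\eta)}\,\eta^2\psi(\eta)\,\chi_3(S,\Upsilon,\tilde a,1/N,\tilde h,\eta)\,dS\,d\Upsilon\,d\eta
\]
with $\chi_3$ of compact support in $(S,\Upsilon)$. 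Next I would estimate the remaining $(S,\Upsilon,\eta)$-integral by \eqref{borneintWNsmallpres}, with $\lambda$ replaced by $\tilde\lambda$. This transfer is legitimate because, in the variables $(S,\Upsilon,\eta)$, the phase $\tilde\Psi^m_{N,\tilde a,(h,\tilde h)}(T,X,\Upsilon,S,\tilde A_c,\eta)$ has exactly the structure of $\Psi^m_{N,a,a,h}(T,X,\Upsilon,S,A_c,\eta)$ in the tangential regime $\gamma\simeq a$ of Section \ref{sectang}: since $\tilde a,\tilde A\simeq1$, the critical relations $S^2=\tilde A-\tilde a$, $\Upsilon^2=\tilde A-X$ reproduce the same fold and cusp degeneracies near $S=0$, $\Upsilon=0$, $\tilde A\simeq\tilde a$ that are responsible for the exponents $\tfrac14$ and $\tfrac16$. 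The constant $(h/\tilde h)^2<\tfrac14$ shifts $\tilde A_c$ and the stationary-phase data by bounded amounts only, exactly as for the $\gamma$-dependent parametrix; and for $m=1$ the terms carrying $m^2h^2/\eta^2$ are treated as in the proof of Proposition \ref{propN>lambda} and the remarks of Section \ref{sectransv}: they come with the same factors $N\simeq T$ as the $N\tilde A^{3/2}$- and $NB(\eta\tilde\lambda\tilde A^{3/2})$-terms, they are independent of $(S,\Upsilon)$, and they are either a negligible perturbation of $\tilde\Psi^m$ or else produce additional oscillation in $\eta$ that can only improve the estimate, so the upper bound \eqref{borneintWNsmallpres}, whose proof rests on the degeneracies in $(S,\Upsilon)$, continues to hold.

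Finally I would substitute \eqref{borneintWNsmallpres} (with $\lambda\to\tilde\lambda$) into the prefactor and simplify. The power bookkeeping gives $\frac{(\tilde h/h)^4}{\tilde h^3}\cdot\frac1{(N\tilde\lambda)^{1/2}}\cdot\frac1{N^2}\cdot\big(\tilde\lambda/N^3\big)^{-5/6}=\frac{\tilde h}{h^4}\,\tilde\lambda^{-4/3}=\tilde h^{-5/3}=\frac{\tilde h^{1/3}}{\tilde h^2}$ (using $\tilde\lambda=\tilde h^2/h^3$), while the denominator $\big(\tilde\lambda/N^3\big)^{-1/12}+\big(N^2|\tfrac{T}{4N}-1|\big)^{1/6}$ equals $(N/\tilde\lambda^{1/3})^{1/4}+|N(T-4N)|^{1/6}$ up to harmless constants; this is precisely \eqref{eq:2hhtilde}. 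I expect the only genuine work to be this bookkeeping together with the verification --- sketched above --- that the mass term and the constant $(h/\tilde h)^2$ leave \eqref{borneintWNsmallpres} intact; everything else is word-for-word as in \cite[Prop.6]{ILP3}.
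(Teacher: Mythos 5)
Your proposal is correct and follows essentially the same route as the paper, which obtains this proposition precisely by performing the stationary phase in $\tilde A$ (giving \eqref{eq:bis488ter999999machintilde}) and then invoking the bound \eqref{borneintWNsmallpres} with $\lambda$ replaced by $\tilde\lambda$, $h$ by $\tilde h$ and $a$ by $\tilde a$; your bookkeeping $\frac{(\tilde h/h)^4}{\tilde h^3(N\tilde\lambda)^{1/2}}\cdot\frac1{N^2}\bigl(\tilde\lambda/N^3\bigr)^{-5/6}=\frac{\tilde h^{1/3}}{\tilde h^2}$ and the identification of the denominator with $(N/\tilde\lambda^{1/3})^{1/4}+|N(T-4N)|^{1/6}$ are exactly right. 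Your verification that the constant $(h/\tilde h)^2$ and the $m^2h^2/\eta^2$ term leave the $(S,\Upsilon)$ degeneracies (hence \eqref{borneintWNsmallpres}) intact is in fact more detail than the paper itself records for this step.
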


\begin{prop}
\label{propdispNpetitlointilde}
Let $m\in\{0,1\}$. For $1\leq |N|<\tilde \lambda^{1/3}$ and $|T-4N|\gtrsim 1/N$, we have
  \begin{equation}
    \label{eq:2fftilde}
       \left| \tilde W^m_{N}(T,X,Y)\right| \lesssim  \frac{\tilde h^{1/3}}{\tilde h^2}\frac{1}{(1+|N(T-4N)|^{1/2})}\,.
  \end{equation}
\end{prop}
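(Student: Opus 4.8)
The plan is to carry over the proof of Proposition \ref{propdispNpetitloin} verbatim, updating the three scales: wherever $\lambda$, $a$, $h$, $\Psi^m_{N,a,a,h}$ occur they become $\tilde\lambda=\tilde h^2/h^3$, $\tilde a$, $\tilde h$, $\tilde\Psi^m_{N,\tilde a,(h,\tilde h)}$. Since $1\leq|N|<\tilde\lambda^{1/3}\leq\tilde\lambda$, the first step is to apply Proposition \ref{propN<lambdatilde }, performing the stationary phase in the $\tilde A$ variable; this writes $\tilde W^m_N(T,X,Y)$, modulo $O(h^\infty)$, as $\frac{(\tilde h/h)^4}{\tilde h^3(N\tilde\lambda)^{1/2}}$ times an oscillatory integral in $(S,\Upsilon,\eta)$ with phase $\tilde\lambda\,\tilde\Psi^m_{N,\tilde a,(h,\tilde h)}$ evaluated at the $\tilde A$-critical point.

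The core estimate is then the $\tilde\lambda$-analogue of \eqref{borneintWNsmall}, namely that this remaining $(S,\Upsilon,\eta)$-integral is $\lesssim N^{-2}(\tilde\lambda/N^3)^{-5/6}(1+(N^2|T/(4N)-1|)^{1/2})^{-1}$. I would obtain it exactly as in \cite[Prop.~6]{ILP3}: first stationary phase in $\eta$ (the relevant second derivative being controlled by the geometric terms of the phase, unchanged from the non-tilde model), followed by the estimate of the resulting product of Airy-type integrals in $S$ and $\Upsilon$. In the range $|T-4N|\gtrsim 1/N$ considered here the two critical points of each of these integrals are separated and nondegenerate, producing the stationary-phase gain $(1+(N^2|T/(4N)-1|)^{1/2})^{-1}$; the coalescence of critical points that yields only the weaker $|N(T-4N)|^{1/6}$ decay is confined to the complementary window $|T-4N|\lesssim 1/N$ treated in Proposition \ref{propdispNpetitprestilde}.

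I expect the only point requiring a word of justification to be the mass term: in \eqref{PhiNagammadeftilde} the factor $m^2h^2/\eta^2$ sits under the square root $\sqrt{\tilde A+(h/\tilde h)^2+m^2h^2/\eta^2}$ and, unlike in the bounded-time setting of \cite{ILP3}, cannot in general be absorbed into the symbol. However, the difference between the $m=1$ and $m=0$ phases is of size $O(Th^2)$ together with all its $(\tilde A,\eta)$-derivatives up to order two, which is negligible against the $O(N)$ coming from the $-\frac43 N\tilde A^{3/2}$ and $NB$ terms (as in the proof of Proposition \ref{propN>lambda}); in particular the $\tilde A$-critical point moves by $O(h^2)$ and the caustic location in $T$ by $O(Nh^2)$, which for $|N|<\tilde\lambda^{1/3}$ is $\lesssim 1/N$ and hence irrelevant in the regime $|T-4N|\gtrsim 1/N$. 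Thus the bounds of \cite[Prop.~6]{ILP3} transfer uniformly in $m\in\{0,1\}$.

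It then remains to combine the two estimates and simplify. Using $\tilde\lambda=\tilde h^2/h^3$ and $(\tilde h/h)^4=\tilde h^4/h^4$, the prefactor becomes $\frac{(\tilde h/h)^4}{\tilde h^3(N\tilde\lambda)^{1/2}}\cdot\frac{N^{5/2}}{N^2\tilde\lambda^{5/6}}=\frac{(\tilde h/h)^4}{\tilde h^3\tilde\lambda^{4/3}}=\tilde h^{-5/3}=\frac{\tilde h^{1/3}}{\tilde h^2}$, the powers of $N$ cancelling exactly; and since $N^2|T/(4N)-1|=\frac14 N|T-4N|$, the denominator $1+(N^2|T/(4N)-1|)^{1/2}$ is comparable to $1+|N(T-4N)|^{1/2}$. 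This yields \eqref{eq:2fftilde}.
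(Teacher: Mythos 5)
Your proposal follows exactly the paper's route: the paper obtains Proposition \ref{propdispNpetitlointilde} precisely by invoking Proposition \ref{propN<lambdatilde } and the analogue of \eqref{borneintWNsmall} with $\lambda$, $h$, $a$, $\Psi^m_{N,a,a,h}$ replaced by $\tilde\lambda$, $\tilde h$, $\tilde a$, $\tilde\Psi^m_{N,\tilde a,(h,\tilde h)}$, and your prefactor computation $\frac{(\tilde h/h)^4}{\tilde h^3(N\tilde\lambda)^{1/2}}\cdot N^{5/2}N^{-2}\tilde\lambda^{-5/6}=\tilde h^{-5/3}=\frac{\tilde h^{1/3}}{\tilde h^2}$ is correct. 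Your extra paragraph justifying why the mass term ($O(Th^2)$ perturbation of the phase, critical-point shift $O(h^2)$, caustic shift $O(Nh^2)\lesssim 1/N$ since $N<\tilde\lambda^{1/3}\leq\tilde h^{2/3}/h$) can be absorbed is a welcome addition that the paper leaves implicit, but it does not change the argument.
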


\begin{prop}
\label{propdispNgrandtilde}
Let $m\in \{0,1\}$. Let $\tilde \lambda^{1/3}\lesssim N$, then the following estimates hold true
\begin{enumerate}
\item When $\tilde\lambda^{1/3}\lesssim N\lesssim \tilde \lambda$,
     $  \left| \tilde W^m_{N}(T,X,Y)\right|\lesssim \frac {\tilde h^{1/3}}{\tilde h^2} \frac {1} {((N/\tilde \lambda^{1/3})^{1/2} +\tilde \lambda^{1/6}|T-4N|^{1/2})}\,$.
    \item When $\tilde \lambda\lesssim N(\lesssim T)$, 
      $ \left| \tilde W^{m}_{N}(T,X,Y)\right|\lesssim \frac {\tilde h^{1/3}}{\tilde h^{2}}\frac {\sqrt{\tilde \lambda/N}} {(N/\tilde \lambda^{1/3})^{1/2}}\,$.
  \end{enumerate}
Moreover, for $N\gg T$ we have $\tilde W^m_{N}(T,\cdot)=O(h^{\infty})$.
\end{prop}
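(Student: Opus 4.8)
The plan is to transcribe the proof of Proposition \ref{propdispNgrand} essentially verbatim. After the rescaling that produced \eqref{eq:bis488ter9999tilde}, the phase $\tilde\Psi^m_{N,\tilde a,(h,\tilde h)}$ and the prefactor $\frac{(\tilde h/h)^4}{\tilde h^3}$ have exactly the structure of $\Psi^m_{N,a,a,h}$ and $\frac{a^2}{h^3}$ from Section \ref{sectang}, under the dictionary $h\mapsto\tilde h$, $a\mapsto\tilde a$, $\lambda\mapsto\tilde\lambda=\tilde h^2/h^3$ — note that $\frac{a^2}{h^3}=\lambda^{4/3}h^{-5/3}$ while $\frac{(\tilde h/h)^4}{\tilde h^3}=\tilde\lambda^{4/3}\tilde h^{-5/3}$, so the prefactor transcribes as well. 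Since $\tilde\lambda>1/h$, all the stationary phase arguments below are legitimate.

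For $\tilde\lambda^{1/3}\lesssim N\lesssim\tilde\lambda$ I would first use Proposition \ref{propN<lambdatilde } to run the stationary phase in $\tilde A$; the resulting $(S,\Upsilon,\eta)$ integral is controlled, exactly as in \cite[Prop.\,7]{ILP3}, by the right-hand side of \eqref{borneintW} with $\lambda$ replaced by $\tilde\lambda$, that is by $\tilde\lambda^{-2/3}(1+\tilde\lambda^{1/3}|\tfrac{T}{4N}-1|^{1/2})^{-1}$; multiplying by $\frac{(\tilde h/h)^4}{\tilde h^3(N\tilde\lambda)^{1/2}}$ and using $N\simeq T$, $\tilde a\simeq1$ gives the first bound after a short computation. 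For $\tilde\lambda\lesssim N$ I would use Proposition \ref{propN>lambdatilde}: the stationary phase applies in $\tilde A$ and $\eta$ together, producing the factor $1/N$ of \eqref{eq:bis488ter999999machin>tilde}, and the residual $(S,\Upsilon)$ integral is a product of two Airy-type integrals with large parameter $\tilde\lambda$, of worst decay $\tilde\lambda^{-2/3}$; then $\frac{(\tilde h/h)^4}{\tilde h^3 N}\tilde\lambda^{-2/3}=\frac{\tilde h^{1/3}}{\tilde h^2}\frac{\sqrt{\tilde\lambda/N}}{(N/\tilde\lambda^{1/3})^{1/2}}$. Finally, $\tilde W^m_N(T,\cdot)=O(h^\infty)$ for $N\gg T$ follows by non-stationary phase in $\tilde A$, as for Lemma \ref{lemmeNtpetit}: the equation $\partial_{\tilde A}\tilde\Psi^m=0$ reads $T=2\sqrt{\tilde A+(h/\tilde h)^2+m^2h^2/\eta^2}\,(\Upsilon+S+2N\sqrt{\tilde A}(1-\tfrac34 B'(\eta\tilde\lambda\tilde A^{3/2})))$, and since $|S|,|\Upsilon|\leq3$, $\tilde A\simeq1$ and $B'=O(\tilde\lambda^{-2})$ on the support of the cut-offs, a critical point can exist only if $T\simeq N$; hence for $N\gg T$ one has $|\partial_{\tilde A}\tilde\Psi^m|\gtrsim N$ throughout the domain of integration and repeated integration by parts gives $O(\tilde\lambda^{-\infty})=O(h^\infty)$.

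The one point that needs care — the only feature not already in \cite{ILP3} — is $m=1$ with $N\gg\tilde\lambda$: here $N\simeq T\simeq t(h/\tilde h)^2$ is so large that $th\gg1$, and the factor carrying $\sqrt{\tilde A+(h/\tilde h)^2+m^2h^2/\eta^2}-\sqrt{\tilde A+(h/\tilde h)^2}$ can no longer be absorbed into the symbol. I would handle this as in Proposition \ref{propN>lambda}: computing $\partial^2_{\tilde A}\tilde\Psi^{m=1}\simeq-\eta N\tilde A^{-1/2}$, $\partial^2_{\eta}\tilde\Psi^{m=1}=N\tilde\lambda\tilde A^3 B''(\eta\tilde\lambda\tilde A^{3/2})+O(m^2Th^2)$ and $\partial^2_{\eta,\tilde A}\tilde\Psi^{m=1}=\frac32\eta\tilde\lambda N\tilde A^2 B''(\eta\tilde\lambda\tilde A^{3/2})+O(m^2Th^2)$, and checking that the $m^2h^2/\eta^2$ contributions do not create a cancellation, so that $|\det\mathrm{Hess}_{\tilde A,\eta}\tilde\Psi^{m=1}|\gtrsim N^2/\tilde\lambda^2$; consequently \eqref{eq:bis488ter999999machin>tilde} remains valid as an upper bound (in fact for $m=1$ the mass term only enlarges the Hessian, improving the estimate), and the decay of the residual $(S,\Upsilon)$ integral is, as for $m=0$, unaffected by the mass. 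Apart from this verification, the proof is a rereading of \cite[Prop.\,6,\,7]{ILP3} with the replacements above, exactly as the remark preceding Proposition \ref{propdispNpetitprestilde} indicates; the main obstacle is precisely to ensure that the mass term does not spoil the Hessian lower bound in this last regime.
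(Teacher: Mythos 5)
Your proposal is correct and follows essentially the same route as the paper: the paper obtains (1) and (2) by exactly this transcription of \cite[Prop.\,6,\,7]{ILP3} (i.e. \eqref{borneintW} and the Airy-product bound with $\lambda\mapsto\tilde\lambda$, $h\mapsto\tilde h$, prefactor $(\tilde h/h)^4/\tilde h^3=\tilde\lambda^{4/3}\tilde h^{-5/3}$), and gets the $N\gg T$ statement from Lemma \ref{lemmeNtpetit}, which is the same non-stationary-phase argument you give. Your extra check that for $m=1$, $N\gg\tilde\lambda$ the term $O(m^2Th^2)$ in $\partial^2_{\eta}\tilde\Psi$ (which, unlike in Proposition \ref{propN>lambda}, need not be smaller than $N/\tilde\lambda^2$ since $\tilde h\geq 2h$) has a favorable sign and so only enlarges $|\det\mathrm{Hess}|\gtrsim N^2/\tilde\lambda^2$ is consistent with, and slightly more explicit than, the paper's treatment.
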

When $N\gg T$, the last statement of Proposition \ref{propdispNgrandtilde} follows from Lemma \ref{lemmeNtpetit}.
The case $\tilde \lambda\lesssim N$ occurs when $N\simeq T=t(\frac{\tilde h}{h})^2> \frac 1h (\frac{\tilde h}{h})^2=\tilde\lambda$, hence for $t\gtrsim \frac{1}{h}$. Using Propositions \ref{propdispNpetitprestilde}, \ref{propdispNpetitlointilde} and \ref{propdispNgrandtilde} we can now follow exactly the same approach as in Section \ref{secdisptangHF} and obtain the corresponding estimates for each $\tilde h>2h$. We define a set $\mathcal{\tilde N}^m_{1}(T,X,Y)$ as follows 
\begin{gather*}
\mathcal{\tilde N}^m_1(T,X,Y)=\cup_{|(T',X',Y')-(T,X,Y)|\leq 1}\mathcal{\tilde N}^m(T,X,Y),\\
\mathcal{\tilde N}^m(T,X,Y)=\{N\in \mathbb{Z}, \exists (\Sigma,\Upsilon,\tilde A,\eta) \text{ such that } \nabla_{ (\Sigma,\Upsilon,\tilde A,\eta)}\tilde \Psi^m_{N,\tilde a,(h,\tilde h)}(T,X,Y,\Upsilon,S,\tilde A,\eta)=0\}.
\end{gather*}
\begin{prop}\label{propcardNtilde}
Let $t\in \mathbb{R}$, $t>h$, $\tilde h\geq 2h$ and $T=t(h/\tilde h)^2$. The following holds true:
\begin{itemize}
\item We control the cardinal of $\mathcal{\tilde N}^m_{1}(t,x,y)$,
\begin{equation}
  \label{eq:113tilde}
  \left| \mathcal{\tilde N}^m_{1}(T,X,Y)\right| \lesssim O(1)+T/\tilde\lambda^2+m^2h^2T\,,
\end{equation}
and this bound is optimal. 
\item The contribution of the sum over $N\notin \mathcal{\tilde N}^m_{1}(T,X,Y)$ in \eqref{uhgamN} is $O(\tilde \lambda^{-\infty})=O(h^{\infty})$.
\end{itemize}
\end{prop}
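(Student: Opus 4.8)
The plan is to repeat the argument of Proposition~\ref{propcardN} almost verbatim, replacing the phase $\Psi^m_{N,a,\gamma,h}$ by the rescaled phase $\tilde\Psi^m_{N,\tilde a,(h,\tilde h)}$ of \eqref{PhiNagammadeftilde} and the large parameter $\lambda_\gamma$ by $\tilde\lambda=\tilde h^2/h^3>1/h$. First I would record the critical point system of $\tilde\Psi^m_{N,\tilde a,(h,\tilde h)}$ in $(\Upsilon,S,\tilde A,\eta)$: the equations $\partial_\Upsilon=\partial_S=0$ give $\Upsilon^2=\tilde A-X$ and $S^2=\tilde A-\tilde a$; the equation $\partial_{\tilde A}=0$ takes the form, in analogy with \eqref{critAj},
\[
2N\sqrt{\tilde A}\Big(1-\tfrac34 B'(\eta\tilde\lambda\tilde A^{3/2})\Big)=\frac{T}{2\sqrt{\tilde A+(h/\tilde h)^2+m^2h^2/\eta^2}}-(\Upsilon+S),
\]
which, since $\tilde A\simeq 1$ and $\Upsilon,S=O(1)$, already forces $N\simeq T$ for $T$ large (for $N\gg T$ one has $\tilde W^m_N=O(h^\infty)$ by the last statement of Proposition~\ref{propdispNgrandtilde}); and $\partial_\eta=0$ gives a relation among $Y,T,\Upsilon,S,\tilde A,N$. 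Exactly as in the passage from \eqref{criteta} to \eqref{critYT} and then \eqref{critYsurT}, substituting the $\partial_{\tilde A}$-relation into $\partial_\eta=0$ eliminates $N$ and $B'$ and yields, after dividing by $T$, a relation of the form $Y/T+O(1/T)+O(m^2h^2)=\tilde F_{\beta}(\tilde A)$, with $\beta:=(h/\tilde h)^2$, where $\tilde F_\beta'(1)=-\tfrac{2+\beta}{6(1+\beta)^{3/2}}$ is bounded away from zero uniformly for $\beta\in(0,1/4]$.

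Next, given $N_1,N_2\in\mathcal{\tilde N}^m_1(T,X,Y)$, I would pick $(T_j,X_j,Y_j)$ with $|T_j-T|,|X_j-X|,|Y_j-Y|\le 1$ and critical data $(\Upsilon_j,S_j,\tilde A_j,\eta_j)$, $j=1,2$; multiplying the $\partial_{\tilde A}$-relation for index $j$ by $\sqrt{\tilde A_{j'}}$, subtracting and dividing by $\sqrt{\tilde A_1\tilde A_2}$ produces an identity for $2(N_1-N_2)$ analogous to \eqref{difN1N2}. In it the terms $(\Upsilon_j+S_j)/\sqrt{\tilde A_j}$ are $O(1)$; the $B'$-terms contribute $\simeq(N_1+N_2)/\tilde\lambda^2\simeq T/\tilde\lambda^2$, using $B'(u)\simeq -b_1u^{-2}$, $\eta_j,\tilde A_j\simeq 1$, $N_j\simeq T$, and (as after \eqref{difN1N2}) no cancellation between the two summands since $|\eta_1-\eta_2|$ is only bounded by a small constant; and, after expanding $\big(\tilde A_j+(h/\tilde h)^2+m^2h^2/\eta_j^2\big)^{-1/2}$ in powers of $m^2h^2/\eta_j^2$, the $\eta_j^{-2}$-part contributes $\lesssim m^2h^2T$ and no less, again for lack of control on $\eta_1-\eta_2$, while the remaining main term $T_j\big(2\sqrt{\tilde A_j}\sqrt{\tilde A_j+(h/\tilde h)^2}\big)^{-1}$ is, by the triangle inequality and $|T_1-T_2|\le 2$, $\tilde A_j\simeq 1$ (uniformly in $\beta$), controlled by $O(1)+T_2|\tilde A_1-\tilde A_2|$.

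It then remains, exactly as in Proposition~\ref{propcardN}, to bound $T_2|\tilde A_1-\tilde A_2|$ via the $Y$-variable: using the $N$- and $B'$-free relation of the first paragraph for $j=1,2$ and the non-degeneracy of $\tilde F_\beta$ at $\tilde A\simeq 1$, one gets $|\tilde A_1-\tilde A_2|\lesssim |Y_1/T_1-Y_2/T_2|+O(1/T_1)+O(1/T_2)+O(m^2h^2)$, hence $T_2|\tilde A_1-\tilde A_2|\lesssim|Y_1-Y_2|+Y_1|1-T_2/T_1|+O(T_2/T_1)+O(1)+m^2h^2T=O(1)+m^2h^2T$, using $|Y_j-Y|\le1$, $|T_j-T|\le1$ and the boundedness of $Y_1/T_1$ (clear from the same relation). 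Collecting, $|N_1-N_2|\lesssim O(1)+T/\tilde\lambda^2+m^2h^2T$, which is \eqref{eq:113tilde}; optimality follows by exhibiting, at a suitable $(T,X,Y)$, an interval of $N$'s of exactly this length each carrying a stationary point, precisely as in \cite[Prop.~1]{ILP3}. For the second bullet, if $N\notin\mathcal{\tilde N}^m_1(T,X,Y)$ then $\nabla_{(\Upsilon,S,\tilde A,\eta)}\tilde\Psi^m_{N,\tilde a,(h,\tilde h)}$ has no zero on the relevant compact region for all $(T',X',Y')$ within distance $1$; arguing as in \cite[Prop.~1]{ILP3} one obtains a lower bound for $|\nabla_{(\Upsilon,S,\tilde A,\eta)}\tilde\Psi^m_{N,\tilde a,(h,\tilde h)}|$ that grows with $|N|$ for $|N|$ large, so that repeated integration by parts in $(\Upsilon,S,\tilde A,\eta)$ in $G^m_{(h,\tilde h)}=\sum_N\tilde W^m_N$ gives $O(\tilde\lambda^{-\infty})=O(h^\infty)$ for this tail, since $\tilde\lambda>1/h$.

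The main obstacle, compared with Proposition~\ref{propcardN}, is the bookkeeping of the mass term: here $m^2h^2T$ is \emph{not} dominated by $T/\tilde\lambda^2$ (indeed $m^2h^2T/(T/\tilde\lambda^2)=m^2\tilde h^4/h^4\ge 16m^2$ since $\tilde h\ge 2h$), so for $m=1$ the mass genuinely adds an extra overlap term which one must track carefully both in the $\partial_{\tilde A}$-difference and in the $Y/T$-relation; a secondary point is verifying the curvature and phase-Hessian lower bounds, and the non-degeneracy of $\tilde F_\beta$, uniformly in the bounded-but-not-small parameter $\beta=(h/\tilde h)^2$.
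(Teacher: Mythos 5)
Your proposal is correct and follows essentially the same route as the paper, whose proof (given in the remark after the statement) consists precisely of rerunning the argument of Proposition \ref{propcardN} with the phase \eqref{PhiNagammadeftilde}, $\gamma$ replaced by $1$ and $\lambda_\gamma$ by $\tilde\lambda$, arriving at the same bound $|N_1-N_2|=O(1)+m^2h^2T+T/\tilde\lambda^2$. Your additional observations — the computation $\tilde F_\beta'(1)=-\tfrac{2+\beta}{6(1+\beta)^{3/2}}$ (which checks out) and the fact that the mass term now dominates $T/\tilde\lambda^2$ since $\tilde h\geq 2h$ — match the paper's own remark on why the $m=1$ case genuinely differs here.
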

\begin{rmq}
Notice that when $m=1$ and $t$ is large enough, the main contribution in the right hand side of \eqref{eq:113tilde} comes this time from the last term, as $h/\tilde h\leq 1/2$. For $m=1$ and $t>\frac{1}{h^2}(\frac{\tilde h}{h})^2$, the cardinal of $\mathcal{\tilde N}^m_{1}(t,x,y)$ is $|t|h^2(\frac{\tilde h}{h})^2$; for $\frac{1}{h^2}(\frac{\tilde h}{h})^6<t\leq \frac{1}{h^2}(\frac{\tilde h}{h})^2$ the cardinal is $|t|h^2(\frac{\tilde h}{h})^2$.
In order to prove Proposition \ref{propcardNtilde} we use exactly the same approach as in the proof of Proposition \ref{propcardN} but with the phase function \eqref{PhiNagammadeftilde}. The parameter $\gamma$ is replaced by $1$, and the difference of any two difference points $N_{1,2}\in \mathcal{\tilde N}^m_1(t,x,y)$ is estimated (as in \eqref{difN1N2}) as follows, 
\[
|N_1-N_2|=O(1)+m^2h^2 T+\frac{T}{\tilde \lambda^2},\quad T=t(\frac{h}{\tilde h})^2,\quad  \tilde \lambda=\frac{\tilde h^2}{h^3}.
\]
\end{rmq}
As long as $T\leq \tilde \lambda^2$ (i.e. $t<\frac{1}{h^2}(\frac{\tilde h}{h})^6$) there is no overlap and we use Propositions \ref{propdispNpetitprestilde}, \ref{propdispNpetitlointilde} and \ref{propdispNgrandtilde}. For $N<\tilde\lambda^{1/3}$, the worst bound is given by \eqref{eq:2hhtilde} which reads as 
\begin{equation}\label{esttildeWN1}
|\tilde W^m_N(T,X,Y)|\lesssim \frac{\tilde h^{1/3}}{\tilde h^2} \frac{\tilde \lambda^{1/12}}{t^{1/4}( h/\tilde h)^{1/2}}=\frac{1}{h^2}(\frac{h}{t})^{1/4}(\frac{h}{\tilde h}),
\end{equation}
and this holds for $t$ such that $|t(\frac{h}{\tilde h})^2-4N|\lesssim \frac 1N$
and for $\tilde a\sim 1$, which is equivalent to $a\sim (\tilde h/h)^2$. 
When $T\sim N<\tilde\lambda$ we have $t(h/\tilde h)^2\leq \tilde h^{2/3}/h$, which implies 
\[
\frac{\tilde h^{1/3}}{\tilde h^2}\lesssim \frac{1}{h^2}(\frac{h}{t})^{1/4}(\frac{h}{\tilde h}).
\]
As, from \eqref{eq:2fftilde}, $\tilde W^m_N(T,X,Y)$ is always bound by $\tilde h^{1/3}/\tilde h^2$ for $N<\tilde\lambda^{1/3}$, it follows that the bounds \eqref{esttildeWN1} are sharp and hold for all $T\simeq N\lesssim \tilde \lambda^{1/3}$. 

Let now $\tilde\lambda^{1/3}\leq N\lesssim \tilde \lambda^2$ and $T\simeq N$. Using Proposition \ref{propdispNgrandtilde} $(1)$ yields, (as there is no overlap)
\[
|\tilde W^m_N(T,X,Y)|\lesssim \frac{\tilde h^{1/3}}{\tilde h^2} \frac{\tilde\lambda^{1/6}}{\sqrt{t} (h/\tilde h)}=\frac{\tilde h^{-1/3}}{h^{3/2}\sqrt{t}},
\]
and using that in this regime we gave $t(\frac{h}{\tilde h})^2\simeq N\geq \tilde\lambda^{1/3}=\frac{\tilde h^{2/3}}{h}$ we obtain $t\gtrsim \frac{\tilde h^{8/3}}{h^3}$. Introducing this in the last inequality yields
\begin{equation}\label{esttildeWN2}
|\tilde W^m_N(T,X,Y)|\lesssim \frac{\tilde h^{-1/3}}{h^{3/2}t^{1/4}}(\frac{h^3}{\tilde h^{8/3}})^{1/4}=\frac{1}{h\tilde h}(\frac{h}{t})^{1/4}=\frac{1}{h^2}(\frac{h}{t})^{1/4}(\frac{h}{\tilde h}),
\end{equation}
which is the same bound as in \eqref{esttildeWN1}. For $T> \tilde \lambda$, using Proposition \ref{propdispNgrandtilde} $(2)$ yields 
\begin{equation}\label{esttildeWN3}
|\tilde W^m_N(T,X,Y)|\lesssim \frac{\tilde h^{1/3}}{\tilde h^2}\frac{\tilde \lambda^{2/3}}{t(h/\tilde h)^2}\leq \frac{1}{h^2}(\frac{h}{t})^{1/3}(\frac{h}{\tilde h}).
\end{equation}
When $T>\tilde \lambda^2$ we use the form of the parametrix as a sum over eigenmodes $k$ and apply the stationary phase with respect to $\eta$ with the phase function given in \eqref{phasehhtilde} and the Airy factors in the symbols (as we did in Section \ref{secWGM}). For $\omega_k\simeq \frac{\tilde h^{4/3}}{h^2}$ and $\eta\simeq 1$ on the support of the symbol, we have, for $m\in\{0,1\}$ 
\[
|\partial^2_{\eta,\eta}\tilde \phi_{k,(h,\tilde h)}|\simeq \frac 29 \frac th \eta^{-4/3}(\omega_k\frac{h^2}{\tilde h^{4/3}})^{1/2}\Big(1+O((h/\tilde h)^2)+O(h^2)\Big).
\]
We may use again \eqref{derivelAirybounds}, where now $\omega_k^{3/2}\simeq \frac{\tilde h^2}{h^3}= \tilde \lambda$; the condition \eqref{tocheck}, necessary and sufficient in order to consider the Airy factors as part of the symbol, becomes
\[
\tilde \lambda\lesssim (t/h)^{1/2-\nu}
\]
for some $\nu>0$; as we have $T:=t(\frac{h}{\tilde h})^2>\tilde\lambda^2$, it follows that $(t/h)^{1/2}>\frac{1}{\sqrt{h}}\frac{\tilde h}{h}\tilde\lambda=\tilde\lambda^{3/2}$ and the last condition holds for $\nu=\frac 16$. As, after the change of coordinates $\theta=\eta/\tilde h$, $G^{m}_{(h,\tilde h)}(t,x,a,y)$ becomes
\begin{equation}
\sum_{k\geq 1}\frac{\tilde h^{1/3}}{\tilde h^2}\int_{\mathbb{R}}e^{i \tilde\phi_{k,(h,\tilde h)}}
 \psi(\eta)\eta^{2/3}\psi_{1}\Big(h\sqrt{\lambda_{k}(\eta/\tilde h)}\Big)
  \frac{1}{L'(\omega_k)} Ai (x\eta^{2/3}/\tilde h^{2/3} -\omega_k) Ai(a \eta^{2/3}/\tilde h^{2/3}-\omega_k)d\eta\,,
\end{equation}
it follows, applying the stationary phase in $\eta$ and using again Lemma \ref{lemsob}, that we have
\begin{align*}
|G^{m}_{(h,\tilde h)}(t,x,a,y)|&\lesssim \frac{\tilde h^{1/3}}{\tilde h^2}(\frac ht)^{1/2}\Big|\sum_{k}\frac{\psi_1\Big(\sqrt{\eta^{4/3}\omega_kh^2/\tilde h^{4/3}+\eta^2(h/\tilde h)^2+m^2h^2}\Big)}{L'(\omega_k)} \Big|\\
&\lesssim  \frac{\tilde h^{1/3}}{\tilde h^2}(\frac ht)^{1/2}\tilde \lambda^{2/3},
\end{align*}
where we have use the fact that on the support of the symbol we have $\eta\simeq 1$ and $k\simeq \omega_k^{3/2}\simeq \tilde\lambda$ and that $L'(\omega_k)\simeq \sqrt{2\omega_k}\simeq k^{1/3}$. As $t>\frac{1}{h^2}(\frac{\tilde h}{h})^6$, then $t^{-1/6}< h^{1/3}\frac{h}{\tilde h}$ and we obtain the second line in \eqref{dispomega} 
\[
\frac{\tilde h^{1/3}}{\tilde h^2}(\frac ht)^{1/2}\tilde \lambda^{2/3}=\frac {1}{h^{3/2}t^{1/3}}\frac{\tilde h^{-1/3}}{t^{1/6}}\leq \frac {1}{h^{3/2}t^{1/3}} (\frac{h}{\tilde h})^{4/3}=\frac {1}{h^{2}}(\frac ht)^{1/3} h^{1/6} (\frac{h}{\tilde h})^{4/3}.
\]
The last statement of Proposition \ref{propThmhhtilde} is obtained by summing up for all $\tilde h=2^j h$.

\section{Dispersive estimates for the wave flow in large time and the Klein Gordon flow. Proof of Theorem \ref{thmDKG} in the low frequency case}
We let $G^m_{SF}(t,x,a,y):=\sum_{j\in\mathbb{N}}G^m_{j}(t,x,a,y)$ where $G^m_j$ is defined in \eqref{eq:uhterWGMSF},
then
\begin{multline}
  \label{eq:uhterWGMSphi}
  G^m_{SF}(t,x,a,y):=\sum_{k\geq 1} \int e^{i (y\theta+t\sqrt{\lambda_k(\theta)+m^2})}   \phi(|\theta|)\phi(\sqrt{\lambda_k(\theta)})
\frac{ |\theta|^{2/3}}{ L'(\omega_{k}) } Ai(x|\theta|^{2/3}-\omega_{k}) Ai(a|\theta|^{2/3}-\omega_{k}) d\theta \,,
\end{multline}
where the integral is taken for $\theta\in\mathbb{R}^{d-1}\setminus\{0\}$. 
Let $\chi_0\in C^{\infty}_0([-2,2])$ be equal to $1$ on $[-3/2,3/2]$, let $M>1$ be sufficiently large and write $G^m_{SF}=G^m_{SF,\chi_0}+G^m_{SF,1-\chi_0}$ where for $\chi\in \{\chi_0,1-\chi_0\}$ we have set
\[
G^m_{SM,\chi}(t,x,a,y):=\sum_{k\geq 1}\int e^{i (y\theta+t\sqrt{\lambda_k(\theta)+m^2})} \phi(|\theta|)\phi(\sqrt{\lambda_k(\theta)})\chi(\frac{t\lambda_k(\theta)}{M})
e_k(x,\theta)e_k(a,\theta)d\theta.
\] 
\begin{lemma}
Let $M>1$ be large enough and $t>0$. Then there exists $C(M,d)\simeq M^{d/2}$ such that
\[
\|G^m_{SM,\chi_0}\|_{L^{\infty}(\Omega_d)}\leq C(M,d)\min\{1,\frac{1}{t^{\frac{d}{2}}}\}.
\]
\end{lemma}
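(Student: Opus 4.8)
The plan is to bound $G^m_{SM,\chi_0}$ by the triangle inequality (no oscillation is needed) and to show that the support constraints imposed by the cut-offs $\phi(\sqrt{\lambda_k(\theta)})$ and $\chi_0(t\lambda_k(\theta)/M)$ already force the stated decay. Set $E(t):=\min\{4,\,2M/t\}$; on the support of the integrand in $G^m_{SM,\chi_0}$ one has $\lambda_k(\theta)=|\theta|^2+\omega_k|\theta|^{4/3}\le E(t)$, and since all terms are positive this gives $\omega_k|\theta|^{4/3}\le E(t)$. As $\omega_k\ge\omega_1>0$ this forces $|\theta|\lesssim E(t)^{3/4}$, and since $\omega_k\simeq k^{2/3}$, for each such $\theta$ only the indices $k\lesssim E(t)^{3/2}|\theta|^{-2}$ contribute (the upper cut-off $|\theta|\le 2$ from $\phi(|\theta|)$ being already implied).

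First I would record a pointwise eigenfunction bound. From \eqref{eig_k}, the uniform bound $|Ai|\le C$, and $L'(\omega_k)\simeq\omega_k^{1/2}$ uniformly in $k\ge 1$ (which follows from Lemma \ref{lemL}: the asymptotics of $L$ give $L'(\omega_k)\sim 2\omega_k^{1/2}$ for large $k$, and $L'(\omega_k)>0$ for each $k$ by strict monotonicity), one obtains, uniformly in $x,a\ge 0$,
\[
|e_k(x,\theta)\,e_k(a,\theta)|\;\lesssim\;\frac{|\theta|^{2/3}}{\omega_k^{1/2}}\;\simeq\;\frac{|\theta|^{2/3}}{k^{1/3}}\,.
\]
This is the only step where the normalising factor $1/L'(\omega_k)$ is used in an essential way: the gain $\omega_k^{-1/2}\simeq k^{-1/3}$ is precisely what makes the sum over $k$ below convergent.

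Next I would perform the summation and integration. Using $\sum_{k\lesssim K}k^{-1/3}\simeq K^{2/3}$ with $K\simeq E(t)^{3/2}|\theta|^{-2}$ (which is $\gtrsim 1$ on the relevant $\theta$-region), the sum over the contributing $k$ of the previous bound is $\lesssim|\theta|^{2/3}\cdot E(t)|\theta|^{-4/3}=E(t)|\theta|^{-2/3}$, hence
\[
\|G^m_{SM,\chi_0}\|_{L^{\infty}(\Omega_d)}\;\lesssim\;E(t)\int_{|\theta|\lesssim E(t)^{3/4}}|\theta|^{-2/3}\,d\theta\;\lesssim_d\;E(t)\cdot\big(E(t)^{3/4}\big)^{\,d-5/3}\;=\;E(t)^{\,\frac{3d}{4}-\frac14}\,,
\]
where the $\theta$-integral over $\mathbb{R}^{d-1}$ converges near $\theta=0$ precisely because $d\ge 2$ (so that $d-\tfrac83>-1$). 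It then remains to unwind $E(t)=\min\{4,2M/t\}$. If $t\lesssim M$ then $E(t)\le 4$, so the right-hand side is $\lesssim_d 1$; since $M>1$ and $t\le M$ give $\min\{1,t^{-d/2}\}\ge M^{-d/2}$, this is $\le C(M,d)\min\{1,t^{-d/2}\}$ with $C(M,d)\simeq M^{d/2}$. If $t\gtrsim M$ then $E(t)=2M/t\le 1$, and since $\tfrac{3d}{4}-\tfrac14\ge\tfrac d2$ for $d\ge 1$ while the base is $\le 1$, $E(t)^{\frac{3d}{4}-\frac14}\le(2M/t)^{d/2}\lesssim_d M^{d/2}t^{-d/2}=M^{d/2}\min\{1,t^{-d/2}\}$. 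Both regimes give the claim, with the factor $M^{d/2}$ dictated by the volume of the frequency window $\{\lambda_k(\theta)\lesssim M/t\}$.

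I do not anticipate a genuine obstacle: the argument is essentially a counting estimate. The three points requiring mild care are (i) keeping the full factor $\omega_k^{-1/2}$ coming from $1/L'(\omega_k)$ in the eigenfunction bound, uniformly in $(x,a)$ — using only $|Ai|\le C$ suffices and loses nothing decisive; (ii) tracking the two cut-off thresholds, the low-frequency one ($4$) and the $\chi_0$ one ($2M/t$), through the single parameter $E(t)$; and (iii) the convergence of $\int_{|\theta|\le R}|\theta|^{-2/3}\,d\theta\simeq_d R^{d-5/3}$ at $\theta=0$, which holds in the range $d\ge 2$ relevant here. The estimate obtained is in fact slightly stronger than stated — decay $\min\{1,(M/t)^{\,3d/4-1/4}\}$ rather than $M^{d/2}\min\{1,t^{-d/2}\}$ — but the weaker form is all that is needed and is the one compatible with the other frequency regimes.
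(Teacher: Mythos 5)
Your argument is correct, and it reaches the stated bound (in fact the slightly stronger decay $(M/t)^{3d/4-1/4}$) by a more elementary route than the paper. The paper also proceeds by pure size estimates on the support of the cut-offs, but it handles the $k$-summation differently: after passing to polar coordinates it applies Cauchy--Schwarz in $k$ and then Lemma \ref{lemsob} (the square-sum Airy bound \eqref{estairy2}), which exploits the decay $|Ai(-z)|\lesssim (1+z)^{-1/4}$ and bounds the sum over $k\le L$ by $L^{1/3}$ rather than your $L^{2/3}$; it then integrates $\rho$ over the full range $\rho\le\sqrt{M/t}$, obtaining $(M/t)^{d/2}$ after the substitution $\rho=\sqrt{M/t}\,w$, and it needs a separate dyadic decomposition in $\rho$ to get the uniform bound for $t<1$. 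You instead use only $|Ai|\le C$ together with $1/L'(\omega_k)\lesssim\omega_k^{-1/2}$, but you compensate by retaining the constraint $\omega_1|\theta|^{4/3}\le E(t)$, which confines $\theta$ to the smaller ball $|\theta|\lesssim E(t)^{3/4}$ (the paper discards this and integrates up to $E(t)^{1/2}$); this is why your cruder $k$-count still yields a sufficient — indeed stronger — power of $M/t$, and why the single parameter $E(t)=\min\{4,2M/t\}$ lets you treat small and large $t$ uniformly without the paper's extra dyadic argument. What the paper's route buys is the sharper summation mechanism (Lemma \ref{lemsob}) that it reuses throughout the high-frequency analysis; what yours buys is economy: no Cauchy--Schwarz, no Airy decay beyond boundedness, and a single case-free computation, with only the mild bookkeeping points you already flag (uniformity of $L'(\omega_k)\simeq\omega_k^{1/2}$ over all $k\ge1$, and convergence of $\int_{|\theta|\le R}|\theta|^{-2/3}d\theta$ for $d\ge2$).
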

\begin{proof}
Writing $\theta=\rho\Theta$ with $\rho=|\theta|$, we have
\begin{multline}
G^{m}_{SF,\chi_0}(t,x,a,y)=\sum_{k\geq 1}\int_0^{\infty}\int_{\Theta\in\mathbb{S}^{d-2}}e^{-i\rho|y|\Theta_1}\sin^{d-2} \Theta_1d\Theta \rho^{d-2}e^{i t\sqrt{\lambda_k(\rho)+m^2}} \phi(\rho)\phi(\sqrt{\lambda_k(\rho)})\chi_0(\frac{t\lambda_k(\rho)}{M})\\
\frac{ \rho^{2/3}}{ L'(\omega_{k}) } Ai(x\rho^{2/3}-\omega_{k}) Ai(a\rho^{2/3}-\omega_{k}) d\rho,
\end{multline}
where $e_1=(1,0,...,0)\in\mathbb{R}^{d-1}$. On the support of $\chi_0(\frac{t\lambda_k(\rho)}{M})(\sum_{j}\psi(2^j\rho))$ we have $\lambda_k(\rho)=\rho^2+\omega_k\rho^{4/3}\leq M/t$ which implies $\rho\leq \sqrt{M/t}$ and $\omega_k\leq \Big(M/t-\rho^2\Big)/\rho^{4/3}$ (as on the support of $\psi(2^j\rho)$, $j\geq 1$, $\rho$ doesn't vanish). Let $L=L(M/t,\rho):=\Big(M/t-\rho^2\Big)^{3/2}/\rho^{2}$; as $\omega_k\simeq k^{2/3}$, it follows that on the support of the symbol of $G^{m}_{SF,\chi_0}$ we must have $k\leq L(M/t,\rho)$. We estimate $G^{m}_{SF,\chi_0}$ as follows
\begin{multline}
|G^{m}_{SF,\chi_0}(t,\cdot)|\leq \int_{\rho\leq \sqrt{M/t}}\Big|\int_{\mathbb{S}^{d-2}}d\Theta\Big|\rho^{d-2+2/3}\Big|\sum_{1\leq k\leq L(M/t,\rho)} \frac{1}{ L'(\omega_{k})} Ai(x\rho^{2/3}-\omega_{k}) Ai(a\rho^{2/3}-\omega_{k})\Big| d\rho\\
\lesssim  \int_{\rho\leq \sqrt{M/t}}\rho^{d-2+2/3}\Big(\sum_{1\leq k\leq L(M/t,\rho)} \frac{1}{ L'(\omega_{k}) } Ai^2(x\rho^{2/3}-\omega_{k})\Big)^{1/2}\Big(\sum_{1\leq k\leq L(M/t,\rho)} \frac{1}{ L'(\omega_{k}) } Ai^2(a\rho^{2/3}-\omega_{k}) \Big)^{1/2} d\rho\\
\leq \int_{\rho\leq \sqrt{M/t}}\rho^{d-2+2/3}\Big(M/t-\rho^2\Big)^{1/2}/\rho^{2/3}d\rho=\int_{\rho\leq \sqrt{M/t}}\rho^{d-2}\Big(M/t-\rho^2\Big)^{1/2}d\rho,
\end{multline}
where in the second line we have applied the Cauchy-Schwarz inequality and then used \eqref{estairy2} from Lemma \ref{lemsob}.
Taking $\rho=\sqrt{\frac{M}{t}}w$ gives
\[
|G^{m}_{SF,\chi_0}(t,\cdot)|\lesssim \Big(\frac Mt\Big)^{\frac{d-2+1+1}{2}}\int_{w\leq 1}(1-w^2)^{1/2}dw\leq M^{d/2}/t^{d/2}.
\]
Let now $t<1$ and let $\rho=2^{-j}\tilde\rho$ for some $j\geq 0$ : as $\lambda_{k}(\theta)\leq 4$ on the support of $\phi$, then for fixed $j\geq 0$, the sum over $k$ in $G^m_j$ is finite as $\omega_k\leq 4\times 2^{4j/3}$. We estimate each $G^m_j(t,\cdot)$ as follows
\begin{multline}
  \label{eq:uhterWGM1chiSFtsmall}
  |G^m_{j}(t,x,a,y)|=\Big|\sum_{1\leq k\lesssim 2^{2j}}\int_0^{\infty}2^{-j}\int_{\Theta'\in \mathbb{R}^{d-2}} e^{-i2^{-j}\tilde\rho|y|\sqrt{1-|\Theta'|^2}}c(\Theta')d\Theta' (2^{-j}\tilde\rho)^{d-2}e^{i t\sqrt{\lambda_k(\rho)+m^2}} \\
\phi(\sqrt{\lambda_k(2^{-j}\tilde\rho)})\psi_2(\tilde\rho)\frac{(2^{-j}\tilde\rho)^{2/3}}{ L'(\omega_{k}) } Ai(x(2^{-j}\tilde\rho)^{2/3}-\omega_{k}) Ai(a(2^{-j}\tilde\rho)^{2/3}-\omega_{k}) d\tilde\rho\Big|\\
\lesssim 2^{-j(d-1+2/3)}\Big(\sum_{1\leq k\lesssim 2^{2j}} \frac{1}{ L'(\omega_{k}) } Ai^2(x\rho^{2/3}-\omega_{k})\Big)^{1/2}\Big(\sum_{1\leq k\lesssim 2^{2j}} \frac{1}{ L'(\omega_{k}) } Ai^2(a\rho^{2/3}-\omega_{k}) \Big)^{1/2}\lesssim 2^{-j(d-1)}.
\end{multline}
As the sum over $j\geq 0$ is convergent, we obtain a uniform bound for $G^m_{SF,\chi_0}(t,\cdot)$ for all $t<1$.
\end{proof}

We are left with $G^m_{SF,1-\chi_0}(t,\cdot)=\sum_{j\in\mathbb{N}}G^m_{j,1-\chi_0}(t,\cdot)$, where $G^m_{j,1-\chi_0}$ has the same form as \eqref{eq:uhterWGMSF} with the additional cut-off $(1-\chi_0)\Big(\frac{t\lambda_k(\theta)}{M}\Big)$ inserted into the symbol. Write, for some symbol $c(\Theta')$,
\begin{multline}
  \label{eq:uhterWGM1chiSF}
  G^m_{j,1-\chi_0}(t,x,a,y):=\sum_{k\geq 1}\int_0^{\infty}\int_{\Theta'\in \mathbb{R}^{d-2}} e^{-i\rho|y|\sqrt{1-|\Theta'|^2}}c(\Theta')d\Theta'\rho^{d-2}e^{i t\sqrt{\lambda_k(\rho)+m^2}} \phi(\rho)\phi(\sqrt{\lambda_k(\rho)})\\
  (1-\chi_0)(\frac{t\lambda_k(\rho)}{M})
\psi_2(2^{j}\rho)\frac{ \rho^{2/3}}{ L'(\omega_{k}) } Ai(x\rho^{2/3}-\omega_{k}) Ai(a\rho^{2/3}-\omega_{k}) d\rho.
\end{multline}

We make the change of variables $\rho=2^{-j}\tilde\rho$, with $\tilde \rho\in [\frac 34, 2]$ on the support of $\psi_2(\tilde\rho)$, and set
\begin{equation}\label{phikj}
\phi^m_{k,j}(t,x,a,y,\tilde \rho,\Theta'):=-2^{-j}\tilde\rho |y|\sqrt{1-|\Theta'|^2}+t\sqrt{m^2+2^{-2j}\tilde\rho^2+2^{-4j/3}\tilde\rho^{4/3}\omega_k}.
\end{equation}
The first order derivative of $\phi^m_{k,j}$ with respect to $\tilde\rho$ is given by
\begin{equation}\label{der1phikj}
\partial_{\tilde\rho}\phi^m_{k,j}(t,x,a,y,\tilde \rho,\Theta')=-2^{-j} |y|\sqrt{1-|\Theta'|^2}+t\frac{2^{-2j}\tilde\rho+\frac 232^{-4j/3}\tilde\rho^{1/3}\omega_k}{\sqrt{m^2+2^{-2j}\tilde\rho^2+2^{-4j/3}\tilde\rho^{4/3}\omega_k}},
\end{equation}
and the second order derivative with respect to $\tilde \rho$ is given by
\begin{multline}\label{seconderphikj}
\partial^2_{\tilde\rho}\phi^m_{k,j}(t,x,a,y,\tilde \rho,\Theta')=\frac{t}{\sqrt{m^2+2^{-2j}\tilde\rho^2+2^{-4j/3}\tilde\rho^{4/3}\omega_k}^3}\Big(m^2(2^{-2j}+\frac 29 \tilde\rho^{-2/3}2^{-4j/3}\omega_k)\\-\frac 19 (2^{-4j/3}\omega_k) 2^{-2j}\tilde\rho^{4/3}-\frac 29\tilde\rho^{2/3} (2^{-4j/3}\omega_k)^2 \Big).
\end{multline}
Notice that if $m=0$, the second order derivative of $\phi^{m=0}_{k,j}$ does not vanish anywhere (since the two terms in the second line of \eqref{seconderphikj} have same sign). On the other hand, for $m=1$ and $j\geq 0$, the second derivative of $\phi^{m=1}_{k,j}$ may cancel at some $\tilde\rho$ on the support of $\psi_2$ : this may happen for an unique $k=k(j)\sim 2^{4j/3}$. Since the phase functions may behave slightly different according to whether $m=0$ or $m=1$, we deal separately with these cases. In the following, we consider the case $m=0$ and then explain how to deal with the degenerate critical points of $\phi_{k(j),j}$.

\subsubsection{The wave flow}

Let $m=0$. We start by noticing that on the support of $1-\chi_0$ we have $t\geq \frac{3M}{2\lambda_k(\rho)}$ and that on the support of $\phi(\sqrt{\lambda_k(\rho)})$ we have $\lambda_k(\rho)\leq 4$: this implies that on the support of the symbol we must have $t\geq \frac 38 M$. We first deal with the case when $|\theta|$ is not too small. 

\begin{prop}\label{propAG}
Let $M$ be sufficiently large and $t\gtrsim M$. There exists a constant $C>0$ independent of $t$ such that for all $\epsilon>0$ 
\begin{equation}\label{sumjpetit}
\Big|\sum_{2^j\leq t^{(1-\epsilon)/4}}G^{m=0}_{j,1-\chi_0}(t,\cdot)\Big|\leq \frac{C}{|t|^{(d-1)/2}}.
\end{equation}
\end{prop}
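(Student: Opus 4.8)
The plan is to treat, for each dyadic $2^{j}\le t^{(1-\varepsilon)/4}$, the integral $G^{m=0}_{j,1-\chi_{0}}(t,\cdot)$ from \eqref{eq:uhterWGM1chiSF} as a $(d-1)$-dimensional oscillatory integral in the rescaled variables $(\tilde\rho,\Theta')$ whose large parameter --- once the Airy factors are absorbed into the symbol --- is $\Lambda_{k}:=t\,\omega_{k}^{1/2}2^{-2j/3}$, to estimate each such integral by stationary phase, and to sum the resulting bounds first over $k$ and then over $j$; the $j$-sum turns out to be geometrically convergent, so the hypothesis $2^{j}\le t^{(1-\varepsilon)/4}$ is used only to make $\Lambda_{k}$ large enough.

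First I would record the geometry of the support in \eqref{eq:uhterWGM1chiSF}: there $\rho\simeq 2^{-j}$ and $\lambda_{k}(\rho)=\rho^{2}+\omega_{k}\rho^{4/3}\lesssim 1$, whence $\omega_{k}\lesssim 2^{4j/3}$, $k\lesssim 2^{2j}$, and the only nontrivial parameters are $\mu:=2^{-2j/3}/\omega_{k}\in(0,1/\omega_{1}]$ and $b:=2^{-j/3}|y|/(t\omega_{k}^{1/2})$. After the substitution $\rho=2^{-j}\tilde\rho$ the phase \eqref{phikj} with $m=0$ becomes $\Lambda_{k}\Phi_{k,j}$ with
\[
\Phi_{k,j}(\tilde\rho,\Theta')=\tilde\rho^{2/3}\sqrt{1+\mu\tilde\rho^{2/3}}-b\,\tilde\rho\sqrt{1-|\Theta'|^{2}},
\]
while the volume element produces the constant prefactor $2^{-j(d-1+2/3)}$ (from $\rho^{d-2}$, $\rho^{2/3}$ and $d\rho=2^{-j}d\tilde\rho$) times a $\tilde\rho$-symbol which is $\simeq 1$; since $\omega_{k}\ge\omega_{1}$ and $2^{j}\le t^{(1-\varepsilon)/4}$ one has $\Lambda_{k}\gtrsim t^{(5+\varepsilon)/6}\gg 1$, while $\Lambda_{k}\lesssim t$ throughout. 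Next I would analyse $\Phi_{k,j}$: its only critical point sits at $\Theta'=0$ together with the (unique, since the left side is strictly decreasing) root $\tilde\rho_{c}$ of $\partial_{\tilde\rho}\big(\tilde\rho^{2/3}\sqrt{1+\mu\tilde\rho^{2/3}}\big)=b$, which lies in the support only when $b$ belongs to a fixed compact subinterval of $(0,\infty)$, that derivative being $\simeq 1$ uniformly in $\tilde\rho\simeq 1$, $\mu\in[0,1/\omega_{1}]$. Away from that regime $|\partial_{\tilde\rho}\Phi_{k,j}|\gtrsim 1$ or $|\nabla_{\Theta'}\Phi_{k,j}|\gtrsim 1$, while $\partial_{\tilde\rho}^{\ell}\Phi_{k,j}=O(1)$ for $\ell\ge 2$ (the summand $\tilde\rho\sqrt{1-|\Theta'|^{2}}$ is affine in $\tilde\rho$), so repeated integration by parts gives $O(\Lambda_{k}^{-\infty})=O(t^{-\infty})$. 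When the critical point exists it is nondegenerate with $|\det\mathrm{Hess}\,\Phi_{k,j}|\simeq 1$: indeed $\partial_{\tilde\rho}^{2}\Phi_{k,j}\simeq -1$ uniformly --- this is the strict concavity of $\rho\mapsto\sqrt{\lambda_{k}(\rho)}$ noted right after \eqref{seconderphikj}, made quantitative using $\rho^{2}\lesssim\omega_{k}\rho^{4/3}$ on the support --- there are no $\tilde\rho$--$\Theta'$ cross terms at $\Theta'=0$, and the $\Theta'$-block equals $b\tilde\rho_{c}\,\mathrm{Id}_{d-2}\simeq\mathrm{Id}_{d-2}$.

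The key step, and the one I expect to be the main obstacle, is to justify carrying the Airy factors $Ai(x\rho^{2/3}-\omega_{k})$ and $Ai(a\rho^{2/3}-\omega_{k})$ inside the symbol of this oscillatory integral \emph{uniformly in $k$}. This is exactly the mechanism behind \eqref{tocheck}: by \eqref{derivelAirybounds} each $\tilde\rho$-derivative of $Ai(x(2^{-j}\tilde\rho)^{2/3}-\omega_{k})$ costs a factor $O(\omega_{k}^{3/2})$, and since on the support $\omega_{k}^{3/2}\lesssim 2^{2j}\lesssim t^{(1-\varepsilon)/2}$ while $\Lambda_{k}\lesssim t$, one gets $\omega_{k}^{3/2}\lesssim\Lambda_{k}^{1/2-\nu}$ for some $\nu>0$ (one may take $\nu=\varepsilon/2$), uniformly over $k\lesssim 2^{2j}$ --- this is precisely where the bound $2^{j}\le t^{(1-\varepsilon)/4}$ is used. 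The remaining factors are harmless symbols in $\tilde\rho$: for $(1-\chi_{0})(t\lambda_{k}(2^{-j}\tilde\rho)/M)$ one checks $\partial_{\tilde\rho}\big(t\lambda_{k}(2^{-j}\tilde\rho)/M\big)\simeq t\lambda_{k}(\rho)/M$, which is $\simeq 1$ on its transition region, and similarly for $\phi(\sqrt{\lambda_{k}(\cdot)})$ and the $\tilde\rho$-powers. One also needs $\Lambda_{k}\gg 1$, which holds since $\Lambda_{k}\gtrsim t^{(5+\varepsilon)/6}$ and $t\gtrsim M$. The other delicate point is that the nondegeneracy $|\det\mathrm{Hess}\,\Phi_{k,j}|\simeq 1$ be uniform over the whole range of $(j,k)$, which is what the concavity estimate above delivers.

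Finally I would assemble the estimate. Stationary phase in the $d-1$ variables $(\tilde\rho,\Theta')$, using $|\det\mathrm{Hess}\,\Phi_{k,j}|\simeq 1$ and bounding the two Airy factors by $\|Ai\|_{\infty}$ (uniform in $x,a,k$, as $\omega_{k}>1$), gives
\[
|G^{m=0}_{j,1-\chi_{0}}(t,x,a,y)|\ \lesssim\ 2^{-j(d-1+2/3)}\sum_{k\lesssim 2^{2j}}\Lambda_{k}^{-(d-1)/2}\,\frac{1}{L'(\omega_{k})}\ +\ O(t^{-\infty}).
\]
With $\Lambda_{k}^{-(d-1)/2}=t^{-(d-1)/2}\omega_{k}^{-(d-1)/4}2^{j(d-1)/3}$, $L'(\omega_{k})\simeq\omega_{k}^{1/2}$ and $\omega_{k}\simeq k^{2/3}$, the $k$-sum is $\lesssim\sum_{k\lesssim 2^{2j}}k^{-(d+1)/6}$, hence $\lesssim 2^{j(5-d)/3}$ for $2\le d\le 4$, $\lesssim j$ for $d=5$, and $\lesssim 1$ for $d\ge 6$. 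Tallying the powers of $2^{j}$ yields $|G^{m=0}_{j,1-\chi_{0}}(t,\cdot)|\lesssim t^{-(d-1)/2}\,2^{-\kappa j}(1+j)$ with $\kappa=\kappa(d)>0$ (explicitly $\kappa=d-5/3$ for $2\le d\le4$ and $\kappa=2d/3$ for $d\ge5$), and summing this convergent series over $j\ge 0$ with $2^{j}\le t^{(1-\varepsilon)/4}$ produces \eqref{sumjpetit} with a constant independent of $t$. A refinement --- noting that $b\simeq 1$ pins $k$ to an $O(1)$ block around $k_{*}\simeq 2^{-j}|y|^{3}/t^{3}$ --- is available but is not needed for the stated bound.
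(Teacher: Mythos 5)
Your argument is essentially the paper's own proof of Proposition \ref{propAG}: after rescaling $\rho=2^{-j}\tilde\rho$ you apply stationary phase in $(\tilde\rho,\Theta')$ with the Airy factors absorbed into the symbol via \eqref{derivelAirybounds}, with the same large parameter $\Lambda_k=t\sqrt{2^{-4j/3}\omega_k}$, the same admissibility condition (the paper's \eqref{tocheckkj}, guaranteed by $2^j\le t^{(1-\epsilon)/4}$), the same per-$(j,k)$ bound as \eqref{sumdecayjpetitwave}, and the same summation over $k\lesssim 2^{2j}$ and then over $j$. One caveat: the inference ``$\omega_k^{3/2}\lesssim 2^{2j}\lesssim t^{(1-\epsilon)/2}$ and $\Lambda_k\lesssim t$, hence $\omega_k^{3/2}\lesssim\Lambda_k^{1/2-\nu}$'' is not valid as written (an \emph{upper} bound on $\Lambda_k$ goes the wrong way), but the inequality you need is true uniformly in $k$ -- write $\omega_k^{3/2}=2^{2j}(2^{-4j/3}\omega_k)^{3/2}$ and use $2^{-4j/3}\omega_k\lesssim1$, so the worst case is $2^{-4j/3}\omega_k\sim1$ where $\Lambda_k\sim t$, exactly the paper's verification of \eqref{tocheckkj} -- hence the proof stands.
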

\begin{proof}
We apply the stationary phase in $\rho=2^{-j}\tilde \rho$ as in Section \ref{secWGM} with phase function $\phi_{k,j}$ and with the Airy factors as part of the symbol. For $m=0$, the main contribution in brackets in \eqref{seconderphikj} is $-\frac 29 (2^{-4j/2}\omega_k)^2\tilde\rho^{2/3}$ (when $j\geq 2$ this is obvious; when $j\in\{0,1\}$ is small and when $2^{-2j}\simeq 2^{-4j/3}\omega_k\simeq 1$ this remains true as the last two terms in \eqref{seconderphikj} have same sign) and the second derivative behaves like
\[
\partial^2_{\tilde\rho}\phi^{m=0}_{k,j}\simeq -\frac 29 t(2^{-4j/3}\omega_k)^{1/2}.
\]
Using \eqref{derivelAirybounds} it follows that, in order to apply the stationary phase with respect to $\tilde\rho$ with the Airy factors in the symbol we must have, for some $\nu>0$,
\begin{equation}\label{tocheckkj}
(t(2^{-4j/3}\omega_k)^{1/2})^{1/2-\nu}\geq \omega_k^{3/2}.
\end{equation}
The last inequality can be re-written as $(t (2^{-4j/3}\omega_k)^{1/2})^{1/2-\nu}\geq 2^{2j}(2^{-4j/3}\omega_k)^{3/2}$,
which is equivalent to $t^{1/2-\nu}\geq 2^{2j}(2^{-4j/3}\omega_k)^{(5+2\nu)/4}$, and as $2^{-4j/3}\omega_k\leq 4$, we need to assume $2^{2j}\leq t^{1/2-\nu}$ for some $\nu>0$. For $j$ such that $2^j\leq t^{(1-\epsilon)/4}$, \eqref{tocheckkj} holds with $\nu=\epsilon/2$. The phase $\phi_{k,j}$ is stationary when $2^{-j}|y|\sim |t|\sqrt{2^{-4j/3}\omega_k}$ and on the support of the symbol we have $|t|\sqrt{2^{-4j/3}\omega_k}\geq \frac 12 |t|2^{-4j/3}\omega_k\gtrsim M$: indeed, this follows using $\sqrt{\lambda_k(\rho)}\geq  \frac 12 \lambda_k(\rho)$ (as $\sqrt{\lambda_k(\rho)}\leq 2$) and $\lambda_k(\rho)=\rho^2+\rho^{4/3}\omega_k\sim \rho^{4/3}\omega_k$. Moreover, the phase is stationary with respect to $\Theta'$ at $\Theta'=0\in \mathbb{R}^{d-2}$, and the determinant of the Hessian matrix of second order derivative is $\sim (2^{-j}|y|)^{d-2}\sim (|t|\sqrt{2^{-4j/3}\omega_k})^{d-2}$. We can therefore apply the stationary phase with respect to both $\tilde\rho$ and $\Theta'$, %
which yields
\begin{equation}\label{sumdecayjpetitwave}
|G^{m=0}_{j,1-\chi_0}(t,\cdot)|\lesssim \sum_{k\lesssim 2^{2j}} 2^{-j} \frac{2^{-j(d-2)}}{(|t| \sqrt{2^{-4j/3}\omega_k})^{(d-2)/2}}\times \frac{2^{-2j/3}}{L'(\omega_k)}\frac{1}{(t\sqrt{2^{-4j/3}\omega_k})^{1/2}}.
\end{equation}
Using that $L'(\omega_k)\simeq \sqrt{2\omega_k}$, \eqref{sumdecayjpetitwave} reads as (for $d\geq 2$)
\[
|G^{m=0,d=2}_{j,1-\chi_0}(t,\cdot)|\lesssim \frac{1}{|t|^{1/2}}\times 2^{-j(1+2/3-1/3)}\sum_{k\leq 2^{2j}}\frac{1}{\omega_k^{3/4}}\leq \frac{1}{|t|^{1/2}}2^{-4j/3}\times 2^j= \frac{2^{-j/3}}{|t|^{1/2}},
\]
\[
|G^{m=0,d= 3}_{j,1-\chi_0}(t,\cdot)|\lesssim \frac{1}{|t|}\times 2^{-2j}\log (2^{2j}),\quad |G^{m=0,d\geq 4}_{j,1-\chi_0}(t,\cdot)|\lesssim \frac{1}{|t|^{(d-1)/2}}\times 2^{-2j(2d-3)/3}.
\]
Summing up over $j$ such that $2^j\leq t^{(1-\epsilon)/4}$ allows to conclude.
\end{proof}
From now on we let $t\gtrsim M$ and $2^j\gtrsim |t|^{(1-\epsilon)/4}$, which corresponds to small initial angles $\theta$. Fix $a\in 2^{2j_0/3}[\frac 12, 2]$ for some $j_0\in\mathbb{Z}$. We first notice that for $a\rho^{2/3}> \omega_k$ the estimates become trivial using the exponential decay of the Airy function on the positive real line. Let $a\rho^{2/3}< \omega_k$. As $\rho=2^{-j}\tilde \rho$, $\tilde \rho\in[\frac 34, 2]$ on the support of $\psi_2$ and $2^{-4j/3}\omega_k\leq \lambda_k(2^{-j}\tilde \rho)\leq 4$ on the support of $\phi$, we obtain the condition $\frac {1}{2}(\frac 34)^{2/3} 2^{2(j_0-j)/3}\leq a 2^{-2j/3}\tilde\rho^{2/3}\leq \omega_k\leq 4\times 2^{4j/3}$ which further yields $2^{2(j_0-j)/3}\leq 2(4/3)^{2/3} 4\times 2^{4(j+1)/3}$, and as $(4/3)^{2/3}<2^{1/3}$ we find $j_0< 3(j+2)$. We start with the sum over $j>j_0$ as in this case the Airy factors can be dealt with using \eqref{eq:Apm} and the decay of $A_{\pm}$.  

\begin{prop}\label{proplemapetit}
There exists a constant $C=C(d)$ independent of $j_0$ or $M$, such that the following holds
\[
\Big|\sum_{j>j_0}G^{m=0}_{j,1-\chi_0}(t,\cdot)\Big|\leq \frac{C}{|t|^{(d-1)/2}}.
\]
\end{prop}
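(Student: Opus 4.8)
The plan is to replace the two Airy factors in \eqref{eq:uhterWGM1chiSF} by their oscillatory asymptotics, so that each $G^{m=0}_{j,1-\chi_0}(t,\cdot)$ becomes a genuine oscillatory integral in $(\tilde\rho,\Theta')$ with smooth amplitude, and then to run a stationary phase producing, besides the $t^{-(d-1)/2}$ decay, enough extra powers of $\omega_k$ and $2^{-j}$ to sum freely over $k\lesssim 2^{4j/3}$ and then geometrically over $j>j_0$. First I would reduce the range of $k$: on the support of the symbol one has $\rho=2^{-j}\tilde\rho$ with $\tilde\rho\in[\tfrac34,2]$, $\omega_k\lesssim 2^{4j/3}$ from $\phi(\sqrt{\lambda_k(\rho)})$, and $t\lambda_k(\rho)\gtrsim M$ from $1-\chi_0$; since $a\simeq 2^{2j_0/3}$ and $j>j_0$ this forces $a\rho^{2/3}\lesssim 2^{2(j_0-j)/3}\ll 1\leq\omega_1\leq\omega_k$, so the first Airy argument $a\rho^{2/3}-\omega_k$ stays negative and bounded away from $0$. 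For the second factor I would split according to whether $x\rho^{2/3}\gtrsim\omega_k$ (where $Ai(x\rho^{2/3}-\omega_k)$ is super-exponentially small in $\omega_k$, hence the contribution is $O(2^{-Nj})$ for every $N$ after summing in $k$), or $x\rho^{2/3}<\tfrac12\omega_k$ (handled like the first factor), or the thin slice $x\rho^{2/3}\simeq\omega_k$ (handled by the crude bound $|Ai|\lesssim 1$, keeping $Ai(x\rho^{2/3}-\omega_k)$ in the amplitude).

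Next I would use $Ai(z)=A_+(-z)+A_-(-z)$ together with $A_\pm(u)=\Psi(e^{\mp i\pi/3}u)e^{\mp\frac23 iu^{3/2}}$, $\Psi$ a classical symbol of order $-\tfrac14$, to write $Ai(a\rho^{2/3}-\omega_k)$ (and, when $x\rho^{2/3}<\tfrac12\omega_k$, also $Ai(x\rho^{2/3}-\omega_k)$) as a sum of two terms, each an amplitude of size $\simeq\omega_k^{-1/4}$ times the explicit oscillation $e^{\mp\frac23 i(\omega_k-a\rho^{2/3})^{3/2}}$, resp.\ $e^{\mp\frac23 i(\omega_k-x\rho^{2/3})^{3/2}}$. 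The resulting phase in the integration variables $(\tilde\rho,\Theta')$ is then
\[
\Phi^{\pm\pm}_{k,j}=t\sqrt{\lambda_k(\rho)}-2^{-j}\tilde\rho|y|\sqrt{1-|\Theta'|^2}\mp\tfrac23(\omega_k-a\rho^{2/3})^{3/2}\mp\tfrac23(\omega_k-x\rho^{2/3})^{3/2},
\]
the last summand being absent on the slice $x\rho^{2/3}\simeq\omega_k$. The structural point I would check is that the $\tilde\rho$-derivatives of $(\omega_k-a\rho^{2/3})^{3/2}$ are $O(a2^{-2j/3}\omega_k^{1/2})$, negligible against those of the main phase $t\sqrt{\lambda_k(\rho)}$ (which are $\simeq t2^{-2j/3}\omega_k^{1/2}$, with a sign-definite and non-vanishing second derivative), precisely because $j>j_0$ makes $a\rho^{2/3}\ll1$; and that $(\omega_k-x\rho^{2/3})^{3/2}$, whose second $\tilde\rho$-derivative has the opposite sign, moves the critical point only by a bounded amount, without degeneracy, as long as $x\rho^{2/3}<\tfrac12\omega_k$. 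Thus $\Phi^{\pm\pm}_{k,j}$ retains in $(\tilde\rho,\Theta')$ the non-degenerate critical structure already used in the proof of Proposition \ref{propAG}: the $\Theta'$-critical point (trivial when $d=2$) is $\Theta'=0$ with Hessian $\simeq 2^{-j}|y|$, and at the $\tilde\rho$-critical point $2^{-j}|y|\simeq t2^{-2j/3}\omega_k^{1/2}$ (for $(t,x,y)$ outside this regime the $\tilde\rho$-integral is non-stationary and contributes a negligible amount).

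Finally I would carry out the stationary phase in $\tilde\rho$ --- with effective large parameter $|\partial^2_{\tilde\rho}\Phi^{\pm\pm}_{k,j}|\simeq t(2^{-4j/3}\omega_k)^{1/2}\gtrsim M$, using $\sqrt{\lambda_k(\rho)}\geq\tfrac12\lambda_k(\rho)$ on the support of $1-\chi_0$ exactly as in Proposition \ref{propAG} --- and in $\Theta'$, which together contribute $(t2^{-2j/3}\omega_k^{1/2})^{-(d-1)/2}=t^{-(d-1)/2}2^{(d-1)j/3}\omega_k^{-(d-1)/4}$. Collecting this with the measure and rescaling factors $2^{-j(d-2)}\cdot 2^{-j}\cdot 2^{-2j/3}$ (from $\rho^{d-2}$, $d\rho=2^{-j}d\tilde\rho$ and $\rho^{2/3}$), with $1/L'(\omega_k)\simeq\omega_k^{-1/2}$, and with the amplitude gain $\omega_k^{-1/4}$ for each Airy factor treated via $A_\pm$ (and $\omega_k^{0}$ on the near-turning-point slice), I would obtain $|G^{m=0}_{j,1-\chi_0}(t,\cdot)|\lesssim t^{-(d-1)/2}\,2^{-(2d/3)j}\sum_{1\leq k\lesssim 2^{4j/3}}\omega_k^{-\kappa}$ with $\kappa=1+(d-1)/4$ off the slice and $\kappa=\tfrac34+(d-1)/4$ on it; since $\omega_k\simeq k^{2/3}$ the finite $k$-sum is $\lesssim 2^{\mu j}$ with $\mu<2d/3$, so $|G^{m=0}_{j,1-\chi_0}(t,\cdot)|\lesssim 2^{-\delta j}t^{-(d-1)/2}$ for some $\delta>0$, and summing the geometric series over $j>j_0$ gives the claim, uniformly in $j_0$ and $M$. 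I expect the main obstacle to be the bookkeeping around the second Airy factor $Ai(x\rho^{2/3}-\omega_k)$: since $x$ is the free evaluation point it cannot be pushed away from its turning point, so the three sub-regimes must be patched cleanly, and in each one the surviving power of $\omega_k$ must be checked to leave a net negative power of $2^j$ after the $k$-sum; relatedly, one must ensure that the oscillations produced by the $A_\pm$-expansions never align with the main phase $t\sqrt{\lambda_k(\rho)}$ so as to create a degenerate $\tilde\rho$-critical point --- and it is exactly here that the hypothesis $j>j_0$ is used.
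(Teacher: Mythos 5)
There is a genuine gap, and it sits exactly at the step you flag as the ``structural point.'' You claim that the $\tilde\rho$-derivatives of $\tfrac23(\omega_k-a\rho^{2/3})^{3/2}$, of size $a2^{-2j/3}\omega_k^{1/2}$, are negligible against the derivative of the main phase $t\sqrt{\lambda_k(\rho)}$, of size $t2^{-2j/3}\omega_k^{1/2}$, ``precisely because $j>j_0$ makes $a\rho^{2/3}\ll1$.'' But the ratio of these two quantities is $a/t$, not $a\rho^{2/3}$: the condition $j>j_0$ only guarantees that both Airy arguments are in the oscillatory regime (so the $A_\pm$ expansion with symbols of size $\omega_k^{-1/4}$ is legitimate); it does not prevent the Airy oscillations from competing with, and cancelling against, the $t$-phase. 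Since in this low-frequency section $a\simeq 2^{2j_0/3}$ is arbitrary and can be much larger than $t$ (there is no a priori relation between $j_0$ and $t$), in the regime $|t|\lesssim a$ your phases $\Phi^{\pm\pm}_{k,j}$ can have a degenerate $\tilde\rho$-critical point (the second derivative $t\sqrt{2^{-4j/3}\omega_k}\mp a\sqrt{\cdots}\mp x\sqrt{\cdots}$ can vanish), and moreover the stationarity condition no longer forces $2^{-j}|y|$ to be large, so the $\Theta'$-stationary phase is unavailable as well. Your proposal provides no argument in this regime, and your closing remark that ``it is exactly here that the hypothesis $j>j_0$ is used'' to rule out alignment of phases is incorrect.

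The paper's proof splits accordingly: for $|t|>4a$ it runs precisely your stationary-phase argument (non-degenerate $\tilde\rho$-Hessian $\sim|t|(2^{-4j/3}\omega_k)^{1/2}$, large parameter $2^{-j}|y|$ for the $\Theta'$-integral, gains from the $A_\pm$ symbols), but for $|t|\leq 4a$ it abandons oscillation altogether: it takes absolute values, uses only $|A_\pm|\lesssim\omega_k^{-1/4}$ and $L'(\omega_k)\simeq\sqrt{\omega_k}$ to get $|G^{m=0}_{j,1-\chi_0}(t,\cdot)|\lesssim 2^{-j(d-1)}$, and then converts this into $t^{-(d-1)/2}$ decay (with a convergent geometric factor in $j$) via the inequality $t\lesssim 4a\lesssim 2^{2j_0/3}<2^{2j/3}$, valid exactly because $j>j_0$ --- this, rather than non-degeneracy of the phase, is where the hypothesis $j>j_0$ does its work in the hard regime. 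Two smaller points: the paper reduces to $x\leq a$ by symmetry of the Green function, which makes your three-way splitting in $x\rho^{2/3}$ unnecessary; and on your ``thin slice'' $x\rho^{2/3}\simeq\omega_k$, keeping $Ai(x\rho^{2/3}-\omega_k)$ in the amplitude is itself problematic, since its $\tilde\rho$-derivatives are of size $\omega_k$ and the large parameter $t2^{-2j/3}\omega_k^{1/2}$ need not dominate them in the range $2^j\gtrsim t^{(1-\epsilon)/4}$ relevant here (this is the same obstruction as in \eqref{tocheckkj}).
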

\begin{proof}
Using Proposition \ref{propAG}  
we are left with the case
 $\rho=2^{-j}\tilde \rho$ with $2^{j}\geq t^{(1-\epsilon)/4}$, which corresponds to waves that propagate 
 within directions of very small angles $\lesssim M^{-1/4}$. As $j_0-j\leq -1$, $x\leq a$ (by symmetry of the Green function) and $a2^{-2j/3}\tilde \rho^{2/3}\leq 2\times 2^{2(j_0-j)/3}2^{2/3}\leq 2<\frac 45\omega_k$ for all $k\geq 1$, we can write both Airy factors in \eqref{eq:uhterWGM1chiSF} using \eqref{eq:Apm}. We obtain four different phase functions, where $\pm_1$ and $\pm_2$ mean independent signs,

\begin{equation}\label{defphim=0kj}
\phi^{m=0,\pm_1,\pm_2}_{k,j}:=\phi^{m=0}_{k,j}\pm_1 \frac 23 (\omega_k-a2^{-2j/3}\tilde\rho^{2/3})^{3/2}\pm_2 \frac 23 (\omega_k-x2^{-2j/3}\tilde\rho^{2/3})^{3/2},
\end{equation}
whose derivatives are given by
\begin{equation}\label{der1phikjpm}
\partial_{\tilde\rho}\phi^{m=0,\pm_1,\pm_2}_{k,j}=\partial_{\tilde\rho}\phi^{m=0}_{k,j}\mp_1 \frac 23 a2^{-2j/3}\tilde\rho^{-1/3} (\omega_k-a2^{-2j/3}\tilde\rho^{2/3})^{1/2}\mp_2 \frac 23 x2^{-2j/3}\tilde\rho^{-1/3}(\omega_k-x2^{-2j/3}\tilde\rho^{2/3})^{1/2},
\end{equation}
\begin{multline}\label{derphikjpm}
\partial^2_{\tilde\rho}\phi^{m=0,\pm_1,\pm_2}_{k,j}=\partial^2_{\tilde\rho}\phi^{m=0}_{k,j}\pm_1 \frac 29 a2^{-2j/3}\tilde\rho^{-4/3} (\omega_k-a2^{-2j/3}\tilde\rho^{2/3})^{1/2}\Big(1+\frac{a2^{-2j/3}\tilde\rho^{2/3}}{(\omega_k-a2^{-2j/3}\tilde\rho^{2/3})}\Big)\\
\pm_2 \frac 29 x2^{-2j/3}\tilde\rho^{-4/3}(\omega_k-x2^{-2j/3}\tilde\rho^{2/3})^{1/2}\Big(1+\frac{x2^{-2j/3}\tilde\rho^{2/3}}{(\omega_k-x2^{-2j/3}\tilde\rho^{2/3})}\Big).
\end{multline}
The main term in each of the two brackets in \eqref{derphikjpm} is $1$. We distinguish two main regimes :
\begin{itemize}
\item If $|t|> 4a$, $x\leq a$, then the last two terms in \eqref{der1phikjpm} (corresponding to the derivatives of the phase functions the Airy factors $A_{\pm}$) are small compared to the second term in the right hand side of \eqref{der1phikj} and therefore
the phase is stationary in $\tilde \rho$ for $2^{-j}|y|\sqrt{1-|\Theta'|^2}\sim |t|(2^{-4j/3}\omega_k)^{1/2}$; for such values, the parameter $2^{-j}|y|$ of $\sqrt{1-|\Theta'|^2}$ is large ; moreover, $|\partial^2_{\tilde\rho}\phi^{m=0,\pm_1,\pm_2}_{k,j}|\sim |t|(2^{-4j/3}\omega_k)^{1/2}$ (as in this regime the second order derivatives of the Airy functions remain much smaller than $\partial^2_{\tilde\rho}\phi^{m=0}_{k,j}$) ; we conclude exactly as in the proof of Proposition \ref{propAG}, as the stationary phase applies in both $\tilde\rho$ and $\Theta'$. The estimates are even better than in \eqref{sumdecayjpetitwave} due to the decay of the Airy factors $A_{\pm}$.

\item Let $|t|\leq  4a $, then of $\partial^2_{\tilde\rho}\phi^{m=0,\pm_1,\pm_2}_{k,j}$ may be small (as
$t\sqrt{2^{-4j/3}\omega_k}\mp_1 a \sqrt{2^{-4j/3}\omega_k-a2^{-2j}}\mp_2 x \sqrt{2^{-4j/3}\omega_k-x2^{-2j}}$ may be close to $0$, in which case the (RHS) term of \eqref{derphikjpm} may be small). Moreover, we may also have $2^{-j}|y|$ small there where the phase is stationary with respect to $\tilde \rho$: when this is the case we cannot apply the stationary phase with respect to $\Theta'$. However, as for $x\leq a$ the symbols of $A_{\pm}(x2^{-2j/3}\tilde \rho^{2/3}-\omega_k)$ and $A_{\pm}(x2^{-2j/3}\tilde \rho^{2/3}-\omega_k)$ decay like $\omega_k^{-1/4}$, we bound $|G^{m=0}_{j,1-\chi_0}(t,\cdot)|$ as follows
\begin{align}\label{tpetitagrand}
|G^{m=0}_{j,1-\chi_0}(t,\cdot)|& \leq  2^{-j(1+(d-2)+2/3)}\sum_{k\lesssim 2^{2j}} \frac{1}{L'(\omega_k)\omega_k^{1/4+1/4}}\leq 2^{-j(1+(d-2)+2/3)}2^{2j/3}\\
\nonumber
&\leq 2^{-j(d-1)}\lesssim \frac{2^{-2j(d-1)/3}}{|t|^{(d-1)/2}}, \quad \text{ as } t\lesssim 4a\leq 16\times 2^{2j_0/3}<16\times 2^{2j/3},
\end{align}
where, in order to estimate the sum over $k$ we have used that $L'(\omega_k)\simeq \sqrt{2\omega_k}$ and $\omega_k\simeq k^{2/3}$.
Summing up over $j>j_0$ achieves the proof.
\end{itemize}
\end{proof}
In Proposition \ref{proplemapetit} we have considered only values $j>j_0$ so that the Airy factors could be written as in \eqref{eq:Apm}. The next lemma deals with $j\leq j_0<3(j+2)$. %
\begin{prop}\label{lemagrand}
There exists a constant $C=C(d)$ independent of $j_0$, $M$, such that the following holds
\[
\Big|\sum_{j\leq j_0< 3(j+2),  2^j\gtrsim t^{(1-\epsilon)/4}}G^{m=0}_{j,1-\chi_0}(t,\cdot)\Big|\leq \frac{C}{|t|^{(d-1)/2}}.
\]
\end{prop}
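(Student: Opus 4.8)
The plan is to establish, for each single shell $j$ in the stated range and uniformly in $(x,y)$, a bound of the form $|G^{m=0}_{j,1-\chi_0}(t,\cdot)|\lesssim 2^{-c_d j}\,|t|^{-(d-1)/2}$ with $c_d>0$ when $2^j\lesssim|t|^{1/2}$, and $|G^{m=0}_{j,1-\chi_0}(t,\cdot)|\lesssim 2^{-j(d-1)}$ when $2^j\gtrsim|t|^{1/2}$; in either case the bound decays geometrically in $j$, and (using $2^{-j(d-1)}\lesssim|t|^{-(d-1)/2}$ whenever $2^j\gtrsim|t|^{1/2}$) the sum over $\{j\le j_0:\ 2^j\gtrsim|t|^{(1-\epsilon)/4}\}$ is a convergent geometric series bounded by $\lesssim|t|^{-(d-1)/2}$, independently of $j_0$ and $M$. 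These per-shell bounds are obtained exactly as in the proofs of Propositions \ref{propAG} and \ref{proplemapetit}: one rescales $\rho=2^{-j}\tilde\rho$, $\tilde\rho\in[\tfrac34,2]$, in \eqref{eq:uhterWGM1chiSF} and runs a stationary phase in $\tilde\rho$ and in the $d-2$ angular variables $\Theta'$. The structural novelty compared to Proposition \ref{proplemapetit} (which treated $j>j_0$) is that here $j\le j_0<3(j+2)$ forces $2^{2j/3}\lesssim a\lesssim 2^{2j}$, so that on the support of $\phi(\sqrt{\lambda_k(\rho)})$ (which imposes $\omega_k\lesssim 2^{4j/3}$) the Airy argument $a\rho^{2/3}-\omega_k$ runs over $\lesssim 2^{2j}$ modes $k$ lying near or below the turning point of the $a$-Airy factor, and likewise for $x\rho^{2/3}-\omega_k$ since $x\le a$.

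I would first dispose of $|t|\le 4a$. In that case the support condition $a\rho^{2/3}\lesssim\omega_k\lesssim 2^{4j/3}$ forces $2^j\gtrsim|t|^{1/2}$ (for smaller $j$ the $a$-Airy factor is exponentially small in $|t|$ and $G^{m=0}_{j,1-\chi_0}$ is negligible). Bounding both Airy factors by $\omega_k^{-1/4}$ away from their turning points and by $O(1)$ on the $O(\omega_k^{1/2})$ modes near them, and summing by Cauchy--Schwarz and \eqref{estairy2} with $L\sim 2^{2j}$ exactly as in \eqref{tpetitagrand}, yields $|G^{m=0}_{j,1-\chi_0}|\lesssim 2^{-j(d-1)}$, which with $2^j\gtrsim|t|^{1/2}$ is $\lesssim|t|^{-(d-1)/2}$, and the geometric series over $j\gtrsim\tfrac12\log_2|t|$ sums to $|t|^{-(d-1)/2}$.

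For $|t|>4a$ (hence $a\lesssim|t|/4$) I would split on the size of $2^j$, fixing a small $\nu\in(0,\epsilon/2)$. If $2^j\lesssim|t|^{1/4-\nu/2}$, then $2^{2j}\lesssim|t|^{1/2-\nu}$, so \eqref{tocheckkj} holds and, by \eqref{derivelAirybounds}, the two Airy factors may be kept inside the symbol; by \eqref{seconderphikj} the phase $\phi^{m=0}_{k,j}$ of \eqref{phikj} then has $|\partial^2_{\tilde\rho}\phi^{m=0}_{k,j}|\sim|t|(2^{-4j/3}\omega_k)^{1/2}\sim|t|$ at its critical point, located near $2^{-j}|y|\sim|t|(2^{-4j/3}\omega_k)^{1/2}$, where the $\Theta'$-phase is stationary at $\Theta'=0$ with Hessian $\sim 2^{-j}|y|\sim|t|$; a stationary phase in $(\tilde\rho,\Theta')$ together with \eqref{estairy2} gives the claimed bound, just as in Proposition \ref{propAG}. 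If instead $2^j\gtrsim|t|^{1/4-\nu/2}$, I would write the Airy factors via \eqref{eq:Apm} away from their turning points --- peeling off the $O(\omega_k^{1/2})$ modes $k$ with $\omega_k$ within $O(\omega_k^{1/3})$ of $a\rho^{2/3}$, resp.\ $x\rho^{2/3}$, whose net contribution to the symbol sum is $O(1)$ after division by $L'(\omega_k)\sim\omega_k^{1/2}$ --- obtaining the four phases $\phi^{m=0,\pm_1,\pm_2}_{k,j}$ of \eqref{defphim=0kj}. Because $|t|>4a$ keeps the extra terms in \eqref{der1phikjpm} and \eqref{derphikjpm} coming from the Airy phases subordinate to the main contributions $\partial_{\tilde\rho}\phi^{m=0}_{k,j}$ and $\partial^2_{\tilde\rho}\phi^{m=0}_{k,j}\sim|t|$, the combined phase is again stationary only near $2^{-j}|y|\sim|t|(2^{-4j/3}\omega_k)^{1/2}$ with second derivative $\sim|t|$; a stationary phase in $(\tilde\rho,\Theta')$ --- with the $A_\pm$ symbols supplying the extra decay $(\omega_k-a\rho^{2/3})^{-1/4}(\omega_k-x\rho^{2/3})^{-1/4}$ summed through \eqref{estairy2} --- again produces $\lesssim 2^{-c_d j}|t|^{-(d-1)/2}$, exactly as in the first bullet of the proof of Proposition \ref{proplemapetit}.

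The main difficulty is precisely this last sub-case, where the Airy factors sit near their turning points and the representation \eqref{eq:Apm} degrades: since the argument $a\rho^{2/3}-\omega_k=a2^{-2j/3}\tilde\rho^{2/3}-\omega_k$ sweeps, as $\tilde\rho$ ranges over $[\tfrac34,2]$, through a range of size $\sim a2^{-2j/3}$, a given mode $k$ passes between the monotone and the oscillatory regime of $Ai$, so one needs a partition of unity in $(\tilde\rho,k)$ adapted to the turning locus rather than a fixed cutoff. The saving point is that $|t|>4a$ together with $a\lesssim 2^{2j}$ force the $\omega_k$ of the peeled modes to stay $\ll|t|$, so that an integration by parts in $\tilde\rho$ against $\phi^{m=0}_{k,j}$ away from its critical point still gains, while near the critical point the van der Corput bound from $|\partial^2_{\tilde\rho}\phi^{m=0}_{k,j}|\sim|t|$ survives. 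Once the three regimes above are assembled, the proof concludes by summing the geometric series in $j$ over $\{j\le j_0:\ 2^j\gtrsim|t|^{(1-\epsilon)/4}\}$, which is bounded by a constant times $|t|^{-(d-1)/2}$ independently of $j_0$ and $M$.
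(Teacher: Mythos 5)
Your easy regimes are handled correctly and essentially as in the paper: for $|t|\lesssim a$ the constraint $j_0<3(j+2)$ gives $2^{2j}\gtrsim a\gtrsim |t|$, so the crude Cauchy--Schwarz bound via \eqref{estairy2} (as in \eqref{tpetitagrand} and \eqref{eq:uhterWGM1chiSFtgsumj}) yields $2^{-j(d-1)}\lesssim |t|^{-(d-1)/2}$ per shell; for $2^{2j}\lesssim |t|^{1/2-\nu}$ the Airy factors stay in the symbol by \eqref{tocheckkj} and the argument of Proposition \ref{propAG} applies; and for $|t|>4a$ the modes with $\omega_k$ well above the turning point $a\rho^{2/3}$ can be opened with \eqref{eq:Apm} and treated as in the first bullet of Proposition \ref{proplemapetit}. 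The genuine gap is the remaining case, which is precisely the new content of this proposition: the ``tangent'' modes $\omega_k\sim a2^{-2j/3}\sim 2^{2(j_0-j)/3}$ when $|t|\gg a$ and $\omega_k^{3}\gtrsim |t|^{1-\epsilon}$, where \eqref{tocheckkj} fails so the Airy factors cannot be kept in the symbol (this is what the paper's Lemma \ref{propAGtg} isolates, leaving the regime $2^{2(j_0-j)}\geq t^{1-\epsilon}$ to a separate argument). Your proposed treatment of these modes --- van der Corput in $\tilde\rho$ with the Airy factors as amplitude, plus an unspecified partition of unity along the turning locus --- does not give the bound as stated. First, with the Airy factors in the amplitude, the argument $a2^{-2j/3}\tilde\rho^{2/3}-\omega_k$ sweeps an interval of length $\sim\omega_k$ as $\tilde\rho$ runs over $[\tfrac34,2]$, so the total variation of the symbol in $\tilde\rho$ is of size a positive power of $\omega_k\gtrsim |t|^{(1-\epsilon)/3}$ (roughly $\omega_k^{5/4}$ from $|Ai'|\lesssim(1+|z|)^{1/4}$), and van der Corput then loses this factor; also the second derivative is $|t|(2^{-4j/3}\omega_k)^{1/2}$, not $|t|$. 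Second, and more seriously for $d\geq 3$, you never explain how to recover the angular decay $|t|^{-(d-2)/2}$: one must show that stationarity in $\tilde\rho$ forces $2^{-j}|y|\sim|t|(2^{-4j/3}\omega_k)^{1/2}\gtrsim M$ so that the stationary phase in $\Theta'$ applies, and once the Airy factors are represented in oscillatory form their phases contribute terms of size up to $\tfrac54\,\omega_k^{3/2}$ to the $\tilde\rho$-derivative of the total phase (cf.\ \eqref{phastatrhotgrand}), so this is a quantitative comparison between $|t|$ and $a$ with explicit constants.

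This is exactly the step the paper carries out differently: it opens \emph{both} Airy factors with the integral representation \eqref{eq:bis47}, introduces the auxiliary variables $s,\sigma$, disposes of $|s|,|\sigma|\geq\tfrac98\sqrt{\omega_k}$ by integration by parts (giving $O(\omega_k^{-n})=O(|t|^{-n})$ precisely because $\omega_k^{3}\gtrsim|t|^{1-\epsilon}$ here), recovers the $\phi^{m=0,\pm_1,\pm_2}_{k,j}$ phases when $|s|,|\sigma|\geq\tfrac18\sqrt{\omega_k}$, and in the remaining region uses $|s+\sigma|\leq\tfrac54\sqrt{\omega_k}$ together with the threshold $|t|\geq 20a$ to prove the domination $\tfrac{t\,\cdot}{}\geq(5/3)2^{-1/3}\omega_k^{3/2}>\tfrac54\omega_k^{3/2}$, which is what legitimizes both the $\Theta'$ and the $\tilde\rho$ stationary phases and yields \eqref{sumdecayjpetitwavetg}; the regime $|t|\leq 20a$ is then absorbed into the crude bound \eqref{eq:uhterWGM1chiSFtgsumj}. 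Your cutoff at $|t|>4a$ (rather than $20a$, which is harmless only because all of $|t|\lesssim a$ can be treated crudely) and your sketch of the near--turning-point analysis leave this central mechanism unproved, so the proposal as written does not close the hard sub-case.
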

\begin{proof}
Notice that if $a\in 2^{2j_0/3}[\frac 12, 2]$ is chosen such that $2^{j_0}\leq t^{(1-\epsilon)/4}$ for some $\epsilon>0$, then Proposition \ref{propAG} applies for all $j\leq j_0$ and we conclude that $G^{m=0}_{j,1-\chi_0}(t,\cdot)$ is bounded by $C/|t|^{(d-1)/2}$. Therefore we consider $j_0$ such that $2^{j_0}\gtrsim t^{(1-\epsilon)/4}$ for any $\epsilon>0$.

For values $k$ such that $\omega_k \leq \frac 14 2^{2(j_0-j)/3}$ we obtain trivial contributions in \eqref{eq:uhterWGM1chiSF} using the decay of the Airy functions. Therefore we split the sum over $k$ in two parts, according to whether $\omega_k\sim 2^{2(j_0-j)/3}$ or $\omega_k\geq 2^{2(j_0-j+2)/3}$. In the second case, we obtain $\frac 45\omega_k\geq  a2^{-2j/3}\tilde\rho^{2/3}$ and using \eqref{eq:Apm} yields again four phase functions as in \eqref{defphim=0kj}; in this case the proof follows as in Lemma \ref{proplemapetit} for $|t|>4a$. When $|t|\leq 4a$ we have $|t|\leq 4a \leq 8\times 2^{2j_0/3}$.
As $j\leq j_0< 3(j+2)$, we write $j=[j_0/3]-1+l$, with $l\in [0,2([j_0]+1)/3]\cap\mathbb{N}$ and we obtain, as in \eqref{tpetitagrand},
\begin{multline}
\sum_{j\leq j_0< 3(j+2)} |G^{m=0}_{j,1-\chi_0}(t,\cdot)|\lesssim \sum_{j\leq j_0< 3(j+2)}2^{-j(d-1)}= \sum_{j=[j_0/3]-1+l,l\leq  2([j_0]+1)/3}2^{-([j_0]/3-1+l)(d-1)}\\
\leq \frac{4^{d-1}}{|t|^{(d-1)/2}}\sum_{l\geq 0} 2^{-l(d-1)}\leq \frac{4^{d-1}}{|t|^{(d-1)/2}}.
\end{multline}
Let $\omega_k\simeq  2^{2(j_0-j)/3}$ : we have to obtain dispersive bounds for $\sum_{j\leq j_0\leq 3(j+2)} G^{m=0,\#}_{j,1-\chi_0}(t,\cdot)$, where
\begin{multline}
  \label{eq:uhterWGM1chiSFtg}
 G^{m=0,\#}_{j,1-\chi_0}(t,x,a,y):=\sum_{k\geq 1}\int_0^{\infty}\int_{\Theta' \in\mathbb{R}^{d-2}} e^{\frac ih\rho|y|\sqrt{1-|\Theta'|^2}}c(\Theta')d\Theta'\rho^{d-2}e^{i t\sqrt{\lambda_k(\rho)}} \phi(\rho)\phi(\sqrt{\lambda_k(\rho)})\\
\tilde\psi(\frac{\omega_k}{2^{2(j_0-j)/3}})(1-\chi_0)(\frac{t\lambda_k(\rho)}{M})
\psi_2(2^{j}\rho)\frac{ \rho^{2/3}}{ L'(\omega_{k}) } Ai(x\rho^{2/3}-\omega_{k}) Ai(a\rho^{2/3}-\omega_{k}) d\rho.
\end{multline}
Here we have introduced a cut-off $\tilde\psi$ supported in $[\frac 14,4]$ and defined the "tangent" flow as the restriction of $G^{m=0}_{j,1-\chi_0}$ to the sum over $k$ such that $\omega_k\simeq 2^{2(j_0-j)/3}$.
We need the following lemma, whose proof is similar to that of Proposition \ref{propAG} :
\begin{lemma}\label{propAGtg}
Let $t\geq \frac 23M$. There exists a constant $C=C(d)>0$ such that for small $\epsilon>0$ the following holds
\begin{equation}\label{sumjpetitbis}
\Big|\sum_{2^{2(j_0-j)}\leq t^{1-\epsilon}}G^{m=0,\#}_{j,1-\chi_0}\Big|\leq \frac{C}{|t|^{(d-1)/2}}.
\end{equation}
\end{lemma}
\begin{proof}
We follow the same approach as in Proposition \ref{propAG} : as $t$ is sufficiently large and $\omega_k\simeq 2^{2(j_0-j)/3}$ is small for $j$ close to $j_0$, we can consider the Airy factors as part of the symbol. As in \eqref{tocheckkj}, this is possible as long as
\[
t\sqrt{2^{-4j/3}\omega_k}\gg \omega_k^3,
\] 
and writing $\omega_k=2^{2(j_0-j)/3}\frac{\omega_k}{2^{2(j_0-j)/3}}$, the last inequality reads as $t\gg \frac{2^{2(j_0-j)}}{\sqrt{2^{-4j/3}\omega_k}}\Big(\frac{\omega_k}{2^{2(j_0-j)/3}}\Big)^3$.
As $2^{-4j/3}\tilde\rho^{4/3}\omega_k\leq 4$, $\tilde\rho\in[\frac 32,2]$ and $\frac{\omega_k}{2^{2(j_0-j)/3}}\in [\frac 14,4]$ on the support of $\tilde \psi$ and $\psi(\tilde\rho)$, in order to apply the stationary phase with the Airy factors in the symbol it will be enough to require $2^{2(j_0-j)}\leq t^{1-\epsilon}$ for some $\epsilon>0$ (and $t\gtrsim M$). The dispersive bounds follow like in \eqref{sumdecayjpetitwave} : 
\begin{equation}\label{sumdecayjpetitwavetg}
|G^{m=0,\#}_{j,1-\chi_0}(t,\cdot)|\lesssim \sum_{k\simeq 2^{j_0-j}(\lesssim 2^{2j})} 2^{-j} \frac{2^{-j(d-2)}}{(|t| \sqrt{2^{-4j/3}\omega_k})^{(d-2)/2}}\times \frac{2^{-2j/3}}{L'(\omega_k)}\frac{1}{(t\sqrt{2^{-4j/3}\omega_k})^{1/2}}.
\end{equation}
\end{proof}
Let now $j$ such that $2^{2(j_0-j)}\geq t^{1-\epsilon}$ for some small $\epsilon>0$ and $k$ such that $\frac{\omega_k}{2^{2(j_0-j)/3}}\in [\frac 14,4]$. Replacing the Airy factors of \eqref{eq:uhterWGM1chiSFtg} by their the integral formulas \eqref{eq:bis47} yields a new phase
\begin{equation}\label{phimkmj01}
\phi^{m=0}_{k,j}+\frac{s^3}{3}-s(\omega_k-a2^{-2j/3}\tilde\rho^{2/3}) +\frac{\sigma^3}{3}-\sigma (\omega_k-x2^{-2j/3}\tilde\rho^{2/3}),
\end{equation}
whose critical points satisfy $s^2+a2^{-2j/3}\tilde\rho^{2/3}=\omega_k$ and $\sigma^2+x2^{-2j/3}\tilde\rho^{2/3}=\omega_k$ and whose critical values equal $\phi^{m=0,\pm_1,\pm_2}_{k,j}$ defined in \eqref{defphim=0kj}. If $|s|\geq \frac 98 \sqrt{\omega_k}$ or $|\sigma|\geq \frac 98\sqrt{\omega_k}$, repeated integrations by parts provide a contribution $O(\omega_k^{-n})$ for all $n\in \mathbb{N}$ : as $\omega_k\simeq 2^{2(j_0-j)/3}$ and $2^{2(j_0-j)}\geq t^{1-\epsilon}$, then $O(\omega_k^{-n})=O(|t|^{-n})$. In the following we let $|s|,|\sigma|\leq \frac 98\sqrt{\omega_k}$. If $|s|,|\sigma|\geq \frac 18\sqrt{\omega_k}$, then the  stationary phase applies in both $s,\sigma$, the critical value of the phase becomes $\phi^{m=0,\pm_1,\pm_2}_{k,j}$ and we conclude as in Proposition
\ref{proplemapetit}. Let $|s|\leq  \frac 18\sqrt{\omega_k}$ or $|\sigma|\leq \frac 18\sqrt{\omega_k}$, then $|s+\sigma|\leq \frac 54\sqrt{\omega_k}$ and $|s-\sigma|\leq \frac 54\sqrt{\omega_k}$.

Let first $|t|\geq 20 a$, then the phase is stationary in $\tilde\rho$ if $2^{-j}|y|\sqrt{1-|\Theta'|^2}\sim |t|(2^{-4j/3}\omega_k)^{1/2}$ : as a consequence, the stationary phase can applies in $\Theta'$: indeed, the phase is stationary in $\tilde \rho$ when
\begin{equation}\label{phastatrhotgrand}
2^{-j}|y|\sqrt{1-|\Theta'|^2}+t\frac{2^{-2j}\tilde\rho+\frac 232^{-4j/3}\tilde\rho^{1/3}\omega_k}{\sqrt{2^{-2j}\tilde\rho^2+2^{-4j/3}\tilde\rho^{4/3}\omega_k}}-(s+\sigma)\omega_k=0,
\end{equation}
where $|s+\sigma|\leq \frac 54\sqrt{\omega_k}$. Since $j_0$ is large (recall that $2^{j_0-j}\geq t^{(1-\epsilon)/2}$) and so is $j$ (recall that we are dealing with $j\leq j_0\leq 3(j+2)$), the middle term in \eqref{phastatrhotgrand} equals
\[
|t|\frac{2^{-2j}\tilde\rho+\frac 232^{-4j/3}\tilde\rho^{1/3}\omega_k}{\sqrt{2^{-2j}\tilde\rho^2+2^{-4j/3}\tilde\rho^{4/3}\omega_k}}=\frac 23 |t|\tilde\rho^{-1/3}\sqrt{2^{-4j/3}\omega_k}(1+\tilde\rho^{2/3}/(2^{2j/3}\omega_k)+O((2^{2j/3}\omega_k)^{-2})).
\]
As $a\in 2^{2j_0/3}[\frac 12,2]$, $2^{2(j_0-j)/3}\geq \frac 14\omega_k$ and $\tilde\rho\in[\frac 34,2]$, we have 
\begin{align*}
|t|\frac{2^{-2j}\tilde\rho+\frac 232^{-4j/3}\tilde\rho^{1/3}\omega_k}{\sqrt{2^{-2j}\tilde\rho^2+2^{-4j/3}\tilde\rho^{4/3}\omega_k}}&\geq \frac 23 \times 20 a\times 2^{-2j/3}\sqrt{\omega_k}\times \tilde\rho^{-1/3}\\
&\geq \frac 23 \times 20\times \frac 12 2^{2j_0/3} \times 2^{-2j/3}\sqrt{\omega_k}\times \tilde\rho^{-1/3}\\
&\geq \frac{20}{3\times 4\times\tilde\rho^{1/3}} \omega_k^{3/2}\geq (5/3)2^{-1/3}\omega_k^{3/2},
\end{align*}
and as $(5/3)\times 2^{-/3}\simeq 1,32>5/4$ it follows that \eqref{phastatrhotgrand} can hold only if $2^{-j}|y|\sim |t|(2^{-4j/3}\omega_k)^{1/2}\gtrsim M$, so the stationary phase applies in $\Theta'$ as the parameter is sufficiently large. Moreover, for $|t|\geq 20a$, the second order derivative in $\tilde\rho$ is $\simeq |t|(2^{-4j/3}\omega_k)^{1/2}$, so the stationary phase applies also in $\tilde\rho$ and the estimates we obtain are exactly as in \eqref{sumdecayjpetitwavetg}.

Let now $t\leq 20a$ and $|s|,|\sigma|\leq \frac 98\sqrt{\omega_k}$ with $|s+\sigma|\leq \frac 54\sqrt{\omega_k}$. Notice that in this case the stationary phase in $\Theta'$ may not apply at all as the parameter $2^{-j}|y|$ may remain small.  After the change of variable $\rho=2^{-j}\tilde\rho$, the sum over $j$ of \eqref{eq:uhterWGM1chiSFtg} can be bounded by
\begin{multline}
  \label{eq:uhterWGM1chiSFtgsumj}
\sum_{j\leq j_0\leq 3(j+2)}2^{-(d-1)j-2j/3}\int_0^{\infty}\tilde\rho^{d-2+2/3}\psi_2(\tilde\rho)\Big| \sum_{k\sim 2^{j_0-j}}
\frac{1}{ L'(\omega_{k}) } Ai(x\rho^{2/3}-\omega_{k}) Ai(a\rho^{2/3}-\omega_{k}) \Big| d\rho\\
\lesssim \sum_{j\leq j_0\leq 3(j+2)}2^{-(d-1)j-2j/3}2^{(j_0-j)/3}=2^{-j_0(d-1)/3}\sum_{j\leq j_0\leq 3(j+2)}2^{(j_0/3-j)d}
\leq
\frac{2\times 7^{d-1}}{t^{\frac{d-1}{2}}},
\end{multline}
where we have used that $k\sim 2^{j_0-j}$ on the support of $\tilde\psi(\frac{\omega_k}{2^{2(j_0-j)/3}})$ and then applied the Cauchy-Schwarz inequality followed by \eqref{estairy2} with $L\sim 2^{j_0-j}$ to obtain the first sum in the second line. To deduce the last inequality we have used $|t|\leq 20 a\leq 40\times  2^{2j_0/3}$ which gives $2^{-j_0(d-1)/3}< \frac{7^{d-1}}{|t|^{(d-1)/2}}$ together with the fact that the sum over $j$ is taken over $j_0/3-j\leq 5/3$ hence it is convergent.

\end{proof}

\subsubsection{The Klein-Gordon flow}

Let now $m=1$, then the term in brackets in \eqref{seconderphikj} is 
\begin{equation}\label{terminbrak}
f_{k,j}(\tilde\rho^{2/3}):=\frac 29(2^{-4j/3}\omega_k)\tilde\rho^{-2/3}\Big(1-(2^{-4j/3}\omega_k)\tilde\rho^{4/3}\Big) -\frac 19 (2^{-4j/3}\omega_k)2^{-2j}\tilde\rho^{4/3}+2^{-2j}.
\end{equation}
The following expansions will be useful (see \cite[(2.52), (2.64)]{AFbook})):
 $\omega_1=2.3381074105$, $\omega_4=6.7867080901$, $\omega_8=11,0085243037$ and for all $j\geq 3$, $\omega_{k=2^{2j}}=(\frac{3\pi}{8})^{2/3}\times 2^{4(j+1)/3}(1+O(2^{-2(j+1)}))$. \\

 We now separate two different situations: the first one was alluded to in the introduction and uncovers a new effect that leads to a worse decay estimate for Klein-Gordon. The second case is dealt with as we did for the wave equation.
\begin{enumerate}
\item Let $j=0$ and $k=1$ : we claim that $\partial^2_{\tilde\rho}\phi^{m=1}_{0,1}$ may vanish for some $\tilde\rho$ near $1$ for all $t$. Let $j=0$ and $k=1$ and let $z=\tilde\rho^{2/3}$ : the function $f_{1,0}(z):=1+\frac 29 z^{-1}\omega_1-\frac 19 \omega_1 z^2-\frac 29 z\omega_1^2$ satisfies $f_{1,0}(1)=1+\frac 19 \omega_1-\frac 29 \omega_1^2\simeq 0,04$ and $f_{1,0}'(z)<0$ for all $z>0$, hence $f_{0,1}$ is strictly decreasing, so the second derivative of $\phi^{m=1}_{1,0}$ does cancel for some $\tilde\rho$ very close to $1$. In the same way, for each $j\geq 1$, there exists at most one value $k(j)\geq 2$, $k(j)\sim 2^{2j}$, such that $f_{k,j}(z)$ vanishes for some $z$ near $1$. We have $2^{-4j/3}\omega_{k(j)}\sim 1$.

Let $j\geq 0$ and $k=k(j)$ and let $a\sim 2^{2j_0/3}$ for some $j_0\geq 0$. As $a2^{-2j/3}\lesssim \omega_{k(j)}\sim 2^{4j/3}$, we must have $j_0\leq 3j+c_0$ for some fixed $c_0$ depending only on the support of the $\psi_2$, $\phi$. If $t$ is large but such that $t\leq M_1 a$ for some $M_1>2$, then $2^{-2j_0/3}\leq M_1/t$ %
and we can proceed as in \eqref{eq:uhterWGM1chiSFtgsumj} :   the change of variables $\rho=2^{-j}\tilde \rho$ yields a factor $2^{-(d-1)j-2j/3}$ as in \eqref{eq:uhterWGM1chiSFtgsumj}. Using \eqref{estairy2}, the sum over $k\sim 2^{2j}$ of the Airy factors yields in turn a factor $2^{2j/3}$. Writing 
\[
2^{-(d-1)j}= 2^{-(d-1)(j-j_0/3)}\times 2^{-(d-1)j_0/3}\lesssim 2^{-(d-1)(j-j_0/3)} t^{-(d-1)/2},
\]
and using that the sum over $j$ is taken for $j>j_0/3$, provide bounds like $t^{-(d-1)/2}$ for the sum over $j, k(j)$. We are left with the case $t>M_1a$. Writing the Airy factors under their integral form yields phase functions as in \eqref{phimkmj01}, where $\phi^{m=0}_{k,j}$ is replaced by $\phi^{m=1}_{k,j}$ and where $k=k(j)$. The first order derivative of this new phase with respect to $\tilde\rho$ equals $\partial_{\tilde\rho}\phi^{m=1}_{k(j),j}+\frac 23 as2^{-2j/3}\tilde \rho^{-1/3}+\frac 23 a\sigma 2^{-2j/3}\tilde \rho^{-1/3}$ and using  $s^2,\sigma^2\leq 2\omega_{k(j)}\lesssim 2^{4j/3}$ it follows that the absolute value of last two terms in this derivative are bounded by $c_1 a$ and $c_1 x$ for some fixed constant $c_1>1$ ; moreover, we recall that, by symmetry of the Green function (and its spectral localizations), we can assume $x\leq a$. The first order derivative of $\phi^{m=1}_{k,j}$ is given in \eqref{der1phikj} and for $k=k(j)$, the factor of $t$ is $\sim 2^{-4j/3}\omega_k\sim 1$. Therefore, if $M_1$ is chosen sufficiently large, then the phase may be stationary in $\tilde\rho$ when $2^{-j}|y|\sim t$ ; in particular, as $t$ is large, the stationary phase with respect to $\theta$ applies. Near $\tilde \rho$ such that $\partial^2_{\tilde\rho}\phi^{m=1}_{k(j),j}=0$ we have $\partial^3_{\tilde\rho}\phi^{m=1}_{k(j),j}\sim t$ and using again that $t>M_1a\geq M_1x$ with $M_1$ large implies that the third order derivative behaves like $t$. Applying Van der Corput lemma yields a factor $t^{-1/3}$. For each $j$ we have $\frac{1}{L'(\omega_{k(j)})}\sim\frac{1}{\sqrt{\omega_{k(j)}}}\sim 2^{-2j/3}$. Eventually, the sum over $j$ and $k\sim 2^{2j}$ may be bounded by $C t^{-(d-2)/2-1/3}$ for some uniform constant $C>0$.

Notice that we cannot do better than that for $m=1$ : using \eqref{eq:AiryPoissonBis} yields a sum over $N\sim t$ and as $t$ is large, the number of waves which provide important contributions is proportional to $t$, which yields a loss worse than the one obtained using the gallery modes.\\

\item For $j\geq 0$ and $k\neq k(j)$, then $\partial^2_{\tilde\rho}\phi^{m=1}_{j,k}\neq 0$ on the support of $\psi_2$. This situation can be dealt with exactly as in the case of the wave flow and provide the same kind of bounds.
\end{enumerate}

\def\cprime{$'$} \def\cprime{$'$}


\begin{thebibliography}{10}

\bibitem{blsmso08}
Matthew~D. Blair, Hart~F. Smith, and Christopher~D. Sogge.
\newblock Strichartz estimates for the wave equation on manifolds with
  boundary.
\newblock {\em Ann. Inst. H. Poincar\'e Anal. Non Lin\'eaire},
  26(5):1817--1829, 2009.

\bibitem{esk77}
Gregory Eskin.
\newblock Parametrix and propagation of singularities for the interior mixed
  hyperbolic problem.
\newblock {\em J. Analyse Math.}, 32:17--62, 1977.

\bibitem{ILLP}
Oana Ivanovici, Richard Lascar, Gilles Lebeau, and Fabrice Planchon.
\newblock Dispersion for the wave equation inside strictly convex domains {II}:
  the general case, 2016.
\newblock {\tt arXiv:math/1605.08800}.

\bibitem{ILP3}
Oana Ivanovici, Gilles Lebeau, and Fabrice Planchon.
\newblock Strichartz estimates for the wave equation inside strictly convex 2d
  model domain.
\newblock preprint {\tt https://arxiv.org/abs/2008.03598}.

\bibitem{Annals}
Oana Ivanovici, Gilles Lebeau, and Fabrice Planchon.
\newblock Dispersion for the wave equation inside strictly convex domains {I}:
  the {F}riedlander model case.
\newblock {\em Ann. of Math. (2)}, 180(1):323--380, 2014.

\bibitem{ilpCE}
Oana Ivanovici, Gilles Lebeau, and Fabrice Planchon.
\newblock New counterexamples to {Strichartz estimates for the wave equation on
  a 2D} model convex domain.
\newblock preprint, {\tt https:/arxiv.org/abs/2008.02716}, 2020.

\bibitem{KaOz}
Jun Kato and Tohru. Ozawa.
\newblock Endpoint strichartz estimates for the klein?gordon equation in two
  space dimensions and some applications.
\newblock {\em J.Math.Pures Appl.}, (95):48--71, 2011.

\bibitem{MaNaTo}
Shuji Machihara, Kenji Nakanishi, and Tohru Ozawa.
\newblock Small global solutions and the nonrelativistic limit for the
  nonlinear dirac equation.
\newblock {\em Rev. Mat. Iberoamericana}, (19):179--194, 2003.

\bibitem{mel76}
R.~B. Melrose.
\newblock Equivalence of glancing hypersurfaces.
\newblock {\em Invent. Math.}, 37(3):165--191, 1976.

\bibitem{mesj78}
R.~B. Melrose and J.~Sj{\"o}strand.
\newblock Singularities of boundary value problems. {I}.
\newblock {\em Comm. Pure Appl. Math.}, 31(5):593--617, 1978.

\bibitem{mesj82}
R.~B. Melrose and J.~Sj{\"o}strand.
\newblock Singularities of boundary value problems. {II}.
\newblock {\em Comm. Pure Appl. Math.}, 35(2):129--168, 1982.

\bibitem{meta}
Richard~B. Melrose and Michael~E. Taylor.
\newblock {\em Boundary Problems for Wave Equations With Grazing and Gliding
  Rays}.
\newblock Available at {\tt http://www.unc.edu/math/Faculty/met/wavep.html}.

\bibitem{meta85}
Richard~B. Melrose and Michael~E. Taylor.
\newblock The radiation pattern of a diffracted wave near the shadow boundary.
\newblock {\em Comm. Partial Differential Equations}, 11(6):599--672, 1986.

\bibitem{AFbook}
Olivier Vall\'{e}e and Manuel Soares.
\newblock {\em Airy functions and applications to physics}.
\newblock Imperial College Press, London; Distributed by World Scientific
  Publishing Co. Pte. Ltd., Hackensack, NJ, 2004.

\end{thebibliography}
\end{document}